\newcommand{\lasso}{%
    \begin{tikzpicture}[scale=0.1, thick]
    {
    [yshift = -1]
    \draw circle(1);
    \draw (1.0,0.0)--(5.0,0.0);
    }
    \end{tikzpicture}%
}
\theoremstyle{plain}
\newtheorem{THM}{Theorem}[section]
\newtheorem{MAINTHM}{Theorem}
\newtheorem{MAINCOR}[MAINTHM]{Corollary}
\newtheorem{PROP}[THM]{Proposition}
\newtheorem{LEM}[THM]{Lemma}
\newtheorem{FACT}[THM]{Fact}
\newtheorem{COR}[THM]{Corollary}
\newtheorem*{CLAIME}{Claim}
\theoremstyle{definition}
\newtheorem{DEF}[THM]{Definition}
\newtheorem{RMK}[THM]{Remark}
\newtheorem{Q}[THM]{Question}
\newlist{PROPenum}{enumerate}{1} 
\setlist[PROPenum]{label=(\roman*),ref=\thePROP\,(\roman*)}
\crefname{PROPenumi}{Proposition}{Propositions}
\DeclareMathOperator{\id}{id}
\DeclareMathOperator{\Id}{Id}
\DeclareMathOperator{\Aut}{Aut}
\DeclareMathOperator{\SAut}{SAut}
\DeclareMathOperator{\Out}{Out}
\DeclareMathOperator{\Hom}{Hom}
\DeclareMathOperator{\Ext}{Ext}
\DeclareMathOperator{\PMap}{PMap}
\DeclareMathOperator{\PMapc}{PMap_{\it c}}
\DeclareMathOperator{\PPHE}{PPHE}
\DeclareMathOperator{\Homeo}{Homeo}
\DeclareMathOperator{\Map}{Map}
\DeclareMathOperator{\supp}{supp}
\DeclareMathOperator{\rk}{rk}
\DeclareMathOperator{\cork}{cork}
\newcommand{\N}{\mathbb{N}}
\newcommand{\Z}{\mathbb{Z}}
\newcommand{\bbQ}{\mathbb{Q}}
\newcommand{\R}{\mathbb{R}}
\newcommand{\cC}{\mathcal{C}}
\newcommand{\cE}{\mathcal{E}}
\newcommand{\cF}{\mathcal{F}}
\newcommand{\cL}{\mathcal{L}}
\newcommand{\cV}{\mathcal{V}}
\newcommand{\cP}{\mathcal{P}}
\newcommand{\cR}{\mathcal{R}}
\newcommand{\cS}{\mathcal{S}}
\newcommand{\cA}{\mathcal{A}}
\def\G{{\Gamma}}
\newcommand{\defeq}{:=}
\newcommand{\la}{\langle}
\newcommand{\ra}{\rangle}
\newcommand{\<}{\langle}
\renewcommand{\>}{\rangle}
\newcommand{\longto}{\longrightarrow}
\newcommand{\cech}{\text{{\v C}ech}}
\newcommand{\ceC}{\check{C}}
\newcommand{\dC}{\mathring{C}}
\newcommand{\wt}[1]{\widetilde{#1}}
\newcommand{\dd}{\setminus \hspace{-5pt}\setminus}
\let\oldtocsection=\tocsection
\let\oldtocsubsection=\tocsubsection
\let\oldtocsubsubsection=\tocsubsubsection
\renewcommand{\tocsection}[2]{\hspace{0em}\oldtocsection{#1}{#2}}
\renewcommand{\tocsubsection}[2]{\hspace{1em}\oldtocsubsection{#1}{#2}}
\renewcommand{\tocsubsubsection}[2]{\hspace{2em}\oldtocsubsubsection{#1}{#2}}
\title[Generating Sets and Algebraic Properties of $\PMap(\G)$]{Generating Sets and Algebraic Properties of Pure Mapping Class Groups of Infinite Graphs}
\author{George Domat, Hannah Hoganson, and Sanghoon Kwak}
\date{}
\begin{document}
\maketitle 
\begin{abstract}
    We completely classify the locally finite, infinite graphs with pure mapping class groups admitting a coarsely bounded generating set. We also study algebraic properties of the pure mapping class group: We establish a semidirect product decomposition, compute first integral cohomology, and classify when they satisfy residual finiteness and the Tits alternative. These results provide a framework and some initial steps towards quasi-isometric and algebraic rigidity of these groups.
\end{abstract} 

\addtocontents{toc}{\protect\setcounter{tocdepth}{0}}

\section{Introduction}\label{sec:intro}
A recent surge of interest in mapping class groups of infinite-type surfaces has prompted the emergence of a ``big" analog of $\Out(F_{n})$ as well. Algom-Kfir--Bestvina \cite{AB2021} propose that the appropriate analog is the group of self proper homotopy equivalences up to proper homotopy of a locally finite, infinite graph. 

One main difficulty of studying these ``big'' groups is that the classical approach of geometric group theory is not applicable. In particular, the mapping class groups of infinite-type surfaces and those of locally finite, infinite graphs are generally not finitely generated, and not even compactly generated. Fortunately, they are still Polish groups (separable and completely metrizable topological groups), to which Rosendal provides a generalized geometric group theoretic approach. The role of a finite or compact generating set is replaced with a \emph{coarsely bounded} (CB) generating set. For example, a group that admits a coarsely bounded generating set has a well-defined quasi-isometry type \cite[Theorem 1.2 and Proposition 2.72]{rosendal2022}, and a coarsely bounded group is quasi-isometric to a point. A group with a coarsely bounded neighborhood around the identity is said to be \emph{locally coarsely bounded}, which is equivalent to having a well-defined \emph{coarse equivalence} type, and is necessary to have a coarsely bounded generating set.
Using this framework, Mann--Rafi \cite{mann2023large} gave a classification of (tame) surfaces whose mapping class groups are coarsely bounded, locally coarsely bounded, and generated by a coarsely bounded set. This established a first step toward studying the coarse geometry of mapping class groups of infinite-type surfaces. Recently, Thomas Hill \cite{hill2023largescale} gave a complete classification of surfaces that have \emph{pure} mapping class groups with the aforementioned coarse geometric properties, without the tameness condition.

In the authors' previous work \cite{DHK2023}, we gave a complete classification of graphs with coarsely bounded, and locally coarsely bounded, pure mapping class groups, the subgroup of the mapping class group fixing the ends of the graph pointwise. In this paper, we provide the complete classification of infinite graphs with CB-generated pure mapping class groups, fulfilling our goal to provide a foundation for studying the coarse geometry of these groups. In the following statement, $E$ refers to the space of ends of the graph $\G$ and $E_{\ell}$ is the subset of ends accumulated by loops.

\begin{MAINTHM}\label{thm:CBgenClassification}
    Let $\G$ be a locally finite, infinite graph. Then its pure mapping class group, $\PMap(\G)$, is \emph{CB generated} if and only if either $\G$ is a tree, or satisfies both:
    \begin{enumerate}[label=\arabic*.]
        \item $\G$ has finitely many ends accumulated by loops, and
        \item there is no accumulation point in $E \setminus E_\ell$.
    \end{enumerate}
\end{MAINTHM}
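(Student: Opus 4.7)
The proof splits into necessity and sufficiency, with sufficiency carrying most of the technical weight.

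For the necessity direction, I would show that if $\Gamma$ is not a tree and either condition fails, then $\PMap(\Gamma)$ admits a continuous surjection onto a Polish group that is not CB generated, which by Rosendal's theory rules out CB generation of $\PMap(\Gamma)$ itself. Suppose first that condition (1) fails, so there are infinitely many loop ends $\{e_i\}_{i\in\N}$. Choose a pairwise disjoint family of clopen end-neighborhoods $U_i \ni e_i$, each containing infinitely many loops. For each $i$, one defines a ``loop-count'' homomorphism $\rho_i : \PMap(\Gamma) \to \Z$ measuring the net number of loops transported into $U_i$; assembled, these yield a continuous surjection onto a subgroup of $\prod_i \Z$ whose image contains an unbounded discrete free abelian subgroup, which is not CB generated. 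If instead condition (2) fails, an accumulation point $e^* \in E \setminus E_\ell$ is a limit of infinitely many distinct ends, and permuting these ends through increasingly separated blocks gives an analogous quotient, in the spirit of the obstructions in Mann--Rafi and Hill.

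For sufficiency, the tree case is addressed directly: when $\Gamma$ is a tree, $\PMap(\Gamma)$ is determined by its action on the space of ends, and CB-ness (hence vacuous CB generation) follows from the classification in \cite{DHK2023}. Otherwise, assume (1) and (2) hold, and let $e_1, \dots, e_n$ be the finitely many loop ends. I would choose pairwise disjoint clopen subgraph neighborhoods $U_1, \dots, U_n$ of these ends, each a canonical ``infinite rank'' piece with $e_i$ as its unique loop end. Condition (2) ensures that the complement $K = \Gamma \setminus \bigcup_i U_i$ has only isolated non-loop ends, placing it in the CB regime of \cite{DHK2023}, so $\PMap(K)$ is CB.

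The proposed CB generating set then consists of a CB neighborhood of the identity in $\PMap(\Gamma)$ (which exists by local CB-ness established in \cite{DHK2023}), a CB generating set for each $\PMap(U_i)$ (each of which is CB generated because $U_i$ has a single loop end, reducing to a case already handled by the earlier classification), and a finite collection of explicit ``handle transporters'' that move loops between $K$ and the $U_i$. The main obstacle, and where the bulk of the work concentrates, is a fragmentation statement: every $\varphi \in \PMap(\Gamma)$ must factor uniformly as a bounded product $\varphi = \varphi_0 \varphi_1 \cdots \varphi_n$ with $\varphi_0$ supported on a fixed compact piece of $\Gamma$ and each $\varphi_i$ supported in $U_i$. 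This should follow from the semidirect product decomposition advertised in the abstract, combined with an argument that condition (2) prevents the displacement of $\varphi$ from escaping into uncontrollable limit behavior in $K$, so all ``infinite'' activity can be pushed into the finitely many loop-end neighborhoods $U_i$.
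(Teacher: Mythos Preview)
Your proposal has two genuine gaps, one in each direction.

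\textbf{Necessity, condition (2).} You propose to obstruct CB generation when $E\setminus E_\ell$ has an accumulation point by ``permuting these ends through increasingly separated blocks.'' But elements of $\PMap(\Gamma)$ fix every end pointwise by definition, so no such permutations exist. The actual obstruction (which the paper imports from \cite{DHK2023}) comes instead from word maps on the infinitely many rays accumulating to the limit end: these produce length functions on $\PMap(\Gamma)$ that are unbounded on every generating set. Your analogous treatment of condition (1) is also shakier than it looks: the paper does not argue via a quotient to $\prod_\N \Z$ but rather cites that infinitely many loop ends already prevents \emph{local} CB-ness, which is strictly stronger and avoids having to analyze CB generation of the target group.

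\textbf{Sufficiency, the fragmentation.} Your key step is the claim that every $\varphi$ factors as $\varphi_0\varphi_1\cdots\varphi_n$ with $\varphi_0$ compactly supported and each $\varphi_i$ supported in a single end-neighborhood $U_i$. This is false whenever $n=|E_\ell|\ge 2$. Take a primitive loop shift $h$ from $e_1$ to $e_2$. Any $\varphi_i$ supported in $U_i$ has zero flux across $\partial U_i$ (it moves no loops in or out of $U_i$), and the same holds for any compactly supported $\varphi_0$; but $h$ has flux $\pm 1$ with respect to the partition $\{e_1\}\sqcup\{e_2,\dots,e_n\}$. So $h$ admits no such factorization, even after allowing your ``handle transporters,'' unless those transporters are themselves loop shifts between distinct $U_i$'s---at which point the fragmentation statement no longer says what you wrote. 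The semidirect product $\PMap(\Gamma)=\overline{\PMap_c(\Gamma)}\rtimes \Z^\alpha$ does not localize $\varphi$ to the $U_i$; rather, it separates $\varphi$ into a product of finitely many \emph{global} loop shifts (the $\Z^\alpha$ part) and a remainder in $\overline{\PMap_c(\Gamma)}$. The paper then shows that this remainder can be approximated in $\cV_K$ by a genuinely compactly supported map, and finishes by proving that $\PMap(K')\subset\langle\cS\rangle$ for every compact $K'$ using explicit Nielsen-type generators. Your outline never addresses why $\overline{\PMap_c(\Gamma)}$ is generated by a CB set, which is where the real work lies.
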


\begin{RMK}\label{rmk:connectsum}
    Alternatively, we have a constructive description: $\PMap(\G)$ is CB generated if and only if $\G$ can be written (not necessarily uniquely) as a finite wedge sum of the four graphs from \Cref{fig:CBsummands}.
\end{RMK}

\begin{figure}[ht!]
	    \centering
		    \includegraphics[width=.6\textwidth]{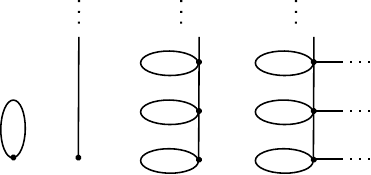}
		    \caption{Every graph with a CB-generated pure mapping class group can be written as a finite wedge sum of these four graphs. From left to right these are: a single loop, a single ray, a Loch Ness monster graph, and a Millipede monster graph.}
	    \label{fig:CBsummands}
\end{figure}
    
\Cref{table:fullclassification} illustrates the complete classification of graphs with CB, locally CB, and CB-generated pure mapping class group. Observe the trend that $\PMap(\G)$ admits more complicated coarse geometric properties when $\G$ has more complicated geometry.

\colorlet{clrCB}{blue!20}
\colorlet{clrLocCB}{yellow!20}
\colorlet{clrCBgen}{green!20}
\colorlet{clrNone}{pink!30}
\colorlet{clrNA}{gray!30}

\begin{table}[ht!]
\centering
\makebox[\textwidth][c]{\renewcommand{\arraystretch}{1.5}
\begin{tabular}{rlllllll}
    \toprule
         \multirow{2}{*}[-.3em]{\makecell[r]{\tiny$t=$ the number of\\\tiny components of $\G \setminus \G_c$ \\\tiny with infinite end spaces}}& \multicolumn{3}{c}{Finite rank $r$} &\phantom{}& \multicolumn{3}{c}{Infinite rank with $|E_\ell|=:n$}\\ \cmidrule{3-4} \cmidrule{6-8}
         && $r=0$\phantom{abc} & $r \in [1,\infty)$ && $n=1$ & $n \in [2,\infty)$ & $n=\infty$ \\
        \midrule \addlinespace
        \raisebox{2.8em}{$t=0$}
        &&\raisebox{.4em}{\makecell{CB\\~\adjustbox{valign=c}{\includegraphics[width=5em]{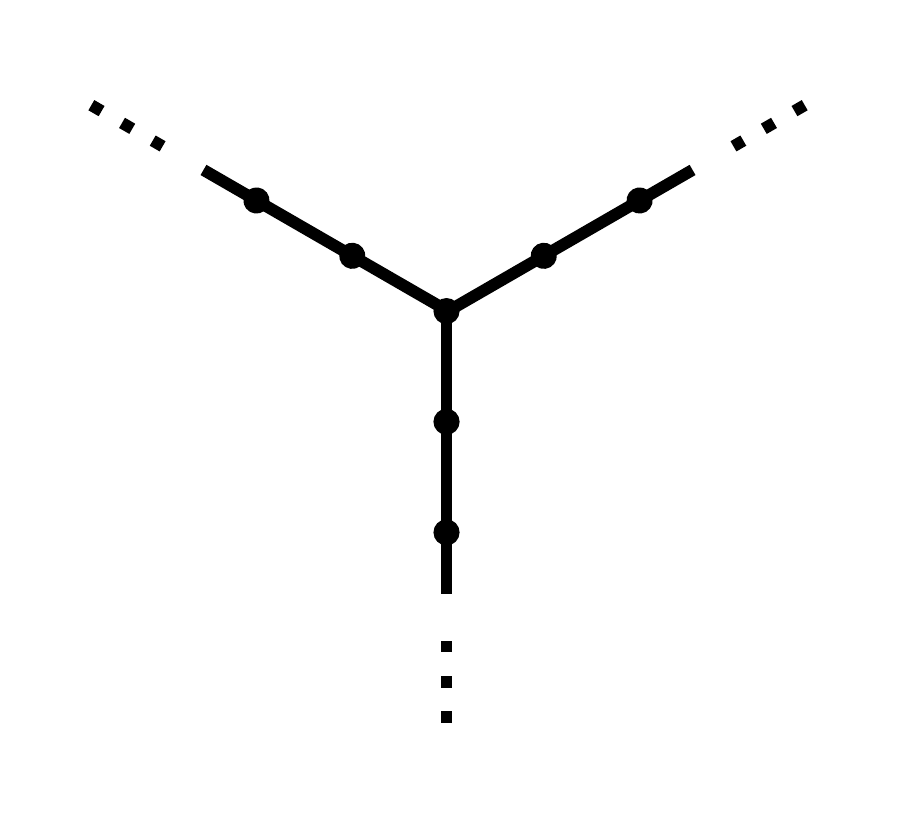}}}}\cellcolor{clrCB}
        & \begin{tabular}{@{}l@{}}\setlength\tabcolsep{1.5pt}%
        CB if $\G=\lasso$\cellcolor{clrCB}\\ \makecell{CB-gen otherwise\\~\adjustbox{valign=c}{\includegraphics[width=5em]{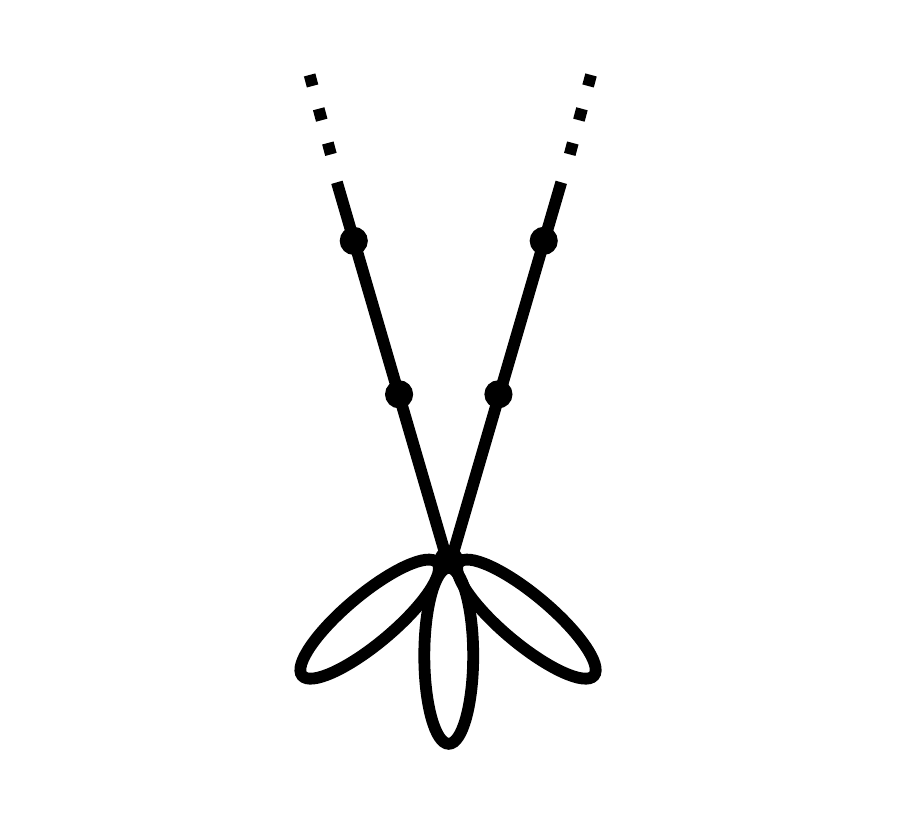}}}\cellcolor{clrCBgen}\end{tabular}
        & & \raisebox{.5em}{\makecell{\phantom{abcc.c}CB\phantom{abac\!\ c}\\~\adjustbox{valign=c}{\includegraphics[width=5em]{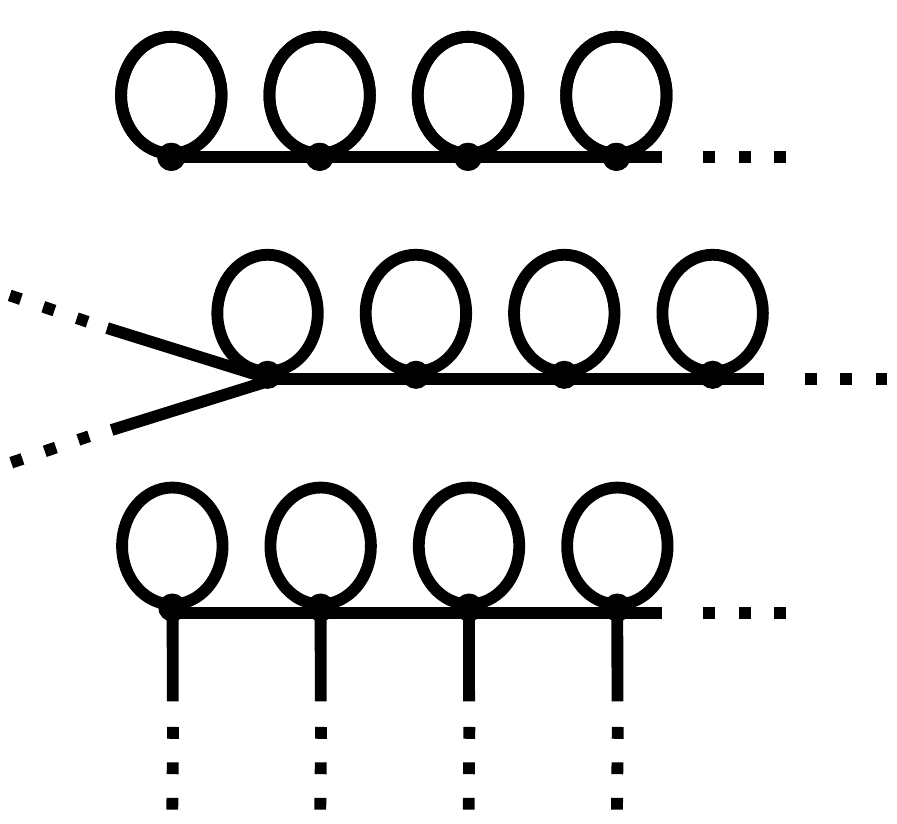}}}}\cellcolor{clrCB}
        & \raisebox{.5em}{\makecell{\phantom{a\!}CB-generated\phantom{a\!}\\~\adjustbox{valign=c}{\includegraphics[width=5em]{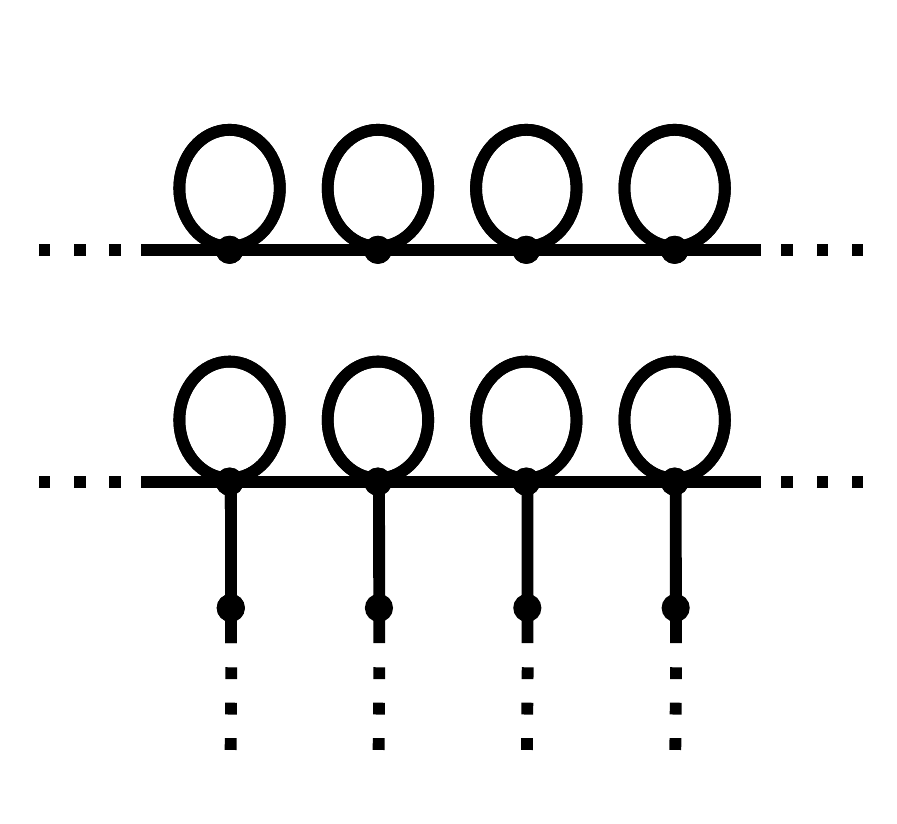}}}}\cellcolor{clrCBgen}
        & \raisebox{.5em}{\makecell{not locally CB\\~\adjustbox{valign=c}{\includegraphics[width=5em]{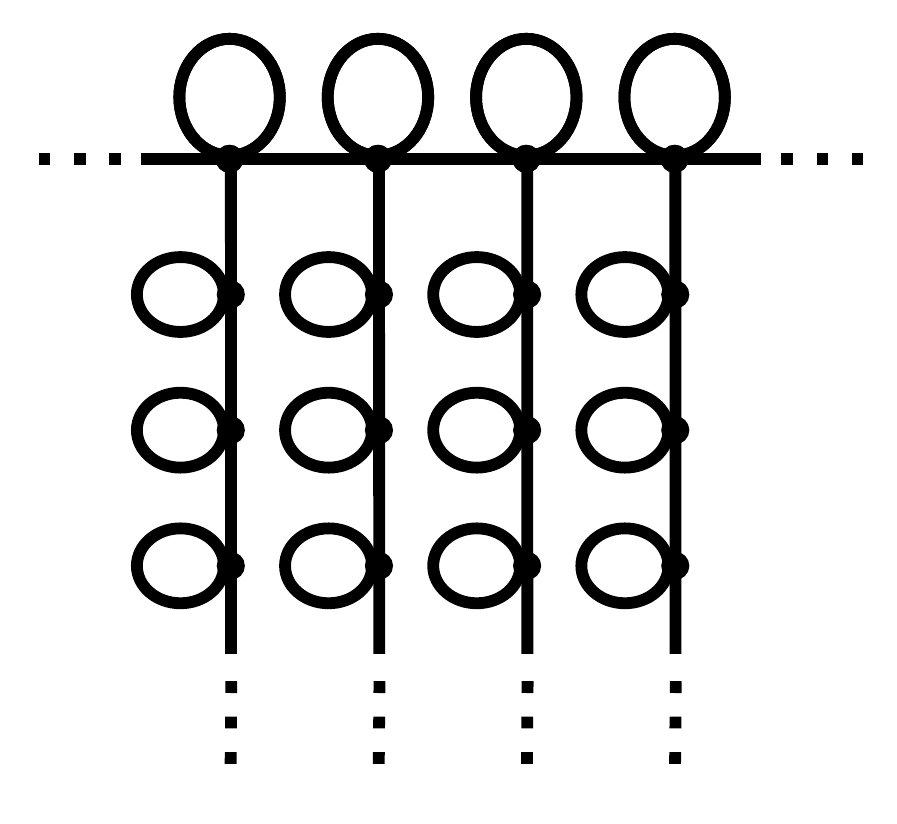}}}}\cellcolor{clrNone}\\[2ex]
        \raisebox{2.3em}{$t \in [1, \infty)$} & 
        & \makecell{CB\\~\adjustbox{valign=c}{\includegraphics[width=5em]{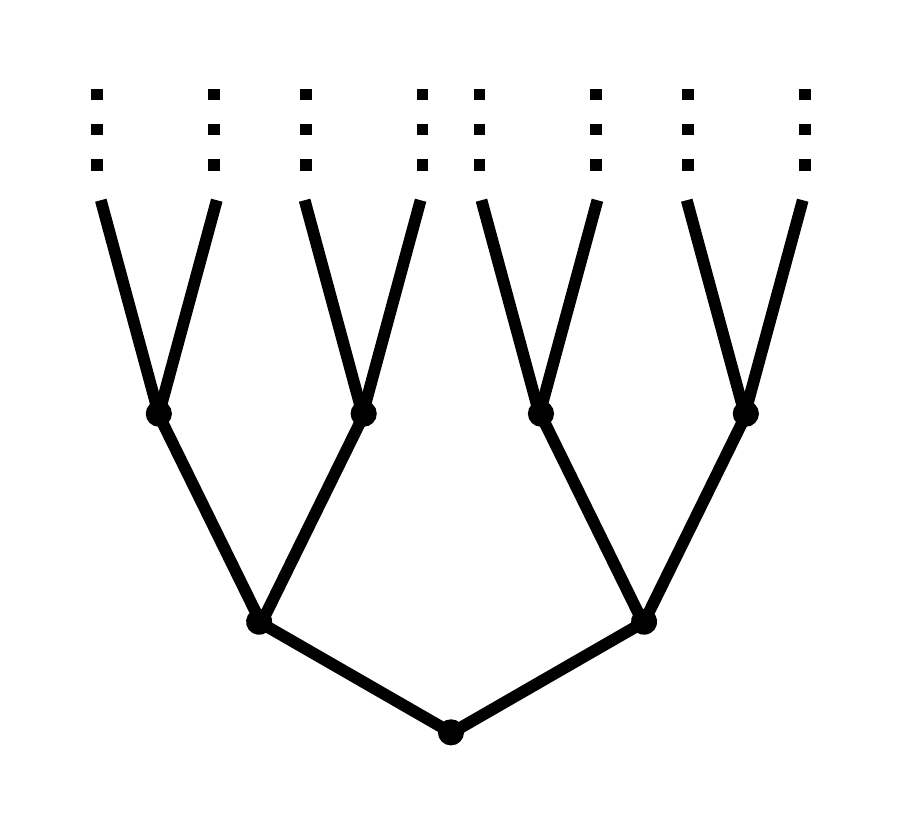}}}\cellcolor{clrCB}
        & \phantom{ab}\makecell{locally CB\\~\adjustbox{valign=c}{\includegraphics[width=5em]{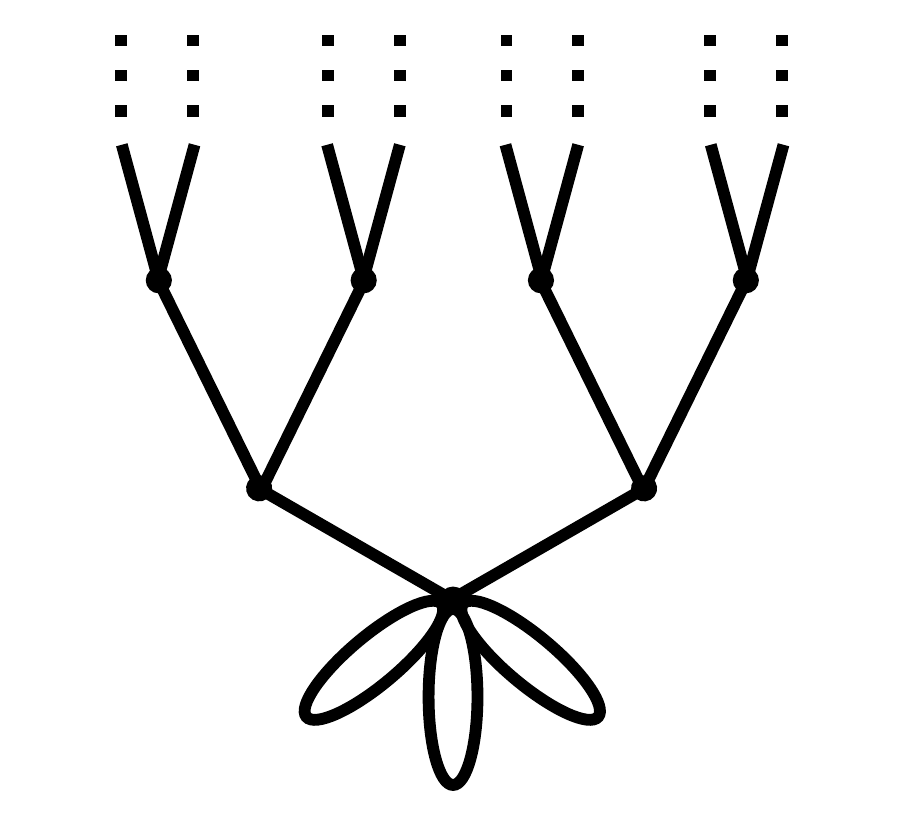}}}\phantom{ab\ }\cellcolor{clrLocCB}
        & &\makecell{\phantom{ab}locally CB\phantom{ab}\\~\adjustbox{valign=c}{\includegraphics[width=5em]{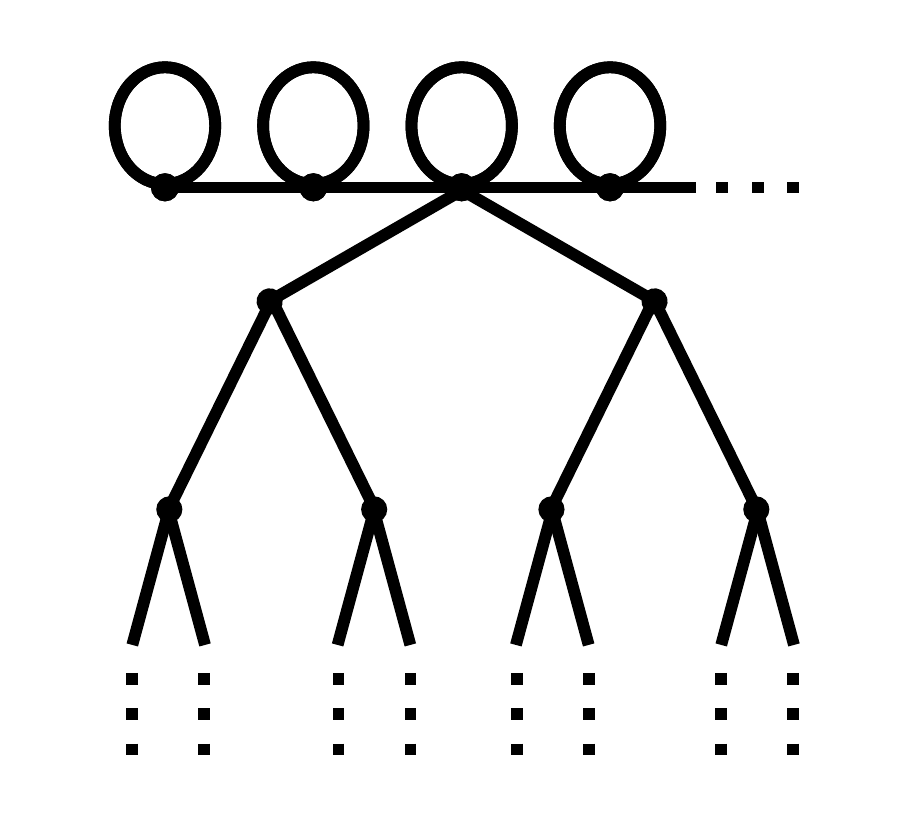}}}\cellcolor{clrLocCB}
        & \makecell{\phantom{ab}locally CB\phantom{ab}\\~\adjustbox{valign=c}{\includegraphics[width=5em]{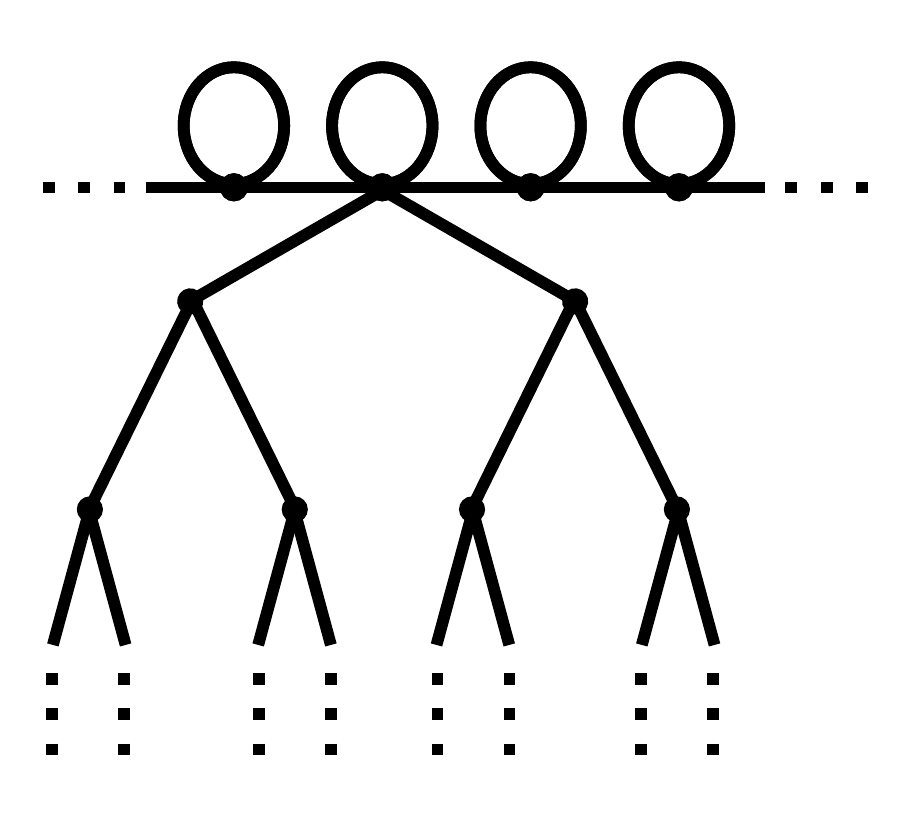}}}\cellcolor{clrLocCB}
        & \makecell{not locally CB\\~\adjustbox{valign=c}{\includegraphics[width=5em]{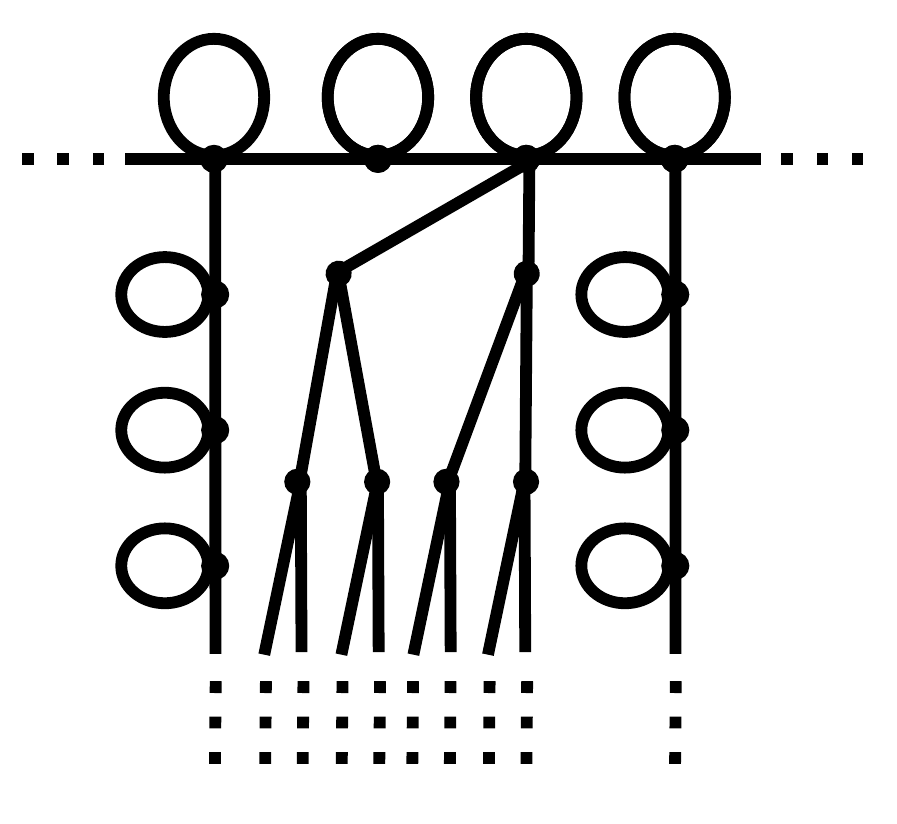}}}\cellcolor{clrNone}\\[1.5ex]
        \raisebox{2.3em}{$t = \infty$} &        
        & \raisebox{2.2em}{\makecell[c]{\phantom{abc}N/A}}\cellcolor{clrNA}
        & \raisebox{2.2em}{\makecell[c]{\phantom{abcdefg}N/A}}\cellcolor{clrNA}
        & &\makecell{not locally CB\\~\adjustbox{valign=c}{\includegraphics[width=5em]{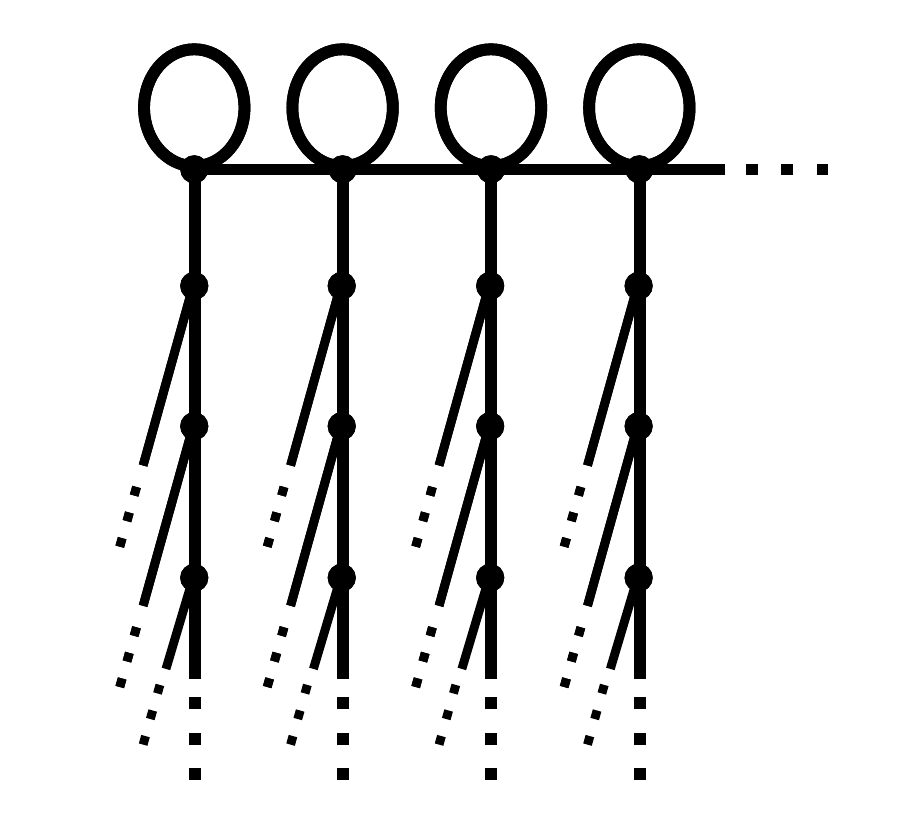}}} \cellcolor{clrNone}
        &\makecell{not locally CB\\~\adjustbox{valign=c}{\includegraphics[width=5em]{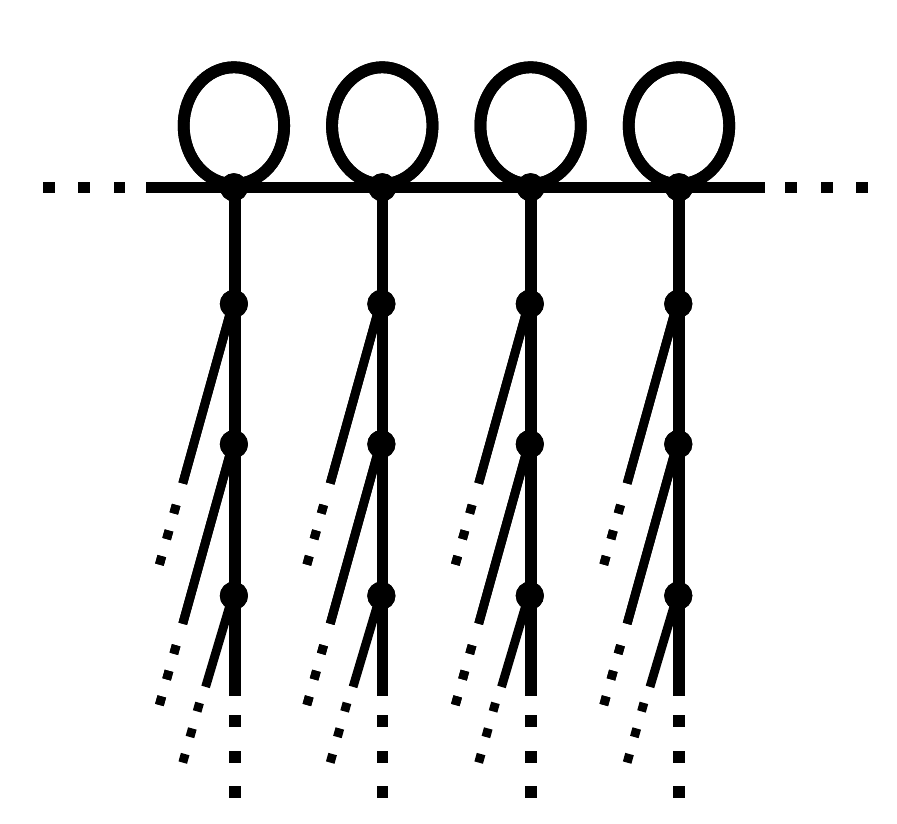}}} \cellcolor{clrNone}
        & \makecell{not locally CB\\~\adjustbox{valign=c}{\includegraphics[width=5em]{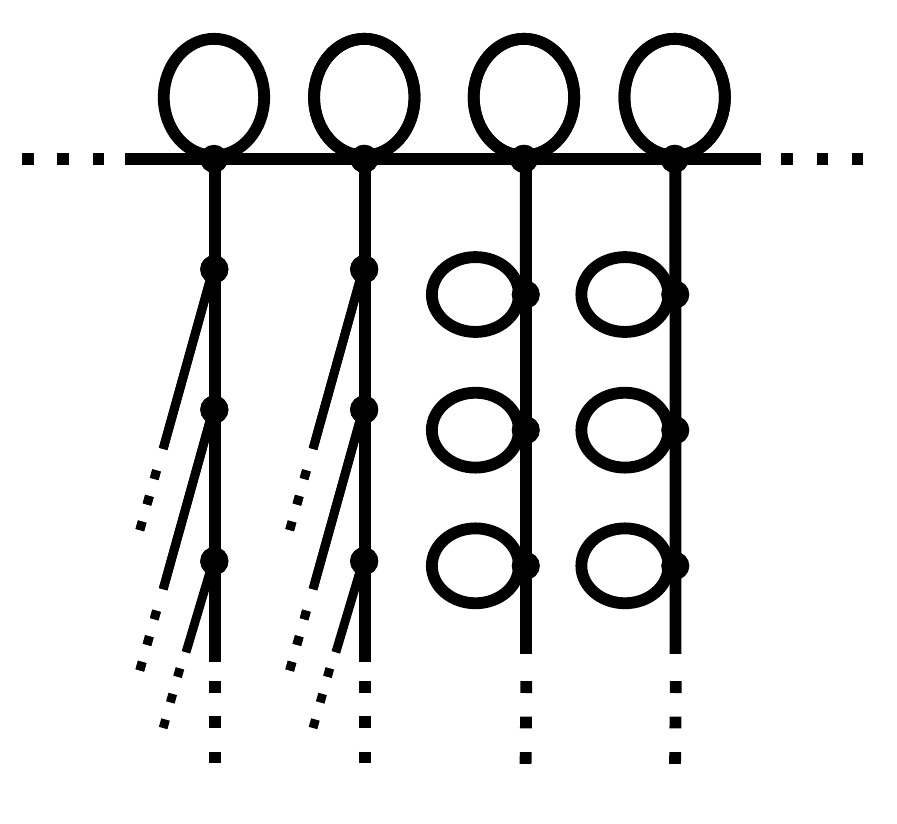}}}\cellcolor{clrNone}\\[1.5ex]
    \bottomrule
\end{tabular}
    } 
\caption{(\Cref{thm:CBgenClassification} and \cite{DHK2023}). Classification of $\G$ with CB, locally CB, and CB-generated $\PMap(\G)$. Here $\G_c$ is the smallest connected subgraph that contains all the immersed loops in $\G$. According to the implication CB $\Rightarrow$ CB-generated $\Rightarrow$ locally CB, each cell is filled with the most general property. For example, a cell labeled by CB-generated means that for such $\G$, $\PMap(\G)$ is CB-generated but not CB. }  \label{table:fullclassification}
\end{table}

The main tool used to prove \Cref{thm:CBgenClassification} is the following semidirect decomposition of the pure mapping class group (reminiscent of \cite[Corollary 4]{APV2020} in the surface setting):

\begin{MAINTHM}
\label{thm:semidirectprod}
Let $\G$ be a locally finite graph.
Let $\alpha = \max\{0,|E_\ell(\G)| - 1\}$ for $|E_\ell(\G)|<\infty$ and $\alpha = \aleph_0$ otherwise. Then we have the following short exact sequence,
\[
    1 \longto \overline{\PMap_c(\G)} \longto \PMap(\G) \longto \Z^\alpha \longto 1
\]
which splits. In particular, we have $\PMap(\G)=\overline{\PMap_c(\G)} \rtimes \Z^{\alpha}.$
\end{MAINTHM}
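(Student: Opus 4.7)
The plan is to build the sequence left-to-right: define a surjective ``flux'' homomorphism $\Phi\colon \PMap(\G)\to \Z^\alpha$ that records how each element displaces loops across the ends in $E_\ell(\G)$, identify its kernel with $\overline{\PMap_c(\G)}$, and finally construct a splitting by explicit ``handle-shift'' pure proper homotopy equivalences. The overall strategy mirrors the analogous decomposition for surfaces referenced above, but is carried out in the setting of proper homotopy equivalences of locally finite graphs.

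To build $\Phi$, fix a proper exhaustion $\G=\bigcup_n K_n$ by finite connected subgraphs chosen so that for $n$ large every complementary component of $\G\setminus K_n$ contains a unique end in its closure, and enumerate $E_\ell(\G)=\{e_0,e_1,\dots\}$ (with $\alpha+1$ terms when finite). For each $i\geq 1$ and each $n$, let $U_n^i$ be the complementary component of $\G\setminus K_n$ containing $e_i$. Given $\phi\in\PMap(\G)$ with representative proper homotopy equivalence $f$, purity of $\phi$ guarantees that for $n$ large, $f(U_n^i)$ is contained in the $e_i$-component of some $\G\setminus K_m$; define $f_i(\phi)$ to be the signed count of loops in the compact symmetric difference of $U_n^i$ and $f(U_n^i)$. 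After verifying independence from $n$ and from the representative up to proper homotopy (and additivity under composition), setting $\Phi(\phi)\defeq(f_i(\phi))_{i\geq 1}$ gives a homomorphism into $\Z^\alpha$, with the direct-sum/product interpretation dictated by the Polish topology on $\PMap(\G)$.

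The inclusion $\overline{\PMap_c(\G)}\subseteq\ker\Phi$ is immediate from continuity of $\Phi$ (its target is discrete) and the triviality of each $f_i$ on compactly supported classes. The reverse inclusion is the crux: given $\phi\in\ker\Phi$ and a basic neighborhood $\cV$ of $\phi$ specified by agreement on a compact subgraph $K\subset\G$, we must produce a compactly supported $\psi\in\cV$. The strategy is to pick $n$ large with $K\subset K_n$, then use vanishing of each $f_i$ to alter $f$ on each $U_n^i$ to the identity by a proper homotopy relative to $\partial U_n^i$, and glue the results into a compactly supported proper homotopy equivalence agreeing with $\phi$ on $K_n$.

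For the splitting, to each $i\geq 1$ associate a handle shift $\sigma_i\in\PMap(\G)$ supported on a properly embedded bi-infinite ladder from $e_0$ to $e_i$ whose rungs are loops; such ladders exist precisely because both ends lie in $E_\ell$. The map $\sigma_i$ slides the rungs one step toward $e_i$, yielding $f_j(\sigma_i)=\delta_{ij}$. Since the ladders for distinct indices can be arranged to have pairwise disjoint supports outside a compact set, the $\sigma_i$ commute, and the assignment $e_i\mapsto\sigma_i$ extends to a homomorphism $\Z^\alpha\to\PMap(\G)$ splitting $\Phi$. The main obstacle will be the local-straightening step inside the kernel argument: rank vanishing is only an accounting statement, and turning it into an actual proper homotopy on an end-neighborhood requires a normal-form lemma for pure proper homotopy equivalences on such regions. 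Additional care is needed in the $\alpha=\aleph_0$ case to pin down which subgroup of $\Z^{\aleph_0}$ arises as the image of $\Phi$ and to check that the splitting is compatible with the topology on $\PMap(\G)$.
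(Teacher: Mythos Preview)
Your outline is essentially correct and matches the paper's strategy when $|E_\ell(\G)|<\infty$, but it has a genuine gap in the infinite case. You propose to ``enumerate $E_\ell(\G)=\{e_0,e_1,\dots\}$'' and to choose an exhaustion so that eventually ``every complementary component of $\G\setminus K_n$ contains a unique end in its closure.'' Neither is possible in general: $E_\ell(\G)$ is a compact, totally disconnected metrizable space and can be uncountable (e.g.\ a Cantor set), and when it is infinite it is never discrete, so no compact set separates its points into distinct complementary components. Consequently, a flux map attached to an individual end $e_i$ is not well-defined, and your loop shifts ``from $e_0$ to $e_i$'' cannot be arranged with pairwise disjoint supports indexed by the points of $E_\ell$.

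The paper's repair is exactly the missing idea: replace individual ends by \emph{clopen subsets} of $E_\ell(\G)$. A flux map $\Phi_\cE$ is defined for every nonempty proper clopen $\cE\subset E_\ell$, and these satisfy $\Phi_{A\sqcup B}=\Phi_A+\Phi_B$ and $\Phi_{\cE^c}=-\Phi_\cE$. One then shows that the indicator functions of a suitable \emph{countable} family $\cA=\{A_i\}_{i\in I}$ of cylinder sets form a free basis for $\dC(E_\ell):=\ceC(E_\ell)/\Z$ (the group of locally constant $\Z$-valued functions modulo constants), so the map $\Pi(f)=(\Phi_{A_i}(f))_{i\in I}$ lands in $\prod_{i\in I}\Z$ with $|I|=\alpha$. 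The kernel identification $\ker\Pi=\overline{\PMap_c(\G)}$ then proceeds much as you describe. For the section, the basis $\cA$ comes with a map into the maximal tree of $\G_c$ whose image cuts it into one-ended pieces; after a proper homotopy one builds, for each $A_i$, a loop shift $\ell_i$ crossing only the cut point $\iota(A_i)$, and these countably many $\ell_i$ have genuinely disjoint total supports, hence commute and give the splitting. In short, your end-by-end bookkeeping must be promoted to clopen-set bookkeeping; once that is done, the rest of your plan goes through.
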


Here, $\overline{\PMapc(\G)}$ is the closure of the group of compactly supported mapping classes and $\Z^{\alpha}$ is generated by commuting loop shifts. 

As a corollary, we compute the rank of the first integral cohomology of $\PMap(\G)$. This allows us to see that the number of ends acummuluted by loops of a graph $\G$ is an algebraic invariant of $\PMap(\G)$.
\begin{MAINCOR}\label{cor:rankoffirstcohomology}
For every locally finite, infinite graph $\G$,
    \[
    \rk \left(H^1(\PMap(\G);\Z)\right) =
        \begin{cases}
            0 & \text{if $|E_\ell| \le 1$}, \\
            n-1 & \text{if $2 \le |E_\ell|=n< \infty$}, \\
            \aleph_0 & \text{otherwise}.
        \end{cases}
    \]
\end{MAINCOR}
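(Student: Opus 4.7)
The plan is to identify $H^1(\PMap(\G);\Z)$ with $\Hom(\PMap(\G),\Z)$ (as abelian groups, using that $\Z$ is a trivial module), and then compute this Hom-group via Theorem B. Applying $\Hom(-,\Z)$ to the split short exact sequence
\[
    1 \longto \overline{\PMap_c(\G)} \longto \PMap(\G) \longto \Z^\alpha \longto 1
\]
yields a split injection $\Hom(\Z^\alpha,\Z) \hookrightarrow \Hom(\PMap(\G),\Z)$ whose cokernel is contained in the $\Z^\alpha$-invariants of $\Hom(\overline{\PMap_c(\G)},\Z)$.

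The central step is to establish that $\Hom(\overline{\PMap_c(\G)},\Z) = 0$. To deal with the closure, I would invoke Dudley's automatic continuity theorem: every abstract homomorphism from a Polish group to the discrete group $\Z$ is continuous. Combined with the density of $\PMap_c(\G)$ in $\overline{\PMap_c(\G)}$, this reduces the task to showing $\Hom(\PMap_c(\G),\Z) = 0$. Writing $\PMap_c(\G) = \varinjlim_K \PMap(\G_K)$ as a directed colimit over finite subgraphs $\G_K$ exhausting $\G$, we obtain $\Hom(\PMap_c(\G),\Z) = \varprojlim_K \Hom(\PMap(\G_K),\Z)$, so it suffices to show that each $\PMap(\G_K)$ has finite (torsion) abelianization. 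Since $\PMap(\G_K)$ is tightly related to $\Out(F_n)$ for $n = \rk(\G_K)$, and $\Out(F_n)^{\mathrm{ab}}$ is finite for every $n \ge 1$ by classical work, this would complete the vanishing. I expect this to be the main obstacle: the precise relationship between $\PMap(\G_K)$ and $\Out(F_n)$ depends on how the finite subgraph meets the rest of $\G$ (marking half-edges going out to the complement, etc.), and the finiteness of the abelianization must be checked across these variants.

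Once $\Hom(\overline{\PMap_c(\G)},\Z) = 0$, we conclude $\Hom(\PMap(\G),\Z) \cong \Hom(\Z^\alpha,\Z)$ and finish case by case. For $\alpha = 0$ (i.e.\ $|E_\ell| \le 1$) the group is trivial. For finite $\alpha = n-1$ with $|E_\ell| = n \ge 2$, we get $\Z^{n-1}$ of rank $n-1$. For $\alpha = \aleph_0$, reading $\Z^{\aleph_0}$ as the topological direct product $\prod_{i \in \N}\Z$ (which is natural in $\PMap(\G)$, where infinite commuting compositions of loop shifts give well-defined mapping classes), Specker's theorem provides the identification $\Hom(\prod_\N \Z,\Z) \cong \bigoplus_\N \Z$, which has countable rank $\aleph_0$. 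These three cases recover the three clauses of the corollary.
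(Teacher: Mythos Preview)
Your proposal is correct and follows essentially the same route as the paper: the paper likewise reduces to showing $\Hom(\overline{\PMap_c(\G)},\Z)=0$ via Dudley's automatic continuity and a direct-limit argument over compact subgraphs (invoking results of Hatcher and Hatcher--Vogtmann to handle the abelianization of each $F_n^{e}\rtimes\Aut(F_n)$), and then reads off $H^1(\PMap(\G);\Z)\cong\Hom(\prod_I\Z,\Z)\cong\bigoplus_I\Z$ exactly as you do with Specker's theorem. The only cosmetic difference is that the paper packages the computation through an explicit isomorphism $\Theta:\dC(E_\ell(\G))\xrightarrow{\sim} H^1(\PMap(\G);\Z)$ built from flux maps, but the underlying argument is the one you outline.
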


We also show that $\PMap(\G)$ distinguishes graphs of finite rank from graphs of infinite rank. Recall a group is \emph{residually finite} if it can be embedded into a direct product of finite groups.
\begin{MAINTHM}\label{thm:PMapRF}
   $\PMap(\G)$ is residually finite if and only if $\G$ has finite rank.
 \end{MAINTHM}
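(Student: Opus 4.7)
For the ``finite rank implies residually finite'' direction, the plan is to construct an injection $\Psi \colon \PMap(\G) \hookrightarrow \Out(F_r)$ induced by the action on $\pi_1(\G) \cong F_r$, and then apply Baumslag's classical theorem that $\Out(F_r)$ is residually finite. Injectivity is a Dehn--Nielsen--Baer-style statement for graphs: if $[f] \in \PMap(\G)$ acts trivially on $\pi_1$, then a suitable lift $\wt f$ of $f$ to the universal cover $\wt\G$ (a tree) commutes with deck transformations, and the purity hypothesis combined with this choice of lift forces $\wt f$ to fix every end of $\wt\G$, whence $\wt f$ is properly homotopic to the identity. Since residual finiteness passes to subgroups, the result follows.

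For the ``infinite rank implies not residually finite'' direction, the plan is to embed the infinite finitary alternating group
\[
A_\infty \defeq \bigcup_{n \geq 5} A_n
\]
into $\PMap(\G)$ and exploit its simplicity. Because $\G$ is locally finite of infinite rank, one can find a sequence $\ell_1, \ell_2, \ldots \subset \G$ of pairwise disjoint embedded loops. For each finitary permutation $\sigma$ of $\{1,2,\ldots\}$, I would realize $\sigma$ as a compactly supported pure mapping class permuting the $\ell_i$ according to $\sigma$: such a mapping class is supported on a finite union of the permuted loops together with arcs connecting them, and therefore fixes the end space pointwise. Restricting to even permutations produces an embedding $A_\infty \hookrightarrow \PMap_c(\G) \subseteq \PMap(\G)$.

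The conclusion follows from a subgroup-intersection argument. If $N \leq \PMap(\G)$ is a finite-index subgroup, then $N \cap A_\infty$ has finite index in $A_\infty$. But $A_\infty$ is simple and infinite, so its only finite-index subgroup is $A_\infty$ itself: any finite-index subgroup contains its normal core, which is either all of $A_\infty$ or trivial, and the latter would embed $A_\infty$ in a finite group. Hence $A_\infty \subseteq N$, and any non-identity element $g \in A_\infty$ is a single element of $\PMap(\G)$ lying in every finite-index subgroup, so $\PMap(\G)$ is not residually finite.

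I expect the main obstacle to be the construction of the embedding $A_\infty \hookrightarrow \PMap(\G)$ for an arbitrary infinite-rank, locally finite graph $\G$. One must choose the disjoint loops along with connecting arcs so that every finitary permutation is realized by a single pure mapping class in a consistent manner, ensuring the assignment $\sigma \mapsto \sigma_\ast$ is actually a group homomorphism rather than a mere choice of representatives. The forward direction's Dehn--Nielsen--Baer-type injectivity statement is likely standard in spirit, but does need to be verified carefully for locally finite infinite graphs with attached trees rather than only for finite graphs.
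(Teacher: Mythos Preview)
Your backward direction is a clean and genuinely different route from the paper's. The paper embeds the direct limit $\SAut_\infty(\G) = \varinjlim \SAut(F_{n_k})$ via a compact exhaustion and then invokes a nontrivial result of Baumeister et al.\ that the minimal size of a nontrivial finite quotient of $\SAut(F_n)$ grows with $n$. Your idea of embedding the finitary alternating group $A_\infty$ via loop swaps is more elementary: once $\G$ is in standard form the transposition loop swaps on a fixed infinite family of loops visibly satisfy the symmetric group relations (they already do inside each $\Aut(F_n)$), so you get an honest copy of the finitary symmetric group, hence of $A_\infty$, inside $\PMap_c(\G)$. Simplicity of $A_\infty$ then finishes the argument exactly as you describe. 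This is a perfectly good alternative and avoids the appeal to \cite{baumeister2019smallest}.

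Your forward direction, however, has a genuine gap. The map $\PMap(\G) \to \Out(F_r)$ is \emph{not} injective as soon as $\G$ has at least two ends. The kernel contains the entire subgroup $\cR$ of ``ray maps'' from \Cref{prop:RsubgroupDecomposition}: a word map that wraps one ray around a loop in $\G_c$ is a nontrivial pure mapping class but acts trivially on $\pi_1(\G,p)$ for any basepoint $p$ in the core. Concretely, take $\G$ to be a single circle with two rays attached at a point; then $\PMap(\G) \cong \Z \rtimes \Z/2$ while $\Out(F_1) \cong \Z/2$. The flaw in your Dehn--Nielsen--Baer sketch is the claim that purity forces the lift $\wt f$ to fix every end of $\wt\G$: purity only controls ends of $\G$, and the ends of $\wt\G$ lying over a single end of $\G$ form a full $\pi_1(\G)$-orbit which $\wt f$ may permute nontrivially while still commuting with deck transformations. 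In the example above, the lift of the ray-wrap map shifts the lifted rays by one unit along $\R$. The paper handles the forward direction instead by combining the known residual finiteness of $F_n^{e-1}\rtimes \Aut(F_n)$ for each finite $e$ with a family of forgetful homomorphisms $\PMap(\G) \to \PMap(\G_\cE)$ to graphs with finitely many ends; you will need some mechanism of this kind to separate the $\cR$-factor.
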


A group satisfies the \textit{Tits Alternative} if every subgroup is either virtually solvable or contains a nonabelian free group.
Interestingly, it is exactly the graphs with $\PMap(\G)$ residually finite that satisfy the Tits Alternative.
 \begin{MAINTHM}\label{thm:TA}
    $\PMap(\G)$ satisfies the Tits Alternative if and only if $\G$ has finite rank.
 \end{MAINTHM}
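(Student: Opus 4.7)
The plan is to treat the two directions separately, using the Bestvina--Feighn--Handel theorem on $\Out(F_r)$ for the finite-rank direction and an explicit construction of a subgroup violating the Tits alternative for the infinite-rank direction.

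For the forward implication (finite rank $\Rightarrow$ Tits alternative), assume $\G$ has finite rank $r$, so $|E_\ell(\G)|=0$ and \Cref{thm:semidirectprod} specializes to $\PMap(\G) = \overline{\PMap_c(\G)}$. The key step is to construct an injective homomorphism
\[
    \Phi \colon \PMap(\G) \hookrightarrow \Out(F_r)
\]
via the action on $\pi_1(\G) \cong F_r$. Well-definedness is routine; injectivity amounts to showing that a pure proper homotopy equivalence inducing an inner automorphism on $\pi_1$ is in fact properly homotopic to the identity, which reduces to a \emph{tree rigidity} statement: every proper homotopy equivalence of a locally finite tree that fixes the end space pointwise is properly null-homotopic. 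Granting this, since $\Out(F_r)$ satisfies the Tits alternative by Bestvina--Feighn--Handel and the Tits alternative passes to subgroups, $\PMap(\G)$ does as well.

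For the reverse implication (infinite rank $\Rightarrow$ no Tits alternative), I would exhibit an explicit subgroup of $\PMap(\G)$ witnessing the failure. Since $\G$ has infinite rank, pick a countable family $\{L_i\}_{i \in \N}$ of pairwise disjoint embedded loops in $\G$, together with paths attaching $L_1, \ldots, L_n$ to a common basepoint to form a subgraph $W_n \subset \G$ with $\pi_1(W_n) \cong F_n$. For any permutation $\sigma \in S_n$, the standard embedding $S_n \hookrightarrow \Out(F_n) = \Map(W_n)$ (permuting the free generators) realizes $\sigma$ as a mapping class supported on $W_n$ and fixing the basepoint, which extends by the identity on $\G \setminus W_n$ to an element of $\PMap_c(\G) \subseteq \PMap(\G)$. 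Assembling these embeddings over $n$ yields an injection of the finitary alternating group $A_\infty$ into $\PMap(\G)$. Since $A_\infty$ is infinite and simple it is not virtually solvable, and since it is locally finite it contains no non-abelian free subgroup; hence $\PMap(\G)$ fails the Tits alternative.

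The primary obstacle is the finite-rank direction, specifically the tree-rigidity claim needed for injectivity of $\Phi$. Although intuitively clear from the contractibility of trees, making it precise requires a careful proper-homotopy argument respecting the (possibly complicated) end space---this may already be implicit in the Algom-Kfir--Bestvina framework \cite{AB2021} or in the authors' earlier work \cite{DHK2023}. A minor secondary concern is confirming that the loop permutations in the infinite-rank construction really are nontrivial mapping classes, but this is immediate from their nontriviality as outer automorphisms of $\pi_1(W_n) = F_n$.
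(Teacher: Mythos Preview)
Your infinite-rank direction is correct and in fact cleaner than the paper's argument. The paper embeds the Grigorchuk group into $\PMap(\G)$ (\Cref{thm:Grigorchuck}); your observation that the finitary alternating group $A_\infty$ already embeds via loop permutations is a more elementary poison subgroup and works for exactly the same reason (locally finite, hence no free subgroup; infinite simple, hence not virtually solvable). The paper does define loop swaps (\Cref{ssec:elements}), so these permutations are genuinely elements of $\PMap_c(\G)$ and your embedding goes through.

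The finite-rank direction, however, has a real gap: the homomorphism $\Phi\colon \PMap(\G)\to\Out(F_r)$ is \emph{not} injective once $\G$ has at least two ends. Concretely, take $\G$ to be a single loop $a$ with two rays attached; then $\PMap(\G)\cong \Z\rtimes\Z/2$, and the infinite cyclic factor is generated by a word map that drags one ray once around $a$. This map is the identity on $\pi_1(\G)\cong\Z$ but is not properly homotopic to the identity. Your proposed reduction to tree rigidity breaks down exactly here: if you lift such an $f$ to the universal cover $\widetilde\G$, the lift commutes with the deck group but does \emph{not} fix the ends of $\widetilde\G$ that sit over the rays, so \Cref{thm:PHEclassification} does not apply. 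In general the kernel of $\Phi$ is the entire subgroup $\cR$ of locally constant maps $E(\G)\to\pi_1(\G_c^*,e_0)$ from \Cref{prop:RsubgroupDecomposition}, which for $r\ge 2$ already contains nonabelian free groups.

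The paper's fix is to keep the map but stop insisting on injectivity: it uses the short exact sequence $1\to\cR\to\PMap(\G)\to\Aut(F_r)\to 1$, verifies directly that $\cR$ satisfies the (strong) Tits alternative by a ping-pong argument in $F_r$, and then invokes stability of the Tits alternative under group extensions (\Cref{prop:TAprops}). So your strategy is salvageable, but you need to analyze the kernel rather than argue it away.
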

These three results are steps towards determining when the isomorphism type of $\PMap(\G)$ determines the graph $\G$, as in the surface case \cite{BDR2020}. 
 
If $\G$ is the infinite rank graph with a single end (the Loch Ness monster graph) and $\G'$ is the wedge sum of $\G$ with a single ray, then the groups $\PMap(\G)$ and $\PMap(\G')$ inject into $\Out(F_{\infty})$ and $\Aut(F_{\infty})$, respectively, by \cite[Theorem 3.1 and Lemma 3.2]{AB2021}. Thus we immediately get the following corollary. We note that one can instead prove this directly, e.g. see \cite{OutFinfinity2011}.

\begin{MAINCOR}
    For $F_{\infty}$, the free group on a countably infinite set, $\Aut(F_{\infty})$ and $\Out(F_{\infty})$ are not residually finite and do not satisfy the Tits alternative. 
\end{MAINCOR}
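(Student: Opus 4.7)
The plan is to transfer both negative statements across the embeddings $\PMap(\G)\hookrightarrow \Out(F_\infty)$ and $\PMap(\G')\hookrightarrow \Aut(F_\infty)$ supplied by \cite[Theorem 3.1 and Lemma 3.2]{AB2021}, where $\G$ is the Loch Ness monster graph (one end, infinite rank) and $\G' = \G \vee R$ is its wedge sum with a single ray. Since both $\G$ and $\G'$ have infinite rank, \Cref{thm:PMapRF} yields that $\PMap(\G)$ and $\PMap(\G')$ are not residually finite, and \Cref{thm:TA} yields that neither satisfies the Tits alternative. The job is then to promote these failures from the subgroups up to $\Aut(F_\infty)$ and $\Out(F_\infty)$.

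For this, I would point out that both residual finiteness and the Tits alternative are closed under passing to subgroups. Residual finiteness is immediate: the restriction of an embedding into a product of finite groups to any subgroup is still such an embedding. For the Tits alternative, if $G$ satisfies it and $H \le G$, then every subgroup $K \le H$ is in particular a subgroup of $G$, hence $K$ is either virtually solvable or contains a nonabelian free subgroup, which is precisely the alternative for $H$. Consequently, if $\Out(F_\infty)$ were residually finite then so would be $\PMap(\G)$, contradicting \Cref{thm:PMapRF}; and similarly for $\Aut(F_\infty)$ using $\PMap(\G')$. The same argument, invoking \Cref{thm:TA}, rules out the Tits alternative for both groups.

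There is essentially no obstacle in the argument itself: the corollary is a transfer via known injections. The only substantive verification is the subgroup-closure of the two properties, which is built into their definitions. Accordingly, in the writeup I would keep this very short, stating the pairing of graphs with ambient groups, citing \cite{AB2021} for the embeddings, and reading off the conclusion from \Cref{thm:PMapRF} and \Cref{thm:TA}.
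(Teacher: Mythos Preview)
Your proposal is correct and follows essentially the same approach as the paper: the paper derives the corollary in one sentence by citing the injections of $\PMap(\G)$ and $\PMap(\G')$ into $\Out(F_\infty)$ and $\Aut(F_\infty)$ from \cite[Theorem~3.1 and Lemma~3.2]{AB2021} and then invoking \Cref{thm:PMapRF} and \Cref{thm:TA}, exactly as you outline.
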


\subsection*{Comparison with Surfaces} The statement of \Cref{thm:semidirectprod} is exactly the same as for pure mapping class groups of surfaces, seen in Aramayona--Patel--Vlamis \cite{APV2020}. Although the proof we give is similar in spirit as well, we have to make use of different tools. In \cite{APV2020} the authors make use of the \emph{homology of separating curves} on a surface and build an isomorphism between the first cohomology of the pure mapping class group and this homology group. For graphs, we do not have any curves to take advantage of. Instead we use partitions of the space of ends accumulated by loops. In order to make this precise and give this an algebraic structure we make use of the group of locally constant integral functions on $E_{\ell}(\G)$, i.e., the zeroth \cech{} cohomology of $E_{\ell}(\G)$, denoted as $\dC(E_{\ell}(\G))$. On a surface, any separating simple closed curve determines a partition of the end space. We can use this to show that the first cohomology groups of pure mapping class groups of graphs and surfaces are often in fact \emph{naturally} isomorphic. This also gives a slightly alternate proof of the main results in \cite{APV2020}. 

\begin{MAINCOR} \label{cor:surfacegraph}
    Let $S$ be an infinite-type surface of genus at least one and $\G$ a locally finite, infinite graph. If $E_{g}(S)$ is homeomorphic to $E_{\ell}(\G)$, then both $H^{1}(\PMap(S);\Z)$ and $H^{1}(\PMap(\G);\Z)$ are isomorphic to $\dC(E_{g}(S)) \cong \dC(E_{\ell}(\G))$.
\end{MAINCOR}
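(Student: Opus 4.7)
The plan is to establish $H^1(\PMap(\G);\Z) \cong \dC(E_\ell(\G))$ and $H^1(\PMap(S);\Z) \cong \dC(E_g(S))$ independently, and then combine them via the end-space homeomorphism.

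For the graph side, I would begin with the split short exact sequence $1 \to \overline{\PMap_c(\G)} \to \PMap(\G) \to \Z^\alpha \to 1$ supplied by \Cref{thm:semidirectprod}. Assuming that $\overline{\PMap_c(\G)}$ has trivial integer abelianization (which must already be established en route to \Cref{cor:rankoffirstcohomology}), the associated five-term exact sequence collapses and yields $H^1(\PMap(\G);\Z) \cong \Hom(\Z^\alpha, \Z)$. The substantive step is then to re-label the generators of $\Z^\alpha$ intrinsically: each commuting loop shift is specified by a clopen bipartition of $E_\ell(\G)$ (namely the set of ends on the ``source'' side of the shift), and the only relations among them are the obvious ones coming from disjoint unions of partitions. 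Packaging this data naturally identifies $\Hom(\Z^\alpha, \Z)$ with $\dC(E_\ell(\G))$, and this identification is functorial in homeomorphisms of $E_\ell(\G)$.

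For the surface side, I would revisit the argument of \cite{APV2020}, in which $H^1(\PMap(S);\Z)$ is computed via the homology of separating curves. Every separating simple closed curve induces a clopen bipartition of $E(S)$ compatible with the subspace $E_g(S)$; isotopic curves induce the same partition, and the homological relations translate to equivalences of partitions. Re-running the APV computation through this partition lens identifies their separating-curve homology, and hence $H^1(\PMap(S);\Z)$, naturally with $\dC(E_g(S))$.

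With both sides recast as $\dC$ of the respective end space, a homeomorphism $E_g(S) \cong E_\ell(\G)$ induces a canonical isomorphism $\dC(E_g(S)) \cong \dC(E_\ell(\G))$ by pullback of locally constant functions, and chaining the three isomorphisms yields the corollary. The principal obstacle is on the surface side: the APV argument is formulated via an explicit choice of separating curves, so the main work is extracting the coordinate-free, partition-based description in a way that visibly matches the graph-side identification. Once this translation is in place, the corollary becomes essentially formal and, as a byproduct, provides a slightly alternate derivation of the APV computation.
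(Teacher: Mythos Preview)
Your proposal is essentially correct and tracks the paper's approach closely. On the graph side, the paper's argument (\Cref{thm:thetaSurjective}) is exactly what you describe: it uses \Cref{prop:h1PmapcZero} plus Dudley's automatic continuity to see that any $\phi \in \Hom(\PMap(\G),\Z)$ vanishes on $\overline{\PMap_c(\G)}$, then reads off $\phi$ from its values on the $\Z^\alpha$ factor and identifies the result with $\dC(E_\ell(\G))$ via the basis of \Cref{thm:chombasis}. One small caveat: the paper does not literally establish that $\overline{\PMap_c(\G)}$ has trivial integral abelianization; it only proves this for $\PMap_c(\G)$ and then uses continuity of $\phi$ on the ambient group $\PMap(\G)$. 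Your five-term-sequence formulation needs the stronger statement $H^1(\overline{\PMap_c(\G)};\Z)=0$, which does follow by the same Dudley argument applied to the Polish group $\overline{\PMap_c(\G)}$, but you should say so rather than assume it is already recorded.

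On the surface side, the paper takes a shorter route than you propose: it quotes the Aramayona--Patel--Vlamis isomorphism $H^1(\PMap(S);\Z)\cong H_1^{sep}(\hat{S};\Z)$ as a black box (\Cref{thm:apvcohom}) and then proves, in a few lines (\Cref{prop:sephomend}), that $H_1^{sep}(\hat{S};\Z)\cong \dC(E_g(S))$ by sending a primitive separating class $v$ to $\chi_{v^+}$. There is no need to re-run the APV computation in partition language, so what you flag as the ``principal obstacle'' is in fact the easiest step. The final chaining of isomorphisms is exactly as you describe.
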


\subsection*{Rigidity Questions}
\sloppy
The results above fit naturally into a general question about (pure) mapping class groups of infinite graphs. Namely: ``How much does the group $\PMap(\G)$ determine the graph $\G$?" One can obtain more concrete questions by considering certain types of rigidity. We will focus on \textit{algebraic} and \textit{quasi-isometric} rigidity. In the finite-type setting, mapping class groups of surfaces and $\Out(F_n)$ are known to exhibit strong rigidity properties. Various results starting with Ivanov \cite{Ivanov1988} (see also \cite{McCarthy1986,Harer1986,BLM1983,Ivanov1997}) establish strong forms of algebraic rigidity and Behrstock\discretionary{--}{}{--}Kleiner\discretionary{--}{}{--}Minsky\discretionary{--}{}{--}Mosher \cite{BKMM2012} establish quasi-isometric rigidity for $\Map(S)$. For $\Out(F_n)$ we also have strong forms of algebraic rigidity from the work of Farb--Handel \cite{FH2007} building on \cite{Khramtsov1990,BV2000} (see also \cite{HW2020}). Quasi-isometric rigidity for $\Out(F_n)$ is still unknown.  

For infinite-type surfaces, the work of Bavard--Dowdall--Rafi \cite{BDR2020} established a strong form of algebraic rigidity \'{a} la Ivanov (see also \cite{HMV2018}). The question of quasi-isometric rigidity is still open, but Mann--Rafi \cite{mann2023large} give a classification of which mapping class groups of tame infinite-type surfaces have a well-defined quasi-isometry type and which of those are trivial. This allows one to begin to distinguish between some of the mapping class groups (see also \cite{schaffer2020}). 

One can ask the same rigidity questions for infinite graphs. The picture becomes less clear than in the surface case. In particular, trees fail to have algebraic rigidity for the \emph{pure} mapping class group, as they all have trivial pure mapping class group. Failure is also present for the full mapping class group. Let $T$ be the regular trivalent tree and let $T'$ be the wedge sum of $T$ with a single ray. Note that $E(T) = \cC$, a Cantor set, and $E(T') = \cC \sqcup \{*\}$, a Cantor set together with a single isolated point. Now we have that $\Map(T) = \Homeo(\cC)$ and $\Map(T') = \Homeo(\cC \sqcup \{*\})$. However, these two groups are isomorphic, as any homeomorphism fixes the extra end $*$ of $T'$. There are even more complicated examples of this failure of algebraic rigidity for mapping class groups of trees that come from work on Boolean algebras by McKenzie \cite{McKenzie1977automorphism} answering a rigidity conjecture of Monk \cite{Monk1975automorphism}. 

The results in this paper allow one to ask several natural rigidity questions for the pure mapping class groups of infinite graphs. We will restrict to some nice classes of graphs in order to state concrete questions. All of the following families of graphs are CB generated by \Cref{thm:CBgenClassification} and hence have a well-defined quasi-isometry type. Let $\G_{n}$ denote the graph with exactly $n$ ends, each of which is accumulated by loops. 

\begin{Q}
    Let $n,m \ge 2$. If $\PMap(\G_{n})$ is quasi-isometric to $\PMap(\G_{m})$, then does $n=m$? 
\end{Q}

By \Cref{cor:rankoffirstcohomology} we do know that $\PMap(\G_n)$ is algebraically isomorphic to $\PMap(\G_m)$ if and only if $n=m$. We can also use the fact that $\PMap(\G_1)$ is CB to see that $\PMap(\G_1)$ is not quasi-isometric to $\PMap(\G_n)$ for any $n \neq 1$. However, the general question is still open. In the authors' previous work \cite{DHK2023}, we computed asymptotic dimension for all of these groups. However, it is infinite for $n>1$. Therefore, in order to answer this question one would need to study and/or develop other ``big" quasi-isometry invariants. 

Instead of comparing the effect of changing the number of ends accumulated by loops, one could ask the same question for rays. Namely, let $\G_{n,r}$ denote the graph with $n$ ends accumulated by loops and $r$ rays. We start by asking for distinguishing features of ``no ray'' versus ``one ray.''

\begin{Q}
    Is $\PMap(\G_{n,0})$ quasi-isometric to $\PMap(\G_{n,1})$? 
\end{Q}

In fact, here we do not even know algebraic rigidity. 

\begin{Q}
    Is $\PMap(\G_{n,0})$ isomorphic to $\PMap(\G_{n,1})$?
\end{Q}

The other large family of graphs with CB-generated pure mapping class groups are the finite-type ones. Let $\Omega_{n,r}$ denote the graph of finite rank $n$ with $r<\infty$ rays attached. We know that no $\PMap(\Omega_{n,r})$ is isomorphic to any $\PMap(\G_{m})$ by using either residual finiteness \Cref{thm:PMapRF} or the Tits alternative \Cref{thm:TA}. We do not know if any of them are quasi-isometric. Note that $\PMap(\Omega_{n,r})$ is always finitely generated, but this does not preclude it from being quasi-isometric to an uncountable group.

\begin{Q}
    Is $\PMap(\Omega_{m,r})$ ever quasi-isometric to $\PMap(\G_{n})$, for $m,r,n>1$? 
\end{Q} 

\subsection*{Outline} In \Cref{sec:prelims}, we give background on mapping class groups of infinite graphs, examples of elements in the pure mapping class group, and coarse geometry of groups. In \Cref{sec:semidirect}, we prove our semidirect product decomposition, \Cref{thm:semidirectprod}. We also prove \Cref{cor:rankoffirstcohomology} in \Cref{ssec:firstcohomology}. By exploiting the semidirect decomposition of $\PMap(\G)$, we prove the CB-generation classification, \Cref{thm:CBgenClassification}, in \Cref{sec:CBgen}. In \Cref{sec:RF} and \Cref{sec:TitsAlternative}, we finish by proving the residual finiteness characterization \Cref{thm:PMapRF} and Tits alternative characterization \Cref{thm:TA}.

\subsection*{Acknowledgments}

Thank you to Mladen Bestvina for providing an idea of the proof for \Cref{lem:zeroflux} and suggestion to use the zeroth \cech~ cohomology to prove \Cref{lem:zerofluxbasis}. We also thank Priyam Patel for helpful conversations toward \Cref{sec:RF} and \Cref{thm:PMapRF}, along with answering questions regarding \cite{PatelVlamis} and \cite{APV2020}. Also we thank Camille Horbez for clarifying the proof of \Cref{fact:vsol}. 

The first author was supported in part by the Fields Institute for Research in Mathematical Sciences, NSF DMS--1745670, and NSF DMS--2303262. The second author was supported by NSF DMS--2303365. The third author acknowledges the support from the University of Utah Graduate Research Fellowship. 

\tableofcontents 

\addtocontents{toc}{\protect\setcounter{tocdepth}{3}}
\section{Preliminaries}\label{sec:prelims}

\subsection{Mapping class groups of infinite graphs}

Let $\G$ be a locally finite, infinite graph. 
Informally, an \emph{end} of a graph is a way to travel to infinity in the graph. The space of ends (or, the end space), denoted by $E(\G)$, is defined as:
\[
    E(\G) = \varprojlim_{K \subset \G}\pi_0(\G \setminus K),
\]
where $K$ runs over compact sets of $\G$ in the inverse limit. Then each element of $E(\G)$ is called an \textbf{end} of $\G$. An end $e$ of $\G$ is said to be \textbf{accumulated by loops}  if the sequence of complementary components in $\G$ corresponding to $e$ only consists of infinite rank graphs. Colloquially, if one continues to see loops along the way to $e$. We denote by $E_\ell(\G)$ the set of ends of $\G$ accumulated by loops. Note $E_\ell(\G)$ is a closed subset of $E(\G)$, and $E(\G)$ can be realized as a closed subset of a Cantor set (hence so is $E_\ell(\G)$). We say that the \textbf{characteristic triple} of $\G$ is the triple $(r(\G),E(\G),E_{\ell}(\G))$, where $r(\G) \in \Z_{\geq 0 } \cup \{\infty\}$ is the rank of $\pi_{1}(\G)$.

Now we define the mapping class group of a locally finite, infinite graph $\G$. Recall that a map is \textbf{proper} if the pre-image of every compact set is compact.
\begin{DEF} \cite{AB2021}
    The \textbf{mapping class group} of $\G$, denoted $\Map(\G)$, is the group of
    proper homotopy classes of proper homotopy equivalences of $\G$. The
    \textbf{pure mapping class group}, denoted $\PMap(\G)$, is the closed
    subgroup consisting of maps that fix the ends of $\G$ pointwise. More precisely,
    it is the kernel of the action of $\Map(\G)$ on the end space
    $(E(\G),E_\ell(\G))$ by homeomorphisms, hence fitting into the following short exact sequence:
    \[
      1 \longto \PMap(\G) \longto \Map(\G) \longto \Homeo(E,E_\ell) \longto 1
    \]
\end{DEF}

 When $E(\G) \setminus E_\ell(\G)$ is nonempty and compact, we can further decompose $\PMap(\G)$ into subgroups of  \emph{core maps} and of \emph{ray maps}. To state the result, we need to introduce a few concepts. 
 \begin{DEF} 
     Let $\G$ be a locally finite, infinite graph. Denote by $\G_c$ the \textbf{core graph} of $\G$, the smallest connected subgraph of $\G$ that contains all immersed loops in $\G$. When $E(\G) \setminus E_\ell(\G)$ is nonempty, pick $e_0 \in E(\G) \setminus E_\ell(\G)$ and denote by $\G_c^*$ be the subgraph consisting of $\G_c$ and a choice of embedded ray in $\G$ limiting to $e_0$ such that the ray intersects $\G_c$ in exactly one point.

     Define $\pi_1(\G_c^*,e_0)$ to be the set of proper homotopy equivalence classes of lines in $\G_c^*$, both ends of which limit to $e_0$. We endow it with a group structure by concatenation. This group is naturally isomorphic to $\pi_{1}(\G_{c}^{*},p)$ for any choice of basepoint $p \in \G_{c}^{*}$. Finally, define $\cR$ as the group of maps $h: E(\G) \to \pi_1(\G_c^*,e_0)$ such that
     \begin{enumerate}[label=(\roman*)]
         \item $h(e_0) =1$, and
         \item $h$ is locally constant,
     \end{enumerate}
     where the group operation is the pointwise multiplication in $\pi_1(\G_c^*,e_0)$.
 \end{DEF}

 We have the following decomposition of $\PMap(\G)$:

\begin{PROP}[{\cite[Corollary 3.9]{AB2021}}]
\label{prop:RsubgroupDecomposition}
    Let $\G$ be a locally finite, infinite graph with $E(\G) \setminus E_\ell(\G)$ nonempty and compact. Then
    \[
        \PMap(\G) \cong \cR \rtimes \PMap(\G_c^*).
    \]
    In particular, when $\G$ has finite rank $n \ge 0$ and finitely many ends, say $|E(\G)|=e$, then
    \[
        \PMap(\G) \cong
        \begin{cases}
            \Out(F_n), & \text{if $e=0$,} \\
            F_n^{e-1} \rtimes \Aut(F_n), & \text{if $e \ge 1$.}
        \end{cases}
    \]
\end{PROP}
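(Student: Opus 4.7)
The plan is to construct the split short exact sequence
$$1 \longto \cR \longto \PMap(\G) \xrightarrow{\pi} \PMap(\G_c^*) \longto 1$$
with both the quotient and its section coming from natural geometric operations on the inclusion $\G_c^* \subset \G$. First I would use that $E(\G) \setminus E_\ell(\G)$ is compact to conclude that only finitely many components of $\G \setminus \G_c$ contain ends outside $E_\ell$, and each such component is a locally finite tree meeting $\G_c$ at a single vertex. The quotient $\pi$ is defined by restriction: for $[\varphi] \in \PMap(\G)$, since $\G_c$ is intrinsically characterized as the smallest connected subgraph containing all immersed loops, $\varphi$ can be properly homotoped to preserve $\G_c$ setwise; because $\varphi$ fixes $e_0$, one can further arrange it to preserve the chosen ray to $e_0$. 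Restriction then defines $\pi([\varphi])$. A section $s: \PMap(\G_c^*) \to \PMap(\G)$ is given by representing a mapping class of $\G_c^*$ by a homeomorphism fixing the tree-attaching vertices on $\G_c$ and extending by the identity on each tree; this is proper and satisfies $\pi \circ s = \id$.

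Next I would identify $\ker(\pi)$ with $\cR$. Given $[\varphi] \in \ker(\pi)$, for each $e \in E(\G)$ pick a proper line $\ell_e$ from $e_0$ to $e$ lying in $\G_c^*$ together with the tree containing $e$. Since $\varphi$ restricts to the identity on $\G_c^*$ up to proper homotopy, the concatenation $\varphi(\ell_e) \cdot \ell_e^{-1}$ is a proper line from $e_0$ to $e_0$ that, after a homotopy rel $\G_c$, lies in $\G_c^*$, defining $h_\varphi(e) \in \pi_1(\G_c^*, e_0)$. One has $h_\varphi(e_0) = 1$ trivially, and local constancy follows from continuity of $\varphi$ on ends: for a sufficiently large compact $K \subset \G$, $\varphi$ acts coherently on all ends lying in a single component of $\G \setminus K$, giving a common value of $h_\varphi$ on those ends. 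The inverse construction takes $h \in \cR$, uses local constancy to partition $E(\G)$ into finitely many clopen constancy-pieces, and builds a representative $\varphi$ by performing the prescribed ``ray slide'' on the rays limiting to each piece. Injectivity of $\varphi \mapsto h_\varphi$ is immediate from the construction, and the resulting bijection is a homomorphism because composition of mapping classes corresponds to pointwise multiplication of the locally constant displacement functions.

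For the ``in particular'' claim, when $e \ge 1$ the graph $\G_c^*$ is a finite-rank graph with a single end, and it is a classical fact that $\PMap(\G_c^*) = \Map(\G_c^*) \cong \Aut(F_n)$, with the ray supplying a basepoint at infinity that upgrades $\Out$ to $\Aut$; meanwhile $\cR$ is the group of functions from a finite $e$-point end space to $\pi_1(\G_c^*, e_0) = F_n$ with value $1$ at $e_0$, so $\cR \cong F_n^{e-1}$. When $e = 0$ the hypothesis of the main statement fails, but $\G = \G_c$ is a finite rank-$n$ graph and $\PMap(\G) = \Map(\G) = \Out(F_n)$ by the classical identification. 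The main obstacle I anticipate is the well-definedness of $\pi$ on proper homotopy classes: two representatives of $[\varphi]$ that both preserve $\G_c^*$ need not be properly homotopic through maps preserving $\G_c^*$, so one must upgrade an arbitrary proper homotopy between them using the homotopy extension property for the CW pair $(\G, \G_c^*)$ in the proper category, combined with a proper cellular-approximation argument to push the homotopy off the non-core portion of $\G$.
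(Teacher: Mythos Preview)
The paper does not supply its own proof of this proposition: it is stated with the citation \cite[Corollary~3.9]{AB2021} and used as a black box throughout. There is therefore no in-paper argument to compare your proposal against.

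That said, your outline follows the natural (and, as far as one can tell from the citation, the intended) route: build $\pi$ by restricting to $\G_c^*$, split it by extension-by-identity, and identify the kernel with $\cR$ via the ``displacement'' of a ray to each end. A couple of points where your sketch is thin deserve attention. First, the well-definedness of $\pi$---which you correctly flag as the main obstacle---is not just a homotopy-extension issue: you also need that a proper homotopy equivalence of $\G$ that restricts to a map $\G_c^* \to \G_c^*$ actually induces a \emph{proper} homotopy equivalence of $\G_c^*$, and that two such restrictions differing by a proper homotopy of $\G$ differ by a proper homotopy of $\G_c^*$. Second, your local-constancy argument for $h_\varphi$ should explicitly treat ends in $E_\ell$: for such $e$ the line $\ell_e$ eventually lies in $\G_c \subset \G_c^*$, and since $\varphi\in\ker(\pi)$ acts trivially on $\G_c^*$ one gets $h_\varphi(e)=1$; the ``continuity on ends'' heuristic alone does not obviously give this. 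Finally, your derivation of the finite-rank, finite-end case is correct, including the observation that $e=0$ falls outside the hypothesis and is handled separately.
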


\begin{RMK}\label{rmk:compactMCG}
    Any time $K$ is a connected, compact subgraph of a locally finite, infinite graph $\G$, we use $\PMap(K)$ to refer to the group of proper homotopy equivalences of $K$ that fix $\partial K$ pointwise up to proper homotopy fixing $\partial K$. This group is naturally isomorphic to the group $\PMap(\tilde{K})$ where $\tilde{K}$ is the graph $K$ together with a ray glued to each point in $\partial K$. Applying the above proposition we see that $\PMap(K)$ is always of the form $F_{n}^{e-1} \rtimes \Aut(F_{n})$ for some $n$ and $e$ because $K$ is always a proper subset of $\G$, so $\partial K$ is nonempty.
\end{RMK}

The pure mapping class group $\PMap(\G)$ records the internal symmetries of
$\G$. Contractible graphs (trees) have no internal
symmetries. This follows from the work of Ayala--Dominguez--M{\'a}rquez--Quintero \cite{ayala1990proper}. They give a proper homotopy equivalence classification of locally finite, infinite graphs.

\begin{THM}[{\cite[Theorem 2.7]{ayala1990proper}}] \label{thm:PHEclassification}
    Let $\G$ and $\G'$ be two locally finite graphs of the same rank. A homeomorphism of end spaces $(E(\G),E_\ell(\G)) \to (E(\G'),E_\ell(\G'))$ extends to a proper homotopy equivalence $\G \to \G'$. If $\G$ and $\G'$ are trees, then this extension is unique up to proper homotopy.
\end{THM}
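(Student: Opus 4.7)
The plan is to build the extension by exhausting each graph with a nested sequence of finite connected subgraphs and transporting the end-space homeomorphism to a combinatorial correspondence on successive ``shells.'' Concretely, fix compact exhaustions $K_1 \subset K_2 \subset \cdots$ of $\G$ and $K'_1 \subset K'_2 \subset \cdots$ of $\G'$ by finite connected subgraphs, chosen so that no component of $\G \setminus K_i$ (resp.\ $\G' \setminus K'_i$) is a forest-with-one-boundary-point that could instead be absorbed into $K_i$. The space of ends is the inverse limit $\varprojlim \pi_0(\G \setminus K_i)$, so the given homeomorphism $\phi : (E(\G),E_\ell(\G)) \to (E(\G'),E_\ell(\G'))$ is realized, after passing to subsequences of both exhaustions, by bijections $\phi_i : \pi_0(\G \setminus K_i) \to \pi_0(\G' \setminus K'_i)$ that (i) commute with the canonical coarsening maps $\pi_0(\G \setminus K_{i+1}) \to \pi_0(\G \setminus K_i)$, and (ii) pair up components whose corresponding clopen sets of ends contain the same ends of $E_\ell$, hence have the same rank profile (finite or infinite).

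For existence, I would now construct the proper homotopy equivalence $f : \G \to \G'$ inductively on annular shells $A_i := K_{i+1} \setminus K_i$. The base case chooses any homotopy equivalence $K_1 \to K'_1$ taking $\partial K_1$ to $\partial K'_1$; this exists because $K_1$ and $K'_1$ are finite graphs and the pairing of complementary components forces matching boundary counts and, combined with additivity of rank across the exhaustion, compatible ranks. In the inductive step, one has a partial homotopy equivalence on $K_i$ that sends each boundary vertex of a component $C$ of $\G \setminus K_i$ into the corresponding boundary of $\phi_i(C) \cap K'_{i+1}$. Pair up the subcomponents inside $C$ prescribed by $\phi_{i+1}$; then the ``annular'' piece of $C$ between $\partial C$ and these subcomponents is a finite graph with specified boundary and specified rank, and any two finite graphs with the same boundary partition and same rank admit a homotopy equivalence extending the prescribed boundary identifications. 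Define $f$ on this annulus accordingly. The resulting $f$ is a map of graphs that is a homotopy equivalence on every compact subgraph, and it is proper by construction (level sets sit inside finite unions of shells); this suffices for $f$ to be a proper homotopy equivalence in the sense used in \cite{AB2021}, since an inverse is produced symmetrically using $\phi^{-1}$ and the standard glueing of proper homotopies on shells.

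For uniqueness in the tree case, suppose $T$ and $T'$ are trees and $f_0, f_1 : T \to T'$ are two proper homotopy equivalences realizing the same end-space homeomorphism. Because $T'$ is contractible, $f_0$ and $f_1$ are homotopic by \emph{some} homotopy $H : T \times [0,1] \to T'$; the issue is to replace $H$ by a proper one. I would do this shell-by-shell: on each $K_i \subset T$ the straight-line homotopy in $T'$ is available once one picks compatible paths between $f_0$ and $f_1$ on $\partial K_i$. Because $f_0$ and $f_1$ agree on ends, for each end $e$ of $T$ and each sufficiently deep shell, $f_0(\partial K_i)$ and $f_1(\partial K_i)$ lie in the same component of $T' \setminus K'_{j(i)}$ for a $j(i) \to \infty$; hence one can choose connecting arcs inside those components and sweep the homotopy across the corresponding annular piece of $T$ while staying in the matched component. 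The resulting homotopy is proper because preimages of compact sets are trapped in finitely many shells.

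The main obstacle I expect is bookkeeping in the inductive step for existence: one must refine the two exhaustions \emph{simultaneously} so that at every stage the bijection $\phi_i$ on complementary components is both compatible with the coarsenings and respects the $E_\ell$-vs-$E \setminus E_\ell$ distinction. Without care, a ``thin'' component of $\G \setminus K_i$ (say a single ray) might be paired with a component of $\G' \setminus K'_i$ that contains loops at the next level, which would wreck the rank-matching needed to produce a homotopy equivalence of the shell. Handling this cleanly is the technical heart of the argument, and it is what the normalization of exhaustions (refusing components that are trees with one boundary point, and passing to subsequences that stabilize the $E_\ell$-pattern) is designed to solve.
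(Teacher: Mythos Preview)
The paper does not prove this statement at all: it is quoted as \cite[Theorem 2.7]{ayala1990proper} and used as a black box, so there is no ``paper's own proof'' to compare against. Your proposal is therefore not a re-derivation of anything in the present paper but an attempted sketch of the Ayala--Dominguez--M\'arquez--Quintero result itself.

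As a standalone sketch, your shell-by-shell construction is the right shape and is essentially how such classification results are proved. Two comments. First, in the existence argument the delicate point is exactly the one you flag: arranging that the annular pieces on the two sides have the \emph{same rank} at every stage. Your normalization of exhaustions is necessary but not obviously sufficient; you also need to argue that, after refinement, the rank of each annular piece $C \cap (K_{i+1}\setminus K_i)$ equals that of $\phi_i(C) \cap (K'_{i+1}\setminus K'_i)$, which uses both that the total ranks agree and that $\phi$ respects $E_\ell$ (so infinite-rank complementary components are paired with infinite-rank ones and one can always ``borrow'' or ``postpone'' loops into deeper shells). Spelling this out is the real content. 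Second, for the tree uniqueness, ``straight-line homotopy in $T'$'' should be read as the geodesic homotopy along the unique arc in $T'$ joining $f_0(x)$ to $f_1(x)$; with that interpretation your properness argument via matching complementary components is correct.
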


The second statement of \Cref{thm:PHEclassification} implies the following.

\begin{PROP}
\label{prop:treePMap}
  Let $\G$ be a locally finite, infinite graph with $\pi_1(\G)=1$. Then $\PMap(\G)=1$.
\end{PROP}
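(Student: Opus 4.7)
The plan is to deduce this directly from the uniqueness clause of \Cref{thm:PHEclassification}. The hypothesis $\pi_1(\G) = 1$, together with local finiteness and connectedness, forces $\G$ to be a tree. So we are in the setting where the second sentence of \Cref{thm:PHEclassification} applies.

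Given any class $[f] \in \PMap(\G)$, we have a proper homotopy equivalence $f \colon \G \to \G$ that fixes the ends of $\G$ pointwise, i.e., the induced homeomorphism $f_* \colon (E(\G), E_\ell(\G)) \to (E(\G), E_\ell(\G))$ is the identity. The identity map $\id_\G$ is of course another proper homotopy equivalence inducing the identity on $(E(\G), E_\ell(\G))$. Both $f$ and $\id_\G$ are therefore extensions of the same homeomorphism of end spaces. Since $\G$ is a tree, the uniqueness portion of \Cref{thm:PHEclassification} yields $f \simeq \id_\G$ as a proper homotopy, so $[f] = 1$ in $\PMap(\G)$.

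Since $[f]$ was arbitrary, this proves $\PMap(\G) = 1$. There is no real obstacle here: the entire argument is an immediate application of \Cref{thm:PHEclassification} once one observes that a simply connected, connected, locally finite graph is a tree, and that elements of $\PMap(\G)$ are by definition the proper homotopy classes that induce the identity on the end space.
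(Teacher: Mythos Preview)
Your proof is correct and is exactly the argument the paper has in mind: the paper simply remarks that the second statement of \Cref{thm:PHEclassification} implies \Cref{prop:treePMap}, and you have spelled out that implication. There is nothing to add.
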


In \cite{AB2021} the authors endow $\Map(\G)$ with the compact-open topology and show that this gives $\Map(\G)$, and hence $\PMap(\G)$, the structure of a Polish group. A neighborhood basis about the identity for the topology is given by sets of the form 
\begin{align*}
    \cV_{K} &\defeq \{[f] \in \Map(\G)\vert~ \exists f' \in [f] \text{ s.t. } f'\vert_{K} = \id \text{ and } \\ &f' \text{ preserves the complementary components of $K$ setwise}\}
\end{align*}
where $K$ is a compact subset of $\G$. 

Recall the \textbf{support} of a continuous map $\phi:X \to X$ is the closure of the set of points $x \in X$ such that $\phi(x) \neq x$. The group of compactly supported mapping classes, denoted by $\PMapc(\G)$, is the subgroup of $\PMap(\G)$ consisting of classes that have a compactly supported representative. Its closure in this topology is denoted by $\overline{\PMapc(\G)}$ and it is a closed (hence Polish) subgroup of $\PMap(\G)$. 

As proper homotopy equivalences are not necessarily injective, unlike homeomorphisms, we need the following alternate notion of support for a proper homotopy equivalence.
\begin{DEF}
    We say that $[f] \in \Map(\G)$ is \textbf{totally supported} on $K \subset \G$ if there is a representative $f' \in [f]$ so that $f'(K) = K$ and $f'\vert_{\G \setminus K} = \id$. 
\end{DEF}

To see how a proper homotopy equivalence can have different support and total support, consider a rose graph with two loops labeled by $a_1$ and $a_2$. Then a (proper) homotopy equivalence mapping $a_1$ to $a_1a_2$ which is the identity elsewhere is supported on $a_1$, but not totally supported on $a_1$. It is totally supported on $a_1 \cup a_2$.
This is in contrast with homeomorphisms on surfaces, where $f$ is supported on $K$ if and only if $f$ is totally supported on $K$. 

As mapping class groups of graphs are independent of the proper homotopy equivalence representative of the graph, it is often useful to consider a `standard' representative within a proper homotopy equivalence class of graphs.
\begin{DEF}\label{def:standardForm}
    A locally finite graph, $\G$, is in \textbf{standard form} if $\G$ is a tree with loops attached at some of the vertices. We endow $\G$ with the path metric that assigns each edge length $1$.
\end{DEF}

We also give special names to specific graphs that we will reference often.

\begin{DEF} \label{def:monsters}
    The \textbf{Loch Ness Monster} graph is the graph with characteristic triple $(\infty,\ \{*\},\ \{*\})$. The \textbf{Millipede Monster} graph is the graph with characteristic triple $(\infty,\ \{0\} \cup \{\frac{1}{n}\ |\ n \in \Z_{>0}\},\ \{0\})$. A \textbf{monster} graph refers to either one of these. 
\end{DEF}

\subsection{Elements of $\PMap(\G)$} \label{ssec:elements}
Here we give a brief treatment of elements of $\PMap(\G)$. For more detailed definitions with examples, see \cite[Section 3]{DHK2023}.

\subsubsection{Loop swaps}
   A Loop swap is an order 2 proper homotopy equivalence induced from a transposition automorphism of a free group. It is totally supported on a compact set. More precisely, we define it as follows.

    \begin{DEF}
        Let $\G$ be a locally finite graph in standard form with $\rk \G \ge 2$. Let $A$ and $B$ be disjoint finite subsets of loops such that $|A|=|B|$. Then the \textbf{loop swap} $\cL(A,B)$ is a proper homotopy equivalence induced from the group isomorphism on $\pi_1(\G)$ swapping the free factors corresponding to $A$ and $B$.
        
        More concretely, pick a basepoint $p \in \G$ and collapse each maximal tree of the subgraphs corresponding to $A$ and $B$ in $\pi_1(\G,p)$. This results in two roses of $|A|=|B|$ petals. Then swap the two roses, followed by blowing up each rose to the original subgraph. Define $\cL(A,B)$ as the composition of these three maps. Note $\cL(A,B) \in \PMapc(\G).$
    \end{DEF}
    As mentioned above, loop swaps of a graph correspond to the transposition free group automorphisms, which are part of generating set for $\Aut(F_n)$. (See \Cref{sec:CBgen}).

\subsubsection{Word maps}
    Next, word maps, which are the most diverse kind of mapping classes among the three kinds of maps introduced in this section.
\begin{DEF}
    Let $\G$ be a locally finite graph with $\rk \G \ge 1$, with a base point $p \in \G$. Let $w \in \pi_1(\G,p)$ and $I$ be an interval in an edge of $\G$. Then the \textbf{word map} $\varphi_{(w,I)}$ is a proper homotopy equivalence that maps $I$ to a path in $\G$ determined by $w \in \pi_1(\G,p)$ and is the identity outside of $I$.
\end{DEF}

    See \cite[Section 3.3]{DHK2023} for a careful construction of these maps. Note $\varphi_{(w,I)}$ is supported on $I$, but in general not \emph{totally} supported on $I$. Rather, it is totally supported on the compact set that is the union of $I$ with a path in $\G$ determined by $w \in \pi_1(\G,p)$.

    The following two properties of word maps will be important in \Cref{sec:CBgen}.
    \begin{LEM}[{\cite[Lemma 3.5]{DHK2023}}] \label{lem:compositionLaw}
        If $I$ is contained in an edge of $\G \setminus \G_c$ and $w_1,w_2$ are elements in $\pi_1(\G,p)$, then
        \[
            [\varphi_{(w_1,I)} \circ \varphi_{(w_2,I)}] = [\varphi_{(w_1w_2,I)}]
        \]
        in $\PMap(\G)$.
    \end{LEM}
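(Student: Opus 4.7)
The plan is to verify this composition law by constructing an explicit proper homotopy from $\varphi_{(w_1,I)}\circ\varphi_{(w_2,I)}$ to $\varphi_{(w_1w_2,I)}$. The hypothesis $I\subset\G\setminus\G_c$ is essential: since $\G\setminus\G_c$ is a disjoint union of trees, every class $w\in\pi_1(\G,p)$ has a loop representative that lies entirely in $\G_c$, so the loop inserted by any word map $\varphi_{(w,I)}$ can be chosen to meet $I$ only at the basepoint of insertion. This prevents unwanted self-interactions under composition.

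I would begin by fixing representatives of the three word maps as in \cite[Section 3.3]{DHK2023}: subdivide $I$ into subintervals $I_L\cup J\cup I_R$ with $J$ in the interior of $I$, send $I_L,I_R$ linearly onto the two halves of $I$, send $J$ onto a loop $\ell_w$ based at an interior point $m\in J$, and let the map be the identity outside $I$. For the composition, choose the representatives of $\varphi_{(w_1,I)}$ and $\varphi_{(w_2,I)}$ so that their loop-insertion subintervals $J_1,J_2$ are disjoint in $I$ and arranged in the order dictated by the group convention $[f\circ g]=[f][g]$. A direct tracking of the composition shows that its image on $I$ traces a path of the form
\[
\alpha_1 * \ell_{w_1} * \alpha_2 * \ell_{w_2} * \alpha_3,
\]
where $\alpha_1,\alpha_2,\alpha_3$ are subarcs of $I$; crucially, $\varphi_{(w_1,I)}$ acts as the identity on the $\ell_{w_2}$ portion since that loop lies in $\G_c$, outside $I$.

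To conclude, I would observe that the subarc $\alpha_2$ of $I$ is contractible, so the loops $\ell_{w_1}$ and $\ell_{w_2}$ can be slid together along $\alpha_2$ to form a single loop based at a common point $m'\in I$ representing the class $w_1w_2$ in $\pi_1(\G,p)$. This sliding extends to a compactly supported proper homotopy between the composition and $\varphi_{(w_1w_2,I)}$, giving the claimed equality in $\PMap(\G)$. The main obstacle is the careful parameterization bookkeeping: one must verify that the choice of subintervals $J_1,J_2$ yields the two inserted loops in the correct order (corresponding to the convention that $[f\circ g]=[f][g]$), and that the sliding homotopy is supported in a compact set containing $I$ together with the two loop images, so that the resulting homotopy is genuinely proper and gives the desired equivalence in $\PMap(\G)$.
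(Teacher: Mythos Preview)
The paper does not supply its own proof of this lemma; it is quoted verbatim from the authors' earlier work \cite[Lemma~3.5]{DHK2023} and used as a black box. Your argument is the natural one and is correct in outline: the essential point is precisely the observation you make, that $I\subset\G\setminus\G_c$ allows the inserted loops $\ell_{w_1},\ell_{w_2}$ to be chosen inside $\G_c$ and hence disjoint from $I$, so that $\varphi_{(w_1,I)}$ acts as the identity on the image of $\ell_{w_2}$ and the composition simply concatenates the two loops along a contractible arc of $I$. This is exactly the sort of direct verification one expects for such a composition law, and there is no reason to think the original proof in \cite{DHK2023} proceeds any differently.
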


    \begin{LEM}[{\cite[Lemma 3.10]{DHK2023}}] \label{lem:conjugationLaw}
        Let $I$ be an interval of $\G$ which is outside of $\G_c$, and $\psi \in \PMap(\G)$ be totally supported on a compact subgraph of $\G_{c}$. Then
        \[
            \psi \circ [\varphi_{(w,I)}] \circ \psi^{-1} = [\varphi_{(\psi_*(w),I)}].
        \]
    \end{LEM}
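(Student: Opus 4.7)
The plan is to pick careful representatives of $\psi$, $\psi\inv$, and $\varphi_{(w,I)}$ and then compute the conjugated composition region by region. Let $K \subset \G_c$ be a compact subgraph on which $\psi$ is totally supported, so I may pick a representative $f$ of $\psi$ with $f(K)=K$ and $f|_{\G\setminus K}=\id$. Since $f|_K \in \PMap(K)$ in the sense of \Cref{rmk:compactMCG}, it admits a proper homotopy inverse $\bar f\colon K \to K$ fixing $\partial K$; extending $\bar f$ by the identity on $\G \setminus K$ gives a representative $f\inv$ of $\psi\inv$ that is also totally supported on $K$. Pick a representative $g$ of $\varphi_{(w,I)}$ that is the identity outside $I$ and, together with a fixed auxiliary arc from $I$ to a basepoint $p \in \G\setminus K$, traces out a loop representing $w \in \pi_1(\G,p)$.

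Setting $h \defeq f \circ g \circ f\inv$, I would then analyze $h$ on three regions determined by the disjointness $I \cap K = \emptyset$ (which holds because $I \subset \G\setminus\G_c$ while $K\subset\G_c$). On $\G \setminus (I \cup K)$, every factor is the identity, so $h = \id$. On $K$, total support of $f\inv$ keeps points inside $K$, where $g$ then acts trivially (as $K\cap I=\emptyset$), so $h|_K = f \circ \bar f$, which is homotopic to $\id_K$ rel $\partial K$ by choice of $\bar f$. On $I$, we have $f\inv|_I = \id$, and so $h|_I = f \circ g|_I$, a path equal to the image under $f$ of the $w$-trace of $g$. Choosing the auxiliary arc from $p$ to $I$ to avoid $K$ as well, that arc is fixed by $f$, so the loop obtained by concatenating it with $h|_I$ represents $f_*(w) = \psi_*(w) \in \pi_1(\G,p)$. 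This identifies $h|_I$ as a valid $\psi_*(w)$-trace in the sense of the word-map definition.

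To finish, I would assemble a single proper homotopy from $h$ to $\varphi_{(\psi_*(w),I)}$ by gluing three pieces: the rel-$\partial K$ homotopy $h|_K \simeq \id_K$, the literal identity on $\G\setminus(I\cup K)$, and a proper homotopy on a compact neighborhood of $I$ straightening $h|_I$ into the standard $\varphi_{(\psi_*(w),I)}$-trace. The main delicacy is keeping the homotopy on $K$ rel $\partial K$ so that it continuously meets the identity on the complement — this is precisely where the hypothesis of \emph{total} support (rather than mere support) is essential. If $\psi$ were only supported on $K$, conjugation could drag portions of the $w$-trace back across $\partial K$, the region-by-region decomposition would break down, and the resulting composition need not agree with any single word map on $I$.
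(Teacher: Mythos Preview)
The paper does not actually give its own proof of this lemma: it is stated with the citation \cite[Lemma 3.10]{DHK2023} and no argument is supplied here, so there is nothing in the present paper to compare your proof against.

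That said, your argument is the natural direct computation and is essentially correct. The region-by-region analysis works because $I$ and $K$ are disjoint, and you correctly identify that the only nontrivial contribution comes from $f$ acting on the $w$-trace of $g|_I$, yielding a $\psi_*(w)$-trace. One small point worth tightening: you place the basepoint $p$ in $\G\setminus K$ and then say you will choose the auxiliary arc from $p$ to $I$ to avoid $K$. For the conclusion $[f\circ(\text{arc}\cdot g|_I\cdot\text{arc}^{-1})]=\psi_*(w)$ you really only need that $f$ fixes $p$ (so that $\psi_*$ is a well-defined endomorphism of $\pi_1(\G,p)$); the arc itself may pass through $K$, since $f_*$ of the concatenated loop is computed by postcomposing with $f$ regardless. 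Your stronger assumption that the arc avoids $K$ is harmless but not needed, and in some graphs may be impossible to arrange (e.g., if $K$ separates $p$ from $I$), so it is safer to drop it. The gluing of homotopies at the end is fine, and your remark about why \emph{total} support is the right hypothesis is exactly the point.
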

    In particular, we can use \Cref{lem:conjugationLaw} when $\psi$ is a loop swap.

\subsubsection{Loop shifts}
Loop shifts are to graphs as handle shifts are to surfaces, which were introduced in Patel--Vlamis \cite{PatelVlamis}. We first define a loop shift on the standard form of the graph $\Lambda$, the graph with characteristic triple $(\infty, \{e_{-},e_{+}\},\{e_{-},e_{+}\})$. (See \Cref{def:standardForm}.) Embed $\Lambda$ in $\R^2$ by identifying the maximal tree with the $x$-axis such that $e_{\pm}$ is identified with $\pm \infty$ of the $x$-axis, and each vertex is identified with an integer point in the $x$-axis. Identify the loops with the circles $\{(x-n)^2 + (y-\frac{1}{4})^2 = \frac{1}{16}\}_{n \in \Z}$. Note these circles are tangent to the integer points $\{(n,0)\}_{n \in \Z}$, thus representing the loops in $\Lambda$. Now define the \textbf{primitive loop shift} $h$ on $\Lambda$ as the horizontal translation $x \mapsto x+1$. One can also omit some loops from $\Lambda$ and define the loop shift to avoid those loops. For a more general definition, see \cite[Section 3.4]{DHK2023}.

\begin{DEF}
Now we define the loop shift on a locally finite, infinite graph $\G$ with $|E_\ell| \ge 2$. Pick two distinct ends $e_{-}, e_+ \in E_\ell(\G)$ accumulated by loops. By considering a standard form of $\G$, we can find an embedded ladder graph $\Lambda$ in $\G$ such that $e_{\pm}$ is identified with $e_{\pm}$ of $\Lambda$, respectively. Now define the \textbf{primitive loop shift} $h$ on $\G$ associated to $(e_-, e_+)$ as the proper homotopy equivalence induced from the primitive loop shift on the embedded ladder graph $\Lambda$. For the rest of the graph, define $h$ to be the identity outside of the $\frac{1}{2}$-neighborhood of $\Lambda$ and interpolate between the shift and the identity on the $\frac{1}{2}$-neighborhood.

Finally, a proper homotopy equivalence $f$ on $\G$ is a \textbf{loop shift} if $f = h^n$ for some primitive loop shift $h$ and $n \in \Z \setminus \{0\}$.
\end{DEF} 

\subsection{Coarse geometry of groups}

\begin{DEF}
    Let $A$ be a subset of a topological group $G$. Then $A$ is \textbf{coarsely bounded (CB)} in $G$ if for every continuous isometric action of $G$ on a metric space, every orbit is bounded.
\end{DEF}

We say a group is \textbf{CB-generated} if it has an algebraic generating set that is CB. Similarly, a group is \textbf{locally CB} if it admits a CB neighborhood of the identity. In \Cref{sec:CBgen}, we will construct a CB-generating set for the pure mapping class groups of certain graphs, proving the if direction of \Cref{thm:CBgenClassification}. On the other hand, we have previously classified which graphs have CB or locally CB mapping class groups: 

\begin{THM}[{\cite[Theorem A, D]{DHK2023}}]
\label{THM:CBclassification}
    Let $\G$ be a locally finite, infinite graph. Then its pure mapping class group $\PMap(\G)$ is \emph{coarsely bounded} if and only if one of the following holds:
    \begin{itemize}
        \item $\G$ has rank zero, or
        \item $\G$ has rank one, and has one end, or
        \item $\G$ is a monster graph with finitely many rays attached.
    \end{itemize}
    Moreover, $\PMap(\G)$ is \emph{locally coarsely bounded} if and only if one of the following holds:
        \begin{itemize}
        \item $\G$ has finite rank, or
        \item $\G$ satisfies both:
    \begin{enumerate}[label=\arabic*.]
            \item $|E_\ell(\G)| <\infty$, and
            \item only finitely many components of $\G \setminus \G_c$ have infinite end spaces.
        \end{enumerate}
    \end{itemize}
\end{THM}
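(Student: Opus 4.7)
The plan is to apply Rosendal's framework using the neighborhood basis $\{\cV_K\}$ of the identity, where $K$ ranges over compact subgraphs of $\G$. A Polish group is CB iff for every open neighborhood $V$ of the identity there exist a finite set $F$ and $n \in \N$ with $G = (FV)^n$, and is locally CB iff some $\cV_K$ is CB. The task thus splits into two parts: for a ``small'' graph, find a compact $K_0$ such that every element of $\PMap(\G)$ is a bounded word in $\cV_{K_0}$ and a fixed finite set, and for a ``large'' graph, produce an unbounded quasimorphism or unbounded orbit that obstructs CB.

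For the ``if'' direction of the CB classification I would argue case by case. Trees have trivial $\PMap$ by \Cref{prop:treePMap}. A single loop has $\PMap \cong \Z/2$, hence is finite and CB. For a monster graph $\G$ with finitely many rays attached, I would use the semidirect decomposition $\PMap(\G) \cong \cR \rtimes \PMap(\G_c^*)$ from \Cref{prop:RsubgroupDecomposition}; the fact that there is essentially a single end accumulated by loops means that any mapping class supported on a large compact piece can be ``swept'' past any prescribed compact $K$ by conjugating with a loop swap pushing its support into loops disjoint from $K$, via \Cref{lem:conjugationLaw}. For the ``only if'' direction, I would assume $\G$ falls outside the list and produce an unbounded length function: if $|E_\ell(\G)| \ge 2$, loop shifts between distinct ends provide a flux homomorphism to $\Z$ (cf.\ \Cref{thm:semidirectprod}) which is manifestly unbounded, while cases with noncompact $E \setminus E_\ell$ or accumulation points in $E \setminus E_\ell$ are handled by exploiting $\cR$ and by constructing sequences of word maps whose word images grow without bound, using \Cref{lem:compositionLaw}.

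For the local CB classification, the ``if'' direction is parallel. When $\G$ has finite rank, $\cV_{\G_c}$ is CB because mapping classes in this neighborhood are trivial on the (compact) rank-carrying part and the residual action is on a forest with trivial $\PMap$. When $|E_\ell| < \infty$ and only finitely many components of $\G \setminus \G_c$ have infinite end spaces, take $K$ to be $\G_c$ together with a compact core of each such component; a displacement argument using loop swaps inside each monster-like component pushes arbitrary representatives into a bounded product of translates of $\cV_K$. For the ``only if'' direction, one constructs, for each compact $K$, an element of $\cV_K$ with arbitrarily large flux (when $|E_\ell| = \infty$, by choosing loop shifts supported outside $K$ in ends that $K$ does not separate) or an unbounded sequence of word-map-induced displacements (when infinitely many components of $\G \setminus \G_c$ have infinite end spaces), violating local CB. The main obstacle I expect is the displacement step for monster-with-rays and for graphs with multiple infinite-end components in $\G \setminus \G_c$: one has to verify both that the conjugating loop swap actually lies in the prescribed $\cV_K$ and that the resulting words have uniformly bounded length, which requires careful bookkeeping combining \Cref{lem:compositionLaw}, \Cref{lem:conjugationLaw}, and the structure of $\cR$.
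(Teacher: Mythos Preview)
This statement is not proved in the present paper: it is quoted verbatim from the authors' earlier work \cite{DHK2023} (see the attribution \texttt{[Theorem A, D]} in the theorem header) and is used here only as background in the preliminaries. There is therefore no proof in this paper to compare your proposal against.

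That said, your outline has a genuine gap in the ``only if'' direction for the CB classification. Your two obstruction mechanisms are (a) flux maps when $|E_\ell|\ge 2$ and (b) unbounded word maps when $E\setminus E_\ell$ is noncompact or has accumulation points. Neither covers the finite-rank cases that are nonetheless not CB: for instance, a rank-$2$ graph with two or three rays has $E_\ell=\emptyset$ and $E\setminus E_\ell$ finite (hence compact with no accumulation), yet $\PMap(\G)\cong F_n^{e-1}\rtimes\Aut(F_n)$ is an infinite finitely generated group and is not CB. Similarly, rank $1$ with two ends gives $\PMap\cong\Z$, again not CB, and again invisible to both of your obstructions. You need a separate argument for these, and it cannot come from flux maps or from accumulation of rays; one has to use directly that infinite discrete (or more generally infinite locally compact) Polish groups are never CB.

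Your local CB ``if'' direction also has a slip: when $|E_\ell|<\infty$ but $\G$ has infinite rank, $\G_c$ is not compact, so ``take $K$ to be $\G_c$ together with \ldots'' does not yield a compact set. The correct $K$ (as in \Cref{prop:CBVK}) is a compact subgraph whose complementary components are trees or monster graphs, not $\G_c$ itself. The displacement/absorption idea you describe is in the right spirit, but the bookkeeping you flag as the ``main obstacle'' is indeed the entire content of the argument, and your sketch does not indicate how to bound the word length uniformly.
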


\begin{RMK}
    Mirroring the constructive description in \Cref{rmk:connectsum} of the CB-generated $\PMap(\G)$ classification, we can alternatively characterize the locally CB condition as: $\PMap(\G)$ is locally CB if and only if $\G$ can be written as a finite wedge sum of single loops, monster graphs, and trees. 
\end{RMK}

After confirming that a group is CB-generated, the Rosendal framework enables the exploration of the group through the lens of coarse geometry.

\begin{THM}\cite[Theorem 1.2, Proposition 2.72]{rosendal2022}
    Let $G$ be a CB-generated Polish group. Then $G$ has a well-defined quasi-isometry type. Namely, any two CB-generating sets for $G$ give rise to quasi-isometric word metrics on $G$.
\end{THM}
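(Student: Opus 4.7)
The plan is to reduce the statement to a direct bi-Lipschitz comparison of word metrics. Fix a CB-generated Polish group $G$ with two CB generating sets $S$ and $T$. Form the associated left-invariant word metrics $d_S$ and $d_T$, where $d_S(g,h) = \min\{n \mid g^{-1}h = s_1 \cdots s_n, s_i \in S \cup S^{-1}\}$, and similarly for $d_T$. Since both metrics are left-invariant, $G$ acts by isometries on each $(G,d_S)$ and $(G,d_T)$ via left multiplication. The goal is to show the identity map $(G,d_S) \to (G,d_T)$ is a quasi-isometry; in fact we will get bi-Lipschitz equivalence.

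The central step is to show that any CB set $A \subseteq G$ is bounded in $d_S$ whenever $S$ is a CB generating set. Granting this, applied to $A = T$, there is some constant $N$ with $d_S(e,t) \leq N$ for every $t \in T \cup T^{-1}$. Writing an arbitrary $g \in G$ as a $d_T$-minimal word $g = t_1 \cdots t_k$ with $k = d_T(e,g)$, left-invariance and the triangle inequality give $d_S(e,g) \leq N \cdot d_T(e,g)$. Swapping the roles of $S$ and $T$ produces a reverse inequality, so $d_S$ and $d_T$ are bi-Lipschitz equivalent, and hence quasi-isometric.

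To justify the central step, observe that the left multiplication action $G \curvearrowright (G,d_S)$ is isometric by construction, so once we verify that this action is continuous (with respect to the original Polish topology on $G$), the CB hypothesis on $A$ forces the orbit $A \cdot e = A$ to be bounded in $d_S$. Verifying continuity is the main obstacle and the only place where the Polish/CB-generation hypotheses enter substantively: the word metric $d_S$ coming from a CB generating set need not be compatible with the topology of $G$, so one has to argue that translations are nonetheless continuous into the metric space $(G,d_S)$. The standard way, following Rosendal's framework, is to pass through a compatible left-invariant metric $d$ on $G$ and show that $d_S$ is coarsely equivalent to the supremum of $d$ against a distinguished family of orbit pseudometrics; then continuity of left multiplication follows from continuity of each orbit map. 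Once this technical verification is in hand, the symmetric bi-Lipschitz argument above completes the proof, and the same reasoning shows any left-invariant coarsely proper metric on $G$ is quasi-isometric to $d_S$, establishing the stronger statement that $G$ has a single well-defined quasi-isometry type.
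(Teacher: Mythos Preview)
First, note that the paper does not prove this statement: it is quoted from Rosendal's monograph as background, with no argument supplied. So there is no paper-side proof to compare against, and your attempt stands or falls on its own.

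Your overall architecture is correct: show that any CB set $T$ has finite $d_S$-diameter, then conclude bi-Lipschitz equivalence by the standard word-length estimate. The problem lies exactly where you flag the ``main obstacle.'' You want to invoke the CB hypothesis on $T$ via the left-multiplication action $G \curvearrowright (G,d_S)$, but this action is essentially never continuous when $G$ is non-discrete: continuity of the orbit map $g \mapsto g\cdot e = g$ at the identity would require an open $U \ni e$ with $d_S(e,g) < 1$ for all $g \in U$, forcing $U = \{e\}$. So the defining property of CB sets simply does not apply to $(G,d_S)$, and your sentence about ``passing through a compatible left-invariant metric $d$'' and ``orbit pseudometrics'' does not close the gap---it names Rosendal's machinery without carrying out any of it. The actual content of Rosendal's proof is a combinatorial characterization of CB sets in Polish groups ($A$ is CB iff for every open $V \ni e$ there exist a finite $F \subseteq G$ and $n \geq 1$ with $A \subseteq (FV)^n$), combined with the production of an \emph{open} CB generating set whose word metric is comparable to $d_S$; these ingredients let one bound $T$ in $d_S$ without ever needing the false continuity claim. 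As written, your argument stops precisely where the real work begins.
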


\section{Semidirect product structure and cohomology} \label{sec:semidirect}

In this section, we prove:

\begin{THM}[\Cref{thm:semidirectprod}, revisited]
\label{thm:semidirectprod_revisited}
Let $\G$ be a locally finite graph.
Let $\alpha = \max\{0,|E_\ell(\G)| - 1\}$ for $|E_\ell(\G)|<\infty$ and $\alpha = \aleph_0$ otherwise. Then we have the following short exact sequence,
\[
    1 \longto \overline{\PMap_c(\G)} \longto \PMap(\G) \longto \Z^\alpha \longto 1
\]
which splits. In particular, we have $\PMap(\G)=\overline{\PMap_c(\G)} \rtimes \Z^{\alpha}.$
\end{THM}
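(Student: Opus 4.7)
The plan is to mirror the surface-side proof of Aramayona--Patel--Vlamis \cite{APV2020}: define a continuous surjective homomorphism $\Phi \colon \PMap(\G) \to \Z^{\alpha}$ whose kernel is exactly $\overline{\PMapc(\G)}$, and then produce a splitting by exhibiting a family of pairwise commuting loop shifts whose fluxes form a $\Z$-basis of $\Z^{\alpha}$. The flux map plays the role of the homology of separating curves in the surface setting; here it will be built out of the zeroth \cech{} cohomology $\dC(E_\ell(\G))$, which provides an algebraic home for partitions of the space of ends accumulated by loops.

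For the construction of $\Phi$, for each clopen decomposition $E_\ell(\G) = U \sqcup V$, I would choose a finite connected subgraph $K \subset \G$ whose complement splits cleanly into a $U$-side $C_U$ and a $V$-side $C_V$. For $\phi \in \PMap(\G)$, pick a representative $f$ that agrees with the identity off a compact set slightly larger than $K$; then define $\Phi_U(\phi)$ as the algebraic count of loops that $f$ has transported from $C_V$ into $C_U$, measured via a rank difference on first homology of appropriate compact subgraphs. The checks to perform are (a) independence of $K$ (two choices can be connected by local enlargements, and \Cref{thm:PHEclassification} controls the discrepancy), (b) independence of the representative, (c) additivity under composition of mapping classes, and (d) local constancy of $U \mapsto \Phi_U(\phi)$. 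Assembled together these give a homomorphism $\Phi \colon \PMap(\G) \to \dC(E_\ell(\G))/\Z$, and a primitive loop shift between $e_-, e_+ \in E_\ell(\G)$ maps to $\delta_{e_+} - \delta_{e_-}$; restricting the codomain to the subgroup generated by such elements yields the desired map to $\Z^{\alpha}$.

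The heart of the argument is the identification $\ker \Phi = \overline{\PMapc(\G)}$. The inclusion $\overline{\PMapc(\G)} \subset \ker \Phi$ follows because a representative of a compactly supported class transports no loops across a sufficiently large $K$, and $\Phi$ is continuous into the discrete group $\Z^{\alpha}$. For the reverse inclusion, take $\phi$ with trivial flux and an exhaustion $K_1 \subset K_2 \subset \cdots$ of $\G$ by finite connected subgraphs. I would inductively construct compactly supported classes $\psi_n$ so that $\phi \psi_n^{-1}$ is the identity on $K_n$ and preserves the components of $\G \setminus K_n$, which forces $\psi_n \to \phi$ in the compact-open topology on $\PMap(\G)$. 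At each step the obstruction to correcting $\phi$ on $K_n$ is exactly the restriction of $\Phi(\phi)$ to the clopen partition of $E_\ell(\G)$ cut out by the components of $\G \setminus K_n$; since this is zero by hypothesis, \Cref{thm:PHEclassification} combined with the local $\cR$-decomposition from \Cref{prop:RsubgroupDecomposition} on each complementary component produces the required $\psi_n$. I expect this inductive correction, and in particular the bookkeeping of how $\phi$ permutes the loops accumulating at each end as $K_n$ grows, to be the main technical obstacle.

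Finally, for the splitting, I would construct a family of commuting loop shifts realizing a basis of $\Z^{\alpha}$. When $|E_\ell(\G)| = n$ is finite, fix $e_0 \in E_\ell(\G)$ and for each of the remaining $n-1$ ends $e$ embed a ladder graph $\Lambda_e$ from $e_0$ to $e$ in a standard form of $\G$, chosen so the ladders are pairwise disjoint. This is possible because each end in $E_\ell(\G)$ has infinitely many accumulating loops, which can be partitioned into $n-1$ disjoint infinite subcollections, one per target end. The primitive loop shifts $h_e$ supported on disjoint $\Lambda_e$ commute, so together they generate a subgroup of $\PMap(\G)$ isomorphic to $\Z^{n-1}$; since the fluxes $\delta_e - \delta_{e_0}$ are $\Z$-linearly independent in $\dC(E_\ell(\G))/\Z$, this subgroup maps isomorphically onto $\Z^{\alpha}$ under $\Phi$, giving the required section. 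When $|E_\ell(\G)|$ is infinite, enumerate a countable subfamily $\{e_i\}_{i \geq 1}$ and repeat the construction with countably many pairwise disjoint ladders to obtain the $\Z^{\aleph_0}$ splitting.
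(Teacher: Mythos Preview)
Your overall strategy matches the paper's: build flux homomorphisms indexed by clopen partitions of $E_\ell$, identify their common kernel with $\overline{\PMapc(\G)}$ via an inductive compact-approximation argument, and split using pairwise commuting loop shifts. For finite $|E_\ell|$ your sketch is essentially what the paper does.

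The genuine gap is in the infinite $|E_\ell|$ case, and it appears both in the target of $\Phi$ and in the splitting. First, your assembled map lands in $\dC(E_\ell(\G))/\Z$, but this is a countable direct sum $\bigoplus \Z$, whereas the quotient $\PMap(\G)/\overline{\PMapc(\G)}$ is the direct product $\prod_{i \in I}\Z$: there exist proper homotopy equivalences whose flux is nonzero across infinitely many basis clopens simultaneously (e.g.\ an infinite product of disjoint loop shifts). Relatedly, your formula $\delta_{e_+}-\delta_{e_-}$ for the flux of a primitive shift only makes sense in $\dC(E_\ell)$ when $e_\pm$ are isolated; for a Cantor set of ends there is no indicator of a single point in the group of locally constant functions. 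The paper resolves this by first fixing a free $\Z$-basis $\{A_i\}$ of $\dC(E_\ell)$ consisting of cylinder sets (\Cref{thm:chombasis}) and then defining $\Pi(f)=(\Phi_{A_i}(f))_i \in \prod_i \Z$.

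Second, your splitting ``enumerate a countable subfamily $\{e_i\}$ and build ladders from $e_0$ to each $e_i$'' does not produce loop shifts dual to such a basis. The flux of the shift $h_{e_j}$ across $A_i$ is $\chi_{A_i}(e_j)-\chi_{A_i}(e_0)$, and there is no reason the resulting infinite matrix is invertible over $\Z$; moreover, for the section to land in $\PMap(\G)$ you must be able to form \emph{arbitrary infinite products} $\prod_j h_{e_j}^{n_j}$, which requires the supports to be locally finite, not merely pairwise disjoint near $e_0$. The paper's fix is to associate a loop shift $\ell_i$ to each basis clopen $A_i$ rather than to an individual end: using the tree model and the map $\iota$ of \Cref{thm:chombasis}(3), it cuts $\G_c$ into one-ended pieces and replaces each by a ``grid of loops'' so that the $\ell_i$ have \emph{genuinely disjoint total supports} and satisfy $\Phi_{A_j}(\ell_i)=\delta_{ij}$. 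This simultaneously gives surjectivity of $\Pi$ and a well-defined section $\hat\iota((n_i))=\prod_i \ell_i^{n_i}$.
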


The map to $\Z^{\alpha}$ is defined using \emph{flux maps}, which were first defined for locally finite, infinite graphs in \cite{DHK2023}. We quickly treat the case when the graph has at most one end accumulated by loops in \Cref{ssec:atmostOneEnd}. Then in \Cref{ssec:fluxmaps}, we recap the necessary definitions for flux maps and further expand on their properties. In \Cref{ssec:semidirect}, we characterize $\overline{\PMap_c(\G)}$ as the common kernel of all flux maps (\Cref{thm:fluxzeromaps}), which provides the left side of the desired splitting short exact sequence. Then in \Cref{ssec:SpaceOfFluxmaps}, we construct the other side of the short exact sequence by finding a section, proving \Cref{thm:semidirectprod}. This requires us to study the space of flux maps, which is done in the same subsection. As an application, in \Cref{ssec:firstcohomology} we compute the first integral cohomology of $\PMap(\G)$. Finally, we show the same approach could have been applied to infinite-type surfaces in \Cref{ssec:surfacescohom} to recover the surface version of \Cref{thm:semidirectprod} by Aramayona--Patel--Vlamis \cite{APV2020}, by showing that there is a natural isomorphism between the first cohomology of the pure mapping class groups of infinite-type surfaces and infinite graphs.

\subsection{The case $|E_\ell| \le 1$} \label{ssec:atmostOneEnd}
\begin{PROP} \label{prop:noloopends}
    Let $\G$ be a locally finite, infinite graph with $|E_\ell| \le 1$. Then $\PMap(\G) = \overline{\PMap_c(\G)}$. Furthermore, if $\lvert E_{\ell} \rvert =0$, then $\PMap(\G)=\PMap_c(\G)$.
\end{PROP}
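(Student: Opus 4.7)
The plan is to split into the cases $|E_\ell(\G)|=0$ and $|E_\ell(\G)|=1$ and argue each separately, since they call for different tools.

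For $|E_\ell(\G)|=0$, I would first note that $\G$ must have finite rank: any infinite sequence of pairwise disjoint loops in a locally finite graph would accumulate at some end, producing a loop-end and contradicting the hypothesis. Hence $\G_c$ is compact and $E(\G)\setminus E_\ell(\G)=E(\G)$ is a nonempty compact totally disconnected space, so \Cref{prop:RsubgroupDecomposition} applies and gives $\PMap(\G) \cong \cR \rtimes \PMap(\G_c^*)$. The factor $\PMap(\G_c^*) \cong \Aut(F_n)$ is realized by mapping classes of the finite subgraph $\G_c^*$ extended by the identity on the remaining rays, so it sits in $\PMapc(\G)$. For $h \in \cR$, local constancy on a compact totally disconnected space forces $h$ to take finitely many values on a finite clopen partition $C_1,\ldots,C_k$ of $E(\G)$ with $h\equiv g_i \in F_n$ on $C_i$. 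I would then pick a compact $K \supset \G_c$ such that each $C_i$ is the end set of a union of connected components of $\G \setminus K$, and realize the ray map attached to $h$ as a finite composition of word maps $\varphi_{(g_i,e)}$ applied to the gateway edges (in $K$) of those components. This composition is compactly supported, yielding $\PMap(\G) = \PMapc(\G)$.

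For $|E_\ell(\G)|=1$, I would instead prove that $\PMapc(\G)$ is dense in $\PMap(\G)$ via a compact exhaustion argument. Given $[f] \in \PMap(\G)$ and a compact $K \subset \G$, the task is to produce $[g] \in \PMapc(\G)$ with $[f]\,[g]^{-1} \in \cV_K$. First enlarge $K$ to a connected compact $K_0$ for which $f$ preserves the complementary components of $K_0$ setwise; this is possible because $f$ fixes every end of $\G$ pointwise. By properness of $f$, pick a compact $K' \supset K_0 \cup f(K_0) \cup f^{-1}(K_0)$. Then define $g$ to agree with $f$ on $K_0$, be the identity on $\G\setminus K'$, and interpolate on $K'\setminus K_0$ by a proper homotopy equivalence whose boundary values match $f|_{\partial K_0}$ on the inner boundary and the identity on $\partial K'$; such an interpolation would be supplied by \Cref{thm:PHEclassification}.

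The main obstacle is ensuring this interpolation exists in the $|E_\ell(\G)|=1$ case. The critical structural input is that $\G \setminus K'$ has exactly one component of infinite rank---the one accumulating to the unique loop-end $e_\ell$---with every other complementary component being a finite-rank tree. Because there is only one loop-end, there is no second loop-end to which loops could be redistributed, so the natural obstruction (which is measured in general by the flux maps introduced later in \Cref{ssec:fluxmaps}) vanishes automatically. This lets the boundary data on each complementary component of $K'$ be matched by a proper homotopy equivalence which is identity outside a compact set, via \Cref{thm:PHEclassification}, completing the construction of $g$ and hence the density argument.
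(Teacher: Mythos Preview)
For the $|E_\ell|=0$ case, your approach via the $\cR \rtimes \PMap(\G_c^*)$ decomposition is valid (modulo the slip that $\G_c^*$ is not a finite subgraph---it carries a ray---though the $\Aut(F_n)$ factor is indeed realized by classes totally supported on the compact core $\G_c$). The paper instead gives a much shorter direct argument: since $f$ is proper and $\G_c$ is compact, choose connected compact $K \supset \G_c \cup f^{-1}(\G_c)$; then $f|_{\G\setminus K}$ is a proper homotopy equivalence between contractible graphs, hence properly homotopic to the identity, so $f$ is totally supported on $K$. This bypasses the semidirect product decomposition entirely.

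For the $|E_\ell|=1$ case, the paper simply cites \cite[Corollary 4.5]{DHK2023}, so your sketch is attempting what the paper defers. Your outline (approximate $f$ by a compactly supported $g$ that agrees with $f$ on a large compact set and is the identity far out) is the right strategy, but the interpolation step has a genuine gap: \Cref{thm:PHEclassification} is about extending end-space homeomorphisms to global proper homotopy equivalences, not about filling in a map on a compact region $K'\setminus K_0$ with prescribed two-sided boundary data. What is actually needed here is a rank-matching argument---that the corank of $f_*(\pi_1(K_0))$ in $\pi_1(K')$ equals the corank of $\pi_1(K_0)$ in $\pi_1(K')$---so that an isomorphism on the complementary free factor, and hence a homotopy equivalence on the compact piece, can be chosen. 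This is exactly where the single-loop-end hypothesis does real work (there is no second loop-end to which rank can escape), and your final paragraph correctly identifies the phenomenon, but invoking \Cref{thm:PHEclassification} does not supply it.
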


\begin{proof}
    The case when $|E_\ell(\G)|=1$ is the result of \cite[Corollary 4.5]{DHK2023}. Now we assume $|E_\ell(\G)|=0$, i.e., $\G$ has finite rank. 
    
    Let $f \in \PMap(\G)$. Because $f$ is proper, $f^{-1}(\G_{c})$ is compact. Thus, there is some connected compact set $K$ such that $\G_{c} \cup f^{-1}(\G_{c}) \subset K$. Now $f\vert_{\G \setminus K}$ is a proper homotopy equivalence between two contractible sets and thus $f$ can be homotoped to be totally supported on $K$. Hence, we conclude $f \in \PMap_{c}(\G)$.
\end{proof}

\subsection{Flux maps}\label{ssec:fluxmaps}
 We begin the case when $|E_\ell| \ge 2$, where the flux maps come onto the scene.
Here we recap the definitions and properties of flux maps developed in \cite[Section 7]{DHK2023}.

Let $\G$ be a locally finite, infinite graph with $|E_\ell| \ge 2$. For each nonempty, proper, clopen subset $\cE$ of $E_\ell$, we will construct a flux map $\Phi_\cE$, which will evaluate to 1 for every primitive loop shift that goes from an end in $E_\ell \setminus \cE$ to an end in $\cE$. We fix such a subset $\cE$ for this discussion.

After potentially applying a proper homotopy equivalence, we can put $\G$ into a standard form so that there is a maximal tree $T$ and a choice of $x_0$ in $T$ such that $\G \setminus \{x_0\}$ defines a partition of the ends that is compatible with the partition $\cE \sqcup (E_\ell \setminus \cE)$ of $E_\ell$. That is, the components of $\G \setminus \{x_{0}\}$ determine a partition $E = \bigsqcup_{i=1}^{m} \cF_{i}$ so that we can rewrite as $\cE = \bigsqcup_{i=1}^{k} (\cF_{i} \cap  E_{\ell})$ and $E_{\ell} \setminus \cE = \bigsqcup_{i=k+1}^{m} (\cF_i \cap E_{\ell})$.

Now we group the components of $\G \setminus \{x_{0}\}$ by the set $\cE$. Let $\G_{+}$ and $\G_{-}$ be the unions of the closures of the components of $\G \setminus \{x_{0}\}$ so that $E_{\ell}(\G_{+}) = \cE$ and $E_{\ell}(\G_{-}) = E_{\ell} \setminus \cE$. More precisely, $\G_{+}$ is exactly the union of the complementary components of $x_0$ with end spaces corresponding to $\cF_1,\ldots,\cF_k$ together with adding back in $x_{0}$. Similarly, $\G_{-}$ is the union of the components corresponding to $\cF_{k+1},\ldots, \cF_{m}$, together with $x_0$.
Finally, let $T_{-}$ be the maximal tree of $\Gamma_{-}$ contained in $T$. Define for each $n \in \Z$:

\begin{align*}
    \Gamma_{n} &\defeq \begin{cases} 
    \overline{\Gamma_{-} \cup B_{n}(x_{0})} &\text{ if }n \geq 0, \\
    \left(\Gamma_{-} \setminus B_{n}(x_{0})\right) \cup T_{-} &\text{ if }n < 0, 
    \end{cases}
\end{align*}
where $B_{n}(x_{0})$ is the open metric ball of radius $n$ about $x_{0}$. See \cite[Section 7.2]{DHK2023}
 for more details and pictures of the $\G_n$'s.

Recall that a subgroup $A$ of a group $G$ is a \textbf{free factor} if there exists another subgroup $P$ such that $G = A * P$. Given a free factor $A$ of $B$, we define the \textbf{corank} of $A$ in $B$, denoted by $\cork(B,A)$, as the rank of $B/\<\!\<A\>\!\>$, the quotient of $B$ by the normal closure of $A$. For the $\G_{n}$ defined above we write $A_{n} = \pi_{1}(\G_{n},x_{0})$, the free factor determined by the subgraph $\G_{n}$. 

Denote by $\PPHE(\G)$ the group of proper homotopy equivalences on $\G$ that fix the ends of $\G$ pointwise and fix the basepoint $x_{0}$, i.e., the group of \emph{pure} proper homotopy equivalences. Any pure mapping class can be properly homotoped to fix a point, hence every pure mapping class has a representative in $\PPHE(\G)$. Note a proper homotopy equivalence on $\G$ induces an isomorphism on the level of fundamental group. Hence, with our choice of basepoint $x_0 \in \G$, for each element $f \in \PPHE(\G)$, we denote by $f_*$ the induced map on $\pi_1(\G,x_0)$.

\begin{DEF}[{\cite[Definition 7.9]{DHK2023}}]
    Given $f \in \PPHE(\Gamma)$, we say that a pair of integers, $(m,n)$, with $m>n$, is \textbf{admissible} for $f$ if 
    \begin{enumerate}
        \item
            $A_{n}$ and $f_{*}(A_{n})$ are free factors of $A_{m}$, and
        \item
            both $\cork(A_{m},A_{n})$ and $\cork(A_{m},f_{*}(A_{n}))$ are finite.
    \end{enumerate}
\end{DEF}

In \cite[Corollary 7.8]{DHK2023}, we showed that for every $f \in \PPHE(\G)$, and $n \in \Z$, there exist $m \in \Z$ such that $m>n$ and $(m,n)$ is admissible for $f$. Hence, we can define:

\begin{DEF}
For a map $f \in \PPHE(\Gamma)$ and an admissible pair $(m,n)$ for $f$, we let
\begin{align*}
    \phi_{m,n}(f) := \cork(A_{m},A_{n}) - \cork(A_{m},f_{*}(A_{n})). 
\end{align*}
Call such a $\phi_{m,n}$ a \textbf{PPHE-flux map}.
\end{DEF}

\begin{LEM}[{\cite[Lemma 7.10]{DHK2023}}] \label{LEM:ind}
    The PPHE-flux of a map $f \in \PPHE(\G)$ is well-defined over the choice of admissible pair $(m,n)$. That is, if $(m,n)$ and $(m',n')$ are two admissible pairs for the map $f \in \PPHE(\Gamma)$ then $\phi_{m,n}(f) = \phi_{m',n'}(f)$. 
\end{LEM}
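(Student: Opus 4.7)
The plan is to reduce the general comparison of two arbitrary admissible pairs to two intermediate statements, each varying only one coordinate. Concretely, I will prove: (a) if $(m, n)$ and $(m', n)$ are both admissible for $f$ with $m < m'$, then $\phi_{m,n}(f) = \phi_{m',n}(f)$; and (b) if $(m, n)$ and $(m, n')$ are both admissible for $f$ with $n < n'$, then $\phi_{m,n}(f) = \phi_{m,n'}(f)$. Given (a) and (b), for any two admissible pairs $(m_1, n_1)$ and $(m_2, n_2)$ I will invoke \cite[Corollary 7.8]{DHK2023} (and its obvious dual for small $n$) to produce $M \geq \max\{m_1, m_2\}$ and $N \leq \min\{n_1, n_2\}$ for which $(M, n_1)$, $(M, n_2)$, and $(M, N)$ are all admissible for $f$. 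A short chain of applications of (a) and (b) will then yield $\phi_{m_1, n_1}(f) = \phi_{M, n_1}(f) = \phi_{M, N}(f) = \phi_{M, n_2}(f) = \phi_{m_2, n_2}(f)$.

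The key tool for both (a) and (b) is additivity of corank under nested free-factor inclusions: if $A \leq B \leq C$ is a chain of free-factor inclusions in a free group, then $\cork(C, A) = \cork(C, B) + \cork(B, A)$. This is immediate from the definition: writing $B = A * P$ and $C = B * Q$ gives $C \cong A * P * Q$, so $C/\<\!\<A\>\!\> \cong P * Q$ is free of rank $\rk(P) + \rk(Q)$. For (a), I will apply additivity to the chains $A_n \leq A_m \leq A_{m'}$ and $f_*(A_n) \leq A_m \leq A_{m'}$ --- the outer inclusions coming from the subgraph inclusion $\G_m \subset \G_{m'}$ and the inner ones from the two admissibility hypotheses --- and subtract to cancel the common term $\cork(A_{m'}, A_m)$. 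For (b), the relevant chains are $A_n \leq A_{n'} \leq A_m$ and $f_*(A_n) \leq f_*(A_{n'}) \leq A_m$, using that $f_*$ is a group isomorphism and therefore carries free factors to free factors. Additivity applied to each chain, together with the isomorphism-invariant identity $\cork(f_*(A_{n'}), f_*(A_n)) = \cork(A_{n'}, A_n)$, yields the equality after subtraction.

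The main obstacle is not the algebra but the geometric verification that the free-factor inclusions among the $A_k$ actually hold. Specifically, for $k < k'$ I need $A_k$ to be a free factor of $A_{k'}$, and the argument for (b) also needs $f_*$ to carry the $A_n \leq A_{n'}$ inclusion to a free-factor inclusion. The former is built into the construction of the $\G_n$ relative to the fixed maximal tree $T$: the subgraph $\G_k$ sits inside $\G_{k'}$ so that a maximal tree of $\G_k$ extends to a maximal tree of $\G_{k'}$, giving the inclusion $A_k \hookrightarrow A_{k'}$ as a free-factor inclusion; the latter is automatic from $f_*$ being an isomorphism of $\pi_1(\G,x_0)$. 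Once these inclusions are verified, (a) and (b) are each a one-line algebraic manipulation, and the combination step is pure bookkeeping.
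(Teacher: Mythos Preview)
The paper does not supply its own proof of this lemma; it is quoted verbatim from \cite[Lemma 7.10]{DHK2023}. Your argument is correct and is the natural one: it is essentially the proof one would expect to find in the cited reference. The reduction to varying one coordinate at a time, together with the additivity identity $\cork(C,A) = \cork(C,B) + \cork(B,A)$ for a chain of free-factor inclusions $A \leq B \leq C$, is exactly the right mechanism, and you have correctly identified the geometric input needed (that the $\{\G_k\}$ form a nested chain with compatible maximal trees, so $A_k$ is a free factor of $A_{k'}$ with finite corank whenever $k < k'$).

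Two minor remarks. First, you do not actually need to invoke \cite[Corollary 7.8]{DHK2023} or any ``dual'' of it in the combination step: once $(m_i,n_i)$ is admissible, the monotonicity you establish in (a) shows that $(M,n_i)$ is admissible for any $M \geq m_i$, so $M = \max\{m_1,m_2\}$ works directly; and similarly your argument for (b) already shows admissibility is preserved under decreasing $n$. Second, the detour through an auxiliary $N \leq \min\{n_1,n_2\}$ is harmless but unnecessary: since $n_1$ and $n_2$ are integers, one of them is $\leq$ the other, and (b) applied once to the pair $(M,n_1)$, $(M,n_2)$ suffices.
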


Furthermore:

\begin{PROP}[{\cite[Proposition 7.11 and Lemma 7.12]{DHK2023}}] \label{prop:fluxmapsWD}The PPHE-flux maps are homomorphisms. Moreover, for any nonempty proper clopen subset $\cE$ of $E_\ell$, if $f,g \in \PPHE(\G)$ are properly homotopic, then $\phi_\cE(f) = \phi_\cE(g)$.
\end{PROP}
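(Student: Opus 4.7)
My plan is to prove the homomorphism property first and then deduce proper homotopy invariance from it. Throughout, fix the partition $E_\ell = \cE \sqcup (E_\ell \setminus \cE)$ and the associated filtration $\{A_n\}_{n \in \Z}$ of $\pi_1(\G, x_0)$ built from the subgraphs $\G_n$. By the independence lemma \cite[Lemma 7.10]{DHK2023}, the flux $\phi_\cE(f)$ of $f \in \PPHE(\G)$ equals $\phi_{m,n}(f)$ for any admissible pair $(m,n)$, so I am free to choose admissible pairs as convenient.

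For the homomorphism property, fix $f, g \in \PPHE(\G)$ and an integer $n$. Using \cite[Corollary 7.8]{DHK2023} repeatedly and taking a maximum, I choose a single $m$ so that $(m,n)$ is admissible for each of $f$, $g$, and $fg$, and so that $f_*g_*(A_n)$ is also a free factor of $A_m$ of finite corank. Writing $(fg)_* = f_* \circ g_*$ and inserting $\pm\cork(A_m, g_*(A_n))$, I rewrite
\[
\phi_{m,n}(fg) = \bigl[\cork(A_m, A_n) - \cork(A_m, g_*(A_n))\bigr] + \bigl[\cork(A_m, g_*(A_n)) - \cork(A_m, f_*g_*(A_n))\bigr].
\]
The first bracket is $\phi_{m,n}(g)$ by definition; the homomorphism property reduces to identifying the second bracket with $\phi_{m,n}(f)$. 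This identification is the crux: the PPHE-flux of $f$ is defined using the nested free factor $A_n \leq A_m$, whereas the second bracket uses the translated free factor $g_*(A_n) \leq A_m$ in its place. My approach is to prove a generalization of the independence lemma stating that for any free factor $B \leq A_m$ of finite corank with $f_*(B)$ also a free factor of $A_m$ of finite corank, the difference $\cork(A_m, B) - \cork(A_m, f_*(B))$ depends only on $f$; after further inflating $m$, one can realize both $A_n$ and $g_*(A_n)$ as terms of a common refined filtration and invoke independence to pass between them.

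For proper homotopy invariance, suppose $f, g \in \PPHE(\G)$ are properly homotopic via $H \colon \G \times [0,1] \to \G$. The path $\gamma(t) := H(x_0, t)$ is a loop at $x_0$, and a standard homotopy argument gives $f_* = \Inn([\gamma]) \circ g_*$ on $\pi_1(\G, x_0)$. By the just-established homomorphism property, $\phi_\cE(g) - \phi_\cE(f) = \phi_\cE(gf^{-1})$, so it suffices to show $\phi_\cE(h) = 0$ whenever $h \in \PPHE(\G)$ is properly homotopic to the identity, equivalently whenever $h_* = \Inn(w)$ for some $w \in \pi_1(\G, x_0)$. Choosing $m$ large enough that $w \in A_m$, conjugation by $w$ is an inner automorphism of $A_m$, so the normal closures $\langle\!\langle A_n\rangle\!\rangle$ and $\langle\!\langle w A_n w^{-1}\rangle\!\rangle$ agree inside $A_m$, giving $\cork(A_m, h_*(A_n)) = \cork(A_m, A_n)$ and hence $\phi_{m,n}(h) = 0$.

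I expect the residual-term identification in the homomorphism argument to be the main technical obstacle. Reducing the flux of $f$ against the translated free factor $g_*(A_n)$ to the flux against $A_n$ itself requires careful juggling of admissible pairs; everything else reduces to routine corank arithmetic and standard properties of inner automorphisms of free groups.
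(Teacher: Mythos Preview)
The paper does not prove this statement here; it is imported wholesale from \cite[Proposition~7.11 and Lemma~7.12]{DHK2023}, so there is no in-paper proof to compare against. I will therefore assess your argument on its own terms.

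\medskip

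\textbf{Homomorphism part.} Your telescoping decomposition is correct, but the step you flag as the ``main technical obstacle'' is genuinely unresolved, and your proposed fix---a generalized independence lemma for arbitrary finite-corank free factors $B \le A_m$, together with realizing $A_n$ and $g_*(A_n)$ in a ``common refined filtration''---is too vague to count as a proof. The independence lemma you cite only compares pairs $(m,n)$ drawn from the fixed filtration $\{A_k\}$; the free factor $g_*(A_n)$ need not equal any $A_k$, so you cannot invoke it directly, and extending it requires essentially reproving the lemma in greater generality.

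There is a much cleaner route that avoids this entirely: instead of forcing $f$, $g$, and $fg$ onto a single admissible pair $(m,n)$, \emph{nest} the pairs. Choose $k$ so that $(k,n)$ is admissible for $g$, then choose $m$ so that $(m,k)$ is admissible for $f$. Then $(m,n)$ is admissible for $fg$, and additivity of corank along the chain $f_*g_*(A_n) \le f_*(A_k) \le A_m$ (using that $f_*$ preserves corank) yields
\[
\phi_{m,n}(fg) = \phi_{m,k}(f) + \phi_{k,n}(g)
\]
directly, with no residual term to identify. Independence of the admissible pair then finishes. This is almost certainly how \cite{DHK2023} proceeds.

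\medskip

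\textbf{Proper homotopy invariance.} Your core computation is correct: if $h_* = \Inn(w)$ and $w \in A_m$, then $h_*(A_n) = wA_nw^{-1}$ is a free factor of $A_m$ with the same normal closure as $A_n$, hence the same corank, so $\phi_{m,n}(h) = 0$. However, your reduction via $gf^{-1}$ is slightly delicate: a proper homotopy inverse of $f$ need not fix $x_0$, so you have to argue it can be homotoped into $\PPHE(\G)$ before applying the homomorphism property. It is cleaner to argue directly: from $f_* = \Inn(w) \circ g_*$, take $m$ admissible for $g$ with $w \in A_m$; then $f_*(A_n) = w\,g_*(A_n)\,w^{-1}$ is conjugate in $A_m$ to $g_*(A_n)$, so $(m,n)$ is admissible for $f$ with $\cork(A_m, f_*(A_n)) = \cork(A_m, g_*(A_n))$, giving $\phi_{m,n}(f) = \phi_{m,n}(g)$ immediately.
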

Hence, the PPHE-flux map factors through $\PMap(\G)$, so
we can define the flux map on $\PMap(\G)$ as follows.

\begin{DEF}\label{def:fluxmapPMap}
For each nonempty proper clopen subset $\cE$ of $E_\ell$, we define the \textbf{flux map} as:
\begin{align*}
    \Phi_\cE:\PMap(\Gamma) \rightarrow \Z \\
    [f] \mapsto \phi_\cE(f),
\end{align*}
which is a well-defined homomorphism by \Cref{prop:fluxmapsWD}.
\end{DEF}

This independence of the choice of admissible pairs further implies the independence of the choice of the basepoint $x_0$.

\begin{LEM}[Independence to choice of $x_0$] \label{LEM:indBP}
    For a nonempty proper clopen subset $\cE$ of $E_\ell$, let $x_0$ and $x_0'$ be two different points that realize the partition $E_\ell = \cE \sqcup (E_\ell \setminus \cE)$. Say $\phi_\cE$ and $\phi_{\cE}'$ are the flux maps constructed from $x_0$ and $x_0'$ respectively, with the same orientation; $E_\ell(\G_+) = E_\ell(\G_+') = \cE$. Then $\phi_{\cE} = \phi_{\cE}'$.
\end{LEM}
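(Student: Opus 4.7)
The plan is to use the flexibility afforded by \Cref{LEM:ind} to pick admissible pairs at the two basepoints that witness the same subgraphs of $\G$, then transport the corank computation between them via change-of-basepoint.

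Given $[f] \in \PMap(\G)$, first choose a representative $f \in \PPHE(\G)$ fixing both $x_0$ and $x_0'$; such a representative exists because one can properly homotope any representative to fix a prescribed finite set of points (by applying the homotopy extension property on the discrete subspace $\{x_0, x_0'\}$). Put $\G$ in a standard form whose maximal tree $T$ contains both basepoints together with a path $\alpha$ from $x_0$ to $x_0'$. Since the orientations agree, namely $E_\ell(\G_+) = E_\ell(\G_+') = \cE$, the filtrations $\{\G_n\}$ at $x_0$ and $\{\G_n'\}$ at $x_0'$ are cofinal exhaustions of $\G$ in the same direction. Hence, given any admissible pair $(m,n)$ for $f$ at $x_0$, we may enlarge $m$ and decrease $n$ (preserving admissibility by \Cref{LEM:ind}) to find indices $(m',n')$ admissible for $f$ at $x_0'$ with $\G_m = \G_{m'}'$ and $\G_n = \G_{n'}'$ as subgraphs of $\G$.

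Once the underlying subgraphs coincide, the change-of-basepoint isomorphism $\iota_\alpha \colon \pi_1(\G_m, x_0) \to \pi_1(\G_m, x_0')$ identifies $A_m$ with $A_{m'}'$ and sends $A_n$ to a subgroup of $A_{m'}'$ conjugate to $A_{n'}'$, since both are the fundamental group of the same subgraph based at different points. Because $f$ fixes both basepoints, the induced maps $f_\ast$ at $x_0$ and at $x_0'$ are related by $\iota_\alpha$ up to an inner automorphism, so $\iota_\alpha(f_\ast(A_n))$ is conjugate to $f_\ast(A_{n'}')$ in $A_{m'}'$. Since corank is preserved under group isomorphism and unaffected by conjugating the second argument (the normal closure is unchanged), we conclude $\cork(A_m, A_n) = \cork(A_{m'}', A_{n'}')$ and $\cork(A_m, f_\ast(A_n)) = \cork(A_{m'}', f_\ast(A_{n'}'))$, yielding $\phi_{\cE}(f) = \phi_{\cE}'(f)$.

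The main technical obstacle lies in the exact subgraph matching of the two filtrations, because the metric balls $B_n(x_0)$ and $B_n(x_0')$ do not naturally align. The resolution is to invoke cofinality together with \Cref{LEM:ind} to promote any near-matching to an exact one; if a full match cannot be arranged, one instead sandwiches $\G_{n_1}' \subset \G_n \subset \G_{n_2}'$, uses admissibility to bound the rank differences, and verifies that the resulting discrepancy contributes equally to both terms of the corank difference and hence cancels.
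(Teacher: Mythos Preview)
Your approach is in the same spirit as the paper's---compare the two filtrations and invoke \Cref{LEM:ind}---but you are working harder than necessary, and the hard part you identify is left unresolved.

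The paper sidesteps your ``main technical obstacle'' entirely. Rather than attempting to match the subgraphs $\G_m=\G_{m'}'$ and $\G_n=\G_{n'}'$, it observes that the two basepoints cut $\G$ into three pieces: two infinite-rank pieces realizing $\cE$ and $E_\ell\setminus\cE$, and a finite-rank middle piece $M$ between $x_0$ and $x_0'$. Because corank only sees the core graphs (the loops), and both filtrations grow in the same direction through the same loops outside $M$, there is a single integer shift $k$ with $A_{n+k}=A_n'$ for \emph{all} $n$---specifically the $k$ for which $\G_k$ and $\G_0'$ have the same core graph. Once this uniform shift is in hand, an admissible pair $(m,n)$ at $x_0$ yields the admissible pair $(m-k,n-k)$ at $x_0'$ with identical corank terms, and \Cref{LEM:ind} finishes. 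No subgraph matching, no sandwich, no explicit change-of-basepoint bookkeeping is needed.

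Your fallback sandwich argument is plausible in outline but is only asserted, not executed: the claim that ``the resulting discrepancy contributes equally to both terms of the corank difference and hence cancels'' is exactly the content that needs proof, and carrying it out would amount to the rank-counting that the paper's shift argument packages in one line. As written, this is a gap. The change-of-basepoint discussion in your second paragraph is correct but superfluous once the free-factor alignment is established; the paper suppresses it because the equality $A_{n+k}=A_n'$ already absorbs the basepoint change at the level of free factors.
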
 

\begin{proof}
    Note $x_0$ and $x_0'$ together cut $\G$ into three parts (not necessarily connected), where two of them are of infinite rank and realize $\cE$ and $E_\ell \setminus \cE$ respectively, and the middle part is of finite rank (but not necessarily compact), and we call it $M$. 
    
    Let $\{\G_n\}$ and $\{\G_n'\}$ be the chains of graphs used to define $\phi_\cE$ and $\phi_{\cE}'$ respectively. Then since $\phi_\cE$ and $\phi_{\cE'}$ are in the same direction, there exists $k \in \Z$ such that $A_{n+k} = A_{n}'$ for all $n \in \Z$. To be precise, this holds for $k$ such that $\G_{k}$ and $\G_{0}'$ have the same core graph. Now, given $f \in \PMap(\G)$ and an admissible pair $(m,n)$ for $f$ at $x_{0}$, the pair $(m-k,n-k)$ is admissible for $f$ at $x_{0}'$. Then\
    \begin{align*}
    (\phi_{\cE})_{m,n}(f) &= \cork(A_m,A_n) - \cork(A_m,f_*(A_n))\\
    &= \cork(A'_{m-k},A'_{n-k}) - \cork(A'_{m-k}, f_*(A'_{n-k}))
    =(\phi_{\cE}')_{m-k,n-k}(f).
    \end{align*}
    
    All in all, the independence of the choice of admissible pairs by \Cref{LEM:ind} proves that $\phi_{{\cE}}(f) = \phi_{{\cE}}'(f)$. Since $f$ was chosen arbitrarily, this concludes the proof.
\end{proof}

Therefore, for each nonempty proper clopen subset $\cE$ of $E_\ell$, we can write the resulting flux map as $\phi_{\cE}$ without specifying $x_0$.

We end this subsection by exploring basic properties of flux maps, to be used in subsequent subsections. Note that flux maps inherit the group operation from $\Hom(\PMap(\G),\Z)$; pointwise addition.

\begin{PROP} \label{prop:fluxProperties}
    Let $\cE \subset E_\ell$ be a nonempty proper clopen subset of $E_\ell$, where $|E_\ell| \ge 2$. Let $A,B$ and $B'$ be nonempty proper clopen subsets of $E_\ell$, such that $A$ and $B$ are disjoint, and $B$ is a proper subset of $B'$. Then the following hold:
    \begin{PROPenum}
        \item \label{prop:fluxcomplement} $\Phi_{\cE^c} = -\Phi_\cE$.
        \item \label{prop:flux_disjoint_sets} $\Phi_{A \sqcup B} = \Phi_A + \Phi_B.$
        \item \label{prop:fluxdifference} $\Phi_{B' \setminus B} = \Phi_{B'} - \Phi_{B}$.
    \end{PROPenum}
\end{PROP}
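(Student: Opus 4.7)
Item (iii) follows immediately from (ii): since $B' = B \sqcup (B' \setminus B)$ is a disjoint union of clopen subsets of $E_\ell$, (ii) gives $\Phi_{B'} = \Phi_B + \Phi_{B' \setminus B}$, which rearranges to (iii). So the plan is to prove (i) and (ii) by setting up a \emph{common} computation. Using \Cref{LEM:indBP} and standard form, I would choose a single basepoint $x_0$ such that $\G \setminus \{x_0\}$ realizes every partition of $E_\ell$ appearing in the identity. Given $f \in \PPHE(\G)$, by enlarging $m$ and decreasing $n$ I can find a pair $(m, n)$ with $n < 0 < m$ that is simultaneously admissible for $f$ in every relevant setup. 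The identity then becomes a comparison of corank formulas sharing the same $x_0$ and $(m, n)$.

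For (ii), I would choose $x_0$ so that the components of $\G \setminus \{x_0\}$ split into three types by whether their ends lie in $A$, $B$, or $(A \sqcup B)^c$ (their unions denoted $\G_+^A$, $\G_+^B$, $\G_-^{A \sqcup B}$), giving $\G_+^{A \sqcup B} = \G_+^A \cup \G_+^B$, $\G_-^A = \G_-^{A \sqcup B} \cup \G_+^B$, and $\G_-^B = \G_-^{A \sqcup B} \cup \G_+^A$. For $n < 0 < m$, the corank $\cork(A_m^S, A_n^S)$ decomposes into loop counts in $\G_-^S \cap B_{|n|}(x_0)$ and $\G_+^S \cap B_m(x_0)$, and a direct count using the three-fold decomposition shows that $\cork(A_m^A, A_n^A) + \cork(A_m^B, A_n^B) - \cork(A_m^{A \sqcup B}, A_n^{A \sqcup B})$ equals an $f$-independent constant (essentially the number of loops of $\G$ in $B_{|n|}(x_0)$). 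The same identity with $f_*$ applied to each $A_n^S$ should give the same constant, and subtracting yields (ii).

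For (i), I would use the same $x_0$ realizing $E_\ell = \cE \sqcup \cE^c$; the setup for $\Phi_{\cE^c}$ is obtained from that of $\Phi_\cE$ by swapping $\G_-$ and $\G_+$. For common admissible $(m, n)$ with $n < 0 < m$, an analogous count shows $\cork(A_m^\cE, A_n^\cE) + \cork(A_m^{\cE^c}, A_n^{\cE^c})$ is an $f$-independent loop count (in $B_m(x_0) \cup B_{|n|}(x_0)$), and matching this with the $f_*$-applied version gives $\Phi_\cE + \Phi_{\cE^c} = 0$. The main obstacle in both parts is the $f_*$-applied corank identity, since $f_*(A_n^S)$ is not the fundamental group of a subgraph and cannot be counted geometrically. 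The key input will be a Grushko-type rank additivity for finite-corank free factors of a (possibly infinite-rank) free group, combined with the compatibilities $f_*(A_n^A \cap A_n^B) = f_*(A_n^A) \cap f_*(A_n^B)$ and $A_m^{A \sqcup B} = A_m^A \cap A_m^B$ inherited from the graph structure.
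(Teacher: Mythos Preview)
Your reduction of (iii) to (ii) matches the paper, and your setup with a single $x_0$ and a common admissible pair $(m,n)$ is sound: admissibility is stable under increasing $m$ and decreasing $n$, and in standard form the identities $A_m^{A\sqcup B}=A_m^A\cap A_m^B$ and $f_*(A_n^A\cap A_n^B)=f_*(A_n^A)\cap f_*(A_n^B)$ do hold (the first because all of these free factors are generated by subsets of one fixed basis of loops, the second because $f_*$ is an isomorphism). The loop-counting computation for the unperturbed coranks is also correct.

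The gap is exactly where you flag it. The statement you need is not a standard ``Grushko-type'' fact, and the two compatibilities you list are not enough by themselves: knowing $P_{A\sqcup B}=P_A\cap P_B$ and $Q_{A\sqcup B}=Q_A\cap Q_B$ does \emph{not} let you split $\cork(Q_A,P_A)$ along $Q_A=Q_{A\sqcup B}*F(L_b^{\ge m})$, because $P_A=f_*(A_n^A)$ need not decompose compatibly with that splitting. What actually makes your identity go through is the extra input that $\langle P_A,P_B\rangle=f_*\bigl(\langle A_n^A,A_n^B\rangle\bigr)$ is a free factor of $\pi_1(\G)$ of \emph{fixed} finite corank $|L\cap B_{|n|}|$; passing to abelianizations and applying the snake lemma to the two short exact sequences $0\to W_{A\sqcup B}\to W_A\oplus W_B\to W_A+W_B\to 0$ and $0\to V_{A\sqcup B}\to V_A\oplus V_B\to F^{\mathrm{ab}}\to 0$ then yields $\cork(Q_A,P_A)+\cork(Q_B,P_B)-\cork(Q_{A\sqcup B},P_{A\sqcup B})=|L\cap B_{|n|}|$, independent of $f$. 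An analogous argument handles (i). So your route can be completed, but the crucial lemma has to be stated and proved rather than invoked.

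For comparison, the paper sidesteps this algebra entirely. For (ii) it first proves $\Phi_{(A\sqcup B)^c}=\Phi_{A^c}+\Phi_{B^c}$: since $A$ and $B$ are disjoint, one can choose admissible pairs for $\Phi_{A^c}$ and $\Phi_{B^c}$ whose subgraphs have \emph{contractible} intersection, so the relevant fundamental groups sit in genuine free-product position and coranks add on the nose, even after applying $f_*$; then (i) flips the signs. For (i) itself the paper picks a proper homotopy inverse $g$ of $f$, chooses $\G_L\subset\G_R$ so that $(\G_R,\G_L)$ is admissible for $\cE$ and $(\G\setminus\G_L,\G\setminus\G_R)$ is admissible for $\cE^c$, and compares three free-factor decompositions of $\pi_1(\G)$ to obtain $\Phi_{\cE^c}(f)=\Phi_\cE(g)=-\Phi_\cE(f)$. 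The paper's device of passing to complements (so the two filtrations become disjoint) is exactly what replaces your rank-additivity lemma.
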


\begin{proof}
    We first note that (iii) follows from (ii), noting that $B' \setminus B$ and $B$ are disjoint. Hence, it suffices to prove (i) and (ii).
    \begin{enumerate}[label=(\roman*)]
        \item Let $f \in \PPHE(\G)$ and $\cE \subset E_\ell$ be a nonempty proper clopen subset. Choose $g \in \PPHE(\G)$ to be a proper homotopy inverse of $f$. 
    Take $\G_L$ and $\G_R$ with $\G_L \subset \G_R$ to be an admissible pair of graphs for $f$ and $g$ with respect to $\cE.$
    Fixing $\G_L$, we can enlarge $\G_R$ so that $(\G \setminus \G_L, \G \setminus \G_R)$ is an admissible pair for $f$ with respect to $\cE^c$. Note $(\G_R, \G_L)$ is still an admissible pair of graphs for $f$ with respect to $\cE$. In summary, we have:
    \begin{itemize}
        \item $f(\G_L) \subset \G_R, \quad g(\G_L) \subset \G_R$
        \item $f(\G \setminus \G_R) \subset \G \setminus \G_L$,
        \item $\cork(\pi_1(\G_R), \pi_1(\G_L))<\infty, \quad \cork(\pi_1(\G_R), f_*(\pi_1(\G_L))) < \infty$.
        \item $\cork(\pi_1(\G_R), g_*(\pi_1(\G_L))) < \infty$.
        \item $\cork(\pi_1(\G \setminus \G_L), \pi_1(\G \setminus \G_R))<\infty, \quad \cork(\pi_1(\G \setminus \G_L), f_*(\pi_1(\G \setminus \G_R))) < \infty$.
    \end{itemize}
    Because $f_*$ is a $\pi_1$-isomorphism, we have the following three different free factor decompositions of $\pi_{1}(\G)$:
    \begin{align*}
        \pi_{1}(\G) &= f_*(\pi_1(\G_R)) \ast f_*(\pi_1(\G \setminus \G_R)), \\
        \pi_{1}(\G) &= \pi_1(\G_R) \ast \pi_1(\G \setminus \G_R), \text{ and} \\
        \pi_{1}(\G) &= \pi_1(\G_L) \ast \pi_1(\G \setminus \G_L).
    \end{align*}
    We also have the free factor decompositions
    \begin{align*}
        f_{*}(\pi_{1}(\G_{R})) &= \pi_{1}(\G_{L}) \ast B, \text{ and}\\
        \pi_{1}(\G\setminus\G_{L}) &= f_{*}(\pi_1(\G\setminus\G_{R})) \ast C,
    \end{align*}
    for some free factors $B$ and $C$ of $\pi_1(\G)$. Putting together these decompositions, we get:
    \begin{align*}
    \pi_{1}(\G) &= \pi_{1}(\G_{L}) \ast B \ast f_*(\pi_1(\G \setminus \G_R)) \\
    \pi_{1}(\G) &= \pi_1(\G_L) \ast f_{*}(\pi_1(\G\setminus\G_{R})) \ast C.
    \end{align*}
    Therefore, we have $\rk(B)=\rk(C)$.
    
    Translating these equalities, we compute:
    \begin{align*}
        \Phi_{\cE^c}(f) &= \cork(\pi_1(\G \setminus \G_L), f_*(\pi_1(\G \setminus \G_R))) - \cork(\pi_1(\G \setminus \G_L), \pi_1(\G \setminus \G_R)) \\
        &= \cork(f_*(\pi_1(\G_R)), \pi_1(\G_L)) - \cork(\pi_1(\G_R),\pi_1(\G_L)) \\
        &= \cork(\pi_1(\G_R), g_*(\pi_1(\G_L))) - \cork(\pi_1(\G_R),\pi_1(\G_L)) \\
        &= \Phi_\cE(g) = -\Phi_\cE(f),
    \end{align*}
where the last equation follows from that $g$ is a proper inverse of $f$ and $\Phi_\cE$ is a homomorphism.
    \item
        Let $f \in \PPHE(\G)$. Choose an $x_{0}$ that determines a partition that is compatible with both $A^{c}$ and $B^{c}$ as in the beginning of this section. Then there exist admissible pairs $(\G_{R_{A^{c}}},\G_{L_{A^{c}}})$ and $(\G_{R_{B^{c}}}, \G_{L_{B^{c}}})$ of $f$ with respect to $A^{c}$ and $B^{c}$ respectively. By taking small enough $\G_{L_{A^{c}}}$ and $\G_{L_{B^{c}}}$, we can ensure that $\G_{R_{A^{c}}}$ and $\G_{R_{B^{c}}}$ have contractible intersection in $\G$; See \Cref{fig:flux_disjoint_sets}.
        \begin{figure}[ht!]
            \centering
            \includegraphics[width=.5\textwidth]{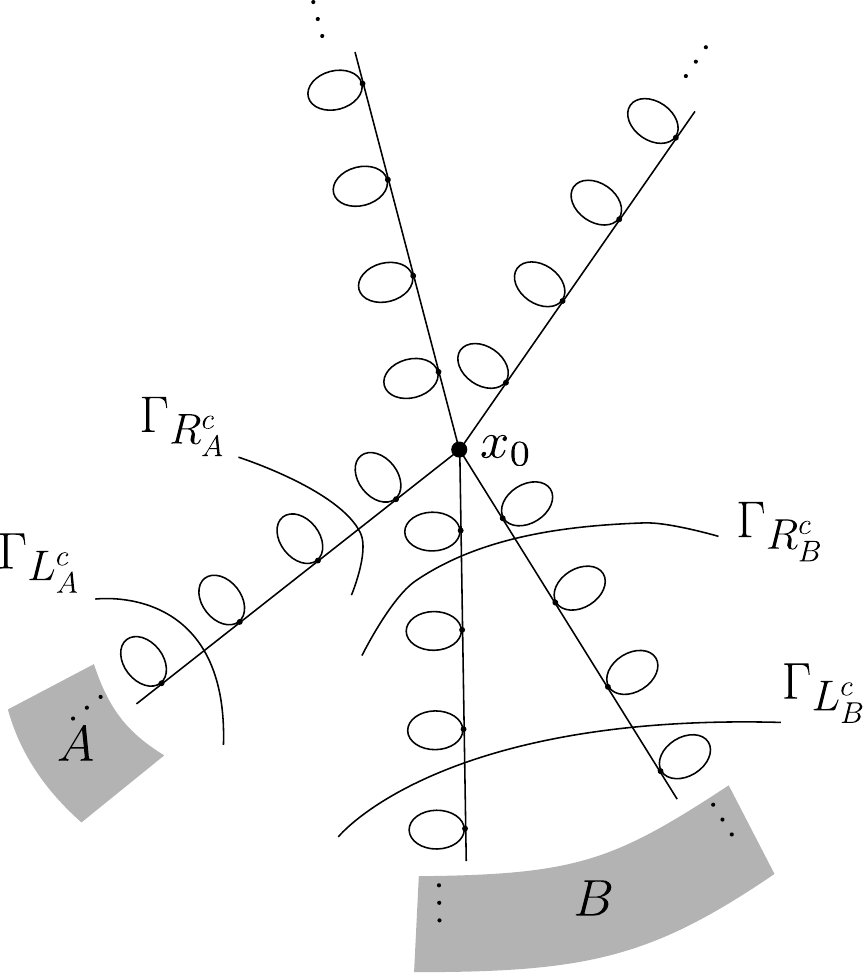}
            \caption{Illustration of the choices of subgraphs for the proof of \Cref{prop:flux_disjoint_sets}. Here the paths from $x_0$ to each subgraph are omitted. We can choose pairs of graphs $(\G_{R_A^c},\G_{L_A^c})$ and $(\G_{R_B^c},\G_{L_B^c})$ such that the graphs from different pairs have contractible intersections.}
            \label{fig:flux_disjoint_sets}
        \end{figure}
    
        Then we observe that $(\G_{R_{A^{c}}} \cup \G_{R_{B^{c}}}, \G_{L_{A^{c}}} \cup \G_{L_{B^{c}}})$ is an admissible pair for $f$ with respect to $A^{c} \cap B^{c} = (A \sqcup B)^{c}$ (still with the basepoint $x_{0}$). We then have a free decomposition
            \[
                \pi_1(\G_{R_{A^{c}}} \cup \G_{R_{B^{c}}}, x_0) \cong \pi_1(\G_{R_{A^{c}}},x_0) \ast \pi_1(\G_{R_{B^{c}}},x_0),
            \]
            and the same for $\pi_1(\G_{L_{A^{c}}} \cup \G_{L_{B^{c}}},x_0)$.
            Finally, we compute 
            \begin{align*}
                \Phi_{(A \sqcup B)^{c}} &=
                \cork\left(A_{R_{A^{c}}} \ast A_{R_{B^{c}}}, f_*(A_{L_{A^{c}}} \ast A_{L_{B^{c}}}) \right) -
                \cork\left(A_{R_{A^{c}}} \ast A_{R_{B^{c}}}, A_{L_{A^{c}}} \ast A_{L_{B^{c}}} \right)\\
                &= \left(\cork(A_{R_{A^{c}}}, f_*(A_{L_{A^{c}}})) + \cork(A_{R_{B^{c}}}, f_*(A_{L_{B^{c}}})) \right)  \\
                &\hspace{20pt}-
                \left(\cork(A_{R_{A^{c}}}, A_{L_{A^{c}}}) + \cork(A_{R_{B^{c}}}, A_{L_{B^{c}}}) \right)\\
                &= \left(\cork(A_{R_{A^{c}}}, f_*(A_{L_{A^{c}}})) - \cork(A_{R_{A^{c}}}, A_{L_{A^{c}}}) \right) \\
                &\hspace{20pt}+
                \left(\cork(A_{R_{B^{c}}}, f_*(A_{L_{B^{c}}})) - \cork(A_{R_{B^{c}}}, A_{L_{B^{c}}}) \right) \\
                &= \Phi_{A^{c}} + \Phi_{B^{c}}.
        \end{align*}
        Finally we apply Part (i) to see that 
        \[
        \Phi_{A \sqcup B} = -\Phi_{(A \sqcup B)^{c}} = -\Phi_{A^{c}} - \Phi_{B^{c}} = \Phi_{A} + \Phi_{B}. \qedhere
        \]
\end{enumerate}
\end{proof}

\begin{RMK}\label{rmk:fluxwholeempty}
    We remark that by \Cref{prop:fluxcomplement} and \Cref{prop:flux_disjoint_sets}, we can even formally define the flux map with respect to the empty set or the whole set $E_\ell$: \[
\Phi_{\emptyset}:= \Phi_A - \Phi_A \equiv 0, \qquad \Phi_{E_\ell} := \Phi_{A} + \Phi_{A^c } \equiv 0.
\] This allows us to define a flux map for any clopen $\cE\subset E$ by $\Phi_{\mathcal{E}}=\Phi_{\mathcal{E}\cap E_{\ell}}$.
\end{RMK}

\subsection{Flux zero maps} \label{ssec:semidirect}

In this section we will prove the following characterization of flux zero maps.

\begin{THM} \label{thm:fluxzeromaps}
    Let $\G$ be a locally finite, infinite graph with $|E_{\ell}(\G)| \geq 2$, and $f \in \PMap(\G)$. Then $f \in \overline{\PMapc(\G)}$ if and only if $\Phi_{\cE}(f) = 0$ for every clopen subset $\cE$ of $E(\Gamma)$.
\end{THM}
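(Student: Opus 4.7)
The forward direction is immediate once we observe that each $\Phi_\cE$ is continuous. Fix a clopen $\cE \subset E_\ell$ and choose an admissible pair $(m,n)$; if $K$ is any compact connected subgraph containing $\G_n$ and the basepoint $x_0$, then every $f \in \cV_K$ has a representative that is the identity on $K$, so $f_*(A_n) = A_n$ and $\Phi_\cE(f) = \cork(A_m, A_n) - \cork(A_m, A_n) = 0$. Thus $\cV_K \subset \ker \Phi_\cE$; since $\Z$ is discrete, this makes $\Phi_\cE$ continuous. The same calculation, enlarging $\G_m$ past the support, shows $\Phi_\cE$ vanishes on $\PMap_c(\G)$, and hence on $\overline{\PMap_c(\G)}$ by continuity. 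For a general clopen $\cE \subset E(\G)$, \Cref{rmk:fluxwholeempty} reduces to the $E_\ell$ case via $\Phi_\cE = \Phi_{\cE \cap E_\ell}$.

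For the converse, I fix a compact exhaustion $K_1 \subset K_2 \subset \cdots$ of $\G$ by connected compact subgraphs with $\G_c \subset K_1$. Since $\{\cV_{K_n}\}$ is a neighborhood basis of the identity in $\PMap(\G)$, it suffices to produce, for each $n$, a compactly supported class $f_n \in \PMap_c(\G)$ with $f_n^{-1}f \in \cV_{K_n}$; then $f_n \to f$ and $f \in \overline{\PMap_c(\G)}$.

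To construct $f_n$, set $K = K_n$ and let $C_1, \dots, C_m$ be the components of $\G \setminus K$ with their clopen end sets $\cE_i \subset E(\G)$. Pick a representative for $f$ and, using properness together with the fact that $f$ fixes every end, choose a compact $L \supset K$ with $f(K) \cup f^{-1}(K) \subset L$ such that $f$ sends each component of $\G \setminus L$ contained in some $C_i$ back into $C_i$. The plan is to properly homotope $f$, relative to $K$, to a representative $f'$ that equals $f$ on $K$ and equals the identity outside some larger compact $L' \supset L$. Then $f_n := [f']$ is compactly supported and, by construction, $f_n^{-1} f$ has a representative that is the identity on $K$ and preserves the components of $\G \setminus K$, placing it in $\cV_K$.

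The main obstacle is producing this relative homotopy. For each $C_i$, the restriction of $f$ to the deep part of $C_i$ is a proper self-homotopy equivalence, and what distinguishes $f$ from the identity on $C_i$ near infinity is a correction localized near $\partial L \cap C_i$. The hypothesis $\Phi_{\cE_i}(f) = 0$ translates, through the admissible-pair definition, into the statement that $f_*$ preserves the relevant free-factor ranks at infinity in $\cE_i$, so no net loop content is pushed across $\partial C_i$. One then realizes this algebraic matching geometrically using the loop swaps and word maps of \Cref{ssec:elements}, both of which already lie in $\PMap_c(\G)$; the crucial absence of loop shifts in the correction is precisely the flux-zero condition. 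Executing this simultaneously across all $m$ components—proceeding one $C_i$ at a time and exploiting the independence of the flux from the choice of admissible pair (\Cref{LEM:ind}) to enlarge compact regions freely—and then assembling the individual compact corrections into a single coherent compactly supported proper homotopy equivalence $f'$ is the principal technical challenge.
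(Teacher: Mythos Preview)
Your forward direction has a slip: the subgraphs $\G_n$ from the flux construction are never compact (each contains the entire tree $T_-$ and reaches out to the ends in $\cE^c$), so no compact $K$ can contain $\G_n$. The underlying idea---that a representative fixing enough of the graph fixes the free factor $A_n$---is correct, but the quantifier order has to be different: given a compact $K$, one chooses $n$ negative enough that the loops of $\G_n$ lie outside $K$, not the other way around. Similarly, in your backward direction you write ``with $\G_c \subset K_1$''; this is impossible, since $|E_\ell|\ge 2$ forces $\G$ to have infinite rank and hence a non-compact core. This slip is harmless for the rest of your sketch, but it should be removed.

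The substantive gap is that the converse is not actually proved. Your final paragraph correctly identifies what has to happen---match ranks on each complementary piece using $\Phi_{\cE_i}(f)=0$, then realize this geometrically---but then says that assembling the pieces ``is the principal technical challenge'' and stops. That challenge is the entire content of the theorem. In particular, you claim to properly homotope $f$, rel $K$, to an $f'$ that equals $f$ on $K$ and is the identity outside a larger compact; producing such an $f'$ while ensuring that $(f')^{-1}f$ \emph{preserves the complementary components of $K$} (which is part of the definition of $\cV_K$) is exactly the delicate point, and invoking ``loop swaps and word maps'' does not explain how to do it.

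The paper avoids this direct construction. For each clopen piece $\cE_i$ it builds an approximation $\psi$ \emph{algebraically}: the hypothesis $\Phi_{\cE_i}(f)=0$ says two coranks agree, which lets one define a $\pi_1$-isomorphism that equals $f_*$ on $A_n$, equals the identity on the complementary free factor, and is an arbitrary isomorphism on the finite-rank middle piece. This is then realized as a map via the $K(G,1)$ property (\Cref{lem:AlgToGeom}), and a limit over $n$ produces $\psi$ with $\psi f^{-1}\in\cV_K$. Crucially, $\psi$ is built to agree with $f$ toward $\cE_i^c$, so its flux with respect to every $\cE_j\subset\cE_i^c$ is unchanged; this lets one iterate over $i=1,\dots,m$. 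The paper's remark after \Cref{lem:zeroflux} explicitly explains that the direct ``by-hand'' construction you are attempting is what makes guaranteeing the complementary-component condition finicky, and that the convergence argument is chosen precisely to sidestep it. Your outline is pointing in a plausible direction, but as written it does not constitute a proof.
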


We have proved the forward direction already in a previous paper.
\begin{PROP}[{\cite[Proposition 7.13]{DHK2023}}] \label{prop:compactFluxZero}
    If $f \in \overline{\PMap_c(\G)}$, then $\Phi_\cE(f)=0$ for every clopen subset $\cE$ of $E(\G)$. 
\end{PROP}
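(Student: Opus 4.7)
The plan is to establish two facts: (a) $\Phi_\cE$ vanishes on $\PMap_c(\G)$, and (b) $\Phi_\cE$ is continuous on $\PMap(\G)$. Together these imply $\ker \Phi_\cE$ is a closed subgroup of $\PMap(\G)$ that contains $\PMap_c(\G)$, and therefore contains its closure $\overline{\PMap_c(\G)}$. By \Cref{rmk:fluxwholeempty} we may assume throughout that $\cE \cap E_\ell$ is a nonempty proper clopen subset of $E_\ell$; otherwise $\Phi_\cE \equiv 0$ and the statement is vacuous.

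For (a), recall from \Cref{rmk:compactMCG} that any $[f] \in \PMap_c(\G)$ has a representative $f$ totally supported on some connected compact subgraph $K$, so that $f(K) = K$ and $f|_{\G \setminus K} = \id$. Using the chain $\{\G_n\}$ that defines $\Phi_\cE$, choose $L$ with $K \subset \G_L$, and then $R > L$ so that $(R,L)$ is admissible for $f$. Total support forces $f(\G_L) = \G_L$ setwise, which, combined with $f$ being a proper homotopy equivalence, gives $f_*(A_L) = A_L$ as subgroups of $\pi_1(\G, x_0)$. Hence $\cork(A_R, f_*(A_L)) = \cork(A_R, A_L)$, and so $\Phi_\cE(f) = 0$.

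For (b), since $\Z$ is discrete it suffices to show that $\Phi_\cE$ is locally constant. Fix $[f] \in \PMap(\G)$ and choose an admissible pair $(R,L)$ for $f$. Consider the open neighborhood $U \defeq [f] \cdot \cV_{\G_R}$ of $[f]$. For any $[g] \in U$, write $[g] = [f][h]$ with $[h] \in \cV_{\G_R}$, and pick a representative $h$ of $[h]$ that is the identity on $\G_R$ and preserves complementary components of $\G_R$ setwise. Because $h$ fixes $\G_R$ pointwise and $A_L \subset A_R$, we have $h_*(A_L) = A_L$; therefore $g_*(A_L) = f_*(h_*(A_L)) = f_*(A_L)$, the same pair $(R,L)$ remains admissible for $g$, and
\[
\Phi_\cE(g) \;=\; \cork(A_R, A_L) - \cork(A_R, g_*(A_L)) \;=\; \Phi_\cE(f).
\]

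The main technical point will be justifying $f_*(A_L) = A_L$ in step (a). The containment $f_*(A_L) \subset A_L$ is immediate from $f(\G_L) \subset \G_L$. The reverse inclusion will follow because a proper homotopy inverse of $f$ may also be chosen totally supported in $K \subset \G_L$ (so that the same argument applied to the inverse gives $A_L \subset f_*(A_L)$), or equivalently by using the free-factor decomposition $\pi_1(\G, x_0) = A_L \ast B_L$ and the fact that $f_*$ is a $\pi_1$-automorphism restricting to the identity on a free complement of $A_L$ generated by loops that avoid $K$. Total support of the representative (rather than mere compact support) is used only in this step.
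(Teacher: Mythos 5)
The paper does not reprove this statement; it simply quotes \cite[Proposition 7.13]{DHK2023}, so your argument has to stand on its own. Your overall strategy --- show $\Phi_\cE$ vanishes on $\PMapc(\G)$, show $\Phi_\cE$ is continuous, and conclude that $\ker\Phi_\cE$ is a closed subgroup containing $\overline{\PMapc(\G)}$ --- is sound, and step (a) is essentially correct: for a representative totally supported on a compact $K\subset\G_L$ one gets $f(\G_L)=\G_L$, and a totally supported proper homotopy inverse gives $f_*(A_L)=A_L$ up to conjugation by an element of $\pi_1(K,x_0)$, which is harmless since coranks depend only on normal closures. One caveat: your alternate justification that $f_*$ ``restricts to the identity on a free complement of $A_L$'' is not literally true --- $f$ fixes the loops outside $K$ pointwise but not the tree paths joining them to $x_0$, so on those generators $f_*$ acts by conjugation by elements of $\pi_1(K,x_0)$. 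This still yields the conclusion, but should be stated correctly.

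The genuine gap is in step (b). The subgraph $\G_R$ is \emph{not} compact: by construction $\G_R=\overline{\G_-\cup B_R(x_0)}$ contains all of $\G_-$, which has infinite rank. So $\cV_{\G_R}$ is not one of the basic open sets $\cV_K$ ($K$ compact), and $[f]\cdot\cV_{\G_R}$ is not a neighborhood of $[f]$ in the compact-open topology; you have shown $\Phi_\cE$ is constant on a set with no reason to contain an open set, so local constancy --- hence continuity --- is not established. Two repairs are available. The quick one is Dudley's automatic continuity theorem \cite{dudley1961}, which the paper itself invokes in the proof of \Cref{thm:thetaSurjective}: every homomorphism from a Polish group to $\Z$ is continuous, so (b) comes for free. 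The hands-on one: take any compact connected $K'$ containing $x_0$; then every component of $\G\setminus K'$ lies entirely in $\G_+\setminus\{x_0\}$ or in $\G_-\setminus\{x_0\}$, so a representative $v$ of an element of $\cV_{K'}$ fixes $K'$ pointwise and maps each such component into itself, whence $v_*(A_0)\subset A_0$ for $\G_0=\G_-$ (and equality follows by applying the same to $v^{-1}$). The admissible pair $(m,0)$ then gives $\Phi_\cE(v)=0$, and since $\Phi_\cE$ is a homomorphism (\Cref{prop:fluxmapsWD}) it is constant on the genuinely open coset $[f]\cdot\cV_{K'}$, which is the local constancy you wanted.
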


We will first assume that $\G$ is a core graph, i.e., $E_{\ell}(\G)=E(\G)$. For brevity, we will temporarily drop the subscript $\ell$ for $E_\ell$ while we work under this assumption. To leverage the algebraic information (flux 0) to obtain topological information (homotopy equivalence), we need the following fact:

\begin{LEM}[{\cite[Proposition 1B.9]{Hatcher2002}}] \label{lem:AlgToGeom}
    Let $X$ be a connected CW complex and let $Y$ be $K(G,1)$. Then every homomorphism $\pi_1(X,x_0) \to \pi_1(Y,y_0)$ is induced by a continuous map $(X,x_0) \to (Y,y_0)$ that is unique up to homotopy fixing $x_0$.
\end{LEM}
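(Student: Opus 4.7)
The plan is to build $f\colon X \to Y$ by obstruction theory on the CW structure of $X$, using that a $K(G,1)$ has vanishing higher homotopy groups. First I would reduce to the case where $X^{(0)} = \{x_0\}$ by collapsing a maximal tree in the $1$-skeleton; this does not change the homotopy type of $X$ and lets me read each $1$-cell as a loop at $x_0$ representing a generator of $\pi_1(X^{(1)}, x_0)$. Set $f(x_0) = y_0$.

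Next I would define $f$ on the $1$-skeleton: for each $1$-cell $e_\alpha$, viewed as a loop $\gamma_\alpha$ at $x_0$, choose any loop in $Y$ based at $y_0$ representing the class $\phi([\gamma_\alpha]) \in \pi_1(Y, y_0)$, and set $f|_{e_\alpha}$ equal to that loop. By construction the induced map on $\pi_1(X^{(1)}, x_0)$ agrees with $\phi$ after the surjection $\pi_1(X^{(1)}, x_0) \twoheadrightarrow \pi_1(X, x_0)$. To extend over $2$-cells: the attaching circle of each $2$-cell $D^2_\beta$ represents a relator $w_\beta$ that is trivial in $\pi_1(X, x_0)$, hence $\phi(w_\beta)=1$ in $\pi_1(Y, y_0)$, so the composition $\partial D^2_\beta \to X^{(1)} \xrightarrow{f} Y$ is null-homotopic and $f$ extends continuously over $D^2_\beta$. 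For cells of dimension $n \geq 3$, each attaching map composed with $f$ yields an element of $\pi_{n-1}(Y) = 0$, so the extension exists inductively and produces $f$ on all of $X$.

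For the uniqueness clause I would run the same obstruction-theoretic procedure on $X \times I$ with its product CW structure, constructing a homotopy $H\colon X \times I \to Y$ with $H_0 = f_0$, $H_1 = f_1$, and $H(x_0, t) = y_0$. On each $2$-cell $e_\alpha \times I$, the loops $f_0|_{e_\alpha}$ and $f_1|_{e_\alpha}$ both represent $\phi([\gamma_\alpha])$ and so are homotopic in $Y$ rel endpoints; picking such a homotopy defines $H$ there. The obstructions to extending $H$ over cells of dimension $n \geq 3$ again live in $\pi_{n-1}(Y)=0$ and hence vanish.

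The only real subtlety is the bookkeeping at the $2$-cell stage, where one must check that the extensions chosen on the boundaries of higher cells are consistent; everything else is automatic from the vanishing of the higher homotopy groups of $Y$. In the setting relevant to this paper, both $X$ and $Y$ are graphs, so the entire argument collapses to the construction on the $1$-skeleton together with the $2$-cell step for a homotopy, which makes it particularly clean.
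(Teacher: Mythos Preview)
The paper does not give its own proof of this lemma; it is simply quoted as \cite[Proposition 1B.9]{Hatcher2002} and used as a black box. Your argument is the standard obstruction-theoretic proof found in Hatcher---collapse a maximal tree to reduce to a single $0$-cell, define $f$ on $1$-cells via representatives of $\phi$, then extend cell-by-cell using $\pi_{n}(Y)=0$ for $n\geq 2$---and it is correct. The uniqueness step is also fine: once $X$ has a single vertex, each $1$-cell $e_\alpha$ is a based loop in $X$, so $[f_0|_{e_\alpha}]$ and $[f_1|_{e_\alpha}]$ are both computed in $\pi_1(X,x_0)$ (not merely in $\pi_1(X^{(1)},x_0)$), whence they agree and the extension over $e_\alpha\times I$ goes through. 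Your closing remark is apt: in the paper's application both $X$ and $Y$ are graphs, so only the $1$-skeleton construction and the $2$-cell step for the homotopy are actually needed.
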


Recall that a graph is $K(F,1)$ for $F$ a free group (the fundamental group of the graph). Now we prove a preliminary lemma to construct a compact approximation of a proper homotopy equivalence.
\begin{LEM} \label{lem:zeroflux}
    Let $\cE \subset E(\G)$ be a nonempty proper clopen subset and $f \in \PMap(\G)$. If $\Phi_{\cE}(f) = 0$, then given any compact $K \subset \G$, there exists $\psi \in \PMap(\G)$ such that
    \begin{enumerate}[label=(\roman*)]
        \item \textrm{\bf (Compact approximation)} 
            $\psi f^{-1} \in \cV_{K}$,
        \item \textrm{\bf (Truncation)} there exist disjoint subgraphs $\G_{\cE}$, and $\G_{\cE^{c}}$ of $\G$ with end spaces $\cE$ and $\cE^{c}$ respectively, such that 
            $\psi\vert_{\G_{\cE}} = \id$ and $\psi\vert_{\G_{\cE^{c}}} = f\vert_{\G_{\cE^{c}}}$, and
        \item \textrm{\bf (Same flux)}
            $\Phi_{\eta}(\psi) = \Phi_{\eta}(f)$ for every clopen subset $\eta \subset E(\G) \setminus \cE$.
    \end{enumerate}
\end{LEM}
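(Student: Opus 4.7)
The plan is to build $\psi$ by gluing $f$ on a large compact piece of the $\cE^c$-side to the identity far out on the $\cE$-side, with a controlled interpolation on the finite-rank annulus in between; the hypothesis $\Phi_\cE(f)=0$ provides exactly the rank equality that makes this interpolation realizable as a proper homotopy equivalence. Put $\G$ in standard form with a basepoint $x_0$ realizing $\cE \sqcup \cE^c$, and use the exhaustion $\{\G_n\}$ from \cite{DHK2023}, so that each $\G_n$ has end space $\cE^c$. By properness of $f$ and \cite[Corollary 7.8]{DHK2023}, first pick $n$ large enough that $K \cup f^{-1}(K) \subset \G_n$, then pick $m > n$ so that $(m,n)$ is admissible for $f$ and $f(\G_n) \subset \G_m$. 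Writing $A_m = A_n * P = f_*(A_n) * P'$, the hypothesis $\Phi_\cE(f) = 0$ gives $\cork(A_m,A_n) = \cork(A_m,f_*(A_n))$, i.e.\ $\rk P = \rk P'$, so any isomorphism $\rho \colon P \to P'$ combines with $f_*|_{A_n}$ to produce an automorphism $\theta$ of $A_m$ carrying $A_n$ onto $f_*(A_n)$.

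Next I realize $\theta$ geometrically via \Cref{lem:AlgToGeom}, obtaining a pointed self-map $\mu \colon (\G_m, x_0) \to (\G_m, x_0)$ with $\mu_* = \theta$. Since $\G_n$ is a $K(A_n,1)$ and $\mu|_{\G_n}$ induces the same $\pi_1$-map as $f|_{\G_n}$, the uniqueness clause of \Cref{lem:AlgToGeom} together with the homotopy extension property of the CW pair $(\G_m,\G_n)$ lets me homotope $\mu$ so that $\mu|_{\G_n}=f|_{\G_n}$. A further finite homotopy on the annulus $\G_m\setminus\G_n$ arranges $\mu=\id$ on the finite vertex set $\partial\G_m$; the rank equality $\rk P=\rk P'$ from zero flux is precisely what leaves enough room in the free factor complement to absorb this boundary correction without disturbing $\mu|_{\G_n}$. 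Now define $\psi$ to be $\mu$ on $\G_m$ and the identity on $\G\setminus\G_m$: continuity at $\partial\G_m$ is immediate, $\psi$ fixes the ends of $\G$, and the induced map $\psi_*$ is an isomorphism, so $\psi\in\PMap(\G)$.

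For the three properties, take $\G_{\cE^c}=\G_n$ and $\G_\cE=\overline{\G\setminus\G_m}$, which are disjoint subgraphs with end spaces $\cE^c$ and $\cE$, giving (ii) directly. For (i), since $\psi|_{\G_n}=f|_{\G_n}$ and $K\cup f^{-1}(K)\subset\G_n$, the composition $\psi f^{-1}$ is homotopic to the identity on $K$, and preservation of the complementary components of $K$ follows from the fact that both $\psi$ and $f$ fix ends, so $\psi f^{-1}\in\cV_K$. For (iii), given clopen $\eta\subset E(\G)\setminus\cE$, use $\Phi_\eta=-\Phi_{\eta^c}$ and compute $\Phi_{\eta^c}$ with a basepoint $x_0'\in\G_n$ realizing $\eta\sqcup\eta^c$ and an exhaustion whose $\eta$-side piece $\G_{n'}''$ lies inside $\G_n$, which is possible because $\eta\subset\cE^c=E(\G_n)$; since $\psi|_{\G_n}=f|_{\G_n}$, the images $\psi_*(\pi_1(\G_{n'}''))$ and $f_*(\pi_1(\G_{n'}''))$ coincide as subgroups, so the corank calculations agree and $\Phi_\eta(\psi)=\Phi_\eta(f)$. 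The main obstacle throughout is arranging $\mu$ to agree with $f$ on $\G_n$ and with the identity on $\partial\G_m$ simultaneously — Hatcher's lemma alone supplies only based homotopy uniqueness, and it is the zero-flux rank equality that removes the obstruction to realizing the required interpolation.
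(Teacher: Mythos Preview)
Your construction is close in spirit to the paper's: both define a piecewise automorphism of $\pi_1(\G)$ (equal to $f_*$ on $A_n$, an arbitrary isomorphism on the finite-rank cofactor, identity beyond $A_m$) and realize it geometrically via \Cref{lem:AlgToGeom}. The difference is in how property~(i) is obtained. The paper does \emph{not} fix $n$ in advance; it builds the map $\psi_n$ for every $n$, observes that $\psi_n \to f$ in $\PMap(\G)$ because $\psi_n$ literally equals $f$ on the exhausting $\G_n$, and then extracts some $\psi_{n'}$ with $\psi_{n'}f^{-1}\in\cV_K$ purely from the definition of convergence. You instead pick one $n$ and argue directly that $\psi f^{-1}\in\cV_K$.

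That direct argument has a gap, and the paper's authors flag precisely this point in a remark following their proof: it is not hard to arrange a representative of $\psi f^{-1}$ that is the identity on $K$, but it is ``significantly more finicky'' to guarantee that the same representative \emph{preserves the complementary components of $K$ setwise}, which is the second clause in the definition of $\cV_K$. Your justification --- that preservation of complementary components ``follows from the fact that both $\psi$ and $f$ fix ends'' --- does not do this. Fixing ends only constrains behaviour at infinity; a representative that is the identity on $K$ can still carry part of one complementary component through $K$ and into another. Concretely, if $g$ is a proper homotopy inverse of $f$, then $\psi g$ agrees with $fg$ only where $g$ lands in $\G_n$, and you have no control over where $g$ sends the portions of complementary components lying outside $\G_n$; the proper homotopy $fg\simeq\id$ need not respect the partition into components of $\G\setminus K$. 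The convergence argument sidesteps all of this because membership in $\cV_K$ is obtained abstractly from the topology, without ever naming a representative. Your arguments for (ii) and (iii) are fine and match the paper's.
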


\begin{proof}
    Let $\{\G_{n}\}_{n \in \Z}$ be as in the definition of $\Phi_{\cE}$, for some choice of basepoint $x_{0}$. Now, given $f \in \PMap(\G)$ and any $n$ there is some $m_{n}>n$ that makes $(m_n,n)$ into an admissible pair for $f$. See \Cref{fig:compactApproximation}.
    
\begin{figure}[ht!]
    \centering
    \includegraphics[width=.6\textwidth]{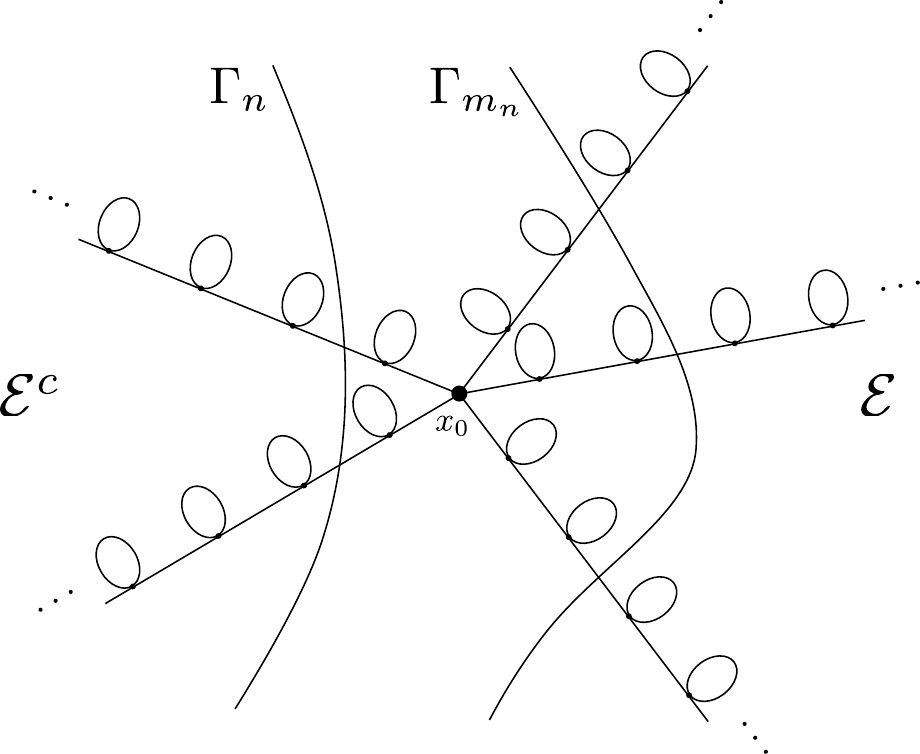}
    \caption{Illustration of $\G_n$ and $\G_{m_n}$ for the flux map $\Phi_{\cE}$. Note $x_0$ is realizing a partition of ends compatible with the partition $\cE \sqcup \cE^c$.}
    \label{fig:compactApproximation}
\end{figure}
    
    Since $\Phi_{\cE}(f) = 0$ we have
    \begin{align} \label{eq:flux0}
\cork(\pi_1(\G_{m_{n}},x_0),\pi_1(\G_{n},x_0)) = \cork(\pi_1(\G_{m_{n}},f(x_0)), f_*(\pi_1(\G_{n},x_0))) \tag{$\ast$}
    \end{align}
    for each $n \in \Z$. This allows us to define an isomorphism $\Psi_{n}: \pi_{1}(\G,x_{0}) \rightarrow \pi_{1}(\G,f(x_{0}))$  for each $n$. Here we use the notation $G \dd H$ to denote the complementary free factor of $H$ in $G$.  Define
    \begin{align*}
        \Psi_{n}=
      \begin{cases}
        \Id & \text{on $\pi_{1}(\Gamma,x_{0}) \dd\ \pi_{1}(\G_{m_{n}},x_{0})$}, \\
        \sigma_n & \text{on } \pi_{1}(\Gamma_{m_{n}},x_{0}) \dd\ \pi_{1}(\G_{n},x_{0}), \\
        f_{*} & \text{on $\pi_{1}(\G_{n},x_{0})$},
      \end{cases}
    \end{align*}
     where $\sigma_n: \pi_{1}(\G_{m_{n}},x_{0}) \dd\ \pi_{1}(\G_{n},x_{0}) \rightarrow \pi_{1}(\G_{m_{n}},f(x_{0})) \dd\ f_{*}(\pi_{1}(\G_{n},x_{0}))$ is any isomorphism. Such $\sigma_n$ is guaranteed to exist by \eqref{eq:flux0}.

    Now by 
    \Cref{lem:AlgToGeom}, for each $n$ there exists a homotopy equivalence $\psi_{n}:(\G,x_{0}) \to (\G,f(x_{0}))$ such that
    \[
      \psi_{n}=
      \begin{cases}
        \Id & \text{on $\G \setminus \G_{m_{n}}$,} \\
        f & \text{on $\G_{n}$}.
      \end{cases}
    \]    
    Also note $\psi_{n}$ is a \emph{proper} homotopy equivalence, as it can be defined in pieces as proper maps.  Further, $\psi_n$ fixes the ends of $\G$, because $f$ does and $\G_{m_n} \setminus \G_n$ is compact. One can similarly define its proper homotopy inverse. Hence, for each $n$ we have $[\psi_n] \in \PMap(\G)$.

    The subgraphs $\{\G_{n}\}_{n \in \Z}$ form an exhaustion of $\G$, so $\psi_{n} \rightarrow f$ in $\PMap(\G)$. Therefore, for a compact $K \subset \G$, there exists an $n'\in \Z$ such that $\psi_{n'}f^{-1} \in \cV_{K}$. Take $\psi = \psi_{n'}$ and set $\G_{\cE^{c}} = \G_{n'}$ and $\G_{\cE} = \overline{\G \setminus \G_{m_{n'}}}$. This gives (i) and (ii) by construction.
    
    We now check that (iii) follows from (ii). Let $\eta$ be a clopen subset of $E(\G)$ that is disjoint from $\cE$. We will actually check that $\Phi_{\eta^{c}}(\psi) = \Phi_{\eta^{c}}(f)$. This will imply (iii) by \Cref{prop:fluxcomplement}.

    Note $\eta \subset \cE^c$. Now let $\G_{m}$ be a subgraph from the definition of $\Phi_{\eta^{c}}$ so that $\G_{m} \subset \G_{\cE^{c}}$.
    Then there exists $n \le m$ such that $(m,n)$ is admissible for $\psi$ with respect to the flux map $\Phi_{\eta^c}$. Since $f=\psi$ on $\G_n \subset \G_m \subset \G_{\cE^{c}}$ by (ii), we see that $\Phi_{\eta^c}(\psi) = \Phi_{\eta^c}(f)$ with the admissible pair of graphs $(\G_m,\G_n)$.
\end{proof}

\begin{RMK}
    The reader may wonder why in the proof above we chose to define this sequence of maps and argue via convergence in place of constructing the map $\psi$ by hand as in \cite{APV2020}. While it is not too difficult to construct a $\psi$ so that $\psi f^{-1}$ is the identity on a given compact $K$, it is significantly more finicky to guarantee that $\psi f^{-1}$ preserves the complementary components of $K$. The convergence argument given above allows us to avoid the messy details of this.
\end{RMK}

\begin{PROP}\label{prop:coregraphcase}
    Let $\G$ be a locally finite, infinite graph with $E(\G) = E_{\ell}(\G)$, $|E(\G)| \geq 2$, and $f \in \PMap(\G)$. If $\Phi_{{\cE}}(f) = 0$ for every clopen subset $\cE$ of $E(\G)$, then $f \in \overline{\PMap_c(\G)}$.
\end{PROP}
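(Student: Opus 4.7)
The strategy is to iterate \Cref{lem:zeroflux}, once per piece of a finite clopen partition of $E(\G)$, to produce a compactly supported approximation of $f$ in each basic neighborhood of the identity. It suffices to show that for every compact $K \subset \G$, there exists $\psi \in \PMapc(\G)$ with $\psi f^{-1} \in \cV_K$.

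Given such a $K$, I would first enlarge it to a compact $K' \supseteq K$ so that $\G \setminus K'$ has finitely many connected components $C_1, \dots, C_N$; this is possible by local finiteness of $\G$. The end spaces $\cE_i := E(C_i)$ form a clopen partition of $E(\G)$ into proper subsets, using $|E(\G)| \ge 2$. I would then inductively build mapping classes $\psi_0 = f, \psi_1, \dots, \psi_N \in \PMap(\G)$ satisfying three invariants: (a) $\psi_i \psi_{i-1}^{-1} \in \cV_K$; (b) $\psi_i$ is the identity on pairwise disjoint subgraphs $\G^{(1)}, \dots, \G^{(i)}$ with $E(\G^{(j)}) = \cE_j$; and (c) $\Phi_{\cE_j}(\psi_i) = 0$ for every $j \in \{1,\dots,N\}$.

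At step $i$, invariant (c) for $\psi_{i-1}$ allows me to apply \Cref{lem:zeroflux} to $\psi_{i-1}$ with clopen $\cE_i$ and compact set $K$, yielding $\psi_i$ that satisfies (a) together with an identity subgraph $\G^{(i)}$ of end space $\cE_i$ and an ``unchanged region'' of end space $\cE_i^c$ on which $\psi_i = \psi_{i-1}$. To secure (b), I would choose the depth parameter $n'$ in the proof of \Cref{lem:zeroflux} large enough that the unchanged region contains all previously built $\G^{(1)}, \dots, \G^{(i-1)}$: since those subgraphs have end spaces in $\cE_i^c$ and compact boundaries in $\G$, this can be arranged. Then $\psi_i = \psi_{i-1} = \id$ on each earlier subgraph, giving (b). For (c), property (iii) of \Cref{lem:zeroflux} gives $\Phi_{\cE_j}(\psi_i) = \Phi_{\cE_j}(\psi_{i-1}) = 0$ for $j \neq i$, while additivity and the complement relation from \Cref{prop:fluxProperties} then yield $\Phi_{\cE_i}(\psi_i) = -\sum_{j \neq i} \Phi_{\cE_j}(\psi_i) = 0$.

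After $N$ steps, $\psi_N$ equals the identity on $\G^{(1)} \cup \cdots \cup \G^{(N)}$, whose complement in $\G$ consists of $K'$ together with the finitely many compact ``middle regions'' produced by the lemma's applications, hence is compact. Thus $\psi_N \in \PMapc(\G)$. Since $\cV_K$ is closed under composition (its elements fix $K$ pointwise and preserve complementary components setwise), we obtain $\psi_N f^{-1} = (\psi_N \psi_{N-1}^{-1}) \cdots (\psi_1 \psi_0^{-1}) \in \cV_K$, as desired. The main obstacle is the geometric bookkeeping in the inductive step: verifying that the depth parameter $n'$ in each application of \Cref{lem:zeroflux} can be chosen both to meet the approximation condition (a) and to contain all earlier identity regions in its unchanged region, while the basepoint $x_0$ for the flux computation can be placed compatibly with the ambient partition (e.g., inside $K'$, realizing the split $\cE_i \sqcup \cE_i^c$).
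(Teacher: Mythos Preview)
Your proposal is correct and follows essentially the same approach as the paper: enlarge $K$ to induce a finite clopen partition of the end space, then iterate \Cref{lem:zeroflux} once per part to truncate $f$ toward the identity on each end, obtaining a compactly supported $\psi_N$ with $\psi_N f^{-1} \in \cV_K$. Your treatment is in fact slightly more careful than the paper's: you explicitly flag the need to preserve the earlier identity regions $\G^{(1)},\dots,\G^{(i-1)}$ at step $i$ by taking the depth parameter $n'$ large enough so that $\G_{\cE_i^c}=\G_{n'}$ contains them (this works because each $\G^{(j)}$ has end space in $\cE_i^c$ and compact boundary, so $\G^{(j)}\cap \G_+$ is compact and eventually absorbed by $B_{n'}(x_0)$), whereas the paper's proof leaves this point implicit in the sentence ``by (ii), we have that $\psi_n$ is equal to the identity on $\bigcup_{i=1}^n \G_{\cE_i}$.''
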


\begin{proof}
    Assume $f \in \PMap(\G)$ has $\Phi_{\cE}(f) = 0$ for every nonempty proper clopen subset $\cE$ of the end space $E(\G)$. Given any compact $K \subset \G$ we will find $\psi \in \PMapc(\G)$ such that $\psi f^{-1} \in \cV_{K}$. 

    Without loss of generality we may enlarge $K$ so that it is connected, has at least two complementary components, and every complementary component of $K$ is infinite. Then the complement of $K$ induces a partition of the ends. Write 
    \[
    \cP_K = \cE_1 \sqcup \ldots \sqcup \cE_n
    \]
    for this partition.

    Apply \Cref{lem:zeroflux} to $f$ using $\cE_{1}$ to obtain $\psi_{1}$. Note that by (iii) we still have $\Phi_{\cE_{2}}(\psi_{1}) = \Phi_{\cE_2}(f) = 0$. Thus we can apply the lemma again to $\psi_{1}$ using $\cE_{2}$ to obtain a $\psi_{2}$. Continue this process recursively to obtain $\psi_{n}$. 

    Now, by (i) of \Cref{lem:zeroflux}, there exist $v_1,\ldots,v_n \in \cV_K$ such that
    \begin{align*}
        \psi_{i} &=
        \begin{cases}
            v_i\psi_{i-1} & \text{for $1 < i \le n$,}\\
            v_1f & \text{for $i=1$}.
        \end{cases}
    \end{align*}
    Putting these together gives $\psi_{n}f^{-1} = v_{n}v_{n-1}\cdots v_{1} \in \cV_{K}$ as $\cV_{K}$ is a subgroup.

    It remains to check that $\psi_{n} \in \PMapc(\G)$. However, by (ii), we have that $\psi_{n}$ is equal to the identity on $\bigcup_{i=1}^{n}\G_{\cE_{i}}$. This exactly covers all of the ends of $\G$ as $\cP_{K}$ was a partition of the ends. Therefore we see that $\psi_{n}$ is supported on $\overline{\bigcap_{i=1}^{n} \G \setminus \G_{\cE_{i}}}$, a compact set. Taking $\psi = \psi_{n}$ gives the desired compact approximation of $f$. 

    Finally, since the $K$ above was taken to be arbitrary, starting with a compact exhaustion of $\G$ we can apply the above to obtain a sequence of compactly supported maps that converge to $f$. 
\end{proof}

Now we turn to the case where $\G$ is not necessarily a core graph. 

\begin{proof}[Proof of \Cref{thm:fluxzeromaps}]
    The forward direction follows from \Cref{prop:compactFluxZero}.

    For the backward direction, we first homotope $f$ so that it fixes the vertices of $\G$. Then we see that we can write $f=f_{T}f_{c}$ where $f_{T}$ has support on $\G \setminus \G_{c}$ and $f_{c}$ has support on $\G_{c}$. 

    We can see that $f_{T} \in \overline{\PMap_{c}(\G)}$. Indeed, enumerate the components of $\G\setminus \G_{c}$ as $\{R_{i}\}_{i\in I}$ where each $R_{i}$ is a tree and $I$ is either finite or $I=\N$. Then we can decompose $f_{T} = \prod_{i\in I} f_{i}$ where each $f_{i}$ has compact support on $R_{i}$. Indeed, each has compact support as $f_{T}$ is proper and thus the pre-image of the cutpoint $\overline{R_{i}} \cap \G_{c}$ is compact and $f_{i}$ can be homotoped to have support contained within the convex hull of this full pre-image of the cutpoint. Furthermore, all of the $f_{i}$ pairwise commute as each $f_{i}$ can be homotoped so that it is totally supported away from the support of each other $f_{j}$. Thus, we see that $f_{T} \in \overline{\PMapc(\G)}$ as it is realized as the limit of partial products of the $f_{i}$. 

    This also shows that given any flux map $\Phi_{\cE}$ we must have that $\Phi_{\cE}(f_{T}) = 0$, again by \Cref{prop:compactFluxZero}. Therefore, given an $\cE$ with $\Phi_{\cE}(f) = 0$ we must have that $\Phi_{\cE}(f_{c})=0$ as $\Phi_\cE$ is a homomorphism. We can then apply \Cref{prop:coregraphcase} to conclude the desired result. 
\end{proof}

\subsection{Space of flux maps}\label{ssec:SpaceOfFluxmaps}

Before we can prove \Cref{thm:semidirectprod} we need to endow the set of flux maps with an algebraic structure. In the surface case, \cite{APV2020} could utilize the first integral (co)homology of separating curves on the surface to give structure to the flux maps they defined. Here we will be using the group of locally constant $\Z$-valued functions on $E_{\ell}(\G)$ in place of the homology of separating curves. We remark that this is really the zeroth \cech{} cohomology of $E_{\ell}(\G)$ with coefficients in the constant sheaf $\Z$. In \Cref{ssec:surfacescohom} we observe that this perspective also works in the surface case. 

For a topological space $X$, we denote by $\ceC(X)$ the group of locally constant $\Z$-valued functions on $X$. The group operation is given by addition of functions. We let $\dC(X) = \ceC(X)/\Z$, the quotient obtained by identifying the constant functions with zero. We will now give a collection of some facts about $\dC(E)$ when $E$ is a compact, totally disconnected, and metrizable space (i.e. a closed subset of a Cantor set).

We identify the Cantor set, $\cC = 2^{\N} = \{0,1\}^{\N}$, with the set of countable binary sequences. A countable basis of clopen sets for the topology is then given by the cylinder sets
    \begin{align*}
        C_{a_{1}\cdots a_{k}} \defeq \{ (x_{n}) \in 2^{\N}\ \vert\ x_{i} = a_{i}, \; i=1,\ldots, k\}
    \end{align*}
where $a_{1}\cdots a_{k}$ is some finite binary sequence of length $k$. Say such a cylinder set has \textbf{width} $k$. For $E$ a closed subset of the Cantor set $\cC$, a \textbf{cylinder set} of $E$ is the intersection of a cylinder set for $\cC$ with $E$, i.e., a set of the form $C_{w} \cap E$ where $w \in 2^{k}$ for some $k \ge 0$. The standard tree model for the Cantor set is the usual rooted binary tree, and for an arbitrary closed subset $E \subset \cC$ we take the subtree with the end space $E$.
Given a subset, $A$, of a topological space we let $\chi_{A}$ denote the indicator function on $A$. 

\begin{THM}[Countable Basis for $\dC(E)$]\label{thm:chombasis}
\setlength{\itemsep}{5pt}
    Let $E$ be a compact, totally disconnected, and metrizable space. There exists a countable collection $\cA=\{A_i\}_{i \in I}$ of cylinder sets of $E$ so that 
    \begin{enumerate}
        \item Any cylinder set $C$ of $E$ that is not in $\cA$ can be written as $C = A_0 \setminus (A_{1} \sqcup \cdots \sqcup A_{n})$ for some $A_0\in \cA$, and some $A_{j} \in \cA$, with $A_j\subset A_0$ and $A_{j} \cap A_{k} = \emptyset$ for all distinct $j,k \in \{1,\ldots,n\}$,
        \item $\mathcal{B} = \{\chi_{A_{i}}\}_{i\in I}$ is a free basis for $\dC(E)$. In particular, $\dC(E) = \oplus_{i\in I} \Z$, and
        \item for $T$ the standard tree model of the end space $(E,\emptyset)$, there exists an injective map $\iota:\cA \rightarrow T$ so that $\iota$ maps into the interior of edges and $\iota(\cA)$ cuts the graph into a collection of one ended graphs. 
    \end{enumerate}
    
\end{THM}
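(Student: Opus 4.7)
The plan is to construct $\cA$ recursively via the standard tree model $T$ of $E$ (rooted at a chosen vertex) and to verify each of the three properties by induction on the structure of $T$. In the base case where $E$ is a singleton, $\dC(E) = 0$ and $\cA = \emptyset$ works vacuously. Otherwise, since $T$ is a subtree of the binary tree with no leaves, it has a first branching vertex $v^{*}$ closest to the root, with two child subtrees $T_{L}, T_{R}$ and end spaces $E_{L}, E_{R}$ satisfying $E = E_{L} \sqcup E_{R}$. I would add $E_{R}$ to $\cA$, set $\iota(E_{R})$ to be an interior point of the edge from $v^{*}$ into $T_{R}$, and recurse on $T_{L}, T_{R}$ to obtain $\cA_{L}, \cA_{R}$; then $\cA = \{E_{R}\} \cup \cA_{L} \cup \cA_{R}$ is countable since $T$ has countably many vertices.

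For property (3), I would apply the inductive hypothesis to $T_{L}, T_{R}$: the components of $T_{L} \setminus \iota(\cA_{L})$ and $T_{R} \setminus \iota(\cA_{R})$ are rays, and the component of $T \setminus \iota(\cA)$ containing the root is the concatenation of the root-to-$v^{*}$ path, the left-child edge at $v^{*}$, and the root ray of $T_{L} \setminus \iota(\cA_{L})$ — again a ray. For property (2), I would combine the short exact sequence $0 \to \Z \to \ceC(E) \to \dC(E) \to 0$ with the direct sum $\ceC(E) = \ceC(E_{L}) \oplus \ceC(E_{R})$ to obtain a splitting $\dC(E) \cong \Z [\chi_{E_{R}}] \oplus \dC(E_{L}) \oplus \dC(E_{R})$, then feed in the inductive free bases of $\dC(E_{L}), \dC(E_{R})$ indexed by $\cA_{L}, \cA_{R}$ to produce a free basis of $\dC(E)$ indexed by $\cA$.

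For property (1), any cylinder $C$ of $E$ lies in $E_{L}$, lies in $E_{R}$, or equals $E$. If $C \subseteq E_{R}$ and $C \notin \cA$ (so $C \neq E_{R}$), I would apply the inductive hypothesis inside $T_{R}$ to write $C = A_{0} \setminus (\bigsqcup_{j} A_{j})$ with each $A_{i} \in \cA_{R} \subseteq \cA$. If $C \subseteq E_{L}$, I would proceed similarly in $T_{L}$, rewriting $E_{L} = E \setminus E_{R}$ when the construction produces $A_{0} = E_{L}$ so that the decomposition sits inside $\cA$.

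The main obstacle is the edge case in property (1) where $C$ contains the end of the ``root ray'' of $T \setminus \iota(\cA)$, in particular when $C = E$ itself: no element of $\cA$ properly contains such $C$. The statement of (1) should be read as implicitly permitting $A_{0} = E$, corresponding to the one component of $T \setminus \iota(\cA)$ — the root ray — that is not in bijection with $\cA$ under property (3). Once this convention is adopted, the remainder of the proof is careful bookkeeping through the inductive step, and the injectivity of $\iota$ together with the tree structure ensures all the required disjointness and containment conditions.
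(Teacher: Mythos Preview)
Your construction produces the same collection $\cA$ as the paper (the paper's cylinders $C_{a_1\cdots a_{k-1}0}$ are exactly the ``right-child end spaces at each branching vertex'' up to relabeling), but the arguments are organized differently. The paper works explicitly with cylinder words: it proves spanning via the identity $\chi_{C_{a_1\cdots a_k}} = \chi_{C_{a_1\cdots a_m}} - \sum_{j=m}^{k-1}\chi_{C_{a_1\cdots a_j 0}}$, proves linear independence by induction on the number of terms (evaluating a purported relation at the sequence $w\bar{1}$), and then passes to general $E$ by intersecting cylinders with $E$ and deleting duplicates. Your route through the splitting $\dC(E) \cong \Z[\chi_{E_R}] \oplus \dC(E_L) \oplus \dC(E_R)$ is more structural and handles arbitrary $E$ directly without first treating the full Cantor set. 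You have also correctly spotted that property~(1) as stated needs $A_0 = E$ to be permitted; the paper's own formula does exactly this (taking $C_\emptyset = \cC$ when $m=0$), and the only later use of (1), in \Cref{lem:zerofluxbasis}, is unaffected since $\Phi_{E_\ell} \equiv 0$.

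The one point that is genuinely incomplete is your invocation of an ``inductive hypothesis'' for $T_L$ and $T_R$: these subtrees can themselves be infinite, so there is no well-founded descent on subtrees and the recursion as written never reaches your base case. What saves the argument is that every claim you actually need is finitary---a given cylinder has finite width, and any linear relation in $\dC(E)$ involves only finitely many $A_i$ of bounded width. You should therefore recast (1) as an induction on the width of $C$, and for (2) observe that iterating your splitting $d$ times places all $\chi_{A_i}$ of width at most $d$ into distinct $\Z$-summands (this simultaneously gives spanning, since any locally constant function on a compact space is constant on all width-$d$ cylinders for $d$ large). With that rephrasing your proof goes through; you should also record that the section $\dC(E_R)\hookrightarrow\dC(E)$ must be normalized at the leftmost end of $E_R$ so that it carries $[\chi_A]_{\dC(E_R)}$ to $[\chi_A]_{\dC(E)}$ for $A\in\cA_R$.
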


\begin{proof}
    Note that if $|E| = n <\infty$ then the result is immediate by taking $\cA$ to be the collection of all individual ends except one. Hence, we will assume that $E$ is infinite. 

    We first prove the result for $E = \cC$ the Cantor set. We define $\cA'$ to be the set of all cylinder sets consisting of cylinders of the form $C_{a_{1}\cdots a_{k-1} 0}$ together with the whole space $\cC$. That is, 
    \begin{align*}
        \cA' = \{\cC,C_{0},C_{00},C_{10},C_{000},C_{100},C_{010},C_{110},\ldots \}
    \end{align*}
    We claim that $\{\chi_{A}\}_{A \in \cA'}$ forms a free basis for $\ceC(\cC)$. We first have

    \begin{CLAIME}
        For every $f \in \ceC(\cC)$, there exist finitely many disjoint clopen subsets $B_1,\ldots,B_n$ and integers $b_{1},\ldots, b_{n}$ such that
    \[
        f = \sum_{j=1}^n b_{j}\chi_{B_j}.
    \]
    \end{CLAIME}
    
\begin{proof}
        \renewcommand{\qedsymbol}{$\triangle$}
    Suppose $f$ is a locally constant function on $\cC$ with \emph{infinitely many} distinct $\Z$-values $b_1,b_2,\ldots$. Then $\{f^{-1}(b_j)\}_{j=1}^\infty$ forms a clopen cover of $\cC$ which does not have a finite subcover, contradicting the compactness of $\cC$. Therefore, $f$ can assume at most finitely different values in $\Z$, and taking $B_{j} = f^{-1}(b_{j})$ proves the claim.
\end{proof}

    Thus we can check that $\{\chi_{A}\}_{A \in \cA'}$ generates $\ceC(\cC)$ by verifying that for an arbitrary clopen set $B$ of $\mathcal{C}$, we can write $\chi_{B}$ as a finite linear combination of elements from $\{\chi_{A}\}_{A \in \cA'}$. Since the cylinder sets form a clopen basis for the topology, we only need to check when $B$ is a cylinder set. Take $B = C_{a_1\cdots a_k}$ for some $k > 0$ and $a_{1}\cdots a_{k} \in 2^{k}$. Then we have either $B \in \cA'$ or $a_{k} = 1$. Supposing the latter, let
    \[
        m = \begin{cases}
            0 & \text{if $a_1=\ldots=a_k=1$,}\\
            \max \{j\vert a_{j}=0\} & \text{otherwise}
        \end{cases}
    \]
    Then we can write 
    \begin{align*}
        \chi_B = \chi_{C_{a_{1}\cdots a_{k}}} = \chi_{C_{a_{1}\cdots a_{m}}} - \left(\sum_{j=m}^{k-1} \chi_{C_{a_{1}\cdots a_{j}0}} \right),
    \end{align*} 
    where we take $a_1\cdots a_m$ as an empty sequence when $m=0$.
    Thus we see that $\{\chi_{A}\}_{A \in \cA'}$ generates $\ceC(\cC)$. This also shows that property (1) holds.

    Next we verify that the set $\mathcal{B}':=\{\chi_{A}\}_{A \in \mathcal{A}'}$ is linearly independent. Suppose
    \begin{align*}
        0=\sum_{j=1}^{n} a_{j} \chi_{A_{j}},
    \end{align*}
    for some distinct $A_1,\ldots,A_n \in \cA'$.
    We will proceed by induction on $n$. The case when $n=1$ is straightforward. Now let $n>1$ and without loss of generality we can assume that $A_{n}$ is of minimal width. Let $w$ be the word defining $A_{n}$, i.e. $A_{n} = C_{w}$. Note that $w$ may be the empty word (when $A_n = \cC)$. Consider the sequence $w\bar{1}$ consisting of the starting word $w$ followed by the constant infinite sequence of $1$s. Then by minimality of $w$, we have
    \begin{align*}
        0 = \sum_{j=1}^{n} a_{j} \chi_{A_{j}}(w\bar{1}) = a_{n}.
    \end{align*}
    Therefore, we have 
        $0=\sum_{j=1}^{n} a_{j} \chi_{A_{j}} = \sum_{j=1}^{n-1} a_{j} \chi_{A_{j}}$
    so by the induction on $n$ we see that $a_{j} = 0$ for all $j$. Thus we see that $\mathcal{B}'$ is a free basis for $\ceC(\cC)$. Taking $\mathcal{A}:= \mathcal{A}' \setminus \{\mathcal{C} \} = \{C_{0},C_{00},C_{10},C_{000},C_{100},C_{010},C_{110},\ldots\}$, the free basis $\mathcal{B}'$ for $\ceC(\cC)$ descends (allowing for a slight abuse of notation) to a free basis $\mathcal{B}:=\{\chi_{A}\}_{A \in \cA}$ for $\dC(\cC)$, proving (2).

    Finally, we can define $\iota: \cA \rightarrow T$ by using the labels on each of the cylinder sets to map each cylinder set to the midpoint of its corresponding edge in the standard binary tree model of the Cantor set. See \Cref{fig:cantorhom} for a picture of the map. The components of $T\setminus \iota(\cA)$ each contains one end from $T$.

    \begin{figure}[ht!]
	    \centering
	    \def\svgwidth{.6\textwidth}
\begingroup%
  \makeatletter%
  \providecommand\color[2][]{%
    \errmessage{(Inkscape) Color is used for the text in Inkscape, but the package 'color.sty' is not loaded}%
    \renewcommand\color[2][]{}%
  }%
  \providecommand\transparent[1]{%
    \errmessage{(Inkscape) Transparency is used (non-zero) for the text in Inkscape, but the package 'transparent.sty' is not loaded}%
    \renewcommand\transparent[1]{}%
  }%
  \providecommand\rotatebox[2]{#2}%
  \newcommand*\fsize{\dimexpr\f@size pt\relax}%
  \newcommand*\lineheight[1]{\fontsize{\fsize}{#1\fsize}\selectfont}%
  \ifx\svgwidth\undefined%
    \setlength{\unitlength}{148.7446193bp}%
    \ifx\svgscale\undefined%
      \relax%
    \else%
      \setlength{\unitlength}{\unitlength * \real{\svgscale}}%
    \fi%
  \else%
    \setlength{\unitlength}{\svgwidth}%
  \fi%
  \global\let\svgwidth\undefined%
  \global\let\svgscale\undefined%
  \makeatother%
  \begin{picture}(1,0.6543108)%
    \lineheight{1}%
    \setlength\tabcolsep{0pt}%
    \put(0,0){\includegraphics[width=\unitlength,page=1]{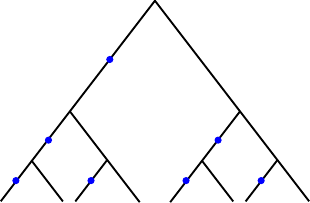}}%
    \put(0.32103919,0.4824797){\color[rgb]{0,0,0}\makebox(0,0)[lt]{\lineheight{1.25}\smash{\begin{tabular}[t]{l}$0$\end{tabular}}}}%
    \put(0.12482335,0.22863016){\color[rgb]{0,0,0}\makebox(0,0)[lt]{\lineheight{1.25}\smash{\begin{tabular}[t]{l}$00$\end{tabular}}}}%
    \put(0.01149387,0.0859654){\color[rgb]{0,0,0}\makebox(0,0)[lt]{\lineheight{1.25}\smash{\begin{tabular}[t]{l}$000$\end{tabular}}}}%
    \put(0.67066,0.22863016){\color[rgb]{0,0,0}\makebox(0,0)[lt]{\lineheight{1.25}\smash{\begin{tabular}[t]{l}$10$\end{tabular}}}}%
    \put(0.55733044,0.0859654){\color[rgb]{0,0,0}\makebox(0,0)[lt]{\lineheight{1.25}\smash{\begin{tabular}[t]{l}$100$\end{tabular}}}}%
    \put(0.24962841,0.0859654){\color[rgb]{0,0,0}\makebox(0,0)[lt]{\lineheight{1.25}\smash{\begin{tabular}[t]{l}$010$\end{tabular}}}}%
    \put(0.7954649,0.0859654){\color[rgb]{0,0,0}\makebox(0,0)[lt]{\lineheight{1.25}\smash{\begin{tabular}[t]{l}$110$\end{tabular}}}}%
  \end{picture}%
\endgroup%

		    \caption{The image of the map $\iota: \mathcal{A} \to T$ is given in blue.}
	    \label{fig:cantorhom}
\end{figure}

    Now to go from the Cantor set to a general infinite end space we identify $E$ with a subspace of $\cC$ and take $\cA = \{C_{0} \cap E, C_{00} \cap E,C_{10} \cap E,\ldots \}$, deleting duplicated sets if necessary.
    Then the set $\{\chi_A\}_{A \in \cA}$ will still determine a free basis for $\dC(E)$. 
\end{proof}

Apply this theorem to $E_{\ell}(\G)$ in order to obtain the set $\cA=\{A_i\}_{i \in I}$. We now define the homomorphism 
\begin{align*}
    \Pi: \PMap(\G) &\rightarrow \prod_{i \in I} \Z \\
    f &\mapsto (\Phi_{A_{i}}(f))_{i \in I}.
\end{align*}
We will check that this map is surjective and has kernel exactly $\overline{\PMapc(\G)}$, i.e. it forms the following short exact sequence:
\begin{align*}
    1 \longrightarrow \overline{\PMapc(\G)} \longrightarrow \PMap(\G) \overset{\Pi}{\longrightarrow} \prod_{i \in I} \Z \longrightarrow 1.
\end{align*}

\begin{LEM}\label{lem:zerofluxbasis}
    Let $\cE$ be a clopen subset of $E(\G)$ so that $\cE \cap E_{\ell}(\G)$ is a proper nontrivial subset. If $f \in \PMap(\G)$ satisfies $\Phi_{A} (f) = 0$ for all $A \in \cA$, then $\Phi_{\cE}(f) = 0$ as well. 
\end{LEM}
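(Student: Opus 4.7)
The plan is to leverage the Boolean-algebraic properties of flux maps established in \Cref{prop:fluxProperties} (together with \Cref{rmk:fluxwholeempty}) to reduce $\Phi_\cE(f)$ to a finite $\Z$-linear combination of the $\Phi_{A_i}(f)$'s for $A_i \in \cA$, all of which vanish by hypothesis.

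First I would use \Cref{rmk:fluxwholeempty} to replace $\cE$ by $\cE \cap E_\ell(\G)$ and so assume $\cE$ is a proper nontrivial clopen subset of $E_\ell(\G)$. Since $E_\ell(\G)$ is a compact, totally disconnected, metrizable space, its cylinder sets form a clopen basis, and a standard compactness argument shows that any clopen subset of $E_\ell(\G)$ is a finite \emph{disjoint} union of cylinder sets (refining overlapping cylinders using the tree model is all that is needed). Write $\cE = C_1 \sqcup \cdots \sqcup C_m$ with each $C_j$ a cylinder set of $E_\ell(\G)$. Then by iterated application of \Cref{prop:flux_disjoint_sets},
\[
\Phi_\cE(f) = \sum_{j=1}^{m} \Phi_{C_j}(f),
\]
so it suffices to show $\Phi_C(f) = 0$ for an arbitrary cylinder set $C$ of $E_\ell(\G)$.

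Now I would invoke property (1) of \Cref{thm:chombasis}: either $C \in \cA$, in which case $\Phi_C(f)=0$ by assumption, or $C = A_0 \setminus (A_1 \sqcup \cdots \sqcup A_n)$ for some $A_0, A_1, \ldots, A_n \in \cA$ with $A_1, \ldots, A_n$ pairwise disjoint subsets of $A_0$. Combining \Cref{prop:fluxdifference} with \Cref{prop:flux_disjoint_sets} gives
\[
\Phi_C(f) = \Phi_{A_0}(f) - \Phi_{A_1 \sqcup \cdots \sqcup A_n}(f) = \Phi_{A_0}(f) - \sum_{j=1}^{n} \Phi_{A_j}(f) = 0,
\]
again by the hypothesis. (If the subtracted union happens to exhaust $A_0$ so that $C = \emptyset$, the computation still gives $0$, matching $\Phi_\emptyset \equiv 0$ from \Cref{rmk:fluxwholeempty}.)

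The content of the lemma is really just that the map $B \mapsto \Phi_B(f)$ descends to a well-defined homomorphism $\dC(E_\ell(\G)) \to \Z$, and that its restriction to the basis $\mathcal{B}$ of \Cref{thm:chombasis}(2) vanishes by hypothesis. There is no genuine obstacle here; the one minor bookkeeping step is verifying that the decomposition of $\cE$ into cylinder sets can be taken to be disjoint, which is immediate from the rooted-tree structure underlying $E_\ell(\G)$.
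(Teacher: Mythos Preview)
Your proof is correct and follows essentially the same route as the paper: reduce $\cE$ to a finite disjoint union of cylinder sets via compactness, apply \Cref{prop:flux_disjoint_sets} to treat one cylinder at a time, and then use property~(1) of \Cref{thm:chombasis} together with \Cref{prop:fluxdifference} to express $\Phi_C(f)$ as a finite $\Z$-linear combination of the $\Phi_{A_j}(f)$'s. The only cosmetic difference is that you pass to $\cE \cap E_\ell(\G)$ at the outset via \Cref{rmk:fluxwholeempty}, whereas the paper intersects with $E_\ell$ after reducing to cylinder sets of $E(\G)$.
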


\begin{proof}
    We first note that $\cE$ can be written as a disjoint union of finitely many cylinder sets. Thus, by \Cref{prop:flux_disjoint_sets} it suffices to check when $\cE$ is a cylinder set $C$ of $E(\G)$.
    Assume that $f \in \PMap(\G)$ satisfies $\Phi_{A_{i}} (f) = 0$ for all $i \in I$. Then $C\cap E_{\ell}(\G)$ is again a cylinder set of $E_{\ell}$. Applying property (1) of \Cref{thm:chombasis} we have either $C \in \cA$, or $C = A_{0} \setminus (\bigsqcup_{j=1}^{n} A_{j})$ for some $A_{0} \in \cA$ and $A_j \in \cA$. If $C \in \cA$, then we conclude $\Phi_C(f)=0$. For the other case, we can apply \Cref{prop:flux_disjoint_sets} and \Cref{prop:fluxdifference} to write
    \begin{align*}
        \Phi_{C} (f) = \Phi_{A_{0}}(f) - \sum_{j=1}^{n} \Phi_{A_{j}}(f) = 0 - 0 =0. 
    \end{align*}
\end{proof}

\begin{COR} \label{cor:kernelpi}
    For $\G$ and $\Pi$ as above, $\ker(\Pi) = \overline{\PMapc(\G)}$. 
\end{COR}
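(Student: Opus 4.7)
The proof is a short combination of the results already assembled. The plan is simply to check both inclusions, each of which reduces to a previously established statement.

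For the inclusion $\overline{\PMapc(\G)} \subseteq \ker(\Pi)$, I would invoke \Cref{prop:compactFluxZero} directly: every $f \in \overline{\PMapc(\G)}$ satisfies $\Phi_{\cE}(f)=0$ for every clopen subset $\cE$ of $E(\G)$, and in particular for each cylinder set $A_i \in \cA$. Hence $\Pi(f) = 0$.

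For the reverse inclusion $\ker(\Pi) \subseteq \overline{\PMapc(\G)}$, let $f \in \ker(\Pi)$, so that $\Phi_{A_i}(f) = 0$ for all $i \in I$. I would first argue that this forces $\Phi_{\cE}(f)=0$ for \emph{every} clopen $\cE \subseteq E(\G)$. There are three cases according to $\cE \cap E_\ell(\G)$: if it is empty or all of $E_\ell(\G)$, then $\Phi_{\cE}(f) = 0$ by \Cref{rmk:fluxwholeempty}; otherwise $\cE \cap E_\ell(\G)$ is a proper nontrivial clopen subset of $E_\ell(\G)$, and \Cref{lem:zerofluxbasis} applies to give $\Phi_{\cE}(f) = 0$. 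Once we have vanishing against every clopen subset of $E(\G)$, \Cref{thm:fluxzeromaps} yields $f \in \overline{\PMapc(\G)}$, completing the proof.

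There is no real obstacle here; the content has already been packaged into \Cref{prop:compactFluxZero}, \Cref{lem:zerofluxbasis}, \Cref{rmk:fluxwholeempty}, and \Cref{thm:fluxzeromaps}. The only point requiring even momentary care is remembering to handle the degenerate cases $\cE \cap E_\ell(\G) \in \{\emptyset, E_\ell(\G)\}$ separately, since \Cref{lem:zerofluxbasis} is stated only for proper nontrivial intersections.
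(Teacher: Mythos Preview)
Your proof is correct and follows essentially the same route as the paper: one inclusion via \Cref{prop:compactFluxZero} (equivalently the forward direction of \Cref{thm:fluxzeromaps}), and the other via \Cref{lem:zerofluxbasis} combined with the backward direction of \Cref{thm:fluxzeromaps}. Your explicit handling of the degenerate cases through \Cref{rmk:fluxwholeempty} is a nice bit of extra care that the paper leaves implicit.
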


\begin{proof}
The forward direction of \Cref{thm:fluxzeromaps} implies $\ker(\Pi) \supset \overline{\PMapc(\G)}$. On the other hand, \Cref{lem:zerofluxbasis} together with the backward direction of \Cref{thm:fluxzeromaps} imply the other containment $\ker(\Pi) \subset \overline{\PMapc(\G)}$.
\end{proof}

Next, we will build a section to show $\Pi$ is surjective, and more importantly, this sequence splits. This gives us our desired semidirect product decomposition in \Cref{thm:semidirectprod}.

\begin{PROP} \label{prop:sectionconstruct}
    There exists an injective homomorphism $\hat{\iota}:\prod_{i\in I} \Z \rightarrow \PMap(\G)$ so that $\Pi \circ \hat{\iota}$ is the identity on $\prod_{i \in I}\Z$.
\end{PROP}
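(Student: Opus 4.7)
The plan is to build a family of pairwise commuting primitive loop shifts $\{h_i\}_{i \in I} \subset \PMap(\G)$ satisfying $\Phi_{A_j}(h_i) = \delta_{ij}$ for all $i,j \in I$, whose supports are pairwise disjoint bi-infinite ladders $\Lambda_i$. Pairwise disjointness will give commutativity, and, since any compact subgraph $K \subset \G$ contains only finitely many edges while each $\Lambda_i$ uses at least one edge, it will also automatically imply that $K$ meets $\Lambda_i$ for only finitely many $i$. Given such a family, we set
\[
    \hat\iota\bigl((n_i)_{i\in I}\bigr) \defeq \lim_{k\to\infty} \prod_{i \le k} h_i^{n_i},
\]
with the partial products taken in any fixed order. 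Local finiteness of the supports makes the partial products stabilize on every compact subgraph, so the limit converges in the Polish topology on $\PMap(\G)$, and commutativity makes the limit order-independent and renders $\hat\iota$ a homomorphism.

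The ladders will be extracted from property (3) of \Cref{thm:chombasis}. The set $\iota(\cA) \subset T$ cuts the standard tree model $T$ of $E(\G)$ into one-ended subtrees $\{U_\alpha\}$. For each $A_i \in \cA$, the edge $\iota(A_i)$ is adjacent to two such components $U_i^\pm$, whose unique ends $e_i^+, e_i^-$ lie in $A_i$ and $A_i^c$ respectively. The bi-infinite path in $T$ from $e_i^-$ to $e_i^+$ sits inside $U_i^- \cup \iota(A_i) \cup U_i^+$, so it crosses $\iota(A_k)$ if and only if $k=i$. Lifting this path to an embedded ladder $\Lambda_i \subset \G$ and letting $h_i$ be the associated primitive loop shift, an admissible-pair computation in the spirit of \cite[Section 7]{DHK2023} yields $\Phi_{A_j}(h_i) = \delta_{ij}$: the ends $e_i^\pm$ lie on opposite sides of $\iota(A_j)$ precisely when $j=i$, and on the same side otherwise.

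The section identity $\Pi \circ \hat\iota = \id$ then follows from linearity of flux: for $k \ge j$, the partial product $\prod_{i \le k} h_i^{n_i}$ has $\Phi_{A_j}$-flux exactly $n_j$. Since flux depends only on behavior on a compact admissible subgraph and $\hat\iota((n_i))$ agrees with its partial products on any such subgraph once $k$ is large enough, we conclude $\Phi_{A_j}(\hat\iota((n_i))) = n_j$. Injectivity of $\hat\iota$ is immediate from the section property.

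The main technical obstacle is arranging the ladders $\Lambda_i$ to be pairwise disjoint while preserving the flux behavior. The underlying tree-paths of different $\Lambda_i$'s may share long segments, since the one-ended components $U_i^\pm$ can coincide across distinct $i$'s (for instance, multiple ``maximal'' $A_i$ share the same adjacent component on the $A_i^c$-side), so naive lifts produce non-commuting shifts. The resolution exploits the infinitude of loops of $\G$ accumulating at each end of $E_\ell$: at each such end, partition the loops into disjoint infinite reservoirs, one per ladder terminating there, and route each $\Lambda_i$ through its own reservoir, possibly replacing shared portions of the underlying tree-path by detours through its designated loops. With a careful enumeration of $\cA$, this yields a pairwise disjoint family of ladders realizing the required flux pattern, completing the construction of $\hat\iota$.
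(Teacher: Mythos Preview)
Your approach matches the paper's: build pairwise commuting loop shifts $h_i$ dual to the flux basis using the one-ended decomposition from \Cref{thm:chombasis}(3), and define $\hat\iota$ as their infinite product. The one step the paper handles more concretely is the disjointness of supports. Your phrase ``replacing shared portions of the underlying tree-path by detours through its designated loops'' does not literally work in standard form, since loops are attached at single vertices and cannot substitute for tree edges; two ladders terminating at the same end of a tree-with-loops must share a terminal ray of the tree. What is actually needed is a preliminary proper homotopy equivalence, and the paper does exactly this: each one-ended component of $\G_c \setminus \iota(\cA)$ is replaced by a ``grid of loops'' in which every adjacent cut point $\iota(A_i)$ has its own dedicated column of loops running off to the end, and $\ell_i$ is then supported on the two columns flanking $\iota(A_i)$. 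These columns are manifestly disjoint for distinct $i$. This is precisely the ``reservoir'' partition you have in mind, but realized by changing the model of $\G$ rather than by routing within a fixed standard form.
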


\begin{proof}
    Let $T$ be the maximal tree of the graph $\G_{c}$ in standard form. Note that the end space of $T$ is homeomorphic to $E_{\ell}(\G)$ and let $\cA=\{A_i\}_{i \in I}$ be the set obtained from (2) of \Cref{thm:chombasis} applied to the set $E_{\ell}(\G)$ and $\iota:\cA \rightarrow T$ be the map given by property (3) of \Cref{thm:chombasis}. The closure in $\G_{c}$ of every complementary component of $\iota(\cA)$ is a one-ended subgraph with infinite rank. Call one such component $\G'$. It has at most a countably infinite number of half edges coming from the points of $\iota(\cA)$. Now we will modify $\G'$ via a proper homotopy equivalence that fixes $\partial\G'$ so that the new graph has a ``grid of loops" above $\partial\G'$. See \Cref{fig:oneendblowup} for how this replacement is done. Such a replacement by a proper homotopy equivalence is possible by the classification of infinite graphs. 
    \begin{figure}[ht!]
	    \centering
	    \def\svgwidth{\textwidth}
		    \import{pics/}{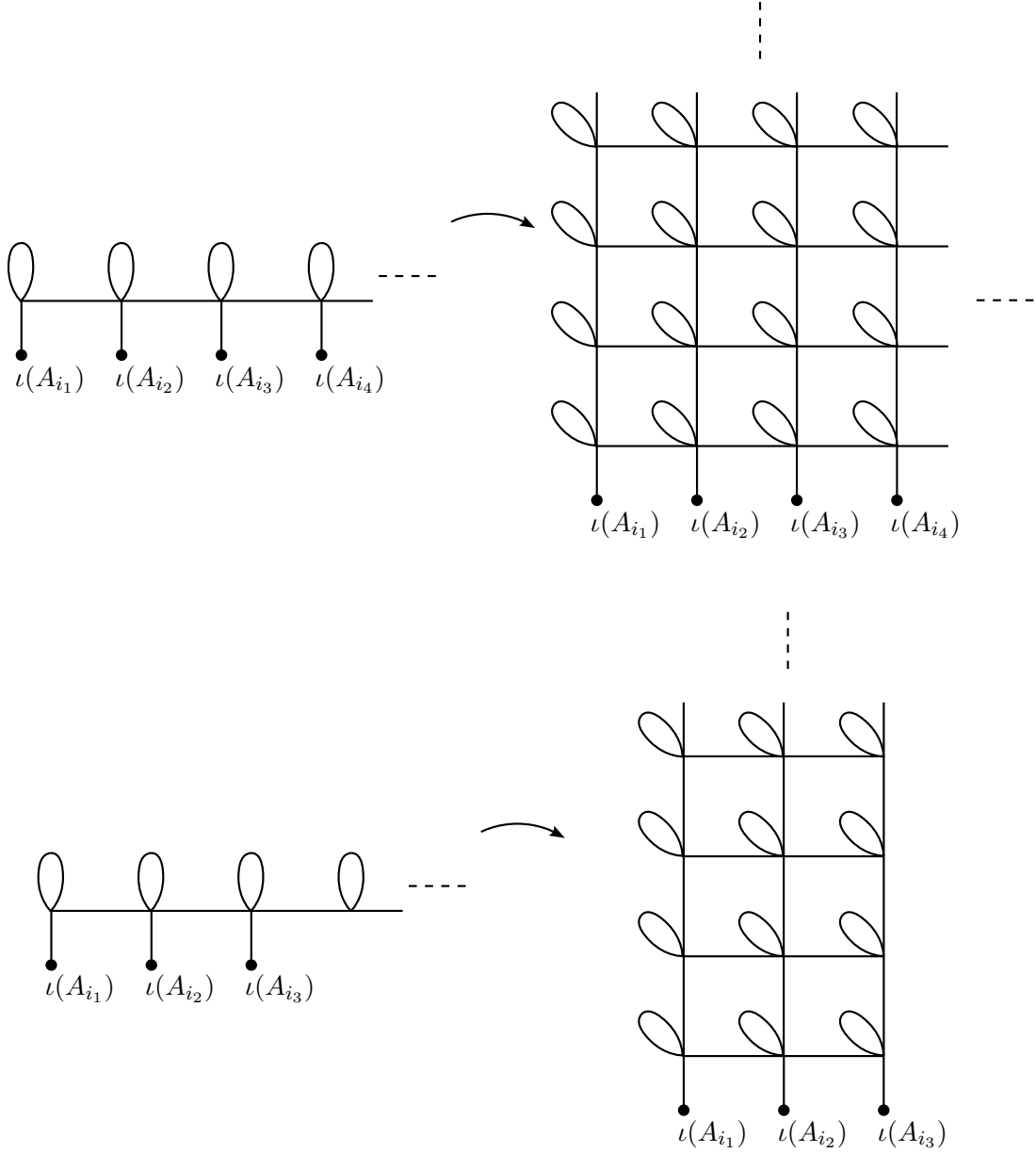}
		    \caption{The new replacement graphs for each component of $T \setminus \iota(\cA)$. The top picture shows the case when a component has infinitely many cut points and the bottom for finitely many. Note that above each cut point one sees a ``column of loops" within the grid.}
	    \label{fig:oneendblowup}
    \end{figure}

    After replacing each component of $\G_{c} \setminus\iota(\cA)$ we obtain a new graph that is proper homotopy equivalent to the original $\G_{c}$. We can also extend this proper homotopy equivalence to the entire graph $\G$, as our proper homotopy equivalence fixes the boundary points of each complementary component of $\iota(\cA)$. Now for each $i$, there are exactly two complementary components whose closures in $\G_c$ contain $\iota(A_{i})$. Let $\ell_{i} \in \PMap(\G)$ be the loop shift supported on the two columns of loops sitting above $\iota(A_{i})$ in these components. Orient the loop shift so that it is shifting towards the end in $A_{i}$.

    Note that each $\ell_{i}$ has total support disjoint from each other $\ell_{j}$ so that $\ell_{i}\ell_{j} = \ell_{j}\ell_{i}$ for all $i,j \in I$. Therefore, $\prod_{i \in I} \la \ell_{i} \ra < \PMap(\G)$, and we can define the homomorphism $\hat{\iota}: \prod_{i \in I}\mathbb{Z} \to \PMap(\G)$ by 
    \begin{align*}
        \hat{\iota}\left((n_{i})_{i\in I}\right) := \prod_{i\in I} \ell_{i}^{n_{i}}.
    \end{align*}

    It remains to check that $\Pi \circ \hat{\iota}$ is the identity on $\prod_{i\in I}\Z$. By the construction of the loop shifts, $\ell_{i}$ crosses exactly one of the clopen subsets in $\cA$, namely $A_{i}$. Therefore, we have
    \begin{align*}
        \Phi_{A_{j}} (\ell_{i}) = \delta_{ij}:=\begin{cases} 1 \;\; \text{ if } i=j, \\ 0 \;\; \text{ if } i\neq j.\end{cases}
    \end{align*}
    Now, given any tuple $(n_{i})_{i \in I} \in \prod_{i\in I}\Z$ we compute

        \begin{align*}
        (\Pi \circ \hat{\iota})\left((n_{i})_{i\in I}\right) &= \Pi\left(\prod_{i\in I} \ell_{i}^{n_{i}}\right) = \left(\Phi_{A_{j}}\left(\prod_{i\in I} \ell_{i}^{n_{i}}\right)\right)_{j\in I} = (n_{i})_{i\in I}. \qedhere
    \end{align*}  
\end{proof}

\begin{proof}[Proof of \Cref{thm:semidirectprod}]
    \Cref{cor:kernelpi} and \Cref{prop:sectionconstruct} above give the desired splitting short exact sequence $1 \longrightarrow \overline{\PMap_c(\G)} \longrightarrow \PMap(\G) \longrightarrow \Z^\alpha \longrightarrow 1$, with $\alpha = |\mathcal{A}|-1$.
\end{proof}

\subsection{The rank of integral cohomology}
\label{ssec:firstcohomology}

As pointed out in \Cref{rmk:fluxwholeempty}, we define $\Phi_{\emptyset} = \Phi_{E_\ell} \equiv 0$.

\begin{LEM}\label{lem:cech2FirstCohomology}
    Let $\{A_i\}_{i \in I}$ be a clopen collection of subsets of $E_\ell(\G)$ such that $\mathcal{B} = \{\chi_{A_i}\}_{i \in I}$ is a free basis for $\dC(E_\ell)$ as in \Cref{thm:chombasis}.
    Then the map
    \begin{align*}
        \Theta: \dC(E_\ell(\G)) &\longrightarrow H^1(\PMap(\G);\Z), \\
        \sum_{i \in I}{n_i}\chi_{A_i} &\longmapsto \sum_{i \in I}{n_i}\Phi_{A_i}.
    \end{align*}
    is a well-defined injective homomorphism.
\end{LEM}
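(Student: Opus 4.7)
The plan is to approach the lemma in two short steps—well-definedness (together with the homomorphism property) and injectivity—both of which follow almost immediately from results already established in this section.

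For well-definedness, I would invoke \Cref{thm:chombasis}(2): the set $\mathcal{B} = \{\chi_{A_i}\}_{i \in I}$ is a free $\Z$-basis for $\dC(E_\ell(\G))$, so every element of $\dC(E_\ell(\G))$ is a unique \emph{finite} integer linear combination $\sum_{i} n_i \chi_{A_i}$. Since each $\Phi_{A_i}$ is a homomorphism $\PMap(\G) \to \Z$, i.e., an element of $\Hom(\PMap(\G),\Z) \cong H^1(\PMap(\G);\Z)$, the corresponding finite sum $\sum_i n_i \Phi_{A_i}$ is likewise a homomorphism. Defining $\Theta$ on a free basis and extending by $\Z$-linearity therefore automatically produces a well-defined group homomorphism, with no additional relations to check.

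For injectivity, the key is to exploit the primitive loop shifts $\{\ell_i\}_{i \in I}$ constructed in the proof of \Cref{prop:sectionconstruct}. By their very construction, they satisfy the duality relation $\Phi_{A_j}(\ell_i) = \delta_{ij}$. Suppose $\Theta(f) = 0$ for some $f = \sum_{i \in F} n_i \chi_{A_i}$ with $F \subset I$ finite. Evaluating the identity $\sum_{i \in F} n_i \Phi_{A_i} = 0$ at $\ell_j$ for each $j \in F$ then isolates $n_j = 0$, forcing $f = 0$ in $\dC(E_\ell(\G))$.

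There is no serious obstacle here: the lemma is a bookkeeping observation that bundles \Cref{thm:chombasis} (providing the basis side) with \Cref{prop:sectionconstruct} (providing the dual side) into a single statement about first cohomology. The only point worth double-checking is that the pairing $(\chi_{A_i}, \ell_j) \mapsto \Phi_{A_i}(\ell_j) = \delta_{ij}$ is genuinely perfect on the chosen basis, but that is exactly the computation already performed at the end of the proof of \Cref{prop:sectionconstruct}.
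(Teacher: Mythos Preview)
Your proposal is correct and matches the paper's proof essentially verbatim: well-definedness via the free basis from \Cref{thm:chombasis}(2), and injectivity by evaluating at the dual loop shifts $\ell_j$ from \Cref{prop:sectionconstruct} using $\Phi_{A_i}(\ell_j)=\delta_{ij}$.
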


\begin{proof}
    Since $\mathcal{B}$ is a free basis for $\dC(E_\ell)$, the map $\chi_{A_i} \mapsto \Phi_{A_i}$ on $\mathcal{B}$ extends to a well-defined homomorphism on the whole group $\dC(E_\ell)$. To see $\Theta$ is injective, suppose $\Theta(\sum_i
    n_i\chi_{A_i}) = \sum_i n_i\Phi_{A_i} =0$ for $\chi_{A_i} \in
    \mathcal{B}$. 
    Then for each $j$ that arises as an index of the summation, we evaluate the sum at the loop shift $\ell_j$ constructed in the proof of \Cref{prop:sectionconstruct}:
    \[
        0 = \sum_i n_i \Phi_{A_i}(\ell_j) = n_j\Phi_{A_j}(\ell_j) = n_j,
    \]
    which implies that $\sum_i n_i\chi_{A_i} \equiv 0$, concluding that $\Theta$ is injective.
\end{proof}

Here we collect relevant results on the first homology of the pure mapping class group of graphs of rank $n$ with $s$ rays.

\begin{FACT}[{\cite[Theorem 1.1]{HV1998}}]
    \label{fact:TrivialRationalCohomology}
    $H_1(\Aut(F_n);\mathbb{Q})=0$ for all $n \ge 1$.
\end{FACT}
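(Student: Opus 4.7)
The plan is to reduce the claim to showing that $\Aut(F_n)^{\mathrm{ab}}$ is a torsion abelian group, since $H_1(G;\mathbb{Q}) = G^{\mathrm{ab}}\otimes_{\mathbb{Z}}\mathbb{Q}$. The case $n=1$ is immediate, as $\Aut(F_1)\cong \Aut(\mathbb{Z})\cong \mathbb{Z}/2$, so its abelianization is finite. Everything interesting happens for $n\geq 2$, and I would work from Nielsen's finite presentation.

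For $n \geq 2$, take as generators the permutation automorphisms $P_\sigma$ (for $\sigma \in S_n$), the inversion $\epsilon_1 \colon x_1 \mapsto x_1^{-1}$, and the Nielsen transvections $R_{ij} \colon x_i \mapsto x_i x_j$ for $i \neq j$. In the abelianization, the classes of $P_\sigma$ and $\epsilon_1$ are manifestly $2$-torsion, so the heart of the argument is to show that each $R_{ij}$ also becomes torsion. Conjugating $R_{ij}$ by transpositions in $S_n$ identifies all classes $[R_{ij}]$ in $\Aut(F_n)^{\mathrm{ab}}$, reducing the problem to showing that one such class has finite order. One then exploits a ``braid-type'' Nielsen relation, e.g.\ one of the form $[R_{ij},P_{jk}]\equiv R_{ik}R_{ij}^{-1}$ modulo commutators, to extract $2[R_{ij}] = 0$ in the abelianization. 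Tracking this carefully gives $\Aut(F_n)^{\mathrm{ab}}$ finite (indeed of exponent dividing $2$, and equal to $\mathbb{Z}/2$ for $n\geq 3$ by Gersten), whence the rationalization vanishes.

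The main obstacle is the combinatorial bookkeeping of Nielsen's relations, which is not visually transparent and requires committing to a specific presentation and systematically abelianizing all relators. A conceptually cleaner alternative, and in fact the route taken by Hatcher and Vogtmann, is the equivariant spectral sequence for the action of $\Aut(F_n)$ on the contractible spine of Auter space. Because all cell stabilizers for this action are finite, rational coefficients make those stabilizers invisible, and $H_\ast(\Aut(F_n);\mathbb{Q})$ is computed directly from the cellular chain complex of the quotient. Degree-$1$ vanishing then reduces to a connectivity statement about the quotient complex, and the same machine in fact yields the vanishing of $H_k(\Aut(F_n);\mathbb{Q})$ in a stable range. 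For the current application only the degree-$1$ piece is needed, so either route (explicit abelianization via Nielsen, or the Auter-space spectral sequence) suffices.
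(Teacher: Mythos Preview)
The paper does not prove this statement at all; it is recorded as a \emph{Fact} with a bare citation to Hatcher--Vogtmann, so there is nothing in the paper to compare your argument against. Your elementary route via the abelianization is sound in outline and more than the paper supplies.

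One correction, though: the relation you invoke to extract torsion does not do what you claim. Writing $[R_{ij},P_{jk}]\equiv R_{ik}R_{ij}^{-1}$ modulo commutators only says $[R_{ik}]=[R_{ij}]$ in $\Aut(F_n)^{\mathrm{ab}}$, which is the identification you already obtained by conjugation with transpositions; it gives no torsion. The relation that actually forces $2[R_{ij}]=0$ is the inversion relation
\[
\epsilon_j\, R_{ij}\, \epsilon_j^{-1} \;=\; R_{ij}^{-1},
\]
which in the abelianization reads $[R_{ij}]=-[R_{ij}]$. With this substitution your argument goes through cleanly (and recovers Gersten's $\Aut(F_n)^{\mathrm{ab}}\cong\Z/2$ for $n\ge 3$), and it is considerably more direct for the degree-$1$ statement than the Auter-space spectral sequence used in the cited reference.
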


\begin{FACT}[{\cite[Section 4]{Hatcher1995}}]
    \label{fact:ThornedRoseCohomology}
    For $n \ge 3$ and $s \ge 1$,
    \[
    H_1(F_n^{s-1} \rtimes \Aut(F_n);\Z) \cong  H_1(F_n^{s} \rtimes \Aut(F_n);\Z).
    \]
    This still holds for $n=1,2$ if $s \ge 2$.
\end{FACT}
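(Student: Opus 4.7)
The plan is to apply the Lyndon--Hochschild--Serre machinery to the split short exact sequence
\[
1 \longrightarrow F_n \longrightarrow F_n^{s} \rtimes \Aut(F_n) \longrightarrow F_n^{s-1} \rtimes \Aut(F_n) \longrightarrow 1,
\]
obtained by projecting out the last $F_n$-coordinate of the base, with a section given by adjoining a trivial last coordinate. Write $G$ and $G'$ for the middle and right terms. For a split extension $G = N \rtimes Q$ the standard abelianization computation gives
\[
H_1(G; \Z) \;\cong\; \bigl(H_1(N)\bigr)_{Q} \;\oplus\; H_1(Q; \Z),
\]
so the desired isomorphism $H_1(G) \cong H_1(G')$ reduces to showing that the $G'$-coinvariants $\bigl(H_1(F_n)\bigr)_{G'}$ vanish.

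I would then observe that the surviving $F_n$-factors in $G'$ sit in different coordinates from the kernel and hence commute with it, so the conjugation action of $G'$ on $H_1(F_n) \cong \Z^n$ factors through the natural map $\Aut(F_n) \to GL_n(\Z)$. The coinvariants therefore reduce to $(\Z^n)_{GL_n(\Z)}$, which a short transvection argument shows is zero for $n \ge 2$: applying the elementary matrix $I + E_{ij}$ to $e_j$ and subtracting $e_j$ produces $e_i$, so every standard basis vector becomes trivial in the coinvariants. This handles the ranges $n \ge 3, s \ge 1$ and $n = 2, s \ge 1$ simultaneously, which in particular contains the claimed $n = 2, s \ge 2$ regime. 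Rationally one can also combine this with \Cref{fact:TrivialRationalCohomology} to see that $H_1(G; \bbQ) = H_1(G'; \bbQ) = 0$ for $n \ge 2$, which is what is ultimately needed for the rank computation in \Cref{cor:rankoffirstcohomology}.

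The remaining case $n = 1, s \ge 2$ is the genuinely subtle one, since $(\Z)_{\Z/2}$ does not vanish on its own. Here one must exploit the presence of the extra $F_1$-factors in $G'$ in a more delicate way, most naturally by invoking Hatcher's sphere-system model for $\Aut(F_n)$: stabilization in $s$ then corresponds to attaching a new boundary sphere, and the desired statement becomes a geometric one about the connectivity of a stabilization complex. I expect the main obstacle to be precisely this $n = 1$ case: the naive split-extension computation does not suffice, and a finer analysis using Hatcher's geometric framework (or a careful presentation-level computation) is required, with the $s \ge 2$ hypothesis ensuring there is at least one surviving $F_1$-factor to play the role of the stabilizing coordinate in the geometric model.
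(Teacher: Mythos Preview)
The paper does not prove this statement; it is cited as a fact from Hatcher's 1995 stability paper, so there is no in-paper argument to compare against.

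Your treatment of $n \ge 2$ is correct and self-contained. The split-extension formula $H_1(N \rtimes Q) \cong (H_1(N))_Q \oplus H_1(Q)$ is valid, the $G'$-action on the kernel $F_n$ does factor through $\Aut(F_n) \to GL_n(\Z)$ because the other $F_n$-coordinates commute with the kernel, and your transvection computation of $(\Z^n)_{GL_n(\Z)} = 0$ is fine. In fact this already gives the $s \ge 1$ case for $n=2$, slightly better than the stated range; the restriction $s \ge 2$ in the Fact presumably reflects Hatcher's stability range rather than the optimal bound. This is more elementary than invoking homological stability.

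For $n=1$ your instinct that the naive argument fails is right, but the situation is worse than you say: the integral statement is simply \emph{false}. Here $F_1^{s} \rtimes \Aut(F_1) \cong \Z^{s} \rtimes \Z/2$ with the sign action, and a direct commutator computation gives $[G,G] = 2\Z^{s}$, hence
\[
H_1(\Z^{s} \rtimes \Z/2;\Z) \cong (\Z/2)^{s+1},
\]
which genuinely depends on $s$. So no geometric stability argument will rescue the $n=1$ case over $\Z$, and you should not look for one. What saves the application in \Cref{prop:h1PmapcZero} is that the paper only uses the \emph{rational} consequence $H_1(F_n^{s} \rtimes \Aut(F_n);\bbQ)=0$, and for $n=1$ this is immediate because the abelianization is $2$-torsion. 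So rather than trying to push through an integral $n=1$ argument, the clean fix is to observe directly that $H_1$ is finite in that case.
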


\begin{PROP}
  \label{prop:h1PmapcZero}
  $H^1(\PMap_c(\G);\Z)=0$ for every locally finite, infinite graph $\G$.
  \end{PROP}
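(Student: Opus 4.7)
My plan is to realize $\PMap_c(\G)$ as a directed union of pure mapping class groups of connected compact subgraphs, and then show that the first cohomology vanishes on each piece. Since $H^1(-;\Z)=\Hom(-,\Z)$ converts directed unions into inverse limits, piecewise vanishing will yield $H^1(\PMap_c(\G);\Z)=0$.

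First I would fix an increasing sequence of connected compact subgraphs $K_1 \subset K_2 \subset \cdots$ with $\bigcup_j K_j = \G$ (for example, metric balls after putting $\G$ in standard form). Any $[f]\in \PMap_c(\G)$ admits a representative $f'$ that is the identity outside some compact set, hence outside $K_j$ for all $j$ sufficiently large. Its restriction to $K_j$ then fixes $\partial K_j$ pointwise and defines an element of $\PMap(K_j)$ in the sense of \Cref{rmk:compactMCG}; conversely, extension by the identity gives an injection $\PMap(K_j)\hookrightarrow \PMap_c(\G)$. Together these inclusions identify $\PMap_c(\G)$ with the directed union $\bigcup_j \PMap(K_j)$.

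Next I would verify $H^1(\PMap(K_j);\Z)=0$ for each $j$. By \Cref{rmk:compactMCG}, $\PMap(K_j)\cong F_n^{e-1}\rtimes \Aut(F_n)$ for $n=\rk(\pi_1 K_j)\ge 0$ and $e=|\partial K_j|\ge 1$; the $n=0$ case is trivial. For $n\ge 3$, iterating \Cref{fact:ThornedRoseCohomology} starting from $s=1$ yields $H_1(F_n^{e-1}\rtimes \Aut(F_n);\Z)\cong H_1(\Aut(F_n);\Z)$, which is torsion by \Cref{fact:TrivialRationalCohomology}. Since $\Z$ is torsion-free, $H^1(\PMap(K_j);\Z)=\Hom(\PMap(K_j),\Z)=0$ in this range. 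For the low-rank cases $n\in\{1,2\}$ and the edge cases of small $e$ (where \Cref{fact:ThornedRoseCohomology} does not directly apply), I would invoke the 5-term exact sequence of the split extension $1\to F_n^{e-1}\to \PMap(K_j)\to \Aut(F_n)\to 1$: the $\Aut(F_n)$-coinvariants of $H_1(F_n^{e-1};\Z)=\Z^{n(e-1)}$ are torsion for all $n\ge 1$ (using $-I\in GL_n(\Z)$ and elementary matrices), and $H_1(\Aut(F_n);\Z)$ is torsion by \Cref{fact:TrivialRationalCohomology} (or by direct computation when $n=1$, where $\Aut(F_1)=\Z/2$).

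The main technical hurdle is the identification $\PMap_c(\G)=\bigcup_j\PMap(K_j)$, which depends essentially on the convention from \Cref{rmk:compactMCG} that $\PMap(K_j)$ consists of proper homotopy equivalences fixing $\partial K_j$; once that identification is in hand, the cohomology calculations are routine applications of the stated Facts together with the observation that $\Hom(-,\Z)$ annihilates torsion.
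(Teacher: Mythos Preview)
Your proposal is correct and follows essentially the same strategy as the paper: exhaust $\PMap_c(\G)$ by the $\PMap(K_j)\cong F_n^{e-1}\rtimes\Aut(F_n)$, use that $\Hom(-,\Z)$ turns the direct limit into an inverse limit, and verify vanishing on each piece via \Cref{fact:TrivialRationalCohomology} and \Cref{fact:ThornedRoseCohomology}. The only difference is in the low-rank cases $n\in\{1,2\}$: the paper checks by hand that $\Z\rtimes\Z/2$ and $F_2\rtimes\Aut(F_2)$ have finite abelianization, whereas you use the five-term sequence together with the observation that $-I\in GL_n(\Z)$ forces the $\Aut(F_n)$-coinvariants of $\Z^{n(e-1)}$ to be $2$-torsion---a slightly more uniform argument, but the same in spirit.
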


  \begin{proof}
    Let $\{\G_k\}$ be a compact exhaustion of $\G$.
    Then $\PMap_c(\G)$ is a direct limit of $\PMap(\G_k)$'s, each of which is isomorphic to $F_{n_k}^{e_k} \rtimes \Aut(F_{n_k})$ for some $e_k \ge 0$ and $n_k \ge 1$ (Recall \Cref{rmk:compactMCG}). Since the direct limit commutes with $H^1(-;\Z) \equiv \Hom(-,\Z)$, it suffices to show that groups of the form $F_n^e \rtimes \Aut(F_n)$ have trivial first cohomology.
    We first show $H^1(\Aut(F_n);\Z)=0$. By the universal coefficient theorem for cohomology,
    \[
        0 \longrightarrow \Ext\left(H_0(\Aut(F_n);\bbQ),\Z\right) \longrightarrow
        H^1(\Aut(F_n);\Z)
        \longrightarrow
        \Hom(H_1(\Aut(F_n);\bbQ),\Z)
        \longrightarrow
        0
    \]
    where $\Ext\left(H_0(\Aut(F_n);\bbQ);\Z\right)=0$ as $H_0(\Aut(F_n);\bbQ)\cong \bbQ$ is free. Also, $H_1(\Aut(F_n);\bbQ)=0$ by \Cref{fact:TrivialRationalCohomology}, so it follows that $H^1(\Aut(F_n);\Z)=0$.
    
    On the other hand, repeatedly applying \Cref{fact:ThornedRoseCohomology} together with the universal coefficient theorem for homology shows that for $n \ge 3$,
    \[
    H_1(F_n^{s} \rtimes \Aut(F_n);\bbQ) = H_1(F_n^{s-1} \rtimes \Aut(F_n);\bbQ) = \ldots = H_1(\Aut(F_n);\bbQ)=0.
    \]
    The last equality comes from \Cref{fact:TrivialRationalCohomology}.
    For $n=1,2$, the argument is the same, except we reduce the problem of showing $H^1(F_n^{s-1} \rtimes \Aut(F_n);\Z)=0$ to checking $H_1(F_n \rtimes \Aut(F_n);\bbQ)=0$. One can check $\Z \rtimes \Z_2$ and $F_2 \rtimes \Aut(F_2)$ have finite abelianization to conclude this. (See e.g. \cite[Corollary 2]{AFV2008presentation} for a finite presentation of $\Aut(F_2)$.) This completes the proof of $H^1(\PMap_c(\G); \Z)=0$.
  \end{proof}

\begin{THM} \label{thm:thetaSurjective}
    The map $\Theta$ in \Cref{lem:cech2FirstCohomology} is an isomorphism.
\end{THM}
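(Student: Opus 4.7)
The plan is to prove surjectivity of $\Theta$ (injectivity is \Cref{lem:cech2FirstCohomology}) by taking an arbitrary $\phi \in H^{1}(\PMap(\G);\Z)=\Hom(\PMap(\G),\Z)$ and producing an explicit preimage $f\in \dC(E_{\ell}(\G))$ with $\Theta(f)=\phi$.

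The first step is to verify that $\phi$ vanishes on $\overline{\PMap_{c}(\G)}$. \Cref{prop:h1PmapcZero} supplies the vanishing $\phi|_{\PMap_{c}(\G)}\equiv 0$; passing to the topological closure then requires continuity of $\phi$ (interpreting $H^{1}$ in the continuous sense for Polish groups), combined with the fact that $\Z$ is discrete, so that any continuous homomorphism into $\Z$ that is zero on a dense subgroup is zero on its closure.

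Next, the splitting $\PMap(\G)=\overline{\PMap_{c}(\G)}\rtimes \Z^{\alpha}$ from \Cref{thm:semidirectprod} lets $\phi$ descend to a homomorphism $\bar\phi:\Z^{\alpha}\to \Z$, where the $\Z^{\alpha}$ factor is spanned by the commuting loop shifts $\{\ell_{i}\}_{i\in I}$ constructed in \Cref{prop:sectionconstruct}. When $\alpha$ is finite, $\bar\phi$ is determined by the finitely many integers $n_{i}:=\bar\phi(\ell_{i})$. When $\alpha=\aleph_{0}$, so that $\Z^{\alpha}\cong \prod_{\N}\Z$, I would invoke Specker's theorem: every group homomorphism $\prod_{\N}\Z\to \Z$ factors through a finite direct product, hence is a $\Z$-linear combination of finitely many coordinate projections. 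Either way, there exist finitely many indices $i_{1},\ldots,i_{k}\in I$ and integers $n_{1},\ldots,n_{k}$ with $\bar\phi((v_{i})_{i\in I})=\sum_{j=1}^{k} n_{j} v_{i_{j}}$.

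Finally, I would set $f := \sum_{j=1}^{k} n_{j}\,\chi_{A_{i_{j}}} \in \dC(E_{\ell}(\G))$ and compare $\phi$ with $\Theta(f)=\sum_{j=1}^{k} n_{j}\,\Phi_{A_{i_{j}}}$. Both homomorphisms vanish on $\overline{\PMap_{c}(\G)}$ (by the first step together with \Cref{prop:compactFluxZero}), and the identity $\Phi_{A_{j}}(\ell_{i})=\delta_{ij}$ from \Cref{prop:sectionconstruct} gives $\Theta(f)(\ell_{i})=n_{i}=\bar\phi(\ell_{i})$ for every $i\in I$, so $\phi$ and $\Theta(f)$ agree on the generators of the $\Z^{\alpha}$ factor as well, and hence agree on all of $\PMap(\G)$. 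The principal obstacles are: (i) justifying vanishing of $\phi$ on the topological closure $\overline{\PMap_{c}(\G)}$, which forces the continuous interpretation of $H^{1}$ and relies on discreteness of $\Z$; and (ii) the mismatch in the infinite case between the product $\Z^{\alpha}$ and the direct sum $\dC(E_{\ell}(\G))$, which Specker's theorem resolves.
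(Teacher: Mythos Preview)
Your overall strategy is the same as the paper's: kill $\phi$ on $\overline{\PMap_c(\G)}$, descend to $\Z^{\alpha}$, then use the duality $\Phi_{A_j}(\ell_i)=\delta_{ij}$ to write $\phi$ as a finite combination of flux maps. Your invocation of Specker's theorem is exactly the content of the paper's line ``$L\cong\prod_{i\in I}\Z$ so that $H^{1}(L;\Z)\cong\bigoplus_{i\in I}\Z$,'' so part (ii) is fine.

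The one genuine gap is your handling of obstacle (i). The theorem is about the ordinary (algebraic) cohomology $H^{1}(\PMap(\G);\Z)=\Hom(\PMap(\G),\Z)$, not continuous cohomology, so ``interpreting $H^{1}$ in the continuous sense'' changes the statement rather than proving it. What you need is that every abstract homomorphism $\PMap(\G)\to\Z$ is automatically continuous; this is a theorem of Dudley \cite{dudley1961} (any group homomorphism from a Polish group to $\Z$ is continuous), and it is precisely what the paper invokes at this step. With Dudley's result in hand, $\phi|_{\PMap_c(\G)}\equiv 0$ together with discreteness of $\Z$ gives $\phi|_{\overline{\PMap_c(\G)}}\equiv 0$ for an arbitrary $\phi\in\Hom(\PMap(\G),\Z)$, and the rest of your argument goes through unchanged.
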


\begin{proof}
  We only need to check the surjectivity of $\Theta$. Pick $\phi \in H^1(\PMap(\G);\Z) = \Hom(\PMap(\G),\Z)$.  
  By \Cref{prop:h1PmapcZero}, we have $\phi(\PMapc(\G)) = \{0\}$. By Dudley's 
  automatic continuity \cite{dudley1961}, $\phi$ is continuous, so
  $\phi(\overline{\PMapc(\G)})=\{0\}$.
  Recall the semidirect product decomposition $\PMap(\G) \cong
  \overline{\PMap_c(\G)} \rtimes L$ from
  \Cref{thm:semidirectprod},
  where $L \cong \prod_{i \in I} \la \ell_{i} \ra$, the product of commuting loop shifts. Furthermore, these loop shifts are dual to the collection of $\{\Phi_{A_{i}}\}_{i\in I} \subset H^{1}(\PMap(\G);\Z)$ so that $\Phi_{A_j}(\ell_i) = \delta_{ij}$.
  Since $\phi$ is zero on the $\overline{\PMap_c(\G)}$-factor, it follows
  that $\phi$ is completely determined by its value on $L$. Note also that $L \cong \prod_{i \in I}\Z$ so that $H^{1}(L;\Z) \cong \oplus_{i \in I}\Z$ where a basis for $H^{1}(L;\Z)$ is given exactly by the set $\{\Phi_{A_{i}}\}_{i\in I}$, as in \Cref{thm:chombasis}(2). Hence,
  $\phi = \phi|_{L} \in H^1(L; \Z)$ can be described
  by a finite linear combination of $\Phi_{A_i}$'s. Such a finite linear combination is the image of a finite linear combination of $\chi_{A_i}$ under $\Theta$, so
  $\Theta$ is surjective.
\end{proof}

\begin{COR}[\Cref{cor:rankoffirstcohomology}, revisited]\label{cor:rankoffirstcohomology_revisited}
For every locally finite, infinite graph $\G$,
    \[
    \rk \left(H^1(\PMap(\G);\Z)\right) =
        \begin{cases}
            0 & \text{if $|E_\ell| \le 1$} \\
            n-1 & \text{if $2 \le |E_\ell|=n< \infty$} \\
            \aleph_0 & \text{otherwise}.
        \end{cases}
    \]
\end{COR}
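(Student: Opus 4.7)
The plan is to reduce the computation directly to the structural results already established, especially \Cref{thm:thetaSurjective}, and then compute $\dC(E_\ell(\G))$ case-by-case according to the cardinality and topology of $E_\ell(\G)$.

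First I would split into cases based on $|E_\ell(\G)|$. When $|E_\ell| = 0$ the graph has finite rank, so by \Cref{prop:noloopends} we have $\PMap(\G) = \PMap_c(\G)$, and \Cref{prop:h1PmapcZero} gives $H^1(\PMap(\G);\Z) = 0$. When $|E_\ell| = 1$, by \Cref{prop:noloopends} we have $\PMap(\G) = \overline{\PMap_c(\G)}$; any $\phi \in \Hom(\PMap(\G),\Z)$ vanishes on $\PMap_c(\G)$ by \Cref{prop:h1PmapcZero}, so by Dudley's automatic continuity \cite{dudley1961} it is continuous and therefore vanishes on the closure, giving rank $0$.

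For $|E_\ell| \ge 2$ I invoke \Cref{thm:thetaSurjective} to identify $H^1(\PMap(\G);\Z)$ with $\dC(E_\ell(\G))$, so it remains to compute the rank of $\dC(E_\ell(\G))$. When $|E_\ell|=n<\infty$, the space $E_\ell$ is discrete with $n$ points, so $\ceC(E_\ell) \cong \Z^n$ and hence $\dC(E_\ell) = \Z^n/\Z \cong \Z^{n-1}$, which has rank $n-1$. When $|E_\ell|$ is infinite, $E_\ell$ is a compact, totally disconnected, metrizable space with infinitely many points, and \Cref{thm:chombasis} furnishes a countably infinite index set $I$ with $\{\chi_{A_i}\}_{i\in I}$ a free basis of $\dC(E_\ell)$. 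Hence $\dC(E_\ell) \cong \bigoplus_{i \in I}\Z$ has rank $\aleph_0$.

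Combining these four cases matches the stated formula. No step is a real obstacle once \Cref{thm:thetaSurjective} and \Cref{thm:chombasis} are available; the only subtlety worth noting explicitly is that in the infinite case, the index set $I$ supplied by \Cref{thm:chombasis} is genuinely countably infinite (as opposed to being, say, uncountable), which follows from the fact that the cylinder sets used to build $\cA$ form a countable basis of clopens of $E_\ell$.
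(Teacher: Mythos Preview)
Your proof is correct and follows the same route as the paper: both deduce the result from the isomorphism $\Theta:\dC(E_\ell(\G))\xrightarrow{\sim} H^1(\PMap(\G);\Z)$ of \Cref{thm:thetaSurjective}. The paper's proof is a one-liner citing that isomorphism; you unpack it by computing $\dC(E_\ell)$ explicitly in each case and by treating $|E_\ell|\le 1$ separately via \Cref{prop:noloopends}, \Cref{prop:h1PmapcZero}, and Dudley continuity---which is precisely the content already absorbed into the proof of \Cref{thm:thetaSurjective}.
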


\begin{proof}
    This follows from the isomorphism $\Theta: \dC(E_\ell(\G)) \cong H^1(\PMap(\G);\Z)$ in \Cref{thm:thetaSurjective}.
\end{proof}

\subsection{Relation to surfaces} \label{ssec:surfacescohom}

Aramayona--Patel--Vlamis in \cite{APV2020} obtain a result similar to \Cref{thm:thetaSurjective} in the infinite-type \emph{surface} case using the homology of separating curves in place of $\dC(E_{\ell}(\G))$. Here we show that these approaches can be unified, as they each rely solely on the subspace of ends accumulated by loops or genus. Let $S$ be an infinite-type surface and let $\hat{S}$ be the surface obtained from $S$ by forgetting the planar ends of $S$. Let $H_{1}^{sep}(\hat{S};\Z)$ be the subgroup of $H_{1}(\hat{S};\Z)$ generated by homology classes that have separating simple closed curves of $\hat{S}$ as representatives. Note that when $S$ has only planar ends, $H_{1}^{sep}(\hat{S};\Z)$ is trivial. 

\begin{THM}[{\cite[Theorem 4]{APV2020} for genus $\geq 2$, \cite[Theorem 1.1]{DP2020} for genus $1$}]\label{thm:apvcohom}
    Let $S$ be an infinite-type surface of genus at least one. Then 
    \begin{align*}
        H^{1}(\PMap(S);\Z) \cong H_{1}^{sep}(\hat{S};\Z).
    \end{align*}
\end{THM}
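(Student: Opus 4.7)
The plan is to mirror the graph argument in the surface setting, factoring the desired isomorphism through the intermediate group $\dC(E_g(S))$ of locally constant $\Z$-valued functions on $E_g(S)$ modulo constants. In other words, I would first establish that $H^1(\PMap(S);\Z) \cong \dC(E_g(S))$ by exactly the same three-step argument used for graphs in Section 3, and then construct a geometric isomorphism $\dC(E_g(S)) \cong H_1^{sep}(\hat{S};\Z)$.

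For the first isomorphism, I would define flux maps $\Phi_\cE : \PMap(S) \to \Z$ for each nonempty proper clopen $\cE \subset E_g(S)$ by choosing a separating simple closed curve $\gamma$ on $\hat{S}$ realizing the partition $\cE \sqcup \cE^c$ and setting $\Phi_\cE(f)$ to be the net genus transported across $\gamma$. This is the surface analog of Definition 3.8 and corresponds to the Kotschick-Morita/APV flux; independence of the curve representing $\cE$ plays the role of Lemma 3.12. Once these flux maps are in hand, I would define
\[
\Theta_S : \dC(E_g(S)) \longrightarrow H^1(\PMap(S);\Z), \qquad \chi_A \longmapsto \Phi_A,
\]
using the countable basis from Theorem 3.14. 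Injectivity of $\Theta_S$ follows by pairing against handle shifts (in the sense of Patel--Vlamis) dual to the basis, exactly as in the proof of Lemma 3.20. Surjectivity uses: (i) Dudley's automatic continuity to see that any $\phi \in \Hom(\PMap(S),\Z)$ is continuous; (ii) $H^1(\PMapc(S);\Z) = 0$, which follows by writing $\PMapc(S)$ as a direct limit of mapping class groups of compact surfaces with boundary and invoking the analog of Proposition 3.23; and (iii) a semidirect decomposition $\PMap(S) = \overline{\PMapc(S)} \rtimes \Z^\alpha$ in the spirit of Theorem B (this is the original APV splitting).

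For the second isomorphism $\dC(E_g(S)) \cong H_1^{sep}(\hat{S};\Z)$, I would proceed as follows. Each separating simple closed curve $\gamma$ in $\hat{S}$ partitions $E(\hat{S}) = E_g(S)$ into two clopen sets $A_\gamma$ and $A_\gamma^c$. Define
\[
\Psi : H_1^{sep}(\hat{S};\Z) \longrightarrow \dC(E_g(S)), \qquad [\gamma] \longmapsto [\chi_{A_\gamma}],
\]
extended linearly. Homologous separating curves cobound a compact subsurface and hence determine the same partition of ends up to a global swap, so $\Psi$ is well defined. Surjectivity follows from Theorem 3.14: every basis element $\chi_A$ for a cylinder set $A$ of $E_g(S)$ can be realized as $\chi_{A_\gamma}$ for some embedded separating curve $\gamma \subset \hat{S}$. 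For injectivity, a separating curve whose associated partition is trivial must bound a compact subsurface of genus zero on one side in $\hat{S}$, and hence is null-homologous in $\hat{S}$ (whose planar ends have been filled).

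The main obstacle is the rigorous setup of the surface flux map and its associated admissibility data; in the graph setting we leveraged free factor coranks, and to transport this to surfaces one must replace coranks with genera of complementary subsurfaces and verify the analog of Proposition 3.11 (in particular, additivity over disjoint clopen subsets). The other technical point worth flagging is the $g=1$ case: the original APV argument required $g \geq 2$, and the DP2020 extension covers $g = 1$ by a separate argument. In the approach above, the $g = 1$ case is handled uniformly because the flux and $\dC$ constructions never use a genus bound — only the existence of separating curves with arbitrary clopen end partitions in $\hat{S}$, which holds as soon as $\hat{S}$ is nonempty.
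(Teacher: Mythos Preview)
This theorem is not proved in the paper; it is stated as a cited result from \cite{APV2020} (for genus $\geq 2$) and \cite{DP2020} (for genus $1$) and is used as a black box in the proof of \Cref{cor:graphtosurf}. There is therefore no paper's-own-proof to compare your proposal against.

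That said, the introduction does remark that the paper's approach ``gives a slightly alternate proof of the main results in \cite{APV2020}'', and your outline is essentially an attempt to flesh that remark out. Two observations. First, your second isomorphism $\dC(E_g(S)) \cong H_1^{sep}(\hat{S};\Z)$ is exactly \Cref{prop:sephomend}, which the paper does prove. Second, for the first isomorphism you invoke ``the original APV splitting'' $\PMap(S) = \overline{\PMapc(S)} \rtimes \Z^\alpha$ as an input; but that splitting is itself one of the main results of \cite{APV2020}, so at that step you are citing the very package you set out to reprove. A genuinely alternate proof along the lines the paper hints at would require independently constructing the surface flux maps, characterizing $\overline{\PMapc(S)}$ as their common kernel, and building the section from pairwise disjointly supported handle shifts --- i.e., carrying out the surface analogue of all of \Cref{sec:semidirect} --- which the paper gestures at but does not actually execute.
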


Let $E_{g}(S)$ denote the space of ends of $S$ accumulated by genus (i.e., the non-planar ends). 

\begin{PROP}\label{prop:sephomend}
    Let $S$ be an infinite-type surface. Then 
    \begin{align*}
        H_{1}^{sep}(\hat{S};\Z) \cong \dC(E_{g}(S)). 
    \end{align*}
\end{PROP}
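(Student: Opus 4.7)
The plan is to construct the isomorphism $\Psi:H_1^{sep}(\hat{S};\Z)\to \dC(E_g(S))$ directly, exploiting the fact that $E(\hat{S})=E_g(S)$ and that every separating simple closed curve $c\subset\hat{S}$ determines a clopen partition of this end space.

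First I would define $\Psi$ on generators. Given an oriented separating simple closed curve $c$, let $\hat{S}^+(c)$ denote the positive side and set $A(c):=E(\hat{S}^+(c))\subset E_g(S)$. Define $\Psi([c])=[\chi_{A(c)}]\in \dC(E_g(S))$. Reversing the orientation of $c$ swaps the positive and negative sides; since $\chi_{A(c)}+\chi_{A(c)^c}\equiv 1\equiv 0$ in $\dC(E_g(S))$, this is consistent with $\Psi([-c])=-\Psi([c])$.

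Next I would verify well-definedness on $H_1$. If $c_1,c_2$ are homologous separating curves, then $c_1-c_2=\partial D$ for a finite (hence compactly supported) singular 2-chain $D$, so $\hat{S}^+(c_1)$ and $\hat{S}^+(c_2)$ differ by a relatively compact region, forcing $A(c_1)=A(c_2)$. To extend $\Psi$ as a homomorphism to all of $H_1^{sep}$, I would pass through algebraic intersection with proper rays. Fix a basepoint $x_0\in\hat{S}$; for each end $e\in E_g(S)$, let $\gamma_e$ be a proper arc from $x_0$ to $e$. Then, with appropriate orientation conventions,
\[
[c]\cdot\gamma_e=\chi_{A(c)}(e)-\chi_{A(c)}(x_0),
\]
which depends only on $[c]\in H_1(\hat{S};\Z)$. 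Consequently, any relation $\sum n_i[c_i]=0$ in $H_1(\hat{S};\Z)$ forces $\sum n_i\chi_{A(c_i)}(e)$ to be independent of $e$, hence constant on $E_g(S)$ and therefore zero in $\dC(E_g(S))$. This shows $\Psi$ is a well-defined homomorphism.

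For surjectivity, I would apply the analog of \Cref{thm:chombasis} to the compact, totally disconnected, metrizable space $E_g(S)$, producing a free basis $\{\chi_{A_i}\}_{i\in I}$ of $\dC(E_g(S))$. Standard infinite-type surface topology provides, for each $i$, a separating simple closed curve $c_i\subset\hat{S}$ realizing the partition $A_i\sqcup A_i^c$, so $\Psi([c_i])=[\chi_{A_i}]$. For injectivity, I would select the $c_i$'s pairwise disjoint and tree-structured in accordance with item (3) of \Cref{thm:chombasis}, and then exhibit arcs $\gamma_i$ between an end in $A_i$ and an end in $A_i^c$ that are dual, i.e.\ $c_j\cdot\gamma_i=\delta_{ij}$. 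This dual pairing shows that any element of the kernel of $\Psi$ must have all coefficients zero.

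The main obstacle is arranging the separating curves $c_i$ and the dual arcs $\gamma_i$ consistently with the tree structure of the basis $\cA$. This amounts to choosing a pants-type decomposition of $\hat{S}$ whose dual graph records the basis tree, which is a standard but technical appeal to the classification of infinite-type surfaces.
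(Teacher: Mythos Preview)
Your approach is essentially the same as the paper's: send an oriented separating simple closed curve $c$ to the indicator function $\chi_{A(c)}$ of the ends on its positive side, then extend linearly. The paper's proof is far terser---it simply defines $\Xi(v)=\chi_{v^+}$ on primitive classes and asserts that this extends to an isomorphism---whereas you supply the intersection-pairing argument for well-definedness and the dual-arc construction for injectivity; both are correct.
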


\begin{proof}
    We first note that by definition, $E_{g}(S) = E(\hat{S})$. Let $v \in H_{1}^{sep}(\hat{S};\Z)$ be a primitive element, i.e. $v$ has a representative $\gamma$ that is an oriented and separating simple closed curve. Now $v$ determines a partition of $E(\hat{S})$ into two clopen subsets, $v^{+}$, those ends to the right of $\gamma$, and $v^{-}$, those ends to the left of $\gamma$. Note that these are proper subsets if and only if $v \neq 0$ if and only if $\chi_{v^+} \neq 0$ in $\dC(E)$. Define 
    \begin{align*}
        \Xi(v) \defeq \chi_{v^{+}} \in \dC(E),
    \end{align*}
    for each nonzero primitive element $v$ of $H_1^{sep}(\hat{S};\Z)$.
    This linearly extends to define an isomorphism $\Xi: H_{1}^{sep}(\hat{S};\Z) \xrightarrow{\sim} \dC(E_{g}(S))$.
\end{proof}

\begin{COR} \label{cor:graphtosurf}
    Let $S$ be an infinite-type surface of genus at least one and $\G$ be a locally finite, infinite graph. If $E_{g}(S)$ is homeomorphic to $E_{\ell}(\G)$, then there is a natural
    isomorphism between $H^{1}(\PMap(S);\Z)$ and $H^{1}(\PMap(\G);\Z)$.
\end{COR}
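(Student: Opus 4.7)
The proof is a straightforward chaining together of the isomorphisms already established in the preceding subsection, so the main task is to assemble them and to interpret what ``natural'' means in this context. The plan is to compose four isomorphisms to get:
\[
H^{1}(\PMap(S);\Z) \;\overset{(1)}{\cong}\; H_{1}^{sep}(\hat{S};\Z) \;\overset{(2)}{\cong}\; \dC(E_{g}(S)) \;\overset{(3)}{\cong}\; \dC(E_{\ell}(\G)) \;\overset{(4)}{\cong}\; H^{1}(\PMap(\G);\Z),
\]
where (1) is \Cref{thm:apvcohom}, (2) is the isomorphism $\Xi$ of \Cref{prop:sephomend}, (3) is induced by the given homeomorphism $E_{g}(S) \xrightarrow{\sim} E_{\ell}(\G)$ by pulling back locally constant integer-valued functions, and (4) is the inverse of the isomorphism $\Theta$ of \Cref{thm:thetaSurjective}.

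Concretely, first I would invoke \Cref{thm:apvcohom} and \Cref{prop:sephomend} to identify $H^{1}(\PMap(S);\Z)$ with $\dC(E_{g}(S))$, and then invoke \Cref{thm:thetaSurjective} to identify $H^{1}(\PMap(\G);\Z)$ with $\dC(E_{\ell}(\G))$. Since any homeomorphism $h\colon E_{g}(S) \to E_{\ell}(\G)$ induces an isomorphism $h^{*}\colon \dC(E_{\ell}(\G)) \to \dC(E_{g}(S))$ via precomposition (it sends clopen sets to clopen sets and is clearly a group homomorphism that is its own inverse up to choosing $h^{-1}$), this supplies the middle isomorphism (3).

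The word \emph{natural} in the statement should be understood as: the isomorphism depends only on the choice of homeomorphism $E_{g}(S) \cong E_{\ell}(\G)$, since each of the four isomorphisms above is canonical (they are not sensitive to ancillary choices such as a basepoint, a maximal tree, or a basis $\cA$ from \Cref{thm:chombasis}). The step requiring the most care is verifying that $\Theta$ is genuinely intrinsic: although its definition used a particular free basis $\{\chi_{A_{i}}\}_{i\in I}$ of $\dC(E_{\ell}(\G))$, the target values $\Phi_{A_{i}}$ are defined directly from the clopen sets $A_{i}$ with no further choices, and the homomorphism property forces $\Theta(\chi_{C}) = \Phi_{C}$ for \emph{every} clopen $C \subset E_{\ell}(\G)$ (not just those in $\cA$), by \Cref{lem:zerofluxbasis} and \Cref{prop:fluxProperties}. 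The analogous remark for $\Xi$ is immediate from its definition as $[\gamma] \mapsto \chi_{v^{+}}$ on primitive separating classes. Granting these canonical descriptions, the composition of the four isomorphisms is natural in the homeomorphism type of the end spaces involved, completing the proof.
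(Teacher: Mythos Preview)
Your proposal is correct and follows essentially the same approach as the paper: both proofs obtain the isomorphism by composing the maps from \Cref{thm:apvcohom}, \Cref{prop:sephomend}, the homeomorphism $E_g(S)\cong E_\ell(\G)$, and \Cref{thm:thetaSurjective}. The paper additionally singles out the degenerate case $E_g(S)=E_\ell(\G)=\emptyset$ explicitly (both cohomology groups vanish), whereas you let this fall out of the cited results; your added discussion of naturality and basis-independence of $\Theta$ is a nice elaboration that the paper omits.
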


\begin{proof}
    We first note that if $E_{g}(S)$ is empty (i.e. $S$ has finite genus), then $H^{1}(\PMap(S);\Z)$ is trivial by \cite[Theorem 1]{APV2020} and \cite[Theorem 1.1]{DP2020}. Similarly, if $E_{\ell}(S)$ is empty, then $H^{1}(\PMap(S);\Z)$ is trivial by \Cref{prop:noloopends} and \Cref{prop:h1PmapcZero}.

    Otherwise, the isomorphism is obtained by composing the maps from \Cref{thm:thetaSurjective}, \Cref{thm:apvcohom}, and \Cref{prop:sephomend}:
    \[
    H^1(\PMap(\G);\Z) \overset{\Theta}{\cong} 
    \dC(E_\ell(\G)) \cong \dC(E_g(S)) \overset{\Xi}{\cong} H_1^{sep}(\hat{S};\Z) \cong H^1(\PMap(S);\Z).
    \]
\end{proof}

\section{CB generation classification} \label{sec:CBgen} 
As an application of \Cref{thm:fluxzeromaps} and \Cref{thm:semidirectprod}, in this section we obtain \Cref{thm:CBgenClassification}, the classification of infinite graphs with CB generating sets. We revisit the theorem for convenience.

\begin{THM}[\Cref{thm:CBgenClassification}, revisited] \label{thm:CBgenClassification_revisited}
    Let $\G$ be a locally finite, infinite graph. Then $\PMap(\G)$ is CB generated if and only if either $\G$ is a tree, or satisfies the following:
    \begin{enumerate}
        \item $\G$ has finitely many ends accumulated by loops, and
        \item there is no accumulation point in $E \setminus E_\ell$.
    \end{enumerate}
\end{THM}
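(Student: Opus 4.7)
The plan is to prove both directions of the biconditional. For the forward direction I first note that CB generation implies local coarse boundedness (a basic fact in the Rosendal framework), so \Cref{THM:CBclassification} already forces either $\G$ of finite rank or $|E_\ell| < \infty$ with only finitely many components of $\G \setminus \G_c$ carrying infinite end spaces. When $\G$ is not a tree this gives condition (1). For condition (2) I would argue by contrapositive: if some $e^\ast \in E \setminus E_\ell$ is an accumulation point of $E$, then a component $T$ of $\G \setminus \G_c$ is a tree with infinitely many ends, and picking pairwise disjoint edge intervals $\{I_n\}$ on rays in $T$ limiting to distinct ends allows me to form countably many commuting word maps $\varphi_{(w,I_n)}$ for any fixed nontrivial $w \in \pi_1(\G_c)$, using \Cref{lem:compositionLaw} and \Cref{lem:conjugationLaw}. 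These fit together, in the manner of \Cref{prop:RsubgroupDecomposition}, to produce a continuous surjection of $\PMap(\G)$ onto a Polish group containing $\prod_n \Z$ as a subgroup; since $\prod_n \Z$ is not CB-generated and CB generation passes to continuous quotients, this contradicts CB generation of $\PMap(\G)$.

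For the backward direction, assume (1) and (2) and that $\G$ has positive rank. I plan to exhibit a CB generating set of the form
\[
  \mathcal{S} \;=\; \mathcal{V}_K \;\cup\; \{\ell_1, \ldots, \ell_{\alpha}\} \;\cup\; \mathcal{F},
\]
where $K$ is a carefully chosen compact connected subgraph of $\G$ making $\mathcal{V}_K$ a CB identity neighborhood (which exists by \Cref{THM:CBclassification}, since (1) and (2) imply local CB), $\ell_1, \ldots, \ell_\alpha$ with $\alpha = \max\{0, |E_\ell|-1\}$ are primitive loop shifts supplying the $\Z^\alpha$ factor in the semidirect decomposition of \Cref{thm:semidirectprod}, and $\mathcal{F}$ is a finite collection of loop swaps together with seed word maps, one on a chosen edge interval for each of the finitely many non-loop rays of $\G$ (finiteness is the force of condition (2), which forces each component of $\G \setminus \G_c$ to be a single ray). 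Each of these three components is CB, so $\mathcal{S}$ is CB. To check generation I would take an arbitrary $f \in \PMap(\G)$, peel off the loop-shift part via \Cref{thm:semidirectprod} to land in $\overline{\PMap_c(\G)}$, and then use \Cref{lem:zeroflux} together with a nested compact exhaustion to approximate modulo $\mathcal{V}_K$ by a compactly supported representative $g'$. By \Cref{rmk:compactMCG}, $g'$ belongs to a group of the form $F_n^{e-1} \rtimes \Aut(F_n)$ whose standard finite generating data can in turn be produced from $\mathcal{F}$ via \Cref{lem:compositionLaw} and \Cref{lem:conjugationLaw}, conjugating seed word maps and swaps by already-generated elements.

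The main obstacle is precisely this reduction step in the backward direction: ensuring that a \emph{fixed} finite seed collection $\mathcal{F}$, combined with the identity neighborhood $\mathcal{V}_K$, suffices to express the generators of the finitely-generated factor $F_n^{e-1} \rtimes \Aut(F_n)$ on every arbitrarily large compact subgraph appearing in the approximation step. Concretely this requires choosing $K$ so that products of loop shifts, seed word maps, and loop swaps produce enough conjugations, via \Cref{lem:conjugationLaw}, to relocate the seed word maps along $\G_c$ to any required ray interval, with any leftover movement absorbed into $\mathcal{V}_K$. Condition (2) is essential at this point: were some component $T$ of $\G \setminus \G_c$ to carry infinitely many ends, one would need infinitely many independent seed word maps to reach all rays in $T$, mirroring exactly the forward-direction obstruction and making $\mathcal{F}$ fail to be finite.
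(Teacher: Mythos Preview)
Your overall strategy matches the paper's: for the forward direction, reduce to the locally CB classification plus an obstruction when $E\setminus E_\ell$ has an accumulation point; for the backward direction, build a CB generating set $\cV_K \cup (\text{finite loop shifts}) \cup (\text{finite loop swaps and seed word maps})$, then for an arbitrary $f$ strip off the $\Z^\alpha$-part via \Cref{thm:semidirectprod}, approximate the remainder modulo $\cV_K$ by a compactly supported $g$, and express $g$ using the Armstrong--Forrest--Vogtmann generators of $\PMap(K')\cong F_n^{e-1}\rtimes\Aut(F_n)$ via \Cref{lem:compositionLaw} and \Cref{lem:conjugationLaw}. This is exactly what the paper does (see \Cref{lem:compactcbgen} and the final proof). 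Two small corrections: once you have landed in $\overline{\PMapc(\G)}$ you do not need \Cref{lem:zeroflux}; the approximation by a compactly supported map is immediate from the definition of closure. Also, the paper's choice of $K$ is simply the wedge point, so your requirement that $K$ be ``carefully chosen'' is milder than you suggest.

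The one genuine gap is in your forward-direction argument for condition (2). You claim to produce a continuous surjection onto a Polish group \emph{containing} $\prod_n\Z$ as a subgroup and then invoke that $\prod_n\Z$ is not CB-generated. But CB generation passes to continuous quotients, not to subgroups of quotients; a CB-generated group can certainly have non-CB-generated subgroups. So as written the argument does not conclude. You would either need the quotient itself to be (or to surject onto) a group you can show is not CB-generated, or to run an entirely different obstruction (e.g.\ a length function argument). The paper sidesteps this by simply citing \Cref{prop:nonCBgenAccumulation} (which is \cite[Theorem~6.1]{DHK2023}) as a black box; you should do the same unless you intend to reproduce that proof in full.
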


The only if direction of \Cref{thm:CBgenClassification} comes from \cite{DHK2023}:
\begin{PROP}[{\cite[Theorem 6.1]{DHK2023}}]\label{prop:nonCBgenAccumulation}
    Let $\G$ be a locally finite, infinite graph. If $\rk(\G)>0$ and $E \setminus E_\ell$ has an accumulation point, then $\PMap(\G)$ is not CB-generated.
\end{PROP}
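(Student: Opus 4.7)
The plan is to split on $|E_\ell(\G)|$.

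When $|E_\ell(\G)|=\infty$, \Cref{thm:semidirectprod} provides a continuous surjection $\PMap(\G) \twoheadrightarrow \Z^{\aleph_0}$. The group $\Z^{\aleph_0}$ (product topology) is not locally CB: every identity neighborhood contains a subgroup isomorphic to $\Z^{\aleph_0}$, which surjects continuously onto $\Z$ and hence has unbounded translation orbits. Since local coarse boundedness is a necessary condition for CB-generation in Rosendal's framework, $\Z^{\aleph_0}$ is not CB-generated. Because CB-generation passes to continuous quotients---the image of a CB generating set is CB and generating---$\PMap(\G)$ is also not CB-generated in this case.

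When $|E_\ell(\G)|<\infty$, the accumulation hypothesis supplies an infinite sequence of distinct rays $r_1, r_2, \ldots \in E \setminus E_\ell$ converging to some end $e$ of $\G$. Using $\rk(\G) > 0$, fix a nontrivial immersed loop $w$ in $\G_c$, and for each $n$ choose a compact interval $I_n$ lying in an edge of $r_n$ outside $\G_c$. The associated word maps $\varphi_n \defeq \varphi_{(w, I_n)}$ are compactly supported mapping classes of infinite order (by \Cref{lem:compositionLaw}), and the $I_n$ escape toward $e$ so that for every compact $K \subset \G$ only finitely many $I_n$ meet $K$.

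Suppose for contradiction that $\PMap(\G)$ is CB-generated by some $S$. If $\PMap(\G)$ is not locally CB, we are done directly via \Cref{THM:CBclassification} (since local CB is necessary for CB-generation). Otherwise, the plan is to promote ``$S$ is CB'' into a structural containment of the form $S \subset F \cdot \mathcal{V}_K$ for some compact $K \supset \G_c$ and some finite set $F \subset \PMap(\G)$, exploiting the fact that $\PMap(\G)$ is locally CB with a nice compact exhaustion. Choose $n$ so large that $I_n$ lies in a single component of $\G \setminus K$ that is disjoint from the total supports of every element of $F$. A careful support-and-conjugation analysis---using \Cref{lem:conjugationLaw} to track how conjugation by elements of $F$ deforms word maps outside of $\G_c$---then shows that no finite word in $S$ can alter the interval $I_n$ nontrivially, and in particular $\varphi_n \notin \langle S \rangle$, contradicting $\langle S \rangle = \PMap(\G)$.

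The main obstacle is the structural step in the finite $|E_\ell|$ case: rigorously promoting the abstract condition ``$S$ is CB'' to the concrete containment $S \subset F \cdot \mathcal{V}_K$. This is not automatic; it requires carefully combining Rosendal's characterization of CB subsets with the specific local-CB structure of $\PMap(\G)$ and the compact-open topology on proper homotopy equivalence classes. Once this containment is established, the word-map obstruction is a clean support argument, and the infinite-$|E_\ell|$ case is essentially formal from \Cref{thm:semidirectprod}.
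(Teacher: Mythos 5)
First, a remark on the comparison: the paper does not prove this proposition at all — it is imported verbatim from \cite[Theorem 6.1]{DHK2023} — so your attempt can only be judged on its own merits. Your first case, $|E_\ell|=\infty$, is correct: $\Pi$ from \Cref{ssec:SpaceOfFluxmaps} is a continuous surjection onto $\Z^{\aleph_0}$ (continuity of each flux map via Dudley, as in \Cref{thm:thetaSurjective}), $\Z^{\aleph_0}$ is not locally CB hence not CB-generated, and CB generation passes to continuous quotients. (The paper handles this case by quoting \Cref{prop:nonCBgenInfEell}, which is stronger, but your route is valid.) Note also that the containment you worry about should read $S\subseteq (F\cV_K)^k$ rather than $S\subseteq F\cdot\cV_K$ — that is what Rosendal's criterion actually gives — but this is harmless, since either version yields $\langle S\rangle\le\langle F\cup\cV_K\rangle$ and reduces the problem to showing that $\langle F\cup\cV_K\rangle$ is a proper subgroup.

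The genuine gap is the step you describe as ``a clean support argument,'' which is in fact the entire content of the finite-$|E_\ell|$ case and which fails as stated, for two reasons. First, ``altering the interval $I_n$'' is not a proper-homotopy invariant: $\varphi_{(w,I_n)}$ is itself properly homotopic to a map that is the identity on $I_n$ (slide the winding to an interval $I_n'$ further out on the same ray), so no argument that only tracks whether representatives restrict to the identity on $I_n$ can distinguish $\varphi_{(w,I_n)}$ from elements of $\langle F\cup\cV_K\rangle$. Second, you cannot ``choose $n$ so large that $I_n$ lies in a component of $\G\setminus K$ disjoint from the total supports of every element of $F$'': the finite set $F$ supplied by Rosendal's criterion consists of arbitrary elements of $\PMap(\G)$, whose supports are representative-dependent and need not avoid any neighborhood of the accumulated end $e$ (for instance, an element realizing a function $h\in\cR$ with $h\equiv v\neq 1$ near $e$ acts nontrivially on every ray converging to $e$). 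The correct replacement for the support argument is the algebraic invariant you are implicitly gesturing at: the $\cR$-coordinate $h_f\colon E\to\pi_1(\G_c^*,e_0)$ of \Cref{prop:RsubgroupDecomposition}. One shows that every element of $\cV_K$, hence every element of $\langle F\cup\cV_K\rangle$, has $h_f$ constant on a fixed clopen neighborhood of $e$ determined by $K$ and $F$, whereas $\varphi_{(w,I_n)}$ for $r_n$ deep inside that neighborhood does not; equivalently, the subgroups $G_n=\{f: h_f \text{ is constant on } U_n\}$ for a neighborhood basis $\{U_n\}$ of $e$ exhibit $\PMap(\G)$ as a countable increasing union of proper open subgroups, which is precisely Rosendal's obstruction to CB generation. (One must also address that \Cref{prop:RsubgroupDecomposition} requires $E\setminus E_\ell$ nonempty and compact, which is not automatic here.) Until the support heuristic is replaced by an argument of this kind, the second case is a plan rather than a proof.
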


\begin{PROP}[{\cite[Theorem 8.2]{DHK2023}}]\label{prop:nonCBgenInfEell}
    Let $\G$ be a locally finite, infinite graph. If $\G$ has infinitely many ends accumulated by loops, then $\PMap(\G)$ is not locally CB. In particular, $\PMap(\G)$ is not CB-generated.
\end{PROP}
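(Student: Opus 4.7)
The plan is to show that no basic neighborhood of the identity in $\PMap(\G)$ is coarsely bounded; since the neighborhoods $\cV_K$, for $K \subset \G$ compact, form a neighborhood basis at the identity, this will suffice. The strategy is to exhibit, for each such $K$, a continuous isometric action of $\PMap(\G)$ under which the orbit of $\cV_K$ is unbounded, via a suitable flux homomorphism onto $\Z$ witnessed by a loop shift hidden inside a single complementary component of $K$.

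Fix an arbitrary compact $K\subset\G$. Since $\G$ is locally finite, $\G\setminus K$ has only finitely many components, and since $|E_\ell(\G)|=\infty$, a pigeonhole argument produces some complementary component $C$ with $|E_\ell(\overline{C})|=\infty$. Pick two distinct ends $e_-,e_+\in E_\ell(\overline{C})$. Using the standard form of $\G$ and the construction in \Cref{ssec:elements}, I would find an embedded ladder graph $\Lambda\subset\overline{C}$ whose two ends are $e_-$ and $e_+$, and take $h\in\PMap(\G)$ to be a primitive loop shift along $\Lambda$. Because $h$ has a representative totally supported in $\overline{C}\setminus K$, it fixes $K$ pointwise and preserves each complementary component of $K$ setwise, so $h\in\cV_K$; hence $h^k\in\cV_K$ for every $k\in\Z$ since $\cV_K$ is a subgroup.

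Since $E_\ell$ is closed in a Cantor set and $e_\pm$ are distinct, I can choose a clopen $\cE\subset E_\ell$ with $e_+\in\cE$ and $e_-\notin\cE$. By the definition of the flux map and the orientation conventions in \Cref{ssec:fluxmaps}, $\Phi_\cE(h)=\pm 1$, and hence $\Phi_\cE(h^k)=\pm k$. The flux map $\Phi_\cE:\PMap(\G)\to\Z$ is a continuous homomorphism, either by Dudley's automatic continuity theorem applied to a Polish group, or directly by checking that $\Phi_\cE^{-1}(0)\supset\cV_{K_\cE}$ for any compact $K_\cE$ separating $\cE$ from $E_\ell\setminus\cE$. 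Consequently, the action of $\PMap(\G)$ on $\Z$ (with its standard metric) defined by $f\cdot n=\Phi_\cE(f)+n$ is a continuous isometric action. The orbit of $0$ under $\cV_K$ contains $\{\Phi_\cE(h^k):k\in\Z\}=\Z$ and is therefore unbounded, so $\cV_K$ is not coarsely bounded.

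Since $K$ was arbitrary, no neighborhood in the basis $\{\cV_K\}$ is coarsely bounded, so $\PMap(\G)$ is not locally coarsely bounded. The ``In particular'' clause then follows from the general fact, recalled in the introduction, that local coarse boundedness is necessary for the existence of a coarsely bounded generating set. The only genuinely delicate step is the construction of the primitive loop shift $h$ supported in $\overline{C}\setminus K$ with the prescribed flux; this is exactly what the hypothesis $|E_\ell(\G)|=\infty$ buys us, since for finite $|E_\ell(\G)|$ one could choose $K$ large enough that every complementary component contained at most one end accumulated by loops, ruling out any loop shift in $\cV_K$.
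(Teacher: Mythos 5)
Your argument is correct, and it is essentially the argument of the cited source: the paper does not reprove this proposition but imports it from \cite[Theorem 8.2]{DHK2023}, where the proof likewise finds, for each compact $K$, a complementary component containing two ends of $E_\ell$, places a loop shift $h\in\cV_K$ there, and uses a continuous flux homomorphism $\Phi_\cE:\PMap(\G)\to\Z$ (nonzero on $h$) to produce an unbounded orbit, exactly as you do. Your closing observation correctly identifies where $|E_\ell|=\infty$ is used, and your appeal to Dudley's automatic continuity for the continuity of $\Phi_\cE$ matches the paper's own usage elsewhere.
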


Now we show that those conditions are also sufficient for CB-generation.
First, recall by \Cref{prop:treePMap} that when $\G$ is a tree, $\PMap(\G)$ is the trivial group.
We proceed to show (1) and (2) are sufficient to show that $\PMap(\G)$ is CB
generated. We start with the case where $\G$ has finite rank and satisfies Condition (2):

\begin{PROP}\label{prop:CBgenFinite}
       Let $\G$ be a locally finite, infinite graph. If $\G$ has finite rank with no accumulation point in $E$, then $\PMap(\G)$ is finitely generated.
\end{PROP}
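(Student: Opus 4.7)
The plan is to reduce to the classical finite-rank, finitely-many-ends case and then invoke \Cref{prop:RsubgroupDecomposition} together with the classical finite generation of $\Aut(F_n)$.

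First I would observe that the assumption that $E(\G)$ has no accumulation point forces $E(\G)$ to be finite. Indeed, $E(\G)$ is a closed subset of a Cantor set, hence compact and metrizable; a compact metrizable space with no accumulation points is discrete, and a discrete compact space is finite. Since $\G$ is locally finite and infinite, $E(\G)$ is nonempty, so we can write $e := |E(\G)| \in \Z_{\ge 1}$. Moreover, the finite rank hypothesis forces $E_\ell(\G) = \emptyset$, so $E(\G) \setminus E_\ell(\G) = E(\G)$ is nonempty and (trivially) compact. The rank case $n = 0$ is already excluded since a tree with finitely many ends is finite, but $\G$ is infinite; so we may assume $n := \rk(\G) \ge 1$.

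Next I would apply the explicit form of \Cref{prop:RsubgroupDecomposition} for finite rank with finitely many ends to conclude
\[
    \PMap(\G) \;\cong\; F_n^{\,e-1} \rtimes \Aut(F_n).
\]
Finally, $F_n^{e-1}$ is a finite direct product of finitely generated free groups, hence finitely generated, and $\Aut(F_n)$ is finitely generated by the classical theorem of Nielsen. A semidirect product of two finitely generated groups is finitely generated (a generating set is the union of a generating set for the normal subgroup and a lift of a generating set for the quotient), so $\PMap(\G)$ is finitely generated. This is just a matter of citing the structure result and standard finite generation facts; there is no real obstacle beyond noting that the hypotheses on $E$ and the finite rank together force $\G$ into the regime where \Cref{prop:RsubgroupDecomposition} gives an explicit finite-rank-type description.
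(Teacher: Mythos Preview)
Your approach is essentially the same as the paper's: reduce to finitely many ends via compactness, then invoke \Cref{prop:RsubgroupDecomposition} and classical finite generation. However, there is a small factual slip: the claim that ``a tree with finitely many ends is finite'' is false---a ray is an infinite tree with one end, and a line is an infinite tree with two ends. So you cannot exclude the rank $n=0$ case on those grounds. Fortunately this does not damage the conclusion: when $n=0$ the formula in \Cref{prop:RsubgroupDecomposition} (or \Cref{prop:treePMap} directly) gives $\PMap(\G)=1$, which is certainly finitely generated. Simply replace the exclusion of $n=0$ with this observation and the proof is complete.
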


\begin{proof}
    Note in this case $E_\ell$ is the empty set, so having no accumulation point in
    $E \setminus E_\ell$ is equivalent to having a finite end space. Hence $\PMap(\G)$ is isomorphic to one of $\Out(F_n), \Aut(F_n)$ or $F_n^e \rtimes \Aut(F_n)$ for some $e=|E|-1 \ge 1$, all of which are finitely generated, concluding the proof.
\end{proof}

Now assume $\G$ has infinite rank but finitely many ends accumulated by loops
with no accumulation point in $E \setminus E_\ell$. As in \Cref{rmk:connectsum}, $\G$ can be realized as a finite wedge sum of rays, Loch Ness monster graphs (infinite rank graph with end space $(E,E_{\ell}) \cong (\{*\},\{*\})$), and Millipede monster graphs (infinite rank graph with end space $(E,E_{\ell}) \cong (\N \cup \{\infty\}, \{\infty\})$).  
Then $\G$ is characterized by the triple $(r,l,m)$ where $r$ is the number of ray summands, $l$ is the number of Loch Ness monster summands, and $m$ is the number of Millipede monster summands. Then $\G$ is as in \Cref{fig:wedgesum}. Note that this triple is not unique, in fact, if $m>0$ then we do not need to keep track of $r$ as any additional ray can simply be moved via a proper homotopy into a Millipede monster summand. However, in order to avoid a case-by-case analysis we prove that $\PMap(\G)$ is CB-generated for \emph{any} triple $(r,l,m)$.  Note that we already know by \Cref{THM:CBclassification} that both the Loch Ness monster graph, $(0,1,0)$, and the Millipede monster graph, $(0,0,1)$, have CB and thus CB-generated pure mapping class groups. Therefore we will ignore these two graphs throughout this section.

\begin{figure}[ht!]
	    \centering
	    \def\svgwidth{.7\textwidth}
\begingroup%
  \makeatletter%
  \providecommand\color[2][]{%
    \errmessage{(Inkscape) Color is used for the text in Inkscape, but the package 'color.sty' is not loaded}%
    \renewcommand\color[2][]{}%
  }%
  \providecommand\transparent[1]{%
    \errmessage{(Inkscape) Transparency is used (non-zero) for the text in Inkscape, but the package 'transparent.sty' is not loaded}%
    \renewcommand\transparent[1]{}%
  }%
  \providecommand\rotatebox[2]{#2}%
  \newcommand*\fsize{\dimexpr\f@size pt\relax}%
  \newcommand*\lineheight[1]{\fontsize{\fsize}{#1\fsize}\selectfont}%
  \ifx\svgwidth\undefined%
    \setlength{\unitlength}{272.6565549bp}%
    \ifx\svgscale\undefined%
      \relax%
    \else%
      \setlength{\unitlength}{\unitlength * \real{\svgscale}}%
    \fi%
  \else%
    \setlength{\unitlength}{\svgwidth}%
  \fi%
  \global\let\svgwidth\undefined%
  \global\let\svgscale\undefined%
  \makeatother%
  \begin{picture}(1,1.14242864)%
    \lineheight{1}%
    \setlength\tabcolsep{0pt}%
    \put(0,0){\includegraphics[width=\unitlength,page=1]{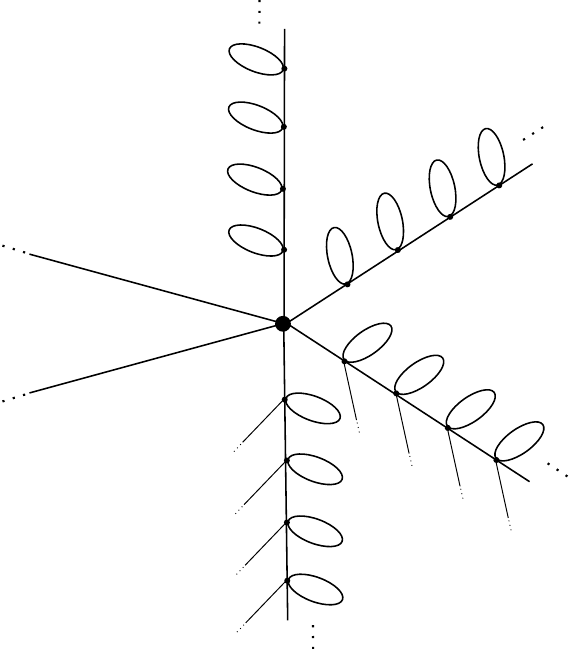}}%
    \put(0.68572265,0.98420197){\color[rgb]{0,0,0}\makebox(0,0)[lt]{\lineheight{1.25}\smash{\begin{tabular}[t]{l}$l$\end{tabular}}}}%
    \put(0,0){\includegraphics[width=\unitlength,page=2]{wedgesum.pdf}}%
    \put(0.74095573,0.11883243){\color[rgb]{0,0,0}\makebox(0,0)[lt]{\lineheight{1.25}\smash{\begin{tabular}[t]{l}$m$\end{tabular}}}}%
    \put(0,0){\includegraphics[width=\unitlength,page=3]{wedgesum.pdf}}%
    \put(0.03874455,0.55770612){\color[rgb]{0,0,0}\makebox(0,0)[lt]{\lineheight{1.25}\smash{\begin{tabular}[t]{l}$r$\end{tabular}}}}%
    \put(0,0){\includegraphics[width=\unitlength,page=4]{wedgesum.pdf}}%
    \put(0.54151048,0.56568736){\color[rgb]{0,0,0}\makebox(0,0)[lt]{\lineheight{1.25}\smash{\begin{tabular}[t]{l}$K$\end{tabular}}}}%
  \end{picture}%
\endgroup%

		    \caption{The graphs $\G$ that we prove have a CB-generating pure mapping class group. Each such $\G$ has a single wedge point $K$ and $\G\setminus K$ has $r$ ray components, $l$ Loch Ness monster components, and $m$ Millipede monster components.}
	    \label{fig:wedgesum}
    \end{figure}

The foundation for our choice of CB-generating set for $\PMap(\G)$ will be the set $\cV_K$, where $K$ is the wedge point as in \Cref{fig:wedgesum}. Recall that an appropriate choice of a compact set $K$ provides a CB neighborhood of the identity certifying that $\PMap(\G)$ is locally CB. 
\begin{PROP}[{\cite[Proposition 8.3]{DHK2023}}]\label{prop:CBVK}
  Let $\G$ be a locally finite, infinite graph with finitely many ends
  accumulated by loops. Then $\PMap(\G)$ is locally CB if and only if $\G
  \setminus \G_c$ has only finitely many components whose ends space is
  infinite. Moreover, for any choice of connected compact subgraph $K$ whose
  complementary components are either trees or monster graphs, $\cV_K$ is a
  CB neighborhood of the identity in $\PMap(\G)$.
\end{PROP}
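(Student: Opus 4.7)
The plan is to prove the biconditional by arguing each direction separately, with the sufficient direction yielding the moreover clause at the same time.

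For the sufficient direction, suppose $\G \setminus \G_c$ has only finitely many components with infinite end spaces. Combining this with $|E_\ell(\G)|<\infty$, I would first use the standard form of $\G$ and the PHE classification \Cref{thm:PHEclassification} to produce a connected compact $K \subset \G$ whose complementary components in $\G$ are all either trees or monster graphs: one monster summand per end in $E_\ell$ after absorbing the finitely many infinite-end tree components of $\G \setminus \G_c$ and all of $\G_c$'s finite core into $K$. To show $\cV_K$ is CB, I observe that any $f \in \cV_K$ has a representative fixing $K$ pointwise and preserving each complementary component $R_i$ setwise; after a further proper homotopy it may be arranged to fix the unique cut point $\overline{R_i}\cap K$. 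Restriction then provides a continuous homomorphism
\[
\rho \colon \cV_K \longrightarrow \prod_i \PMap\bigl(\overline{R_i}\bigr),
\]
which is injective because an element restricting to the identity on every $\overline{R_i}$ is totally supported on $K$ while fixing $K$, hence trivial. For tree components $\PMap(\overline{R_i})$ is trivial by \Cref{prop:treePMap}, and for monster components it is CB by \Cref{THM:CBclassification}. Only finitely many factors are nontrivial, so the product is CB; hence so is $\cV_K$. Being a basic identity neighborhood, this certifies local CB and proves the moreover clause.

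For the necessary direction, suppose $\G \setminus \G_c$ has infinitely many components $\{R_i\}_{i \in \N}$ with infinite end spaces. I would show that no basic neighborhood $\cV_{K'}$ is CB. By local finiteness, $K'$ meets only finitely many $R_i$, so after reindexing, all $R_i$ are disjoint from $K'$. In each $R_i$ pick a short interval $I_i$ in an edge disjoint from $\G_c$, and fix a nontrivial $w \in \pi_1(\G_c,p)$ (guaranteed to exist since the hypothesis forces infinite rank, hence $|E_\ell|\geq 1$). The word maps $\varphi_i := \varphi_{(w,I_i)}$ all lie in $\cV_{K'}$, pairwise commute after arranging total supports inside the disjoint pieces $R_i \cup (\text{path to }\G_c)$ with contractible overlaps, and each has infinite order by \Cref{lem:compositionLaw}. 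To convert this algebraic freedom into unboundedness, I would exhibit a continuous isometric $\PMap(\G)$-action on a graph whose vertices are proper-homotopy markings of larger and larger compact subgraphs of $\G$ containing more of the $R_i$, with edges recording single word-map moves. Under such an action, $\varphi_{i_1}\cdots\varphi_{i_k}$ sits at distance proportional to $k$ from the identity vertex, so $\cV_{K'}$ has infinite orbit diameter and fails to be CB.

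The main obstacle I anticipate is in the necessary direction, specifically engineering an explicit continuous isometric action (equivalently, a continuous left-invariant pseudometric) on $\PMap(\G)$ that detects the algebraic independence of the word maps $\varphi_i$, even though each is locally the identity on any fixed compact set. The sufficient direction, by contrast, reduces cleanly to a finite direct product of CB factors once the compact $K$ is chosen; verifying injectivity of the restriction map and invoking \Cref{THM:CBclassification} for the monster factors are then routine.
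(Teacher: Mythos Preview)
The paper does not give a proof of this proposition: it is quoted verbatim from \cite[Proposition 8.3]{DHK2023}, and the only commentary is the sentence immediately following it, which notes that the ``moreover'' clause is absent from the cited statement but can be read off from the proof there. So there is no proof in the present paper to compare your attempt against.

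That said, two remarks on your outline. In the sufficient direction, injectivity of the restriction map $\rho$ into $\prod_i \PMap(\overline{R_i})$ is not by itself enough to conclude that $\cV_K$ is CB: a Polish group can embed continuously and injectively into a CB group without being CB (e.g.\ $\Z\hookrightarrow S^1$). You want $\rho$ to be a topological isomorphism onto the product; surjectivity is immediate (extend by the identity off each $\overline{R_i}$), and then the open mapping theorem for Polish groups finishes the job. In the necessary direction, you correctly flag the gap yourself: producing infinitely many commuting infinite-order word maps inside $\cV_{K'}$ is the right start, but the actual content is the construction of a continuous length function (equivalently, a continuous isometric action) that separates the partial products, and your description of this step is only a sketch. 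The argument in \cite{DHK2023} builds exactly such a length function, so if you want a self-contained proof you will need to carry that out.
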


We remark that the moreover statement is absent in \cite[Proposition 8.3]{DHK2023}; however, it can be deduced readily from the proof. We thus have that our choice of $\cV_{K}$ is CB. This is the starting point for our CB generating set; we now describe how to choose the remaining elements. 

Enumerate each of the ray summands of $\G$ as $R_{1},\ldots,R_{r}$, the Loch Ness monster summands as $L_{1},\ldots,L_{l}$, and the Millipede monster summands as $M_{1},\ldots,M_{m}$ (skip the enumeration if there are no summands of a given type). We also sequentially label the loops in $L_{i}$ by $a_{i,j}$ where $a_{i,1}$ is the loop closest to $K$. We similarly label the loops in $M_{i}$ by $b_{i,j}$.  For each $R_{i}$ let $I_{i}$ be an interval in the interior of $R_{i}$. Then we have the following finite collection of word maps:
\begin{align*}
    W \defeq \{\phi_{(a_{1,1},I_{i})}\}_{i=1}^{r}.
\end{align*}
If $l=0$ then we use $W:= \{\phi_{(b_{1,1}, I_{i})}\}_{i=1}^{r}$ instead. Note we cannot have $l=m=0$ as $\G$ has infinite rank. If $r=0$, we set $W := \emptyset$.

Next, we have the following finite collection of loop swaps:
\begin{align*}
    B \defeq &\{\alpha_{ij}:=\text{swaps } a_{i,1} \leftrightarrow a_{j,1}\ |\ 1 \le i < j \le l\}\\
        &\cup\{\beta_{ij}:= \text{swaps } b_{i,1} \leftrightarrow b_{j,1}\ |\ 1 \le i < j \le m \}\\
        &\cup\{\gamma_{ij}:= \text{swaps } a_{i,1} \leftrightarrow b_{j,1}\ |\ 1 \le i \le l,\ 1\le j \le m\}. 
\end{align*}
In words, $B$ is the collection of all loop swaps between loops that are adjacent to $K$. 

Finally, we need a finite collection of loop shifts. The graph $\G$ has only finitely many ends accumulated by loops, so by \Cref{cor:rankoffirstcohomology}, $H^{1}(\PMap(\G);\Z)$ has finite rank. Let $H$ be a finite collection of primitive loop shifts dual to a finite basis of $H^{1}(\PMap(\G);\Z)$.

We claim that the set 
\begin{align*}
    \cS := \cV_{K} \cup W \cup B \cup H
\end{align*}
is a CB generating set for $\PMap(\G)$. Note that $\cS$ is CB since $\cV_{K}$ is CB by \Cref{prop:CBVK} and each of $W,B,$ and $H$ is simply a finite set. Thus we only need to verify that $\cS$ is a generating set for $\PMap(\G)$. We will first check that $\cS$ generates $\PMapc(\G)$. 

\begin{LEM}\label{lem:compactcbgen}
    If $K' \subset \G$ is any connected compact subset of $\G$, then $\PMap(K') \subset \la \cS \ra$.  
\end{LEM}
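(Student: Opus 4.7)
The approach is to use the decomposition $\PMap(K') \cong F_n^{e-1} \rtimes \Aut(F_n)$ given by \Cref{rmk:compactMCG} and \Cref{prop:RsubgroupDecomposition}, and realize a standard generating set of each factor by elements of $\cS$. First I would reduce: since $\PMap(K_1') \subseteq \PMap(K_2')$ whenever $K_1' \subseteq K_2'$ are nested connected compact subgraphs of $\G$, we may enlarge $K'$ so that $K' \supseteq K$ and $K' = K \cup \bigcup_j C_j$, where each $C_j$ is a compact initial subgraph of a summand ($R_i$, $L_i$, or $M_i$) of $\G$ as in \Cref{fig:wedgesum}.

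To generate the $\Aut(F_n)$ factor, I would use a classical generating set: all transpositions, all inversions, and one right Nielsen move $\rho \colon a \mapsto aa'$ with $a, a'$ loops in the same complementary component of $K$ (possible since at least one summand has infinite rank). Inversions, in-component transpositions, and the chosen in-component Nielsen move are each totally supported in a single complementary component of $K$ and fix $K$ pointwise, so they lie in $\cV_K$. A transposition between loops adjacent to $K$ in different components lies directly in $B$, and a cross-component transposition between non-adjacent loops is obtained by conjugating a $B$-element by in-component transpositions from $\cV_K$ that bring the relevant loops adjacent to $K$.

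To generate the $F_n^{e-1}$ factor, which is spanned by word maps $\varphi_{(g, I_p)}$ with $p$ a boundary point of $K'$, $I_p \subset K' \setminus K'_c$ a short segment near $p$, and $g$ a loop of $K'$: if $p$ lies on a ray $R_i$, then $I_p \subset \G \setminus \G_c$, so starting from $\varphi_{(a_{1,1}, I_i)} \in W$ I would apply \Cref{lem:conjugationLaw} (conjugating by loop swaps from the previous step) to obtain $\varphi_{(g, I_p)}$ for any loop $g$, and then \Cref{lem:compositionLaw} to obtain $\varphi_{(w, I_p)}$ for any $w \in \pi_1(K'_c)$. If $p$ lies in a non-ray summand, so that $I_p \subset \G_c$, I would instead begin with $\varphi_{(g_0, I_p)}$ where $g_0$ is a loop in the same summand as $p$; this is totally supported in that summand and hence in $\cV_K$. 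Analogous conjugations and compositions then yield arbitrary $\varphi_{(w, I_p)}$.

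The main obstacle is that for $p$ in a non-ray summand, $I_p$ lies in $\G_c$, so \Cref{lem:conjugationLaw} and \Cref{lem:compositionLaw} in $\PMap(\G)$ do not directly apply. The resolution is to argue within $\PMap(K')$, where $I_p$ is a tree segment of the compact graph $K'$ (since $K'_c \subsetneq \G_c$), so the analogous conjugation and composition identities hold within $\PMap(K')$ and transfer to $\PMap(\G)$ via the inclusion $\PMap(K') \hookrightarrow \PMap(\G)$.
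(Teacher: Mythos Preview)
Your proposal is correct and follows essentially the same route as the paper: enlarge $K'$, use the decomposition $\PMap(K')\cong F_n^{e-1}\rtimes\Aut(F_n)$, realize a standard $\Aut(F_n)$ generating set inside $\cV_K\cup B$ (in-component generators from $\cV_K$, cross-component transpositions via conjugating elements of $B$), and then obtain the word-map factors from $W$ (for ray boundaries) or $\cV_K$ (for monster boundaries) together with the conjugation and composition laws. The only cosmetic differences are that the paper uses the smaller Armstrong--Forrest--Vogtmann generating set~\eqref{eqn:AFVgenset} rather than all transpositions, and it leaves implicit the point you spell out---that for boundary intervals lying in $\G_c$ one applies \Cref{lem:compositionLaw} and \Cref{lem:conjugationLaw} inside $\PMap(K')$ (where such intervals lie outside $(K')_c$) rather than in $\PMap(\G)$.
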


Before we give the proof of this lemma we review finite generating sets for $\Aut(F_{n})$. 
Let $F_n$ be a free group of rank $n$, and denote by $a_1,\ldots,a_n$ its free generators.
In 1924, Nielsen \cite{Nielsen1924} proved a finite presentation of $\Aut(F_n)$, with the
generating set $\{\tau_i\}_{i=1}^n \cup \{\sigma_{ij}, \lambda_{ij},
\rho_{ij}\}_{1 \le i \neq j \le n}$, where:
\begin{align*}
    &\tau_i=
    \begin{cases}
        a_i \mapsto a_i^{-1}, \\
        a_j \mapsto a_j & \text{for $j \neq i$.}
    \end{cases}
    &&\sigma_{ij}=
    \begin{cases}
        a_i \leftrightarrow a_j, \\
        a_k \mapsto a_k &\text{for $k \neq i,j$.}
    \end{cases}\\
    &\lambda_{ij}=
    \begin{cases}
        a_i \mapsto a_ja_i, \\
        a_k \mapsto a_k & \text{for $k \neq i,j$.}
    \end{cases}
    &&\rho_{ij}=
    \begin{cases}
        a_i\mapsto a_ia_j, \\
        a_k \mapsto a_k & \text{for $k \neq i,j$.}
    \end{cases}
\end{align*}
We call $\tau_i$ a \textbf{flip}, $\sigma_{ij}$ a
\textbf{transposition,} and $\lambda_{ij},\rho_{ij}$ \textbf{left/right Nielsen
  automorphisms} respectively. In fact, Armstrong--Forrest--Vogtmann \cite[Theorem 1]{AFV2008presentation}
reduced this generating set to consist only of involutions:
\begin{equation}\label{eqn:AFVgenset}
\{\tau_i\}_{i=1}^n
\cup \{\sigma_{i,i+1}\}_{i=1}^{n-1}\cup\{\tau_2\lambda_{12}\}. \tag{$\dagger$}
\end{equation}

\begin{proof}[Proof of \Cref{lem:compactcbgen}]
    Let $K'$ be a connected compact subset of $\G$. Without loss of generality, we can increase the size of $K'$ so that it is as in \Cref{fig:compactcbgen}. In particular, $K'$ satisfies the following:
    \begin{itemize}
        \item $K'$ contains at least two loops of $L_{1}$ (or $M_{1}$ if $l=0$),
        \item $K'$ contains at least one loop from every monster summand,
        \item the vertices in $K'$ are contained in its interior,
        \item every component of $\G\setminus K'$ is infinite, 
        \item $K'$ is connected and contains the wedge point $K$. 
    \end{itemize}
    Note that the last two properties imply that $K'$ contains a subsegment of every ray summand $R_i$.

    \begin{figure}[ht!]
	    \centering
        \includegraphics[width=.7\textwidth]{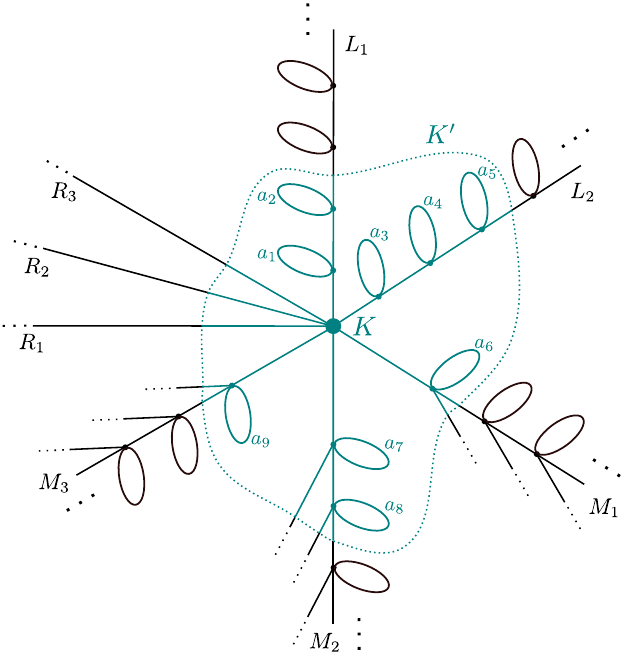}
		    \caption{Illustration of $K'$ in $\G$. Here we have $l=2, m=3, r=3$ and $\PMap(K') \cong F_9^{11} \rtimes \Aut(F_9)$. We may assume $K'$ contains: at least one loop from every monster summand, at least two loops from one of the monster summands, and initial segments of the ray summands, as well as $K$. If needed, we can further enlarge $K'$ such that it contains the vertices in the interior, and it only contains the entirety of the loops.}
	    \label{fig:compactcbgen}
    \end{figure}
    
    By \Cref{prop:RsubgroupDecomposition} and \Cref{rmk:compactMCG},
    we have that $\PMap(K') \cong F_{m}^{k} \rtimes \Aut(F_{m})$ for some $k > 0$ and $m=\rk(K')$.  We first check that $\la \cS \ra$ contains an Armstrong--Forrest--Vogtmann generating set for $\Aut(F_{m})$. Relabel the loops of $K'$ by $a_1,\ldots,a_m$ in the following manner. The loop of $L_{1}$ closest to $K$ is labeled $a_{1}$, the next loop in $L_{1}$ is $a_{2}$, continued until the loops of $L_{1}$ are exhausted. Then the next loop, say $a_{j+1}$, is the first loop on $L_{2}$ etc., until all of the loops in all of the $L_{i}$ are exhausted. Finally, continue relabeling by $a_\bullet$'s the loops in $M_{1}$ through $M_{m}$, in the same process. Note that when $l=0$, then $a_1$ and $a_2$ are contained in $M_1$.

    Note that we immediately have $\tau_{1},\ldots,\tau_m,\lambda_{12} \in \cV_{K} \subset \cS$. Therefore it remains to check that $\sigma_{i,i+1} \in \la \cS \ra$ for all $i=1,\ldots,m-1$. Each such $\sigma_{i,i+1}$ either swaps two adjacent loops in a single component of $K'\setminus K$ or swaps the last loop in a component of $K'\setminus K$ with the first loop in the next component. In the former case we already have that $\sigma_{i,i+1} \in \cV_{K}$. For the latter case, let $a_t$ be the first loop in the component of $K' \setminus K$ containing $a_i$.
    Then consider the loop swap $\sigma_{i,t}$ that swaps $a_i$ with $a_t$ (note those two loops could coincide, and then $\sigma_{i,t}$ is the identity) and let $\sigma_{t,i+1}$ be the loop swap that swaps $a_t$ with $a_{i+1}$, which is the first loop in the component of $K' \setminus K$ containing $a_{i+1}$. Then we have that $\sigma_{i,1} \in \cV_K$, $\sigma_{t,i+1} \in B$ and $\sigma_{i,i+1} = \sigma_{i,t} \sigma_{t,i+1} \sigma_{i,t} \in \la \cS\ra$. Thus we see that every Armstrong--Forrest--Vogtmann generator for the $\Aut(F_{m})$ subgroup of $\PMap(K') \cong F_{m}^{k} \rtimes \Aut(F_{m})$ is contained in $\la \cS \ra$. 

    Finally we need to be able to obtain each of the $k$ factors of $F_{m}$ in $\PMap(K')$. Each $F_{m}$ subgroup can be identified with the subgroup of the collection of word maps on an interval adjacent to the boundary of $K'$. Recall by \Cref{prop:RsubgroupDecomposition} there are $k+1$ such boundary adjacent intervals, so say $I_1,\ldots,I_{k+1}$. Since we have already generated the $\Aut(F_{m})$ subgroup of $\PMap(K')$ with $\cS$ and we can change the word of the word map using \Cref{lem:compositionLaw} and \Cref{lem:conjugationLaw}, it suffices to show that a \emph{single} word map on each interval $I_j$ that maps onto a generator of $F_{m}$ is in $\la \cS \ra$.
    However, we even have one such word map in $\cS$ already. Indeed, if $I_j$ is contained in some ray then we have already added a corresponding word map to $W$. Otherwise, if $I_j$ is contained in some monster summand, then there is an appropriate word map already in $\cV_{K}$ obtained by mapping $I_j$ over the first loop of that summand. We can thus conclude that $\PMap(K') \cong F_{m}^{k} \rtimes \Aut(F_{m})$ in contained in $\la \cS \ra$. 
\end{proof}

We are now ready to prove \Cref{thm:CBgenClassification}. Note that in the above lemma we never made use of the loop shifts in $H$. They will now be used to push any random mapping class into the closure of the compactly supported mapping classes. 

\begin{proof}[Proof of \Cref{thm:CBgenClassification}]
    As discussed in the beginning of the section, the only if direction comes from \Cref{prop:nonCBgenAccumulation} and \Cref{prop:nonCBgenInfEell}. Now we prove the if direction.
    When $\G$ is a tree, we have $\PMap(\G)=1$ by \Cref{prop:treePMap}. If $\G$ has finite rank,
    $\PMap(\G)$ is finitely generated by \Cref{prop:CBgenFinite}. Also if $\G$ is either the Loch Ness monster or Millipede monster graph, then by \Cref{THM:CBclassification}, $\PMap(\G)$ is CB. Hence we may
    assume $1 \le |E_\ell|<\infty$, there is no accumulation point in
    $E\setminus E_\ell$, and $\G$ is neither the Loch Ness monster nor the Millipede monster graphs.

    Let $\cS$ be as defined above; $\cS = \cV_K \cup W \cup B \cup H$. We will show that $\cS$ generates $\PMap(\G)$. Let $f \in \PMap(\G)$.
    If $|E_\ell|=1$, then $\PMap(\G) = \overline{\PMapc(\G)}$ by \Cref{thm:semidirectprod}, so we obtain $f \in \overline{\PMapc(\G)}$.
    Otherwise, if $|E_\ell| \ge 2$, then by postcomposing $f$ with primitive loop shifts in $H$, we may assume the flux of $f$ is zero with respect to any 2-partition of $E_\ell$. By \Cref{thm:fluxzeromaps}, we can assume $f \in \overline{\PMapc(\G)}$ for this case as well.
    
    Then there exists a compact set $K'$ containing $K$, and $g \in \PMapc(\G)$ such that $g$ is totally supported in $K'$ and $fg^{-1} \in \cV_K$. Therefore, it suffices to show that $g$ is contained in the group generated by $\cS$. Since $g$ is totally supported in $K'$, the map $g$
    can be identified with an element in $\PMap(K')$, which is contained in
    $\la \cS \ra$ by \Cref{lem:compactcbgen}. This concludes the proof that $\PMap(\G)$ is generated by $\cS$. Finally, $\cS$ is CB as it is the union of three finite sets $W,B$, and $H$ and the set $\cV_{K}$, which is CB by \Cref{prop:CBVK}.
\end{proof}

\section{Residual finiteness} \label{sec:RF}
In this section, we prove \Cref{thm:PMapRF}:
\begin{THM}[\Cref{thm:PMapRF}, revisited]\label{thm:PMapRF_revisited}
   $\PMap(\G)$ is residually finite if and only if $\G$ has a finite rank.
 \end{THM}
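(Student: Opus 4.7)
The plan is to address the two directions separately, using the structure theory already developed.

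For the forward direction, assume $\G$ has finite rank $n$. Then $E_\ell(\G)=\emptyset$, so $E(\G)\setminus E_\ell(\G)=E(\G)$ is nonempty and compact, and \Cref{prop:RsubgroupDecomposition} gives $\PMap(\G)\cong \cR\rtimes\Aut(F_n)$, where $\cR$ is the group of locally constant maps $E(\G)\to F_n$ vanishing at the chosen end $e_0$. For each end $e\neq e_0$, the evaluation $r\mapsto r(e)$ is an $\Aut(F_n)$-equivariant homomorphism $\cR\to F_n$, and assembling these evaluations yields an injective homomorphism
\[
    \PMap(\G)\hookrightarrow \prod_{e\neq e_0}\bigl(F_n\rtimes \Aut(F_n)\bigr),\qquad (r,a)\mapsto \bigl(r(e),a\bigr)_e.
\]
Each factor $F_n\rtimes\Aut(F_n)$ embeds into $\Aut(F_{n+1})$ (via $(w,\phi)\colon t\mapsto wt$, $x\mapsto\phi(x)$ on $F_{n+1}=F_n*\la t\ra$) and so is residually finite by Baumslag's classical theorem. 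A product of RF groups is RF, so $\PMap(\G)$ embeds into an RF group and is itself RF.

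For the backward direction, suppose $\G$ has infinite rank. Put $\G$ in standard form and choose an infinite sequence of loops $a_1,a_2,\ldots$. For each $i\neq j$, the loop swap $\sigma_{ij}:=\cL(\{a_i\},\{a_j\})$ is an order-two element of $\PMap_c(\G)$ whose induced action on $\pi_1(\G)$ is the transposition $a_i\leftrightarrow a_j$. My plan is to show that the assignment $(i\;j)\mapsto\sigma_{ij}$ extends to a faithful embedding of the finitary symmetric group $S_\infty^{\mathrm{fin}}$ into $\PMap(\G)$. Any finite product of loop swaps lies in $\PMap(K)\cong F_m^{e-1}\rtimes\Aut(F_m)$ for a sufficiently large compact subgraph $K$ by \Cref{rmk:compactMCG}; since loop swaps are built from graph automorphisms permuting generators, such a product sits entirely in the $\Aut(F_m)$-factor and is determined by its action on $\pi_1(K)$, so it is trivial in $\PMap(\G)$ precisely when the associated permutation of generators is trivial.

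Once this embedding is in place the conclusion follows quickly: the alternating subgroup $A_\infty^{\mathrm{fin}}=\bigcup_{n\ge 5}A_n$ is an infinite simple group (a directed union of simple groups with compatible inclusions is simple by a routine normal-subgroup chase), and any infinite simple group has no nontrivial finite quotients and is therefore not residually finite. Since residual finiteness passes to subgroups, $\PMap(\G)$ cannot be residually finite. I expect the main technical checkpoint to be the faithfulness of the $S_\infty^{\mathrm{fin}}$-embedding: once the compact-support reduction correctly identifies a product of loop swaps with its associated element of $\Aut(F_m)$, the rest reduces to standard facts about $\Aut(F_n)$ and classical simple-group theory.
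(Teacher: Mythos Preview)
Your proof is correct in both directions, but each takes a different route from the paper. For the forward direction the paper argues case by case on $|E(\G)|$: for finitely many ends it cites Mal'cev's theorem for $F_n^{|E|-1}\rtimes\Aut(F_n)$, and for infinitely many ends it builds \emph{forgetful homomorphisms} $\cF_\cE:\PMap(\G)\to\PMap(\G_\cE)$ to finite-end subgraphs and separates a given nontrivial element there. Your single embedding into $\prod_{e\neq e_0}(F_n\rtimes\Aut(F_n))$ is slicker and avoids the forgetful machinery entirely; just note two small fixes: the case $|E(\G)|=1$ must be handled separately (your product is then empty, while $\PMap(\G)\cong\Aut(F_n)$ is RF directly), and your explicit embedding into $\Aut(F_{n+1})$ should read $t\mapsto tw$ rather than $t\mapsto wt$ for the semidirect-product law $(w_1,\phi_1)(w_2,\phi_2)=(w_1\phi_1(w_2),\phi_1\phi_2)$ to be respected. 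For the backward direction the paper constructs the direct limit $\SAut_\infty(\G)=\varinjlim\SAut(F_{n_k})$ and invokes a nontrivial result of Baumeister et al.\ that the minimal size of a finite quotient of $\SAut(F_n)$ tends to infinity, forcing $\SAut_\infty(\G)$ to have no finite quotients. Your approach via loop swaps and the infinite alternating group $A_\infty^{\mathrm{fin}}$ is more elementary: once you observe that a finite product of loop swaps sits in the $\Aut(F_m)$-factor of some $\PMap(K)\cong F_m^{e-1}\rtimes\Aut(F_m)$ as the corresponding permutation automorphism, the embedding $S_\infty^{\mathrm{fin}}\hookrightarrow\PMap(\G)$ is immediate, and the simplicity of $A_\infty^{\mathrm{fin}}$ is classical. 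The paper's argument buys a slightly stronger conclusion (a subgroup with \emph{no} finite quotients rather than merely a non-RF one), while yours avoids the dependence on \cite{baumeister2019smallest}.
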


\subsection{Forgetful map}
\label{ssec:forgetful}
Throughout this section, we let $\G$ be a locally finite, infinite graph with no ends accumulated by loops. That is, $E_\ell(\G)=\emptyset$ but $E(\G) 
\neq \emptyset$. Fix an end $\alpha_0 \in E(\G)$.
Define $E_{<\infty}(\G)$ as the collection of finite subsets of $E(\G)$ containing $\alpha_0$:
\[
    E_{<\infty}(\G) = \{ \cE \subset E(\G): \alpha_0 \in \cE, \text{ and } |\cE|<\infty\}.
\]
For each $\cE \in E_{<\infty}(\G)$, we define the graph $\G_{\cE}$ as a subgraph of $\G$ such that:
\begin{itemize}
    \item $\rk \G_{\cE} = \rk \G$, and
    \item $E(\G_{\cE}) = \cE$.
\end{itemize}
Note $\G_\cE$ is properly homotopic to the core graph $\G_c$ of $\G$ with $|\cE|$ rays attached.

Now we use \Cref{prop:RsubgroupDecomposition}: $\PMap(\G) \cong \cR \rtimes \PMap(\G_c^*)$ if $E(\G) \setminus E_\ell(\G)$ is nonempty and compact.
In our case $\G$ is of infinite type and has no ends accumulated by loops, so $E(\G) \setminus E_\ell(\G) = E(\G)$ is automatically nonempty and compact. Now we denote by $\cR_\cE$ the $\cR$ subgroup for $\G_{\cE}$. Then we have a map $\rho_\cE: \cR \to \cR_\cE$ by `restricting' the domain to $E(\G_\cE)$. Namely, given a locally constant map $[f: E(\G) \to \pi_1(\G, \alpha_0)] \in \cR$, we define $\rho_\cE(f):= f|_{E(\G_\cE)}$, where we note that $f|_{E(\G_\cE)}:E(\G_\cE) \to \pi_1(\G,\alpha_0) = \pi_1(\G_\cE,\alpha_0)$ is a locally constant map to $\pi_1(\G_\cE,\alpha_0)$, so $\rho_\cE(f) \in \cR_\cE$.

\begin{LEM}
    The restriction map $\rho_\cE:\cR \to \cR_\cE$ is a group homomorphism.
\end{LEM}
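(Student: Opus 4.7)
The plan is to verify the two things hidden in the statement: first, that $\rho_\cE$ is well-defined as a map $\cR \to \cR_\cE$, and second, that it respects the pointwise group operation. The homomorphism property itself is essentially immediate from the fact that multiplication in both $\cR$ and $\cR_\cE$ is defined pointwise in a common target group, so once well-definedness is established the main work is done.

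For well-definedness, given $f \in \cR$ I would check the three defining conditions of $\cR_\cE$ for $\rho_\cE(f) = f|_{E(\G_\cE)}$. First, the target group: because $\G_\cE$ is built so that $\rk(\G_\cE) = \rk(\G)$ and $\G_\cE$ contains the core graph $\G_c$, the inclusion $\G_\cE \hookrightarrow \G$ induces a canonical isomorphism $\pi_1(\G_\cE, \alpha_0) \cong \pi_1(\G, \alpha_0)$, and in fact both are canonically identified with $\pi_1(\G_c^*, \alpha_0)$ (for appropriate choices of $\G_c^*$). So the values of $f|_{E(\G_\cE)}$ land in the right group. Second, the basepoint condition $\rho_\cE(f)(\alpha_0) = 1$ follows from $\alpha_0 \in \cE$ and the definition of $\cR$. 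Third, local constancy: since $E(\G_\cE) = \cE \subset E(\G)$ carries the subspace topology, the restriction of a locally constant function is locally constant.

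For the homomorphism property, recall that the group operation on both $\cR$ and $\cR_\cE$ is pointwise multiplication in the common fundamental group. Hence for $f, g \in \cR$ and any $e \in E(\G_\cE)$,
\[
\rho_\cE(fg)(e) = (fg)(e) = f(e) \cdot g(e) = \rho_\cE(f)(e) \cdot \rho_\cE(g)(e) = \bigl(\rho_\cE(f)\rho_\cE(g)\bigr)(e).
\]
Thus $\rho_\cE(fg) = \rho_\cE(f)\rho_\cE(g)$.

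The only step that requires any thought is the natural identification of $\pi_1(\G, \alpha_0)$ with $\pi_1(\G_\cE, \alpha_0)$, so that the same codomain underlies both $\cR$ and $\cR_\cE$; everything else is formal. I do not anticipate any real obstacle, but I would be careful to spell out this identification since it is what makes the notion of ``restricting'' a map in $\cR$ to a map in $\cR_\cE$ meaningful rather than vacuous.
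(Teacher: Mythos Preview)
Your proposal is correct and follows essentially the same approach as the paper. The paper's proof consists of just the pointwise computation $\rho_\cE(f\cdot g) = (f\cdot g)|_{E(\G_\cE)} = f|_{E(\G_\cE)} \cdot g|_{E(\G_\cE)} = \rho_\cE(f)\cdot\rho_\cE(g)$, having already dispatched well-definedness (the identification $\pi_1(\G,\alpha_0) = \pi_1(\G_\cE,\alpha_0)$ and local constancy) in the paragraph preceding the lemma; you simply fold that discussion into the proof itself and add the explicit check of the basepoint condition $\rho_\cE(f)(\alpha_0)=1$, which the paper leaves implicit.
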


\begin{proof}
    Recall the group operation in $\cR$ is the pointwise multiplication in $\pi_1(\G,\alpha_0)$.
    Hence the restriction on $f \cdot g$ for $f, g \in \cR$ is just the product of their restrictions:
    \[
        \rho_\cE(f\cdot g) = (f\cdot g)|_{E(\G_\cE)} = f|_{E(\G_\cE)} \cdot g|_{E(\G_\cE)} = \rho_\cE(f) \cdot \rho_\cE(g). \qedhere
    \]
\end{proof}

Observe $\G_c^* = (\G_\cE)_c^*$.
Hence, from the lemma we can build the homomorphism $\cF_\cE$ on $\PMap(\G)$ to $\PMap(\G_\cE)$ by just letting $\cF_\cE = \rho_\cE \times \Id$ on the decomposition $\PMap(\G) = \cR \rtimes \PMap(\G_c^*)$ given in \Cref{prop:RsubgroupDecomposition}:
\[
    \cF_\cE : \PMap(\G) \to \PMap(\G_\cE),
\]
 which we will call as the \textbf{forgetful homomorphism} to $\cE \subset E(\G)$.
 
\subsection{Finite rank if and only if residually finite} \label{ssec:RFiffFIN}
Now we prove \Cref{thm:PMapRF}.
The following lemma is used for the only if direction of the proof. Denote by $\SAut(F_n)$ the unique index 2 subgroup of $\Aut(F_n)$. 
\begin{FACT}[{\cite[Theorem 9.1]{baumeister2019smallest}}]
 \label{fact:minimalindex}
     There exists a strictly increasing sequence of integers $\{a_n\}_{n \ge 3}$ such that for $n \ge 3$, every nontrivial finite quotient of $\SAut(F_n)$ has cardinality of at least $a_n$. 
 \end{FACT}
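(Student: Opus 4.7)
The plan is to prove this via a two-step reduction: first show that every nontrivial finite quotient of $\SAut(F_n)$ has a nonabelian simple composition factor of a very restricted form, and then invoke order bounds on that restricted family. Throughout, assume $n \geq 3$ so that $\SAut(F_n)$ is perfect (Gersten); consequently any finite quotient $Q$ of $\SAut(F_n)$ is itself perfect, and in particular, if $Q$ is nontrivial, $Q$ admits a nonabelian finite simple group $S$ as a composition factor. Since $|Q| \geq |S|$, it suffices to produce a strictly increasing lower bound, as a function of $n$, on the order of any nonabelian simple group arising as a composition factor of a finite quotient of $\SAut(F_n)$.

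For the classification step, I would appeal to the deep theorem of Bridson--Vogtmann (with refinements by Grunewald--Lubotzky) on homomorphisms from $\Aut(F_n)$ to finite groups: for $n \geq 3$, every finite quotient of $\SAut(F_n)$ factors, up to finite-index issues controlled by the congruence subgroup property, through some quotient of $\SL_n(\Z)$. Combined with the Jordan--Dickson description of composition factors of $\SL_n(\Z/m\Z)$, this forces every nonabelian simple composition factor $S$ of $Q$ to lie in the family $\{\PSL_n(\mathbb{F}_q)\}$ with $q$ a prime power, together with possibly some small exceptional factors bounded from below by $|\PSL_n(\mathbb{F}_2)|$ for $n$ large.

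Once the composition factors are restricted to $\PSL_n(\mathbb{F}_q)$-type groups, the conclusion is purely numerical: the order of $\PSL_n(\mathbb{F}_q)$ grows at least as fast as $q^{n^2-1}/\gcd(n,q-1)$, and in particular $|\PSL_n(\mathbb{F}_2)|$ is strictly increasing in $n$. Define
\[
    a_n \;\defeq\; \min\bigl\{\,|Q| : Q \text{ is a nontrivial finite quotient of } \SAut(F_n)\,\bigr\}.
\]
The classification above shows $a_n \geq |\PSL_n(\mathbb{F}_2)|$ for all sufficiently large $n$ (after handling finitely many small cases individually, e.g.\ $n = 3, 4$ where one checks the minimum directly). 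The sequence may fail to be strictly increasing on the nose, but one can replace $a_n$ by its running maximum $a_n' \defeq \max_{k \leq n} a_k$, which is a strictly increasing lower bound once the underlying sequence tends to infinity — and in fact it does, since $|\PSL_n(\mathbb{F}_2)| \to \infty$.

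The main obstacle is Step 2: importing and carefully stating the Bridson--Vogtmann classification of finite quotients of $\SAut(F_n)$, since the precise statement one needs (all nonabelian simple composition factors are linear groups of the expected type in dimension exactly $n$) is not quite off-the-shelf and requires ruling out small alternating and sporadic factors. A cleaner alternative would be to cite this directly from \cite{baumeister2019smallest}, where the sequence $\{a_n\}$ is produced explicitly by tracking minimal faithful permutation degrees of $\PSL_n(\mathbb{F}_p)$; the contribution on our end is then just to verify the strictly increasing property and to confirm applicability for the use in Section~\ref{sec:RF}.
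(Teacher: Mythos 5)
The paper does not prove this statement at all: it is imported as a black box from \cite[Theorem 9.1]{baumeister2019smallest}, and the only feature Section 5 actually uses is that the minimal order of a nontrivial finite quotient of $\SAut(F_n)$ grows with $n$. Your closing suggestion --- cite the reference and merely verify the monotonicity needed downstream --- is therefore exactly what the paper does, and is the right call.

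As an independent proof, however, your sketch has a genuine gap at Step 2. The congruence subgroup property for $\Aut(F_n)$ is a well-known open problem for $n \ge 3$, so it cannot be invoked; and the factorization claim itself is false: $\SAut(F_n)$ has finite quotients that do not factor through any congruence quotient of $\mathrm{SL}_n(\Z)$, for instance the images of $\SAut(F_n)$ in $\Aut(F_n/N)$ for $N$ a finite-index characteristic subgroup of $F_n$ not containing $[F_n,F_n]$ (this is the mechanism behind Grunewald--Lubotzky's non-standard linear representations of $\Aut(F_n)$). Pinning down the nonabelian simple composition factors of an \emph{arbitrary} finite quotient of $\SAut(F_n)$ is precisely the hard, CFSG-dependent content of the cited theorem, so Step 2 assumes what needs to be proved. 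There is also a logical slip at the end: replacing the lower bounds $a_n$ by their running maximum $\max_{k\le n} a_k$ goes the wrong way, since enlarging a lower bound destroys the property that every nontrivial quotient of $\SAut(F_n)$ has order at least $a_n$ (if $a_n$ is the exact minimum, the minimal quotient violates any larger bound); the correct salvage would be a running \emph{minimum} over the tail, which need not be strictly increasing. Fortunately no repair is needed, because the cited theorem produces explicit strictly increasing bounds (coming from $\mathrm{PSL}_n(\mathbb{F}_2)$), and the application in \Cref{thm:PMapRF} only requires $a_n \to \infty$.
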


 \begin{proof}[Proof of \Cref{thm:PMapRF}]
   Suppose that $\G$ has finite rank $n$. If $\G$ has no ends then $\PMap(\G)$ is isomorphic to $\Out(F_n)$, which is residually finite by \cite{grossman1974}. If $\G$ has only one end, then $\PMap(\G)$ is isomorphic to $\Aut(F_n)$, which is residually finite by \cite{baumslag1963}. If $\G$ has finitely many ends, then $\PMap(\G) \cong F_n^{{|E|-1}} \rtimes \Aut(F_n)$ which is again residually finite as both factors are residually finite and $F_n^{|E|-1}$ is finitely generated \cite[Theorem 1]{mal1958homomorphisms}.
   
   Now we assume $\G$ has finite rank and infinitely many ends. The proof is similar to the proof for infinite-type surfaces; \cite[Proposition 4.6]{PatelVlamis}. Let $f \in \PMap(\G)$ be a nontrivial element. Since $\G$ is of finite rank and $f$ is proper, it follows that $(\G \setminus \G_c) \cap \supp(f)$ is compact. In particular, there exists some finite set $\cE \subset E$ such that $(\G_\cE \setminus \G_c) \cap \supp(f)$ is still nonempty. This implies that the forgetful map $\cF_\cE: \PMap(\G) \to \PMap(\G_\cE)$ sends $f$ to a nontrivial element $\cF_\cE(f) \in \PMap(\G_\cE)$.
   However, we know that $\cE$ has finite end space so $\PMap(\G_\cE)$ is residually finite by the previous paragraph. Therefore, there exists a homomorphism $\psi: \PMap(\G_\cE) \to F$ for some finite group $F$ so that $\psi(\cF_\cE(f))$ is nontrivial. Thus $\PMap(\G)$ is residually finite.

   Conversely, let $\G$ have infinite rank and assume it is in standard form.
   Let $\{\G_{k}\}$ be a compact exhaustion of $\G$ by connected subgraphs. Then there exist non-decreasing sequences $\{n_k\}, \{e_k\}$ such that $\PMap(\G_k) \cong F_{n_k}^{e_k} \rtimes \Aut(F_{n_k})$. Here $e_k+1$ is the number of boundaries of $\G_k$ (i.e. the size of $\overline{\G \setminus \G_k} \cap \G_k$), and $n_k$ is the rank of $\G_k$. As $\G$ has infinite rank, we have $\lim_{k \to \infty}n_k = \infty.$ Also, note $\Aut(F_{n_k})$ has the unique index 2 subgroup $\SAut(F_{n_k})$ for each $k$, whose isomorphic copy in $\PMap(\G_k)$ we denote by $G_k$. The group $\Aut(F_{n_k})$ can be identified with the subgroup of mapping classes totally supported on the core graph of $\G_k$, and $G_{k} \cong \SAut(F_{n_{k}})$ with the set of those mapping classes that preserve orientation. Since the core graph of $\G_{k}$ is contained in the core graph of $\G_{k+1}$, and orientation preserving mapping classes on $\G_k$ are orientation preserving on $\G_{k+1}$, it follows that we have the inclusion $G_k \hookrightarrow G_{k+1}$. Hence the direct limit $\SAut_\infty(\G) := \varinjlim G_k$ is a well-defined subgroup of $\PMap(\G)$.
   
   We claim that $\SAut_{\infty}(\G)$ has no finite quotients. To do this, suppose $H$ is a proper normal subgroup of $\SAut_{\infty}(\G)$ with finite index $r \ge 2$.
   Then as $H$ is a proper subgroup of $\SAut_{\infty}(\G)$ and $\varinjlim G_k = \SAut_\infty(\G)$, it follows that there exists some $k_{0}$ such that whenever $k \ge k_{0}$, $G_k$ is not contained in $H$.
   Hence $H \cap G_k$ is a \emph{proper} normal subgroup of $G_k$. Note
   \[
        1 \neq [G_k : G_k \cap H] \le [\SAut_\infty(\G) : H] = r,
   \]
   but the minimal finite index of proper subgroups of $G_k \cong \SAut(F_{n_k})$ increases as $k$ does by \Cref{fact:minimalindex}. Therefore, $[G_k : G_k \cap H]$ cannot be uniformly bounded by $r$; giving a contradiction. Therefore $\SAut_\infty(\G)$ has no nontrivial finite quotient, implying that both $\PMap(\G)$ and $\Map(\G)$ are not residually finite. 
   \end{proof}

\section{Tits alternative}
\label{sec:TitsAlternative}

In a series of three papers \cite{BFH2000,BFH2005,BFH2004}, Bestvina, Feighn, and Handel prove that $\Out(F_{n})$ satisfies what we call the \textbf{strong Tits alternative}: Every subgroup either contains a nonabelian free group or is virtually abelian. The same was previously known for mapping class groups of compact surfaces by work of Ivanov, McCarthy, and Birman--Lubotzky--McCarthy \cite{Ivanov1984,McCarthy1985,BLM1983}. However, it was shown by Lanier and Loving \cite{LL2020} that this is not the case for big mapping class groups. They prove that big mapping class groups \emph{never} satisfy the strong Tits alternative by showing that they always contain a subgroup isomorphic to the wreath product $\Z \wr \Z$. In \cite{abbott2021finding}, the authors extend this idea to find many subgroups isomorphic to wreath products. Allcock \cite{Allcock2021} further showed that most big mapping class groups fail the (standard) Tits alternative by finding a poison subgroup that surjects onto a Grigorchuck group. A groups satisfies the \textbf{Tits alternative} if every subgroup either contains a nonabelian subgroup or is virtually solvable.  Note that some references require subgroups be finitely generated, but we do not need to make that restriction.

\subsection{Infinite rank: Fails to satisfy TA}
In this section, we find poison subgroups (analogous to the surface case) in $\PMap(\G)$ for graphs $\G$ of infinite rank.

\begin{THM}\label{thm:Wreath}
    Let $\G$ be a locally finite graph of infinite rank. Then $\PMap(\G)$ contains a subgroup isomorphic to $\Aut(F_n) \wr \Z$ for all $n\in \N$.
\end{THM}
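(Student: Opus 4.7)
The plan is to build an embedded copy of $\Aut(F_n) \wr \Z$ in $\PMap(\G)$ directly, by exhibiting an infinite-order ``shift'' element $h$ together with commuting copies of $\Aut(F_n)$ totally supported on pairwise disjoint $h$-translates of a compact subgraph of the ladder being shifted.

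First I would find an embedded ladder $\Lambda \subset \G$ and a shift $h \in \PMap(\G)$ along it. When $|E_\ell(\G)| \ge 2$, the loop shift construction in \Cref{ssec:elements} directly furnishes $\Lambda$ and a primitive loop shift $h$ between two chosen ends of $E_\ell(\G)$. When $|E_\ell(\G)| = 1$, the unique end of $\G$ is accumulated by infinitely many loops, and one can embed a bi-infinite ladder $\Lambda \subset \G$ whose two ends both converge to this single end; the translation-by-one along $\Lambda$ (the identity outside a small neighborhood, interpolated between) extends to a proper homotopy equivalence $h$ of $\G$, automatically pure since there is only one end to fix.

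Next, I would pick a compact connected subgraph $K \subset \Lambda$ containing $n$ consecutive loops of $\Lambda$ and, replacing $h$ by a sufficiently large power if necessary, arrange that the translates $\{h^i(K)\}_{i \in \Z}$ are pairwise disjoint subgraphs of $\Lambda$. By \Cref{rmk:compactMCG}, the subgroup $A_0 \le \PMap(\G)$ of mapping classes totally supported on $K$ contains a copy of $\Aut(F_n)$, and setting $A_i := h^i A_0 h^{-i}$ yields a copy of $\Aut(F_n)$ totally supported on the pairwise disjoint subgraph $h^i(K)$.

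It remains to verify $H := \la h, A_0 \ra \cong \Aut(F_n) \wr \Z$. Conjugation by $h$ cycles $A_i \mapsto A_{i+1}$ by construction, and the $A_i$'s pairwise commute because they are totally supported on disjoint subgraphs. The crucial algebraic point is that the $A_i$'s jointly generate their direct sum: any trivial word in finitely many distinct $A_{i_j}$'s must have each factor trivial, since the factors can be simultaneously represented by maps with pairwise disjoint total supports, and the class of their product in $\PMap(\G)$ is trivial only if the class of each piece is. Combining this with the cyclic shift action of $h$ shows the canonical surjection $\Aut(F_n) \wr \Z \twoheadrightarrow H$ is injective. The main obstacle will be making the $|E_\ell(\G)| = 1$ case rigorous: since the paper's loop shift definition requires $|E_\ell| \ge 2$, one must carefully construct and verify the analog shift on a bi-infinite ladder whose two ends collapse to the same end of $\G$, and check properness of the extension together with the existence of a proper homotopy inverse.
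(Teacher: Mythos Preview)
Your proposal is correct and follows essentially the same strategy as the paper: embed a bi-infinite ladder-like subgraph (with both ends going to a single end of $\G$ when $|E_\ell|=1$), use the translation along it as $h$, and take pairwise disjointly totally supported copies of $\Aut(F_n)$ that $h$ cycles. The only cosmetic difference is that the paper embeds a graph $\Delta_n$ with a full $n$-petal rose at each integer point (so each $G_i$ is already $\Aut(F_n)$ and the primitive shift already separates them), whereas you use the standard one-loop-per-vertex ladder and compensate by taking $K$ to contain $n$ consecutive loops and passing to a power of $h$; both work, and the paper treats the $|E_\ell|=1$ case at exactly the same level of rigor you anticipate needing.
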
 

\begin{proof}
    Recall that to define the wreath product $G \wr \Z$, we need a $\Z$-indexed set of copies of $G$, denoted by $\{G_i\}_{i \in \Z}$. Then $\Z$ acts on the index set by translation, so it also acts on $\oplus_{i \in \Z} G_i$ by translation on the indices. Now set $G = \Aut(F_n)$ and denote by $\phi$ the translation action by $\Z$ on the direct sum. We then define
    \[
        \Aut(F_n) \wr \Z :=\left(\bigoplus_{\Z} \Aut(F_n)\right) \rtimes_{\phi} \Z. 
    \]  
    To realize this group as a subgroup of $\PMap(\G)$, we will find $\Z$ copies of $\Aut(F_n)$ together with a translation action.

    For each $n\in \N$, let $\Delta_n$ be the graph obtained from a line identified with $\R$ with a wedge of $n$ circles attached by an edge at each integer point; see \Cref{fig:TA_2end}. If $\G$ has at least two ends accumulated by loops, we can properly homotope $\G$ to have $\Delta_n$ as a subgraph. 

    \begin{figure}[ht!]
        \centering
        \includegraphics[width=.8\textwidth]{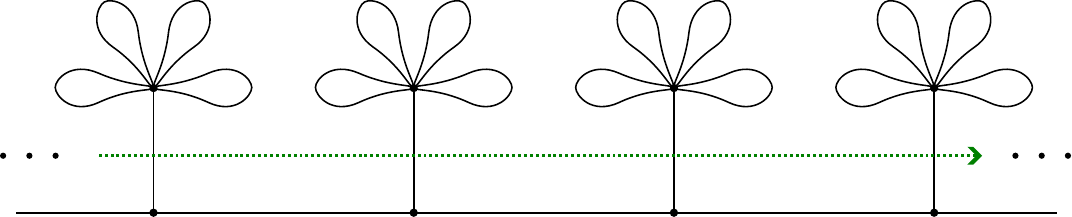}
        \caption{The graph $\Delta_4$, with two ends accumulated by roses with $4$ petals. It admits a translation of roses, denoted by the green dotted arrow. Up to proper homotopy, such graph arises as a subgraph of any graph with at least two ends accumulated by loops.}
        \label{fig:TA_2end}
    \end{figure}

     For each $i \in \Z$, let $R_i$ be the wedge of circles supported above the integer point $i$ in $\Delta_n \subset \Gamma$. Let $G_i$ be the subgroup of elements of $\PMap(\G)$ which are totally supported on $R_i$. Each $G_i$ is isomorphic to $\Aut(F_n)$ (see \Cref{rmk:compactMCG}) and the $G_i$'s have disjoint total support, so $\<G_i\>_{i \in \Z}\cong \oplus_{\Z} \Aut(F_n)$. There is a shift map, call it $h$, that translates along $\Delta_n$ by $+1$ on the line and maps $R_i$ to $R_{i+1}$ isometrically. Because $h^mG_i=G_{i+m}h^m$, the subgroup of $\PMap(\G)$ generated by $G_0$ and $h$ is isomorphic to $\Aut(F_n)\wr \Z$.

    In general, if $\G$ has at least one end accumulated by loops, we can embed a copy of $\Delta_n$ into $\G$ where the images of the two ends of $\Delta_n$ are not distinct. The corresponding ``shift map" will no longer be shifting between distinct ends, but this does not affect the construction of an $\Aut(F_n)\wr \Z$ subgroup.
\end{proof}

\Cref{thm:Wreath} immediately tells us that $\PMap(\G)$ fails the strong Tits alternative because $\Z \wr \Z$ is a subgroup of $\Aut(F_n) \wr \Z$. In \cite{Allcock2021}, Allcock shows that big mapping class groups of surfaces with infinite genus fail the (standard) Tits alternative.
His idea is to find elements of the mapping class group that ``look like'' the action of the Grigorchuck group on a rooted binary tree. Because these elements are not of finite order, the resulting subgroup of the mapping class group is an extension of the Grigorchuck group. When this same idea is implemented in the pure mapping class group of a graph, we instead find an exact copy of Grigorchuck's group. Many graphs, such as an infinite binary tree, also contain Grigorchuck's group as a subgroup of their \emph{full} mapping class group in the obvious way.

\begin{THM} \label{thm:Grigorchuck}
     Let $\G$ be a locally finite graph of infinite rank. Then $\PMap(\G)$ contains a subgroup isomorphic to the Grigorchuck group.
\end{THM}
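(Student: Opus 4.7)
The plan is to embed the Grigorchuk group $G$ into $\PMap(\G)$ by realizing $G$'s canonical faithful action on the infinite rooted binary tree $T_2$ as the action of pure mapping classes on a ``binary tree of loops'' inside $\G$. The contrast with Allcock's surface analog is this: there the natural Grigorchuk-like elements are infinite-order (forcing one to pass to an extension of $G$), whereas in the graph setting one can realize them as honest involutions via loop permutations, obtaining an isomorphic copy of $G$.

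Since $\G$ has infinite rank, $E_\ell(\G)\neq\emptyset$, so fix $e\in E_\ell(\G)$. By \Cref{thm:PHEclassification}, properly homotope $\G$ so that it contains a distinguished subgraph $\G_0$ consisting of a countable collection of loops $\{\ell_v\}_{v\in V(T_2)}$ indexed by the vertices of $T_2$, all accumulating to $e$. It is convenient to arrange $\G_0$ with a recursive self-similar geometric model (for instance, a planar embedding whose two principal halves are congruent and correspond to the two principal subtrees of $T_2$), so that every tree automorphism $\sigma\in\Aut(T_2)$ admits a canonical geometric realization as a proper homotopy equivalence $\tilde{\sigma}$ of $\G_0$ and composition of tree automorphisms corresponds (on the nose) to composition of realizations.

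Using these canonical realizations, define $\phi: G\to\PMap(\G)$ by $\phi(\sigma)=[\tilde{\sigma}]$, extended by the identity outside $\G_0$. Because every $\ell_v$ accumulates to the same end $e$, the permutation induced by $\sigma$ on the loops fixes $E(\G)$ pointwise, so $\phi(\sigma)\in\PMap(\G)$. The self-similar realization forces $\phi$ to be a homomorphism and, crucially, $\phi(\sigma)^n=1$ whenever $\sigma^n=1$; in particular, the four Grigorchuk generators $a,b,c,d$ give honest involutions $\tilde{a},\tilde{b},\tilde{c},\tilde{d}\in\PMap(\G)$. Injectivity of $\phi$ follows by tracking the induced action on $\pi_1(\G_0)$: the automorphism $\phi(\sigma)_*$ permutes the free generators $\{[\ell_v]\}$ according to $\sigma$'s action on $V(T_2)$, and since $G\hookrightarrow\Aut(T_2)$ acts faithfully on $V(T_2)$, $\phi(\sigma)=1$ forces $\sigma=1$.

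The main obstacle will be Step~2: producing the self-similar geometric realization of $\G_0$ that simultaneously accommodates all four Grigorchuk generators as involutive mapping classes. The generators $b,c,d$ are not simple geometric swaps; they are recursively defined in terms of one another, so the geometric model of $\G_0$ must encode this recursion. The flexibility of the 1-dimensional setting — in particular, the existence of the proper homotopy equivalence in \Cref{thm:PHEclassification} — is what makes this feasible, whereas the 2-dimensional topology of surfaces obstructs an analogous honest-involution realization.
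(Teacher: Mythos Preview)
Your overall strategy—realize the Grigorchuk action as permutations of loops indexed by $V(T_2)$ and read injectivity off the faithful action on $\pi_1$—is sound and close in spirit to the paper's. The gap is exactly where you flag it: the model for $\G_0$. You ask that $\G_0$ be simultaneously (a) one-ended, with all loops accumulating to a single $e\in E_\ell(\G)$, and (b) self-similar with ``two congruent principal halves corresponding to the two principal subtrees of $T_2$.'' These are incompatible: if the two halves are congruent noncompact subgraphs meeting only near the root, each contributes its own end; more generally any graph on which the generators act by honest self-similar graph automorphisms carries the full binary-tree end structure, a Cantor set, and then $a,b,c,d$ permute those ends and fall outside $\PMap$. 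You could instead place the loops along a single ray in breadth-first order and attempt to realize each generator as an infinite product of within-level loop swaps, but then the self-similarity that was meant to hand you the recursive realization of $b,c,d$ and the homomorphism property ``on the nose'' is gone, and properness and compatibility of compositions must be argued by hand—precisely the work you defer.

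The paper resolves this by abandoning a single self-similar model. It builds a one-ended graph $\Delta$ as a ray with a \emph{finite} depth-$i$ binary tree $T_i$ attached at the $i$-th vertex and a loop at each leaf of $T_i$. Each Grigorchuk generator acts on every $T_i$ by the truncation of its action on the infinite tree; this is a compactly supported graph automorphism of $T_i$, so the product over $i$ is automatically proper and fixes the unique end. A word in $a,b,c,d$ is trivial in $G$ if and only if it acts trivially on every level of $T_2$, and the loops at the leaves of the $T_i$ record exactly those level-by-level actions, so (via Allcock's argument) the relations among $\tilde a,\tilde b,\tilde c,\tilde d$ in $\PMap(\Delta)$ coincide with those of $G$. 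The finite truncations buy properness and the group law for free—what your self-similar model was meant to supply but cannot in a one-ended graph.
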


\begin{proof}
    First, we define the proper homotopy equivalences called $a,b,c,d$ on an infinite binary tree $T$ as in \Cref{fig:Grigorchuck}. Note only $a$ swaps the level 1 branches. Each of the other three homotopy equivalences $b,c,d$ misses $(3k+1), 3k, (3k-1)$-st branch swaps for $k \ge 1$ respectively, as well as the level-1 swap. These four elements generate the Grigorchuck group, $G$ \cite{grigorchuck1980burnside,grigorchuck2008intermediate}.

    \begin{figure}[ht!]
        \centering
        \includegraphics[width=.8\textwidth]{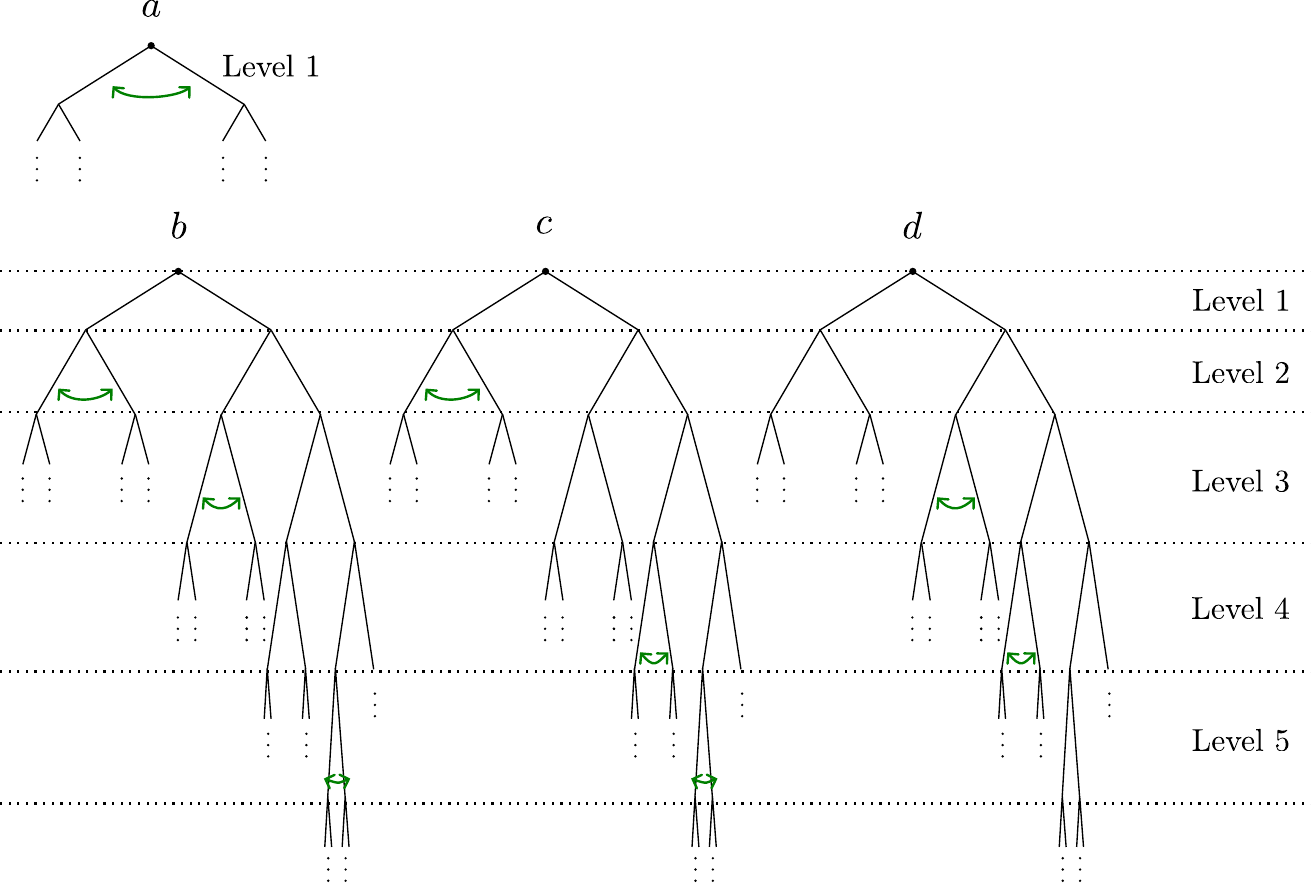}
        \caption{Proper homotopy equivalences $a,b,c$ and $d$ on infinite binary tree $T$. Each green arrow denotes the swap of the two subtrees induced by the swap of the two branches.}
        \label{fig:Grigorchuck}
    \end{figure}

    Now let $\Delta$ be the infinite graph with one end accumulated by loops, constructed as in \Cref{fig:1endGrigorchuck}. Specifically, we start with a ray and label the countably many vertices by $v_1,v_2,\ldots$ etc. Attach a finite binary tree $T_i$ of level $i$ to $v_i$ for each $i \ge 1$. Then attach a single loop at each leaf of the trees.
    For any graph $\G$ with infinite rank, we can apply a proper homotopy equivalence so that $\Delta$ is a subgraph. Hence, $\PMap(\Delta) \le \PMap(\G)$, so it suffices to find a Grigorchuck group inside $\PMap(\Delta)$.
    \begin{figure}[ht!]
        \centering
        \includegraphics[width=.8\textwidth]{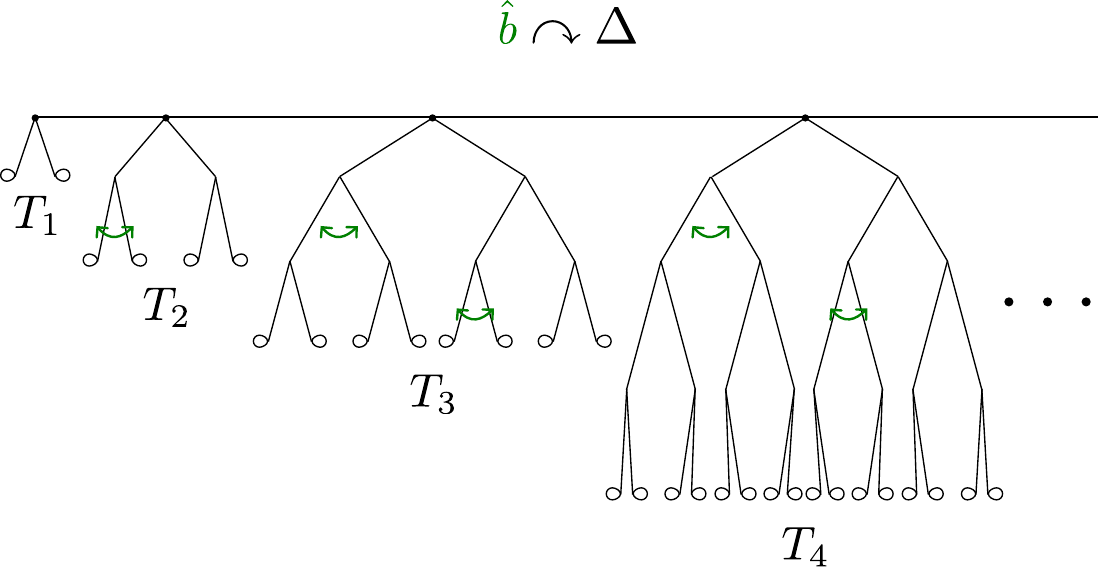}
        \caption{The graph $\Delta$ and the proper homotopy equivalence $\hat{b}$ on $T_1,T_2,\ldots \subset \Delta$ mimicking the definition of $b$ on $T$.}
        \label{fig:1endGrigorchuck}
    \end{figure}
    
    Define a proper homotopy equivalence $\hat{b}$ as the map on \emph{finite} binary trees $T_1,T_2,\ldots$ by `mimicking' $b$ defined on the \emph{infinite} binary tree $T$.
    See \Cref{fig:1endGrigorchuck} for an illustration of $\hat{b}$, denoted in green arrows. Similarly define $\hat{a},\hat{c}$ and $\hat{d}$ from $a,c$ and $d$. Denote by $\wt{a},\wt{b},\wt{c}$ and $\wt{d}$ the proper homotopy classes of $\hat{a},\hat{b},\hat{c}$ and $\hat{d}$ respectively. Following the same proof of \cite[Lemma 4.1]{Allcock2021}, we see that $\wt{a},\wt{b},\wt{c},\wt{d}$ satisfy exactly the defining relations of $G$, and $\wt{G}:=\<\wt{a},\wt{b},\wt{c},\wt{d}\>$ is isomorphic to the Grigorchuck group.
\end{proof}

\begin{COR}\label{cor:nonTA}
     Let $\G$ be a locally finite graph of infinite rank. Then $\PMap(\G)$ and $\Map(\G)$ fail the Tits alternative. 
\end{COR}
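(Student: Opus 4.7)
The proof should be a short deduction from \Cref{thm:Grigorchuck}. The plan is to exhibit the Grigorchuk subgroup $\wt{G} \leq \PMap(\G)$ constructed in the proof of \Cref{thm:Grigorchuck} as a witness to the failure of the Tits alternative. Recall that a group satisfies the Tits alternative if every subgroup is either virtually solvable or contains a nonabelian free group. So I need to argue that $\wt{G}$ itself is neither.

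The first step: $\wt{G}$ contains no nonabelian free subgroup. This is immediate from the fact that Grigorchuk's group is a torsion group \cite{grigorchuck1980burnside,grigorchuck2008intermediate}, i.e., every element has finite order. Since any nontrivial element of a free group has infinite order, no torsion group can contain a nonabelian free subgroup. The second step: $\wt{G}$ is not virtually solvable. The cleanest way is to invoke Grigorchuk's celebrated theorem that $G$ has intermediate growth, combined with the Milnor--Wolf theorem that every finitely generated virtually solvable group has either polynomial or exponential growth. Since a finite-index subgroup of $\wt{G}$ would inherit intermediate growth (and be finitely generated), $\wt{G}$ cannot be virtually solvable. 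Alternatively, one can cite directly that Grigorchuk's group is known not to be virtually solvable.

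Combining these two steps, $\wt{G}$ is a subgroup of $\PMap(\G)$ that is neither virtually solvable nor contains a nonabelian free group, so $\PMap(\G)$ fails the Tits alternative. The extension to $\Map(\G)$ is immediate: since $\PMap(\G)$ is by definition a subgroup of $\Map(\G)$, the same $\wt{G}$ sits inside $\Map(\G)$ and witnesses the failure there as well.

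There is no real obstacle here; the entire argument is an application of classical facts about Grigorchuk's group to the subgroup produced in \Cref{thm:Grigorchuck}. The only thing to be careful about is citing the growth/torsion properties of $G$ correctly and noting that these properties transfer to the isomorphic copy $\wt{G}$ inside $\PMap(\G)$.
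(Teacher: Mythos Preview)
Your proposal is correct and matches the paper's approach: the paper states the corollary without proof, treating it as immediate from \Cref{thm:Grigorchuck}, and your argument spells out precisely the standard reasoning (Grigorchuk's group is torsion, hence contains no nonabelian free subgroup, and is not virtually solvable by intermediate growth and Milnor--Wolf) that makes the deduction go through.
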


\subsection{Finite rank: Satisfies TA}
On the other hand, when $\G$ has finite rank, we get the following contrasting result.

\begin{THM}\label{thm:TA_finite}
    Let $\G$ be a locally finite graph with finite rank. Then $\PMap(\G)$ satisfies the Tits alternative. That is, every finitely generated subgroup is either virtually solvable or contains a free group.
\end{THM}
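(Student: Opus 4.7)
The plan is to use the semidirect product decomposition from \Cref{prop:RsubgroupDecomposition} and reduce the Tits alternative for $\PMap(\G)$ to that of two simpler pieces: a locally constant function group $\cR$ and $\Aut(F_n)$. First I would handle degenerate cases: if $\rk(\G)=0$ then $\PMap(\G)=1$ by \Cref{prop:treePMap}, and if $\G$ is a finite graph of rank $n\ge 1$ then $\PMap(\G)\cong\Out(F_n)$, which satisfies the Tits alternative by Bestvina--Feighn--Handel. In the remaining case, $\G$ is infinite with $E_\ell(\G)=\emptyset$, so $E(\G)\setminus E_\ell(\G)=E(\G)$ is nonempty and compact, and \Cref{prop:RsubgroupDecomposition} yields
\[
\PMap(\G)\;\cong\;\cR\rtimes\PMap(\G_c^*)\;\cong\;\cR\rtimes\Aut(F_n),
\]
where $\cR$ consists of locally constant maps $E(\G)\to F_n$ sending $e_0$ to $1$.

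Next I would show that $\cR$ satisfies the (strong) Tits alternative. Since $E(\G)$ is compact totally disconnected, any element of $\cR$ is constant on a finite clopen partition of $E(\G)$, and taking a common refinement shows that any finitely generated subgroup of $\cR$ embeds into $\cR_{\mathcal P}\cong F_n^{k-1}$ for some finite clopen partition $\mathcal P$ of size $k$. For a subgroup $H\le F_n^k$, each coordinate projection $\pi_i(H)\le F_n$ is free by Nielsen--Schreier; if some $\pi_i(H)$ has rank $\ge 2$, then $H\twoheadrightarrow \pi_i(H)\supseteq F_2$ splits because $F_2$ is free (hence projective), so $H\supseteq F_2$; otherwise every $\pi_i(H)$ is cyclic and $H\hookrightarrow\prod_i\pi_i(H)\le\Z^k$ is abelian. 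Thus every f.g.\ subgroup of $\cR$ is abelian or contains $F_2$. For a general $H\le\cR$, either some f.g.\ subgroup contains $F_2$ (so does $H$), or every f.g.\ subgroup of $H$ is abelian, in which case $H$ itself is abelian since abelianness is witnessed by pairs of elements.

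Finally I would assemble the two pieces via extension-closure of the Tits alternative. For $H\le\PMap(\G)$, set $H_0:=H\cap\cR\lhd H$, so $H/H_0$ embeds into $\Aut(F_n)$, which satisfies the Tits alternative. If $H_0$ contains $F_2$ we are done; otherwise $H_0$ is abelian by the previous paragraph. If $H/H_0$ contains $F_2$, the preimage of this $F_2$ in $H$ surjects onto $F_2$, and projectivity of $F_2$ splits the surjection, producing $F_2\le H$. If instead $H/H_0$ is virtually solvable, then an extension of an abelian group by a virtually solvable group is virtually solvable, so $H$ is virtually solvable. The main obstacle is the middle step: identifying $\cR$ as a directed union of finite direct powers of $F_n$ and exploiting that structure; once this is done, the outer semidirect factor is handled by a classical result and the gluing is routine.
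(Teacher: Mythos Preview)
Your proof is correct and follows the same overall architecture as the paper: both use the short exact sequence $1\to\cR\to\PMap(\G)\to\Aut(F_n)\to 1$ coming from \Cref{prop:RsubgroupDecomposition}, invoke Bestvina--Feighn--Handel for the $\Aut(F_n)$ factor, verify the (strong) Tits alternative for $\cR$, and then conclude via extension-closure. The difference lies in how the $\cR$ step is handled. The paper argues directly that any two non-commuting elements of $\cR$ generate a free group of rank two, by evaluating at an end where they fail to commute and running a ping-pong argument inside $F_n$. You instead embed each finitely generated subgroup of $\cR$ into a finite power $F_n^{k}$ via a common refining clopen partition, and then use coordinate projections together with projectivity of free groups: either some projection is nonabelian free (and a section gives $F_2\le H$), or all projections are cyclic and $H$ is abelian. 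Your route is a bit more structural and avoids the care needed to set up ping-pong correctly; the paper's route is shorter once one accepts the ping-pong setup. Both yield the same strong conclusion for $\cR$ (abelian or contains $F_2$), and both feed identically into the extension step, which you spell out explicitly rather than citing the closure lemma the paper packages as \Cref{prop:TAprops}(3).
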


We first need the following stability property of the Tits alternative.

\begin{PROP}\label{prop:TAprops}
    Satisfying the Tits alternative is stable under subgroups, finite-index supergroups, and group extensions. More precisely,
    \begin{enumerate}
        \item Let $H \le G$. If $G$ satisfies the Tits alternative, then so does $H$.
        \item Let $H \le G$ with $[G:H]<\infty$. If $H$ satisfies the Tits alternative, then so does $G$.
        \item \textrm{(cf. \cite[Proposition 6.3]{Cantat2011})} Suppose the groups $K,G,H$ form a short exact sequence as follows:
        \[
            1 \longrightarrow K \longrightarrow G \longrightarrow H \longrightarrow 1.
        \]
        If $K$ and $H$ satisfy the Tits alternative, then so does $G$.
    \end{enumerate}
\end{PROP}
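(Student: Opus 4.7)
For part (1), the argument is immediate: any subgroup of $H$ is also a subgroup of $G$, so the Tits alternative applied to $G$ directly gives the conclusion for $H$.

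For part (2), given any subgroup $L \le G$, I would consider $L \cap H \le H$. Since $[L : L \cap H] \le [G : H] < \infty$, this intersection is finite-index in $L$. Applying the Tits alternative for $H$ to $L \cap H$ gives two cases. If $L \cap H$ contains a nonabelian free group, then so does $L$. If $L \cap H$ is virtually solvable, pick a finite-index solvable subgroup $S \le L \cap H$; then $[L : S] = [L : L \cap H] \cdot [L \cap H : S] < \infty$, so $L$ is also virtually solvable.

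For part (3), let $\pi: G \to H$ denote the quotient map. Given any $L \le G$, I would use the restricted short exact sequence
\[
    1 \longrightarrow L \cap K \longrightarrow L \longrightarrow \pi(L) \longrightarrow 1,
\]
where $L \cap K \le K$ and $\pi(L) \le H$. Apply the Tits alternative to each end. If $L \cap K$ contains a nonabelian free group, we are done. If $\pi(L)$ contains a nonabelian free group with free generators $a, b$, lift to preimages $\tilde a, \tilde b \in L$; any nontrivial relation in $\langle \tilde a, \tilde b\rangle$ would descend to a nontrivial relation in $\langle a, b\rangle \cong F_2$, so $\langle \tilde a, \tilde b\rangle \cong F_2 \le L$.

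The remaining case — both $L \cap K$ and $\pi(L)$ are virtually solvable — is the main obstacle and requires the standard (but slightly delicate) fact that virtual solvability is closed under extensions. The plan is: choose finite-index solvable subgroups $A_0 \le L \cap K$ and $C_0 \le \pi(L)$, replace $L$ by the finite-index subgroup $L_0 = \pi^{-1}(C_0) \cap L$ (so $\pi(L_0) = C_0$ and $L_0 \cap K = L \cap K$), then take the normal core $A_1 = \bigcap_{g \in L_0} g A_0 g^{-1}$, which is still solvable, normal in $L_0$, and of finite index in $L \cap K$ (as the intersection of finitely many finite-index subgroups). The quotient $L_0/A_1$ is an extension of the solvable $C_0$ by the finite normal subgroup $(L \cap K)/A_1$; taking the kernel of the conjugation action of $L_0/A_1$ on this finite subgroup yields a finite-index subgroup $D/A_1 \le L_0/A_1$ in which $(L \cap K)/A_1$ is central (hence abelian), making $D/A_1$ solvable as a central extension of a solvable group by an abelian group. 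Pulling back, $D$ is an extension of solvable $A_1$ by solvable $D/A_1$, hence solvable, and $D$ has finite index in $L$. Thus $L$ is virtually solvable, completing the proof.
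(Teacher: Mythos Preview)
Your proofs of (1) and (2) are fine; in fact your direct argument for (2) is slightly more explicit than the paper's, which simply observes that (2) is a special case of (3) (take $K=H$, so $G/H$ is finite, hence trivially satisfies the Tits alternative).

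For (3), your overall structure matches the paper's: restrict the exact sequence to $L$, handle the two ``contains $F_2$'' cases, and then deal with the case where both $L\cap K$ and $\pi(L)$ are virtually solvable. The paper does not attempt to prove this last step; it simply cites the fact that virtual solvability is closed under extensions (Dinh, Cantat). You try to prove it, and this is where there is a genuine gap.

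The problem is your claim that $A_1=\bigcap_{g\in L_0} gA_0g^{-1}$ has finite index in $L\cap K$ ``as the intersection of finitely many finite-index subgroups.'' Nothing forces the set of conjugates $\{gA_0g^{-1}:g\in L_0\}$ to be finite: its cardinality is $[L_0:N_{L_0}(A_0)]$, and $L_0$ is typically infinite. Concretely, take $L\cap K=\bigoplus_{\Z}\Z/4$ (abelian), $L_0=(L\cap K)\rtimes\Z$ with $\Z$ acting by shift, and $A_0=\{f:f(0)\in 2\Z/4\}$, an index-$2$ normal subgroup of $L\cap K$. The $\Z$-conjugates of $A_0$ are the subgroups $\{f:f(n)\in 2\Z/4\}$ for $n\in\Z$; they are pairwise distinct and their intersection $\bigoplus_{\Z}2\Z/4$ has infinite index. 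In this example $L\cap K$ is already solvable, so one could have taken $A_0=L\cap K$; but your argument as written allows an arbitrary choice of $A_0$, and for that choice the step fails.

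The fix is to choose $A_0$ \emph{characteristic} in $L\cap K$, so that it is automatically normal in $L_0$ and $A_1=A_0$. This is always possible: one shows that a virtually solvable group $N$ has a \emph{solvable radical} $R(N)$ --- the subgroup generated by all solvable normal subgroups --- which is characteristic, solvable, and of finite index. (The key point is that once $N$ has a solvable normal subgroup $S$ of finite index $m$ and derived length $d$, every solvable normal subgroup of $N$ has derived length at most $d+\log_2 m$, so $R(N)$ has bounded derived length.) With $A_0=R(L\cap K)$ the rest of your argument goes through. Alternatively, you can simply cite the result as the paper does.
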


\begin{proof}
    Claim (1) holds because every subgroup of $H$ is a subgroup of $G$. Claim
    (2) will follow from (3), as finite groups are virtually trivial so satisfy the Tits alternative.
    
    Now we prove (3). Let $L \le G$ be a subgroup. Then we have the following commutative diagram:
    \[
    \begin{tikzcd}
    1 \rar &K \rar &G \rar{q} &H \rar &1 \\
    1 \rar &K \cap L \rar  \arrow[u, hook]  &L \rar{q} \arrow[u, hook] &q(L)  \rar \arrow[u, hook] &1
    \end{tikzcd}
    \]
    Indeed, $K\cap L \trianglelefteq L$ and $q(L) \cong L/(K\cap L) \le H$. By (1), both $K \cap L$ and $q(L)$ satisfy Tits alternative. If $K \cap L$ has $F_2$ as a subgroup, then $L$ has $F_2$ as a subgroup. If $q(L)$ has $F_2$ has a subgroup, then we can find a section of $q$ to lift $F_2$ inside $L$. Hence, we may assume both $K \cap L$ and $q(L)$ are virtually solvable. In this case, the following fact finishes the proof.
    \begin{FACT}[{\cite[Lemma 5.5]{dinh2012}, see also \cite[Lemme 6.1]{Cantat2011}}]
    \label{fact:vsol}
        Suppose $N$ is a normal subgroup of a group $G$. If both $N$ and $G/N$ are virtually solvable, then $G$ is virtually solvable.
    \end{FACT}\noindent
    Hence $L$ is virtually solvable, concluding that $G$ satisfies Tits alternative.
\end{proof}

Now we are ready to prove \Cref{thm:TA_finite}.

\begin{proof}[Proof of \Cref{thm:TA_finite}]
    Let $\rk \G = n.$
    Then we have the following short exact sequence \cite[Theorem 3.5]{AB2021}:
    \[
        1 \longrightarrow \cR \longrightarrow \PMap(\G) \longrightarrow \Aut(F_n) \longrightarrow 1,
    \] where $\cR$ is the group of locally constant functions from $E$ to $F_n$ with pointwise multiplication.  

    The subgroup of $\Out(F_{n+1})$ fixing one $\Z$ factor is naturally isomorphic to $\Aut(F_n)$. Recall that $\Out(F_{n+1})$ satisfies the Tits alternative by \cite{BFH2000}, so $\Aut(F_n)$ does too.  We will show that $\cR$ satisfies the (strong) Tits alternative, then \Cref{prop:TAprops} part (3), guarantees that $\PMap(\G)$ satisfies the Tits alternative as well.
    
    \begin{CLAIME}
        $\cR$ satisfies the strong Tits alternative.
    \end{CLAIME}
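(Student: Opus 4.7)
The plan is to prove the stronger statement that every subgroup $H \le \cR$ is either abelian or contains a nonabelian free group; this is already strictly stronger than the strong Tits alternative (``virtually abelian or $F_2$''). The key observation is that elements of $\cR$ are locally constant functions $E \to F_n$ under pointwise multiplication, so both abelianness and commutators in $\cR$ are computed pointwise.

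If $H$ is abelian there is nothing to prove, so I would assume otherwise and pick $f, g \in H$ with $[f, g] \neq 1$ in $\cR$. Since the commutator is pointwise, there exists $e_\ast \in E$ such that $[f(e_\ast), g(e_\ast)] \neq 1$ in $F_n$. I would then invoke the classical fact that in a free group, any two non-commuting elements generate a rank-two free subgroup, so $\langle f(e_\ast), g(e_\ast)\rangle \cong F_2$ inside $F_n$.

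Next, consider the evaluation homomorphism $\mathrm{ev}_{e_\ast} \colon \cR \to F_n$ given by $h \mapsto h(e_\ast)$. Its restriction to $\langle f, g \rangle \le H$ is a surjection onto $\langle f(e_\ast), g(e_\ast)\rangle \cong F_2$. Because $F_2$ is free, this surjection splits: any choice of preimages of a free basis defines a section, which is automatically a homomorphism. This section produces an embedding $F_2 \hookrightarrow \langle f, g\rangle \le H$, finishing the proof.

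The argument is quite short, and the main ``obstacle'' is recognizing that one should argue pointwise at a single well-chosen end $e_\ast$ and then exploit the fact that any surjection onto a free group splits. No delicate analysis of the global structure of $\cR$ (e.g.\ as a direct limit of subgroups $F_n^k$ indexed by clopen partitions of $E$) is required, and no assumption that $H$ be finitely generated enters the argument.
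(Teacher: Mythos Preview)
Your argument is correct and rests on the same key observation as the paper's: pick two non-commuting elements of $H$, find an end $e_\ast$ where their values in $F_n$ fail to commute, and then exploit this pointwise failure to produce an $F_2$ in $H$.

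The only difference is in how the $F_2$ is extracted. The paper sets up a ping-pong action of $\langle \phi,\psi\rangle$ on $F_n$ via $\phi\cdot w = \phi(e_\ast)w$ to conclude directly that $\langle \phi,\psi\rangle \cong F_2$. You instead invoke the classical fact that two non-commuting elements of a free group generate a rank-two free subgroup, and then split the evaluation surjection $\langle f,g\rangle \twoheadrightarrow F_2$ using freeness of the target. Your route is a bit more modular and avoids checking ping-pong hypotheses; note also that since $\{f(e_\ast),g(e_\ast)\}$ is a free basis of the image, your section can be chosen to send them back to $f,g$, so in fact $\langle f,g\rangle \cong F_2$ as in the paper, not merely $F_2 \hookrightarrow \langle f,g\rangle$.
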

    Consider a (not necessarily finitely generated) subgroup $H \subset \cR$. Suppose there exist $\phi, \psi \in \cR$ that do not commute; so there exists an $e \in E$ such that $\phi(e)\psi(e) \neq \psi(e) \phi(e)$. Now we use the Ping Pong lemma to prove $\< \phi, \psi \> \cong F_2$, which will conclude that $\cR$ satisfies the strong Tits alternative. Let $X_{\phi(e)}$ and $X_{\psi(e)}$ as the set of words in $F_n$ that start with $\phi(e)$ and $\psi(e)$ respectively. We note $X_{\phi(e)}$ and $X_{\psi(e)}$ are disjoint as otherwise $\phi(e)=\psi(e)$, contradicting the assumption $\phi(e)\psi(e) \neq \psi(e)\phi(e)$. We consider the action of $H$ on $F_n$ as:
    \[
        \phi \cdot w := \phi(e)w, \qquad \text{for $\phi \in \cR$ and $w \in F_n$}.
    \]
    Then the same assumption $\phi(e)\psi(e) \neq \psi(e) \phi(e)$ implies that $\phi \cdot X_{\psi(e)} \subset X_{\phi(e)}$ and $\psi \cdot X_{\phi(e)} \subset X_{\psi(e)}$. Therefore, by the Ping-Pong lemma, we conclude $\<\phi, \psi\> \cong F_2$ so $\cR$ satisfies the (strong) Tits alternative.
\end{proof}

We now extend these results to determine which full mapping class groups satisfy the Tits alternative. 

\begin{COR} \label{cor:FullMapTA}
    Let $\G$ be a locally finite, infinite graph.
    Then $\Map(\G)$ satisfies the Tits alternative if and only if $\G$ has finite rank and finite end space.
\end{COR}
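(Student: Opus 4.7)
The plan is to apply \Cref{prop:TAprops}(3) to the short exact sequence
\[
    1 \longto \PMap(\G) \longto \Map(\G) \longto \Homeo(E, E_\ell) \longto 1.
\]
For the "if" direction, finite rank with $|E|<\infty$ forces $E_\ell=\emptyset$, so $\Homeo(E,E_\ell)=\Homeo(E)$ is a finite permutation group (hence trivially satisfies TA), while $\PMap(\G)$ satisfies TA by \Cref{thm:TA_finite}. Then \Cref{prop:TAprops}(3) yields TA for $\Map(\G)$.

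For the "only if" direction I argue the contrapositive. If $\G$ has infinite rank, then $\Map(\G) \supseteq \PMap(\G)$ already fails TA by \Cref{cor:nonTA}. So assume $\G$ has finite rank but $|E|=\infty$. Then $\G_c$ is a finite graph and $\G \setminus \G_c$ consists of finitely many trees $T_1,\ldots,T_k$, each attached to $\G_c$ at a single vertex. Since $E=\bigsqcup_i E(T_i)$ is infinite, some $T:=T_i$ satisfies $|E(T)|=\infty$; I will extract from this a subgroup of $\Map(\G)$ failing TA.

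The key construction is an injective homomorphism $\iota \colon \Homeo(E(T)) \hookrightarrow \Map(\G)$. Given $h \in \Homeo(E(T))$, the tree clause of \Cref{thm:PHEclassification} provides a PHE $\psi_h \colon T \to T$ inducing $h$, unique up to proper homotopy. Since $T$ is contractible, I can homotope $\psi_h$ so that it fixes the attaching vertex $v \in T \cap \G_c$, and then extend by the identity on $\G \setminus T$ to obtain $\Phi_h := \iota(h) \in \Map(\G)$. Tree-uniqueness, together with the fact that any proper homotopy of PHEs of $T$ can be rearranged to preserve $v$ (again by contractibility), makes $\iota$ a well-defined homomorphism. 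Injectivity follows by composing with $\Map(\G) \to \Homeo(E)$: the image of $\iota(h)$ equals $h$ on $E(T)$ and identity on $E \setminus E(T)$, so $\iota(h) = 1$ forces $h = \id$.

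Finally, $\Homeo(E(T))$ contains an isomorphic copy of the infinite symmetric group $S_\infty$: any accumulation point $p \in E(T)$ admits a descending clopen basis whose nonempty shells can be refined to a family $\{U_n\}_{n \in \N}$ of pairwise disjoint, pairwise homeomorphic clopen subsets shrinking to $p$, and permuting the $U_n$ via fixed identifications (and fixing everything outside $\bigsqcup_n U_n \cup \{p\}$) gives an embedding $S_\infty \hookrightarrow \Homeo(E(T))$. By Cayley's theorem the countable Grigorchuck group embeds into $S_\infty$, and since it is a finitely generated infinite torsion group of intermediate growth, it is neither virtually solvable nor contains $F_2$. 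Chaining these embeddings yields a subgroup of $\Map(\G)$ failing TA. The main obstacle is verifying that $\iota$ really is a group homomorphism: this requires combining the tree-uniqueness of \Cref{thm:PHEclassification} with a rearrangement-of-homotopies argument using the contractibility of $T$.
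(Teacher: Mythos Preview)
Your argument is correct, but it diverges from the paper's in the interesting case (finite rank, infinite end space). The paper proceeds constructively: for countable $E$ it adapts the explicit branch--swap maps of \Cref{thm:Grigorchuck} with rays in place of loops, and for uncountable $E$ it uses a clopen Cantor subset of $E$ to realize Grigorchuck's group directly. Your route is more structural: you first produce a section $\iota\colon \Homeo(E(T)) \hookrightarrow \Map(\G)$ over an infinite tree piece $T$, then locate $S_\infty$ inside $\Homeo(E(T))$, and finally invoke Cayley's theorem. This is uniform (no countable/uncountable split) and actually yields a stronger embedding, but it trades the paper's concrete element-by-element construction for two nontrivial verifications.

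Two places deserve a sentence more of justification than you give. First, for the homomorphism property of $\iota$: what you really need is that PHEs of $T$ fixing $v$ and inducing the identity on $E(T)$ are properly homotopic \emph{rel $v$} to the identity. The cleanest way to get this is to attach a ray at $v$ and invoke $\PMap(\text{tree})=1$ (\Cref{prop:treePMap}) for the enlarged tree, rather than ``rearranging homotopies'' by hand. Second, the claim that the shells around $p$ can be refined to pairwise homeomorphic clopen sets is not automatic for an arbitrary closed subset of a Cantor set; it follows from the dichotomy that each nonempty shell either contains an isolated point of $E(T)$ or is perfect (hence a Cantor set), so after passing to a subsequence you can take all $U_n$ to be singletons or all to be Cantor sets. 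With those two points made explicit, your proof stands. (Minor caveat: when $\rk(\G)=0$ the core $\G_c$ is degenerate and your decomposition into $T_1,\ldots,T_k$ does not literally apply, but then $\Map(\G)\cong\Homeo(E(\G))$ directly and your $S_\infty$ argument runs with $T=\G$.)
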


\begin{proof}
    We divide into cases. First, if $\G$ has at least one end accumulated by loops, then $\Map(\G)$ fails the Tits alternative by \Cref{cor:nonTA}. Otherwise, $\G$ has finite rank, and we continue to divide into cases. If $\G$ has finite end space, then $\Map(\G)$ is a finite extension of $\PMap(\G)$, so by \Cref{prop:TAprops} property (2), the full mapping class group $\Map(\G)$ satisfies the Tits alternative. If $\G$ has countable end space, then we can modify the proof of \Cref{thm:Grigorchuck} by replacing the loops with rays, to realize Grigorchuck's group as a subgroup of $\Map(\G)$. If the end space of $\G$ is uncountable, then there is a closed subset of the ends which is homeomorphic to the whole Cantor set, so contains Grigorchuck's group in the natural way, and again $\Map(\G)$ fails the Tits alternative. 
\end{proof}

The strong Tits alternative is not stable under group extensions (consider $\Z\wr \Z$). So, the best we could conclude about $\PMap(\G)$ from the decomposition as $\cR \rtimes \Aut(F_n)$ was \Cref{thm:TA_finite}.
However, our proof that $\cR$ satisfies the strong Tits alternative actually shows the slightly stronger statement: Any two elements of $\mathcal{R}$ which do not commute generate $F_2$. This property could be useful in answering the question, \begin{Q}\label{q:strongTA}
    If $\G$ is a locally finite graph of finite rank, does $\PMap(\G)$ satisfy the strong Tits alternative?
\end{Q}

\bibliographystyle{alpha}
\bibliography{bib}

\end{document}